\setlist[enumerate]{label={\rm(\roman*)}}
\theoremstyle{plain}
\newtheorem{theorem}{Theorem}[section]
\newtheorem{lemma}[theorem]{Lemma}
\theoremstyle{definition}
\newtheorem{definition}[theorem]{Definition}
\theoremstyle{remark}
\newtheorem{remark}[theorem]{Remark}
\numberwithin{equation}{section}
\let\expandafter\oldproof\csname\string\proof\endcsname
\let\oldendproof\endproof
\renewenvironment{proof}[1][\proofname]{%
  \oldproof[\bf #1]%
}{\oldendproof}
\DeclareMathOperator*{\esup}{\sup}
\DeclareMathOperator{\Id}{Id}
\newcommand{\I}{\mathrm{I}}
\newcommand{\II}{\mathrm{II}}
\newcommand{\III}{\mathrm{III}}
\DeclareMathOperator{\RHS}{RHS}
\DeclareMathOperator{\LHS}{LHS}
\newcommand{\K}{\mathcal K}
\newcommand{\Z}{\mathbb Z}
\newcommand{\N}{\mathbb N}
\newcommand{\M}{\mathfrak M}
\newcommand{\Mpl}{\M^+}
\newcommand{\ksum}[3][k]{\sum_{{#1}={#2}}^{{#3}}}
\newcommand{\ksup}[3][k]{\sup_{{#2}\leq {#1} \leq {#3}}}
\newcommand{\LHSeq}[1]{\LHS\eqref{#1}}
\newcommand{\RHSeq}[1]{\RHS\eqref{#1}}
\def\ls{\lesssim}
\def\iq{\frac{1}{q}}
\def\ir{\frac{1}{r}}
\def\th{\widetilde{h}}
\begin{document}

\title{Embeddings Between Generalized Weighted Lorentz Spaces}

\author{Amiran Gogatishvili, Zden\v{e}k Mihula, Lubo\v s Pick, Hana Tur\v{c}inov\'{a} and Tu\u{g}\c{c}e \"{U}nver}

\email[A.~Gogatishvili]{gogatish@math.cas.cz}
\urladdr{ 0000-0003-3459-0355}
\email[Z~Mihula]{mihulzde@fel.cvut.cz}
\urladdr{0000-0001-6962-7635}
\email[L.~Pick]{pick@karlin.mff.cuni.cz}
\urladdr{0000-0002-3584-1454}
\email[H~Tur\v{c}inov\'{a}]{hana.turcinova@fel.cvut.cz}
\urladdr{0000-0002-5424-9413}
\email[T.~\"{U}nver]{tugceunver@kku.edu.tr}
\urladdr{0000-0003-0414-8400}

\address{ Institute of Mathematics of the Czech Academy of Sciences,  \v Zitn\'a~25,  115~67 Praha~1,  Czech Republic}

\address{Czech Technical University in Prague, Faculty of Electrical Engineering, Department of Mathematics, Technick\'a~2, 166~27 Praha~6, Czech Republic; AND Department of Mathematical Analysis, Faculty of Mathematics and Physics, Charles University, Sokolovsk\'a~83, 186~75 Praha~8, Czech Republic}

\address{Department of Mathematical Analysis, Faculty of Mathematics and Physics, Charles University, Sokolovsk\'a~83, 186~75 Praha~8, Czech Republic}

\address{Czech Technical University in Prague, Faculty of Electrical Engineering, Department of Mathematics, Technick\'a~2, 166~27 Praha~6, Czech Republic; AND Department of Mathematical Analysis, Faculty of Mathematics and Physics, Charles University, Sokolovsk\'a~83, 186~75 Praha~8, Czech Republic, ORCID 0000-0002-5424-9413}

\address{Department of Mathematics, Faculty of Engineering and Natural Sciences, Kirikkale University, 71450, Yahsihan, Kirikkale, T\"{u}rkiye}

\subjclass[2000]{46E30, 26D10}
\keywords{Lorentz space, weighted inequality, integral operator.}

\thanks{The research of  Amiran Gogatishvili was supported by the Czech Academy of Sciences RVO: 67985840, by the Czech Science Foundation (GA\v CR), grant no. 23-04720S, by the Shota Rustaveli National Science Foundation (SRNSF), grant no: FR22\_17770, and by the  grant Ministry of Education and Science of the Republic of Kazakhstan (project no. AP14869887). The research of Zden\v ek Mihula was supported by the project OPVVV CAAS CZ.02.1.01/0.0/0.0/16\_019/0000778 and by the grant P201/21-01976S of the Czech Science Foundation. The work of L.~Pick was supported in part by the Danube Region Grant no.~8X2043 and by the grant P201/21-01976S of the Czech Science Foundation. The research of Hana Tur\v cinov\'{a} was supported in part by the Grant Schemes at Charles University, reg.~no.~ \\CZ.02.2.69/0.0/0.0/19\_073/0016935, by the Primus research programme PRIMUS/21/SCI/002 of Charles University and Charles University Research program  No.~UNCE/SCI/023. The research of Tu\u{g}\c{c}e \"{U}nver was supported by the Grant of The Scientific and Technological Research Council of Turkey (TUBITAK), Grant No: 1059B192000075.}

\begin{abstract}
We give a new characterization of a continuous embedding between two function spaces of type $G\Gamma$. Such spaces are governed by functionals of type
\begin{equation*}
	\|f\|_{G\Gamma(r,q;w,\delta)} := \left(\int_{0}^{L} \left( \frac1{\Delta(t)} \int_0^t f^*(s)^r \delta(s) ds \right)^{\frac{q}{r}}
	w(t) dt \right)^\frac1{q},
\end{equation*}
in which $f^*$ is the nonincreasing rearrangement of $f$, $L\in(0,\infty]$, $r,q \in (0, \infty)$, $w, \delta$ are weights on $(0,L)$ and $\Delta(t)=\int_{0}^{t}\delta(s)\,ds$ for $t\in(0,L)$.
To characterize the embedding of such a space, say $G\Gamma(r_1,q_1;w_1,\delta_1)$, into another, $G\Gamma(r_2,q_2;w_2,\delta_2)$, means to find a balance condition on the four positive real parameters and the four weights in order that an appropriate inequality holds for every admissible function. We develop a new discretization technique which will enable us to get rid of restrictions on parameters imposed in earlier work such as the non-degeneracy conditions or certain relations between the $r$'s and $q$'s. Such restrictions were caused mainly by the use of duality techniques, which we avoid in this paper. On the other hand we consider here only the case when $q_1 \le q_2$, leaving the reverse case to future work.
\end{abstract}

\maketitle

\section{Introduction} 

Discretizing and antidiscretizing techniques have been successfully applied to solving several rather difficult problems in the function space theory that had looked almost impossible before. The method itself is technical and not very attractive, but it yields the desired results. Numerous dismal attempts to avoid it and to get equally strong results using different approaches have been tried heavily, most of them markedly unsuccessful.

In this paper, we have a different mission. Our aim is not to circumvent the discretization technique, but rather to enhance it, and to suggest a lateral point of view allowing one to overcome certain restrictions that have been littering it thus far. Roughly speaking, we are going to cleanse the
discretization method from several assumptions on weights involved that have been appearing regularly in earlier work, and which we now confute as unnecessary, a pivotal instance of these being the array of non-degeneracy conditions. As a result, we obtain a considerably stronger characterization of embeddings between $G\Gamma$-spaces, but the impact of the improvement is wider as it extends to natural applications of the embeddings obtained.

The earlier work \cite{Gog:03,Gog:14,Gog:17} made it clear that one of the main sources of the necessity for taking various restrictions was the use of duality as a crucial step in existing techniques. Our main achievement here is that the duality techniques are replaced by different ones based on exploiting the subtle interplay between discrete Hardy inequalities and the localization brought in by the discretization method, allowing us to obtain results of required generality and versatility.

Now is the time to be more precise. Let $(\mathcal{R},\mu)$ be a $\sigma$-finite nonatomic measure space such that $\mu(\mathcal{R})=L\in(0,\infty]$, and let $\mathfrak{M}(\mathcal{R},\mu)$ be the set of all $\mu$-measurable functions on $\mathcal{R}$ whose values belong to $[-\infty, \infty]$ and $\mathfrak{M}^+(\mathcal{R},\mu) = \{ f\in \mathfrak{M}(\mathcal{R},\mu) : f \geq 0\ \text{$\mu$-a.e.}\}$. By $f^*$ we denote the nonincreasing rearrangement of $f \in \mathfrak{M}(\mathcal{R},\mu)$ defined by
\begin{equation*}
	f^*(t) = \inf \{ \lambda \in [0,\infty)\colon \mu( \{ x\in \mathcal{R}\colon |f(x)| > \lambda \} )\leq t\}, \quad t\in (0,\infty).
\end{equation*}

If $X$ and $Y$ are (quasi-)Banach spaces of measurable functions on the same measure space and the identity operator $\Id$ is bounded from $X$ to $Y$ in the sense that there exists a positive constant $C$ such that $\| f \|_Y \leq C \|f\|_X$ for all $f \in X$, then we say that $X$ is \textit{embedded} into $Y$, a fact which we denote by $X \hookrightarrow Y$. The least such a constant $C$ is equal to $\|\Id\|_{X\rightarrow Y}$.

Let $r,q \in (0, \infty)$ and $w, \delta$ be \textit{weights} on $(0,L)$, that is, measurable functions on $(0,L)$ that are positive a.e.~on  $(0,L)$ and integrable near $0$. By integrable near $0$ we mean that $\int_0^t w(s) ds < \infty$ for every $t\in(0,L)$, and the same goes for $\delta$.
The \textit{generalized Gamma space} $G\Gamma(r,q;w,\delta)$ is the collection of all functions $f \in \mathfrak{M}(\mathcal{R},\mu)$ such that
\begin{equation*}
	\|f\|_{G\Gamma(r,q;w,\delta)} := \left(\int_{0}^{L} \left( \frac1{\Delta(t)} \int_0^t f^*(s)^r \delta(s) ds \right)^{\frac{q}{r}} w(t) dt \right)^\frac1{q} < \infty,
\end{equation*}
where we used the notation
\begin{equation*}
	\Delta(t) = \int_0^t \delta(s) ds\quad\text{for $t\in(0,L)$.}
\end{equation*}
We will use this convention throughout; for example, $\Delta_1(t)$ will denote $\int_0^t \delta_1(s) ds$,  $U(t)$ will denote $\int_0^t u(s) ds$, and so on.

The roots of generalized Gamma spaces reach the pivotal paper~\cite{Sa:90} by Sawyer, in which the spaces of type Gamma were first introduced in connection with duality questions for the so-called classical Lorentz spaces of type Lambda,  which had been introduced and studied earlier by Lorentz in~\cite{Lor:51}, and also in connection with action of classical integral operators of harmonic analysis on these spaces.  Sawyer's results unleashed a tsunami of papers, and it would be impossible to cite the whole lot of them here. Let us just recall certain important cornerstones of the theory. First, various weak versions of the Gamma-type spaces were studied in the early 1990s, see e.g.~\cite{Car:93,Car:93a}. In~\cite{Fio:08}, a simpler form of spaces $G\Gamma(r,q;w,\delta)$ (involving the outer weight but not the inner) was introduced, see also~\cite{Fio:09}. It did not go unnoticed that these spaces play a key role for the boundedness of Sobolev-type functions (in this connection see~\cite{Gog:14}), and, moreover, that they constitute a natural environment for seeking solutions to certain variation
inequalities. 

In~\cite{Fio:04}, it was observed that special cases of $G\Gamma$-spaces are equivalent to the so-called \textit{small Lebesgue spaces}, that had been defined in~\cite{Fio:00},  for $p\in(1,\infty)$, by
\begin{equation*}
	\|f\|_{L^{(p}}=\inf_{f=\sum f_k}\sum_{k=1}^{\infty}\inf_{0<\varepsilon<p'-1}\varepsilon^{-\frac{1}{p'-\varepsilon}}\|f_k\|_{(p'-\varepsilon)'},
\end{equation*}
where either it is assumed that the underlying domain is of measure 1 or a normalized norm is used, and $p'$ is the dual index of $p$. These spaces are naturally associated  with the
well-known \textit{grand Lebesgue spaces}, 
defined by
\begin{equation*}
	\|f\|_{L^{p)}}=\sup_{0<\varepsilon<p-1}\varepsilon^{\frac{1}{p-\varepsilon}}\|f\|_{p-\varepsilon},
\end{equation*}
introduced in~\cite{Iwa:92} in connection with the pointwise behavior of Jacobians and with classical discoveries of M\"uller~\cite{Mul:90} and Ball~\cite{Bal:76}. It was shown in~\cite{Cap:05}, that the small Lebesgue space $L^{(p'}$ is equivalent to the associate space (i.e., the K\"othe dual space) of the grand Lebesgue space $L^{p)}$. The main result of~\cite{Fio:04} tells us that one has, in fact,
\begin{equation*}
	\|f\|_{L^{(p}}\approx \int_{0}^{1}(1-\log t)^{-\frac1p}\left(\int_0^tf^*(s)^p\,ds\right)^{\frac1p}\frac{dt}{t},
\end{equation*}
and
\begin{equation*}
	\|f\|_{L^{p)}}\approx \sup_{0<t<1}(1-\log t)^{-\frac1p}\left(\int_t^1f^*(s)^p\,ds\right)^{\frac1p}\frac{dt}{t}.
\end{equation*}
So, the small Lebesgue space coincides with an appropriate particular case of a $G\Gamma$ space in the sense that they are equal in the set-theoretical sense, and their norms are equivalent. The statement of~\cite{Gog:14}*{Theorem~1.1} then shows a new characterization of the grand Lebesgue space in terms of various types of $G\Gamma$ spaces, depending on the parameters involved.

Direct applications of the $G\Gamma$-spaces to the study of the existence, uniqueness, and regularity of the so-called `very weak solutions' to Dirichlet and Neumann problems for the equation $-\Delta u = f$ in nonstandard function spaces can be found for example in~\cite{Rak:15}. The approach is related to the fact, proved in~\cite{Fio:05} and~\cite{Fio:08}, that a first-order Sobolev space built on the space $G\Gamma(p,m;w,1)(\Omega)$ is compactly embedded into $L^{\frac{np}{n-p}}(\Omega)$ if and only if $w\notin L^1(0,1)$, in which $p\in[1,n)$ and $\Omega$ is a sufficiently regular subdomain of the  ambient Euclidean space $\mathbb R^n$.

The spaces $G\Gamma$ play an interesting role in the interpolation theory, as pointed out in~\cite{Ahm:20,Fio:01,Fio:02}. In particular, \cite{Fio:02}*{Theorem~1.3} states that
\begin{equation*}
	(L^{p)},L^{(p})_{\theta,r} = G\Gamma(p, r; w_1, w_2), 
\end{equation*}
in which $w_1$ and $w_2$ are suitable power-logarithmic weights and $(\cdot,\cdot)_{\theta,r}$ denotes the standard $K$-method of real interpolation.

In~\cite{Gog:14}, K\"othe duals of simplified $G\Gamma$-spaces were studied, and the question of when they are Banach algebras was rounded off there. Some more connections, applications, and historical notes can be found in that paper, too.

Our aim here is to investigate embeddings between pairs of $G\Gamma$-spaces, that is,
\begin{equation*}
	G\Gamma(r_1, q_1; w_1, \delta_1) \hookrightarrow G\Gamma(r_2, q_2; w_2,  \delta_2).
\end{equation*}
This amounts to finding a balance condition that would characterize all parameters $r_i,q_i$, and weights $w_i,\delta_i$, $i=1,2$, for which there exists a positive constant $C$, depending possibly only on these parameters and weights, such that the inequality
\begin{equation}\label{E:main}
	\begin{split}
		&\left( \int_0^L \left( \frac1{\Delta_2(t)} \int_0^t f^*(s)^{r_2} \delta_2(s) ds \right)^{\frac{q_2}{r_2}} w_2(t) dt \right)^\frac1{q_2}
		\\
		&\qquad \leq C \left( \int_0^L \left( \frac1{\Delta_1(t)} \int_0^t f^*(s)^{r_1} \delta_1(s) ds \right)^{\frac{q_1}{r_1}} w_1(t) dt \right)^\frac1{q_1}
	\end{split}
\end{equation}
holds for every $\mu$-measurable function $f$. We begin the analysis by ridding of one of the parameters and expressing the inequality in an equivalent but  slightly simpler form. By a standard rescaling argument based on replacing $(f^{*})^{r_1}$ with $f^*$, and then denoting $r= r_2/r_1$, $q= q_2/r_1$, $p= q_1/r_1$, $u = \delta_1 $, $ \delta = \delta_2$, $v = w_1$ and $w = w_2$, we easily observe that~\eqref{E:main} is equivalent to
\begin{equation}\label{main1}
	\begin{split}
		&\left(\int_0^L \left( \frac1{\Delta(t)} \int_0^t f^*(s)^r \delta(s) ds \right)^{\frac{q}{r}} w(t) dt \right)^\frac1{q}
		\\
		&\qquad\leq C \left( \int_0^L \left( \frac1{U(t)} \int_0^t f^*(s) u(s) ds \right)^p v(t) dt \right)^\frac1{p},
	\end{split}
\end{equation}
again with $C$ universal for any $f$.

Let us concentrate on the inequality \eqref{main1}. The first step relaxed a little the number of dangers to worry about, but we still face a more serious problem which consists in the fact that the inequality is formulated for symmetrized versions of functions. Put another way, it constitutes a~weighted inequality restricted to nonincreasing functions. Since such a restriction makes inequalities notoriously hard to manage, our next step will be a reduction of~\eqref{main1} to an unrestricted equivalent inequality. However, we will pay for the reduction by the appearance of one more integral operator.

Assume that $0 < p,q,r < \infty$ and let $v,w, u, \delta$ be weights on $(0,L)$. Then the inequality \eqref{main1} holds if and only if there exists a positive constant $C$ such that the inequality
\begin{equation}\label{main2}
	\begin{split}
		&\left( \int_0^L \left( \frac1{\Delta(t)} \int_0^t \left( \int_s^L h  \right)^r \delta(s) ds \right)^{\frac{q}{r}} w(t) dt \right)^{\frac1{q}}
		\\
		&\leq C \left( \int_0^L \left( \frac1{U(t)} \int_0^t \left( \int_s^L h  \right) u(s) ds \right)^p v(t) dt \right)^{\frac1{p}}
	\end{split}
\end{equation}
holds for all $h \in \mathfrak{M}^+(0,L)$. The equivalence between~\eqref{main1} and~\eqref{main2} is quite standard. Indeed, the fact that \eqref{main1} implies \eqref{main2} amounts to
finding, to a given nonnegative function $h\colon(0,L)\to[0,\infty)$, a function $f\colon \mathcal R\to[0,\infty)$ such that $f^*(s)=\int_s^L h$ for almost every $s\in(0,L)$, which is possible owing to the classical Sierpi\'nski theorem (see~\cite{BS}*{Chapter~2, Corollary~7.8}). Conversely, assume that \eqref{main2} holds.  For every $f \in \mathfrak{M}(\mathcal{R},
\mu)$, there exists a sequence $\{g_n\}_{n=1}^\infty$ of nonnegative measurable functions whose support is bounded and such that the sequence
$\left\{\int_t^{\infty} g_n(s) ds\right\}_{n=1}^\infty$ is nondecreasing in $n$ for every fixed $t>0$, and  $\lim_{n\rightarrow \infty} \int_t^{\infty} g_n(s) ds = f^*(t) $ for almost all $t >0$ (\cite{GogStep}*{Proposition~2.1}). Then, using the monotone convergence theorem, we get \eqref{main1}.

So now it is \eqref{main2} which we have to worry about. From now on, we shall denote by $C$ the optimal (smallest) constant in \eqref{main2}. This can be formally written as
\begin{equation}\label{E:C}
	C = \sup_{h\in\Mpl(0,L)} \frac {\left( \int_0^L \left( \frac1{\Delta(t)} \int_0^t \left( \int_s^L h  \right)^r \delta(s) ds \right)^{\frac{q}{r}} w(t) dt \right)^{\frac1{q}}} {\left( \int_0^L \left( \frac1{U(t)} \int_0^t \left( \int_s^L h  \right) u(s) ds \right)^p v(t) dt \right)^{\frac1{p}}}.
\end{equation}
The ultimate task is to establish two-sided estimates of $C$ in terms of quantities defined in an easily computable way and dependent solely on parameters and weights. In fact, it is quite remarkable that something like that is possible at all. As always in the theory of weighted inequalities, the form of the characterizing expressions will heavily depend on the comparison of the parameters $p,q,r$, inevitably forcing us to split the result into several cases. In this paper, we handle the `convex' variant of the inequality, that is, the case $p\le q$ (which corresponds to the relation $q_1\le q_2$ in~\eqref{E:main}, mentioned in the abstract).

We finally introduce auxiliary nonnegative functions $\varphi$ and $\sigma$ by setting
\begin{equation}\label{varphi}
	\varphi(t) = \int_0^L \min\{U(t)^p, U(s)^p\} \frac{v(s)}{U(s)^p} ds,\ t\in(0,L),
\end{equation}
and
\begin{equation}\label{E:sigma}
	\sigma(t)=\varphi(t)^{-\frac{r}{p-r}-2}V(t)\left(\int_t^L U^{-p}(s)v(s) ds\right)U(t)^{p-1}u(t),\ t\in(0,L),
\end{equation}
in order to simplify the statement of our main result.

The key part of the statement of the main result will be formulated in the form of a two-sided estimate between two constants, one of which being $C$ (the best constant in~\eqref{main2}, formally defined by~\eqref{E:C}), and the other, say $B$, being some other quantity, for which an explicit formula using the weights $u,v,w$ and $\delta$ will be given, usually in the form of a combination of certain integrals and suprema. Before stating the main result let us explain the meaning of such a two-sided estimate. By $C\lesssim B$, we mean that $C \le \lambda B$ with some positive constant $\lambda$ independent of
appropriate quantities. If $C\lesssim B$ and $B\lesssim C$, we write $C \approx B$ and say that $C$ and $B$ are equivalent. 

Now we are in a position to state the principal result of this paper. 

\begin{theorem}\label{T:main}
	Let $0<p\leq q<\infty$, $0<r<\infty$ and $u, \delta, v, w$ be weights on $(0,L)$. Assume that there is $t_0\in(0,L)$ such that $0 < \varphi(t_0) < \infty$. Then $C$ defined by~\eqref{E:C} satisfies the following relations.
	
	\rm{(i)} If $p\leq q$, $p\leq r$, $1\leq q$, $1\leq r$, then
	\begin{equation*}
		C \approx B_1 + B_2,
	\end{equation*}
	where
	\begin{align*}
		B_1 &:= \esup_{t\in(0,L)} W(t)^{\frac1q}\varphi(t)^{-\frac1p},\\
		B_2 &:= \esup_{t\in(0,L)} \Delta(t)^{\frac1r}\varphi(t)^{-\frac1p}\left(\int_t^L\Delta^{-\frac{q}{r}}w\right)^{\frac{1}{q}}.\\
	\end{align*}
	
	\rm{(ii)} If $p \leq r<1\leq q$, then
	\begin{equation*}
		C \approx B_1 + B_2 + B_3,
	\end{equation*}
	where
	\begin{align*}
		B_3
		&:=
		\esup_{t\in(0,L)} \left(\int_t^L\Delta^{-\frac{q}{r}}w\right)^{\frac{1}{q}} \esup_{s\in(0,t)}U(s)\varphi(s)^{-\frac1p}\left(\int_s^t\Delta^{\frac{r}{1-r}}\delta U^{-\frac{r}{1-r}}\right)^{\frac{1-r}{r}}.
	\end{align*}
	
	\rm{(iii)}
	If $1\le r<p\le q$, then
	\begin{equation*}
		C \approx B_1 + B_2 + B_4,
	\end{equation*}
	where
	\begin{align*}
		B_4
		&:=
		\esup_{t\in(0,L)} \left(\int_t^L\Delta^{-\frac{q}{r}}w\right)^{\frac{1}{q}}
		\left(\int_{0}^{t}\sigma(s)U(s)^{\frac{pr}{p-r}}\esup_{\tau\in (s,t)}\Delta(\tau)^{\frac{p}{p-r}}U(\tau)^{-\frac{pr}{p-r}}\, ds\right)^{\frac{p-r}{pr}}.
	\end{align*}
	
	\rm{(iv)} If $r<p\le q$, $r<1\le q$, then
	\begin{equation*}
		C \approx B_1 + B_2 + B_3 + B_5,
	\end{equation*}
	where
	\begin{align*}
		B_5 &:= \esup_{t\in(0,L)} \left(\int_t^L\Delta^{-\frac{q}{r}}w\right)^{\frac{1}{q}} 
		\left(\int_{0}^{t}\sigma(s)\left(\int_{0}^{t}\Delta(\tau)^{\frac{1}{1-r}}\delta(\tau) U(\tau)^{-\frac{r}{1-r}}\right.\right.\\
		&\qquad\times
		\left.\left.\min\left\{U(s)^{\frac{r}{1-r}},U(\tau)^{\frac{r}{1-r}}\right\}\,d\tau\right)^{\frac{p(1-r)}{p-r}}\, ds\right)^{\frac{p-r}{pr}}.
	\end{align*}
	
	\rm{(v)} If $p\leq q<1\le r$, then
	\begin{equation*}
		C \approx B_1 + B_2 + B_6 + B_7,
	\end{equation*}
	where
	\begin{align*}
		B_6
		&:=
		\esup_{t\in(0,L)}         U(t)\varphi(t)^{-\frac1p}\left(\int_t^L\left(\int_s^L\Delta^{-\frac{q}{r}}w\right)^{\frac{q}{1-q}}\Delta(s)^{-\frac{q}{r}}w(s) \right.\\
		&\hspace{4cm} \left. \times \esup_{\tau\in(t,s)}\Delta(\tau)^{\frac{q}{r(1-q)}}U(\tau)^{-\frac{q}{1-q}}\,ds\right)^{\frac{1-q}{q}}, \\
		B_7
		&:=
		\esup_{t\in(0,L)} U(t)\varphi(t)^{-\frac1p}\left(\int_t^L W^{\frac{q}{1-q}}wU^{-\frac{q}{1-q}}\right)^{\frac{1-q}{q}}.
	\end{align*}
	
	\rm{(vi)} If $p\leq q<1$, $p\le r<1$, then
	\begin{equation*}
		C \approx B_1 + B_2 + B_3 + B_7 + B_8,
	\end{equation*}
	where
	\begin{align*}
		B_8
		&:=
		\esup_{t\in(0,L)}         U(t)\varphi(t)^{-\frac1p}\left(\int_t^L\left(\int_s^L\Delta^{-\frac{q}{r}}w\right)^{\frac{q}{1-q}}\Delta(s)^{-\frac{q}{r}}w(s) \right.\\
		&\hspace{4cm} \left. \times \left(\int_t^s\Delta^{\frac{r}{1-r}}\delta U^{-\frac{r}{1-r}}\right)^{\frac{q(1-r)}{r(1-q)}}\,ds\right)^{\frac{1-q}{q}}.
	\end{align*}
	
	\rm{(vii)} If $r<p\leq q<1$, then
	\begin{equation*}
		C \approx B_1 + B_2 + B_3 + B_5 + B_7 + B_8.
	\end{equation*}
	
	Moreover, the multiplicative constants in all the equivalences above depend only on $p,q,r$.
\end{theorem}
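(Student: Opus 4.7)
The plan is to attack the optimal constant in~\eqref{E:C} by a discretization--antidiscretization scheme, but carried out without any duality step, so that the only standing assumption needed on $\varphi$ is $0<\varphi(t_0)<\infty$. The guiding observation is that $\varphi(t)^{1/p}$ coincides, up to a constant, with the right-hand side of~\eqref{main2} evaluated on a Dirac-like input concentrated at $t$: for $h\approx\delta_t$ one has $\int_s^L h\approx\chi_{(0,t)}(s)$, and a direct computation then gives $\varphi(t)^{1/p}=(V(t)+U(t)^p\int_t^L U^{-p}v)^{1/p}$. Hence $\varphi$ is the natural level function along which to discretize, and one fixes once and for all a sequence $\{t_k\}$ with $\varphi(t_k)\approx 2^k$. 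The seven cases of Theorem~\ref{T:main} will then differ only in which discrete (possibly reverse) Hardy inequality is used after discretization, and antidiscretizing by replacing $2^k$ with $\varphi(t)$ always returns the continuous $B_i$ integrals.

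For the upper bound $C\lesssim\sum B_i$, given $h\in\Mpl(0,L)$ I freeze $\int_s^L h$ by its endpoint value on each slab $[t_k,t_{k+1})$, and split the outer norm in~\eqref{main2} along the same slabs. This produces a two-parameter discrete inequality with input $a_k=\int_{t_k}^{t_{k+1}}h$. The \emph{off-diagonal} contribution (slab $k$ against outer scales $t>t_{k+1}$) is controlled by an outer discrete Hardy inequality producing the $W(t)^{1/q}\varphi(t)^{-1/p}$ envelope; this yields $B_1$ when $q\ge 1$ and, via a descending reverse Hardy inequality when $q<1$, the terms $B_6$ and $B_7$ (and $B_8$ in the mixed regime). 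The \emph{diagonal} contribution (slab $k$ against outer scales $t\in(t_k,t_{k+1})$) produces the $\Delta^{-1}$-mean; this yields $B_2$ when $r\ge 1$ and $p\ge r$, and otherwise requires a reverse Hardy inequality applied to the inner $r$-integral, producing $B_3$ in the concave regime $r<1$, $B_4$ in the regime $r<p$ with $r\ge 1$, and $B_5$ when both $r<1$ and $r<p$ hold simultaneously (this is precisely where the kernel $\sigma$ of~\eqref{E:sigma} enters as the product of the two envelope factors).

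The lower bound $C\gtrsim B_i$ is obtained termwise by inserting carefully chosen test functions into~\eqref{E:C}. For $B_1$ and $B_2$ single-slab functions $h=c\chi_{(a,b)}$ with $\varphi(a)\approx\varphi(b)/2$ suffice, because the ratio in~\eqref{E:C} then collapses precisely to $W^{1/q}\varphi^{-1/p}$ or $\Delta^{1/r}\varphi^{-1/p}(\int_t^L\Delta^{-q/r}w)^{1/q}$. For $B_3$, $B_4$, $B_5$ I would take test functions still supported on one slab but internally weighted by the extremizer of the reverse Hardy inequality used in the matching sufficiency step: e.g.~densities proportional to $\delta U^{-r/(1-r)}\Delta^{r/(1-r)}$ for $B_3$, and the two-scale product recovering $\sigma$ for $B_5$. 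For $B_6$, $B_7$, $B_8$ the test function must be two-scale in the \emph{outer} variable, using the extremizer of the descending reverse Hardy inequality against $w$, so that $W^{q/(1-q)}w$ is activated at the right level.

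The most delicate step, and the one that forces the long case list, is the sufficiency direction in cases (iv), (vi), and (vii), where $r<p$ combines with $r<1$ or $q<1$ so that both the inner $r$-average and the outer $q$-integral are concave. The single discretization by $\varphi$ must then be refined by a secondary geometric sequence governing the inner $\Delta^{-1}$-mean, and the technical core of the proof is to show that the two scales interact \emph{only} through the explicit product kernel $\sigma$ of~\eqref{E:sigma}, so that no cross terms survive and the chained reverse-Hardy inequalities at the two scales produce exactly the nested integrals defining $B_5$ and $B_8$, rather than some strictly larger quantity. This is precisely the point at which the older duality-based argument required the non-degeneracy assumptions that the present purely primal approach is able to dispense with.
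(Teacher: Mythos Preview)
Your plan has the right high-level architecture (discretize along $\varphi$, handle diagonal and off-diagonal blocks separately, antidiscretize), and that is indeed what the paper does. But there are two concrete gaps that would stop the argument as you have sketched it.

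First, discretizing by $\varphi(t_k)\approx 2^k$ alone is not enough. The paper uses a \emph{covering sequence} $\{x_k\}\in CS(\varphi,U^p,a)$, which simultaneously controls the growth of $\varphi$ \emph{and} the decay of $\varphi/U^p$. Both monotonicities are used repeatedly: the $\mathcal Z_1/\mathcal Z_2$ dichotomy in the antidiscretization (e.g.\ in the passage from the discrete sums to the integrals defining $B_4$ and $B_5$, which is exactly what Lemma~\ref{TH:antid_lemma1} and the display after it accomplish) would collapse without control of $\varphi/U^p$. Relatedly, your assertion that the double-concave cases (iv), (vi), (vii) require a ``secondary geometric sequence'' is incorrect; a single covering sequence suffices, and the role of $\sigma$ is not that of a product of envelope factors but rather the specific density for which $\int_0^{x_k}\sigma h^{pr/(p-r)}$ discretizes $\sum \varphi(x_i)^{-r/(p-r)}h(x_i)^{pr/(p-r)}$ for any $h\in Q_U$ (Lemma~\ref{TH:antid_lemma1}).

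Second, the ``diagonal'' contribution is far from a direct bound. After discretization one is left with the local quantity $A(x_{k-1},x_k)$ of~\eqref{thm:discr:Ak}, which is itself the optimal constant in a \emph{nested} Copson--Hardy inequality on the slab $(x_{k-1},x_k)$. Characterizing this is a nontrivial result taken from \cite{GMPTU}, and it is precisely what produces the six different $C_{1,j}$ shapes feeding into $B_1,B_2,B_3,B_6,B_7,B_8$. Your sketch treats the diagonal block as if it yielded $B_2$ or $B_3$ by inspection; it does not. Finally, your lower-bound strategy via explicit test functions for $B_4,\dots,B_8$ would be very hard to execute directly; the paper never does this. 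Instead it proves two-sided equivalences $B_i\approx$ (combinations of discrete $C_{i,j}$) by pure antidiscretization estimates (Section~\ref{S:proof}), so that both bounds on $C$ follow from the discrete characterization in Theorem~\ref{thm:main_discretization}.
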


The first particular result in this direction was obtained in~\cite{Gog:17} under the restriction $q_2\ge r_2$ in~\eqref{E:main}, which translates to $q\ge r$ in~\eqref{E:C}. It is also stated there that the solution in the converse case is left as an open problem. In this paper, we solve this problem, at least for the convex variant of the inequality.

Let us summarize the content of the following sections. Elements of the discretization technique are collected in Section~\ref{S:preliminaries}. Fine analysis of indispensable discrete inequalities is carried out in Section~\ref{S:dicrete-inequalities}. The converse process of antidiscretization is the content of Section~\ref{S:antidiscretization}. Finally, in the last Section~\ref{S:proof}, we prove Theorem~\ref{T:main}.

\section{Preliminaries}\label{S:preliminaries}

In this section, we shall fix the notation and recall preliminary results. It is essentially borrowed from \cite{KMT}*{Section~2}, which draws from \cite{EGO-book}, and we include it to make this paper as self-contained as possible.

Throughout the entire paper, $L\in(0,\infty]$ is fixed. We say that a positive function defined on $(0,L)$ is \emph{admissible} if it is increasing and continuous. In this section, we shall assume that $\varrho$ is an admissible function. A function $h\colon(0,L)\to[0,\infty)$ is said to be \emph{$\varrho$-quasiconcave} if $h$ is nondecreasing on $(0,L)$ and the function $\frac{h}{\varrho}$ is nonincreasing on $(0,L)$. If this is the case, we write $h\in Q_\varrho(0,L)$. Let $h$ denote a function from $Q_\varrho(0,L)$ in the rest of this section. Thanks to the monotonicity properties of $\varrho$-quasiconcave functions, $h$ does not vanish identically on $(0,L)$ if and only if $h(t)\neq0$ for every $t\in(0,L)$.
Note that $h^p$ is a $\varrho^p$-quasiconcave function for every $p>0$, and so is $\frac{\varrho}{h}$ provided that $h\not\equiv0$. A nonnegative linear combination of $\varrho$-quasiconcave functions is a $\varrho$-quasiconcave function. Furthermore, if $k\in\N$ and $h_j\in Q_{\varrho_j}(0,L)$ for $j=1,2,\dots, k$, where each $\varrho_j$ is admissible, then the product $h_1h_2\cdots h_k$ is a $(\varrho_1\varrho_2\cdots \varrho_k)$-quasiconcave function.

\begin{definition}
	Let $M,N\in\Z \cup \{-\infty, \infty\}$ such that $-\infty\le N\le0\le M\le\infty$, $a\in(1,\infty)$ and  $h\in Q_{\rho}(0,L)$ such that $h\not\equiv0$. An increasing sequence $\{x_k\}_{k=N}^{M}\subseteq[0,L]$ is called a \emph{covering sequence for $h$, $\varrho$ and $a$} if it satisfies the following six properties.
	\begin{enumerate}
		
		\item $M=\infty$ if and only if
		\begin{equation*}
			\lim_{t\to L^-}h(t)=\infty\quad\text{and}\quad\lim_{t\to L^-}\frac{\varrho(t)}{h(t)}=\infty.
		\end{equation*}
		If $M=\infty$, then $\lim_{k\to\infty}x_k=L$. Otherwise, $x_M=L$.
		
		\item $N=-\infty$ if and only if
		\begin{equation*}
			\lim_{t\to 0^+}h(t)=0\quad\text{and}\quad\lim_{t\to 0^+}\frac{\varrho(t)}{h(t)}=0.
		\end{equation*}
		If $N=-\infty$, then $\lim_{k\to-\infty}x_k=0$. Otherwise, $x_{N}=0$.
		
		\item For every $k\in\Z$ such that $N+2 \le k \le M - 1$, one has
		\begin{equation*}
			a h(x_{k-1})\le h(x_k)\quad\text{and}\quad a\frac{\varrho(x_{k-1})}{h(x_{k-1})}\le\frac{\varrho(x_k)}{h(x_k)}.
		\end{equation*}
		
		\item For every $k\in\Z$ such that $N+2 \le k \le M - 1$, one has
		\begin{align*}
			\frac1{a}h(x_k)\le h(t)\le h(x_k)\quad&\text{for each $t\in[x_{k-1},x_k]$}\\
			\intertext{or}
			\frac1{a}\frac{\varrho(x_k)}{h(x_k)}\le\frac{\varrho(t)}{h(t)}\le\frac{\varrho(x_k)}{h(x_k)}\quad&\text{for each $t\in[x_{k-1},x_k]$}.
		\end{align*}
		
		\item If $M < \infty$, then
		\begin{align*}
			h(x_{M - 1})\le h(t)\le a h(x_{M - 1})\quad&\text{for each $t\in[x_{M - 1},L)$}\\
			\intertext{or}
			\frac{\varrho(x_{M - 1})}{h(x_{M - 1})}\le \frac{\varrho(t)}{h(t)}\le a \frac{\varrho(x_{M - 1})}{h(x_{M - 1})}\quad&\text{for each $t\in[x_{M - 1},L)$}.
		\end{align*}
		
		\item If $N>-\infty$, then
		\begin{align*}
			\frac1{a}h(x_{N+1})\le h(t)\le h(x_{N+1})\quad&\text{for each $t\in(0, x_{N+1}]$}\\
			\intertext{or}
			\frac1{a}\frac{\varrho(x_{N+1})}{h(x_{N+1})}\le \frac{\varrho(t)}{h(t)}\le\frac{\varrho(x_{N+1})}{h(x_{N+1})}\quad&\text{for each $t\in(0, x_{N+1}]$}.
		\end{align*}
	\end{enumerate}
	We denote the set of all covering sequences for $h,$ $\varrho$ and $a$ by $CS(h,\varrho,a)$.
\end{definition}

Note that all sequences in $CS(h,\varrho,a)$ share the same values of $N$ and $M$. Moreover, it is independent of the parameter $a$ whether $N$ and $M$ are finite or infinite.
If $\{x_k\}_{k=N}^{M}\in CS(h,\varrho,a)$, then $\{x_k\}_{k=N}^{M}\in CS(\frac{\varrho}{h},\varrho,a)$, and $\{x_k\}_{k=N}^{M}\in CS(h^p,\varrho^p,a^p)$ for every $p\in(0,\infty)$. Furthermore, it follows from the properties of covering sequences that
\begin{equation*}\label{prel:CSunion}
	(0,L)\subseteq\bigcup_{k=N+1}^M(x_{k-1},x_k]\subseteq(0,L];
\end{equation*}
moreover, the first inclusion is strict if and only if $M \neq \infty$.

\begin{lemma}[\cite{EGO-book}*{Lemma~3.2.5}] \label{Lem:Decomp}
	Let $M,N\in\Z\cup\{-\infty, \infty\}$ such that $-\infty\le N\le0\le M\le\infty$, $a\in(1,\infty)$ and  $h\in Q_{\rho}(0,L)$ such that $h\not\equiv0$ and $\{x_k\}_{k=N}^{M}\in CS(h,\varrho,a)$. The index set $\K^+=\{k\in\Z\colon N+1\leq k \leq M\}$ can be decomposed into $\K^+ = \mathcal{Z}_1\cup\mathcal{Z}_2$, where $\mathcal{Z}_1\cap\mathcal{Z}_1=\emptyset$, in such a way that
	\begin{align*}
		h(t)\approx h(x_k)\quad&\text{for all $t\in[x_{k-1},x_k]$ and every $k\in\mathcal{Z}_1$}, \\
		\intertext{and}
		\frac{\varrho(t)}{h(t)}\approx \frac{\varrho(x_k)}{h(x_k)}\quad&\text{for all $t\in[x_{k-1},x_k]$ and every $k\in\mathcal{Z}_2$}, 
	\end{align*}
	in which the equivalence constants depend only on the parameter $a$. 
\end{lemma}

The interested reader can find the construction of covering sequences and proofs of their properties in \cite{EGO-book}*{Chapter~3}.

We shall conclude this section by recalling a result which, in a way, bridges the divide between the discrete world and the continuous one. Let $p>0$ and $\widetilde w\in\M^+(0,L)$. Set
\begin{equation*}
	\widetilde{\varphi}(t)= \int_0^L \min\{\varrho(t), \varrho (s)\} \widetilde w(s) ds,\ t\in(0,L),
\end{equation*}
and assume that there is $t_0\in(0,L)$ such that $0<\widetilde\varphi(t_0)<\infty$. It is easy to see that $\widetilde\varphi\in Q_\varrho(0,L)$. Let $\{x_k\}_{k=N}^M\in CS(\widetilde\varphi,\varrho,a)$ with $a$ being large enough, namely $a>108$. It follows from \cite{KMT}*{Lemma~3.4 with $\alpha=\beta=0$, see also (4.3)} that
\begin{equation}\label{prel:dis-antidis}
	\begin{aligned}
		\int_0^L \left(\int_0^L \frac{\varrho(t)^\frac1{p} g(s)}{\varrho(t)^\frac1{p} + \varrho(s)^\frac1{p}} ds\right)^p \widetilde w(t) dt &\approx \sum_{k=N}^{M} \widetilde\varphi(x_k) \left(\int_0^L \frac{g(t)}{\varrho(x_k)^\frac1{p} + \varrho(t)^\frac1{p}} dt\right)^p \\
		&\approx \sum_{k = N + 1}^{M} \left(\int_{x_{k-1}}^{x_k} \frac{\widetilde\varphi(t)^\frac1{p}}{\varrho(t)^\frac1{p}} g(t) dt\right)^p
	\end{aligned}
\end{equation}
for every $g\in\M^+(0,L)$, in which the multiplicative constants depend only on $a$ and $p$.

Moreover, it follows from \cite{KMT}*{Lemma~3.5} that
\begin{equation}\label{KMT-Lemma3.5}
	\begin{aligned}
		\sup_{t\in(0,L)} \widetilde\varphi(t) \left(\int_0^L \frac{g(s)}{\varrho(t)^\frac1{p} + \varrho(s)^\frac1{p}} ds\right)^p &\approx \sup_{N\leq k \leq M} \widetilde\varphi(x_k) \left(\int_0^L \frac{g(t)}{\varrho(x_k)^\frac1{p} + \varrho(t)^\frac1{p}} dt\right)^p \\
		&\approx \sup_{N + 1\leq k \leq M} \left(\int_{x_{k-1}}^{x_k} \frac{\widetilde\varphi(t)^\frac1{p}}{\varrho(t)^\frac1{p}} g(t) dt\right)^p
	\end{aligned}
\end{equation}
for every $g\in\M^+(0,L)$, in which the multiplicative constants depend only on $a$ and $p$. The assumption on $a$, which is dictated by the assumptions of \cite{KMT}*{Lemmas~3.4-3.5}, is merely technical and not restrictive at all.

Let $N,M \in \Z\cup\{-\infty, \infty\}$, $N<M$, and $\{\varrho_k\}_{k=N}^M$ be a sequence of positive numbers. We say that $\{\varrho_k\}_{k=N}^M$ is \emph{strongly increasing} or \emph{strongly decreasing} if
\begin{equation}
	\inf\bigg\{ \frac{\varrho_{k+1}}{\varrho_k}\colon N\leq k < M\bigg\} > 1 \label{EQ:strongly_increasing_parameter}
\end{equation}
or
\begin{equation}
	\sup\bigg\{ \frac{\varrho_{k+1}}{\varrho_k}\colon N\leq k < M\bigg\} < 1, \label{EQ:strongly_decreasing_parameter}
\end{equation}
respectively. We shall frequently use the following equivalences involving strongly monotone sequences. Let $\{a_k\}_{k=N}^M$ be a sequence of nonnegative numbers and $p>0$. If $\{\varrho_k\}_{k=N}^M$ is strongly increasing, then
\begin{align}
	\ksum{N}{M}{\varrho_k \left(\ksum[i]{k}{M} a_i\right)^p} &\approx \ksum{N}{M}{\varrho_k a_k^p}, \label{EQ:strongly_increasing_sum_sum}\\
	\ksum{N}{M}{\varrho_k \left(\sup_{k\leq i\leq M} a_i\right)^p} &\approx \ksum{N}{M}{\varrho_k a_k^p} \label{EQ:strongly_increasing_sum_sup}\\
	\intertext{and}
	\sup_{N\leq k\leq M}\varrho_k \left(\ksum[i]{k}{M}{a_i}\right)^p &\approx \sup_{N\leq k\leq M}\varrho_k a_k^p. \label{EQ:strongly_increasing_sup_sum}
\end{align}
If $\{\varrho_k\}_{k=N}^M$ is strongly decreasing, then
\begin{align}
	\ksum{N}{M}{\varrho_k \left(\ksum[i]{N}{k} a_i\right)^p} &\approx \ksum{N}{M}{\varrho_k a_k^p} \label{EQ:strongly_decreasing_sum_sum}\\
	\intertext{and}
	\sup_{N\leq k\leq M}\varrho_k \left(\ksum[i]{N}{k}{a_i}\right)^p &\approx \sup_{N\leq k\leq M}\varrho_k a_k^p. \label{EQ:strongly_decreasing_sup_sum}
\end{align}
Moreover, all the equivalence constants depend only on the value of \eqref{EQ:strongly_increasing_parameter} or \eqref{EQ:strongly_decreasing_parameter} and $p$. Such inequalities involving strongly monotone sequences are classical; e.g., see \cite{GHS}*{Proposition~2.1} (cf.~\cite{L:76, L:93}).

Let $0 < p\le q < \infty$, $\{b_k\}_{k=N}^M$ be sequence of nonnegative numbers, $N,M\in\Z\cup\{-\infty, \infty\}$, $N<M$. By Landau theorem
(e.g., \cite{EGO-book}*{Lemma~1.4.1}),
\begin{equation} \label{eq:landau}
	\sup_{\{a_k\}_{k=N}^M}
	\frac{\left( \ksum{N}{M} a_k^q b_k^q \right)^{\frac{1}{q}}}{ \left( \ksum{N}{M} a_k^p \right)^{\frac1{p}}}=\sup_{N\leq k \leq M}  b_k, 
\end{equation}
where the supremum extends over all sequences ${\{a_k\}_{k=N}^M}$ of nonnegative numbers.

Finally, we shall also make use of the following equivalent expression for optimal constants in discrete Hardy inequalities with weights. Let $0 < p,q,r < \infty$, $\{d_k\}_{k=N}^M$ and $\{b_k\}_{k=N}^M$ be sequences of nonnegative numbers, $N,M\in\Z\cup\{-\infty, \infty\}$, $N<M$. Set
\begin{equation*}
	D = \sup_{\{a_k\}_{k=N}^M} \frac{\left( \ksum{N}{M} \left( \ksum[i]{N}{k} a_i^r b_i \right)^\frac{q}{r} d_k \right)^\frac1{q}}{\left( \ksum{N}{M} a_k^p \right)^\frac1{p}},
\end{equation*}
where the supremum extends over all sequences $\{a_k\}_{k=N}^M$ of nonnegative numbers. Owing to \cite{Be:91}, we have
\begin{equation}\label{eq:Bennett_discr_Hardy}
	D \approx \begin{cases}
		\sup_{N\leq k \leq M} \left(\ksum[i]{k}{M} d_i\right)^\frac1{q} b_k^\frac1{r} \qquad &\text{if $p\leq\min\{r,q\}$},\\
		\sup_{N\leq k \leq M} \left( \ksum[i]{k}{M} d_i \right)^\frac1{q} \left( \ksum[i]{N}{k} b_i^\frac{p}{p-r} \right)^\frac{p-r}{pr} \qquad &\text{if $r < p \leq q$},
	\end{cases}
\end{equation}
in which the equivalence constants depend only on $p,q$ and $r$.

\section{Equivalent Discrete Inequalities}\label{S:dicrete-inequalities}

We use the abbreviation $\LHS(*)$ and $\RHS(*)$ for the left-hand side and right-hand side of the inequality numbered by $(*)$, respectively.

We start with an auxiliary lemma.
\begin{lemma}\label{lem:disc_main}
	Let $0< p,q,r < \infty$ and $u, \delta, v, w$ be weights on $(0,L)$. Let $\varphi$ be the function from~\eqref{varphi}.
	Assume that there is $t_0\in(0,L)$ such that $0 < \varphi(t_0) < \infty$. Let $\{x_k\}_{k=N}^M \in CS(\varphi, U^p, a)$ with $a>108$. Denote by $\widetilde{C_i}$, $i=1,2,3,4$, the optimal constants in the inequalities:
	\begin{align}
		&\left( \ksum{N+1}{M} \int_{x_{k-1}}^{x_k} \left( \frac1{\Delta(t)} \int_{x_{k-1}}^t \left( \int_s^{x_k} h\right)^r \delta(s) ds \right)^{\frac{q}{r}} w(t) dt \right)^{\frac1{q}}
		\nonumber \\
		& \hspace{4cm} 	\leq \widetilde{C_1} \left( \ksum{N+1}{M} \left(\int_{x_{k-1}}^{x_k} \varphi^{\frac1{p}} h\right)^p \right)^{\frac1{p}} \label{M1}\\
		& \text{for every} \,\,  h\in\Mpl(0,L); \notag\\
		&\left( \ksum{N+1}{M-1}  \left( \ksum[i]{N+1}{k} \int_{x_{i-1}}^{x_i} \left( \int_s^{x_i} h\right)^r \delta(s) ds \right)^{\frac{q}{r}} \left( \int_{x_k}^{x_{k+1}} \Delta^{-\frac{q}{r}}w \right) \right)^{\frac1{q}} \nonumber\\
		& \hspace{4cm} 	\leq \widetilde{C_2} \left( \ksum{N+1}{M-1} \left(\int_{x_{k-1}}^{x_k} \varphi^{\frac1{p}} h\right)^p \right)^{\frac1{p}} \label{M2}\\
		& \text{for every} \,\,  h\in\Mpl(0,x_{M-1}); \notag
		\\
		&\left( \ksum{N+1}{M-1}  \left( \ksum[i]{N+1}{k}  \left( \ksum[j]{i}{k} \int_{x_{j}}^{x_{j+1}} h\right)^r \left( \int_{x_{i-1}}^{x_i}\delta \right) \right)^{\frac{q}{r}} \left( \int_{x_k}^{x_{k+1}} \Delta^{-\frac{q}{r}}w \right) \right)^{\frac1{q}}\nonumber \\
		& \hspace{4cm} 	\leq \widetilde{C_3} \left( \ksum{N+1}{M-1} \left(\int_{x_{k}}^{x_{k+1}} \varphi^{\frac1{p}} h\right)^p \right)^{\frac1{p}} \label{M3}\\
		&\text{for every} \,\,  h\in\Mpl(x_{N+1},L); \notag
		\\
		&\left( \ksum{N+1}{M-1} \left( \ksum[i]{k}{M-1}  \int_{x_i}^{x_{i+1}} h  \right)^q \left( \int_{x_{k-1}}^{x_k} w \right) \right)^{\frac1{q}}\leq \widetilde{C_4} \left( \ksum{N+1}{M-1} \left(\int_{x_{k}}^{x_{k+1}} \varphi^{\frac1{p}} h\right)^p \right)^{\frac1{p}} \label{M4}\\
		& \text{for every} \,\,  h\in\Mpl(x_{N+1},L). \notag
	\end{align}
	Then the $C$ defined by~\eqref{E:C} satisfies
	\begin{equation*}
		C\approx \widetilde{C_1} + \widetilde{C_2} + \widetilde{C_3} + \widetilde{C_4},
	\end{equation*}
	in which the equivalence constants depend only on the parameters $p,q,r$ and $a$.
\end{lemma}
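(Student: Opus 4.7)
The argument splits into upper and lower bound directions. For the upper bound $C \lesssim \widetilde{C_1}+\widetilde{C_2}+\widetilde{C_3}+\widetilde{C_4}$, we antidiscretize $\RHS$ of \eqref{main2} via \eqref{prel:dis-antidis} to obtain a common right-hand side matching that of each \eqref{M1}--\eqref{M4}, and then decompose $\LHS$ of \eqref{main2} along the covering sequence into four pieces, each controlled by one of the discrete inequalities \eqref{M1}--\eqref{M4}. For the lower bound $C \gtrsim \widetilde{C_i}$, we observe that the left-hand side of each of \eqref{M1}--\eqref{M4} is dominated by the full left-hand side of \eqref{main2}, so the bound transfers immediately with the same antidiscretization on the right-hand side.

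\textbf{Antidiscretization of the right-hand side.} A direct computation (integration by parts or Fubini) gives
\[
\frac{1}{U(t)}\int_0^t \biggl(\int_s^L h\biggr) u(s)\,ds \;=\; \int_0^L h(s)\,\frac{\min\{U(t),U(s)\}}{U(t)}\,ds \;\approx\; \int_0^L \frac{U(s)\,h(s)}{U(t)+U(s)}\,ds.
\]
Setting $\varrho = U^p$ and $\widetilde w = v/U^p$ in the framework preceding \eqref{prel:dis-antidis} yields $\widetilde\varphi = \varphi$; applying \eqref{prel:dis-antidis} with $g(s)=U(s)\,h(s)$ then gives
\[
\RHS^p \;\approx\; \sum_{k=N+1}^M \biggl(\int_{x_{k-1}}^{x_k}\varphi(t)^{1/p}\,h(t)\,dt\biggr)^p.
\]
The standing assumption $a>108$ is exactly what makes \eqref{prel:dis-antidis} available. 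This quantity is precisely the common right-hand side of \eqref{M1}--\eqref{M4}.

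\textbf{Decomposition of the left-hand side.} For $t\in(x_{k-1},x_k]$ and $s\in(x_{i-1},x_i]$ with $i\leq k-1$, write
\[
\int_s^L h \;=\; \int_s^{x_i} h \;+\; \sum_{j=i}^{k-1}\int_{x_j}^{x_{j+1}} h \;+\; \int_{x_k}^L h,
\]
and keep only the first and last summands when $s\in(x_{k-1},t]$. Using $(a+b+c)^r\approx a^r+b^r+c^r$ (with constants depending only on $r$), the inner integral $\int_0^t (\int_s^L h)^r\delta(s)\,ds$ decomposes, up to a constant, into four contributions: a local term $\int_{x_{k-1}}^t(\int_s^{x_k}h)^r\delta(s)\,ds$, a previous-interval term $\sum_{i<k}\int_{x_{i-1}}^{x_i}(\int_s^{x_i}h)^r\delta(s)\,ds$, a chain term $\sum_{i<k}(\sum_{j=i}^{k-1}\int_{x_j}^{x_{j+1}}h)^r\int_{x_{i-1}}^{x_i}\delta$, and a tail term $(\int_{x_k}^L h)^r\,\Delta(t)$. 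Dividing by $\Delta(t)$, raising to the $q/r$-th power, multiplying by $w(t)$, integrating over $(x_{k-1},x_k)$ and summing over $k$ produces four pieces: the first three match the left-hand sides of \eqref{M1}--\eqref{M3} (after a one-step index shift of the weights $\int\Delta^{-q/r}w$ for \eqref{M2} and \eqref{M3}), and the fourth gives the left-hand side of \eqref{M4} since the tail's $\Delta(t)$ cancels the division. Applying each of \eqref{M1}--\eqref{M4} and combining via the $\ell^q/\ell^p$-type inequality yields $\LHS \lesssim (\widetilde{C_1}+\widetilde{C_2}+\widetilde{C_3}+\widetilde{C_4})\,\RHS$.

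\textbf{Lower bound and main obstacle.} For any nonnegative $h$ the pointwise bounds $\int_s^L h \geq \int_s^{x_k}h$, $\int_s^L h \geq \int_s^{x_i}h$, $\int_s^L h \geq \sum_{j=i}^{k-1}\int_{x_j}^{x_{j+1}}h$, and $\int_s^L h \geq \int_{x_k}^L h$ (on the relevant subsets) show that the $q$-th power of the left-hand side of each of \eqref{M1}--\eqref{M4} is dominated by the $q$-th power of $\LHS$ of \eqref{main2}; combined with the antidiscretization of $\RHS$ this yields $\widetilde{C_i}\lesssim C$ for every $i$, whence the full lower bound. The most delicate technical step lies in the upper bound: tracking the one-step index shift between the ``$t\in(x_{k-1},x_k)$, inner sum $i\leq k-1$'' structure arising from the decomposition and the ``weight on $(x_k,x_{k+1})$, inner sum $i\leq k$'' structure appearing in the definitions of $\widetilde{C_2}$ and $\widetilde{C_3}$. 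This is absorbed into the constants via the comparability $\Delta(x_k)\approx \Delta(t)$ for $t\in(x_{k-1},x_k)$ and the bounded-ratio behavior of $\int\Delta^{-q/r}w$ across adjacent covering intervals, both consequences of the geometric growth of $\{x_k\}\in CS(\varphi,U^p,a)$ with $a>108$.
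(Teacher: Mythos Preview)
Your overall strategy is exactly that of the paper: antidiscretize the right-hand side via \eqref{prel:dis-antidis}, decompose the left-hand side along the covering sequence into four pieces, and match those pieces to the left-hand sides of \eqref{M1}--\eqref{M4}. The paper actually shows the two-sided equivalence $\LHS\eqref{main2}\approx \I_{1,1}+\I_{1,2}+\I_2+\II$ in one stroke, but your separate upper/lower argument is equivalent and the pointwise lower bounds you describe for each $\widetilde{C_i}$ are correct.

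There is, however, a genuine error in your final paragraph. You claim the index mismatch between your decomposition (weight on $(x_{k-1},x_k)$, inner sum $i\le k-1$) and the definitions of $\widetilde{C_2},\widetilde{C_3}$ (weight on $(x_k,x_{k+1})$, inner sum $i\le k$) is resolved by ``$\Delta(x_k)\approx\Delta(t)$ for $t\in(x_{k-1},x_k)$'' and ``bounded-ratio behavior of $\int\Delta^{-q/r}w$ across adjacent covering intervals,'' and you attribute both to $\{x_k\}\in CS(\varphi,U^p,a)$. This is false: the covering sequence is adapted to $\varphi$ and $U^p$, and says nothing whatsoever about $\Delta$ or about $\int\Delta^{-q/r}w$ on the intervals $(x_{k-1},x_k)$. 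Neither of those comparabilities holds in general.

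The correct resolution---and what the paper does---is simply a relabeling of the summation index. Your piece with weight $\int_{x_{k-1}}^{x_k}\Delta^{-q/r}w$ and inner sum over $i\le k-1$ becomes, after the substitution $k\mapsto k+1$, exactly the expression with weight $\int_{x_k}^{x_{k+1}}\Delta^{-q/r}w$ and inner sum $i\le k$; the ranges of $k$ shift from $N+2\le k\le M$ to $N+1\le k\le M-1$. No estimate and no constant is involved. The same reindexing handles the chain term: your $\sum_{j=i}^{k-1}\int_{x_j}^{x_{j+1}}h$ becomes $\sum_{j=i}^{k}\int_{x_j}^{x_{j+1}}h$ after the shift, matching \eqref{M3} exactly. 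So the ``main obstacle'' you identify is not an obstacle at all, and the mechanism you invoke to overcome it is invalid; replace that paragraph by the trivial reindexing and the proof is complete.
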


\begin{proof}
	First, since
	\begin{equation}\label{min-equiv}
		\frac{1}{U(s)+U(t)} \approx \min\left\{\frac1{U(t)}, \frac1{U(s)}\right\} \quad \text{for every $s,t\in(0,L)$,}
	\end{equation}
	we have
	\begin{equation*}
		\int_0^t \left( \int_s^L h \right) u(s) ds \approx \int_0^L \frac{U(s)U(t)}{U(s) + U(t)}h(s) ds \quad \text{for every $t\in(0,L)$}.
	\end{equation*}
	Then
	\begin{equation*}
		\RHSeq{main2} \approx  \left( \int_0^L \left( \int_0^L \frac{U(s)U(t)}{U(s) + U(t)}  h(s)ds \right)^p  \frac{v(t)}{U(t)^p} dt \right)^\frac1{p}
	\end{equation*}
	and, applying~\eqref{prel:dis-antidis} to $\varrho=U^p$, $g = hU$, $\widetilde w = \frac{v}{U^p}$, and $\widetilde\varphi=\varphi$, we obtain
	\begin{equation}\label{lem:disc_main:discRHS}
		\RHSeq{main2} \approx  \left( \ksum{N+1}{M} \left( \int_{x_{k-1}}^{x_k} \varphi^{\frac1{p}} h \right)^p \right)^{\frac1{p}}.
	\end{equation}
	Next,
	\begin{align}\label{lem:disc_main:discLHS}
		\LHSeq{main2} &= \left( \ksum{N+1}{M} \int_{x_{k-1}}^{x_k} \left( \frac1{\Delta(t)} \int_0^t \left( \int_s^{x_k} h + \int_{x_k}^L h  \right)^r \delta(s) ds \right)^{\frac{q}{r}} w(t) dt \right)^{\frac1{q}} \nonumber
		\\
		&\approx \left( \ksum{N+1}{M} \int_{x_{k-1}}^{x_k} \left( \frac1{\Delta(t)} \int_0^t \left( \int_s^{x_k} h  \right)^r \delta(s) ds \right)^{\frac{q}{r}} w(t) dt \right)^{\frac1{q}} \nonumber
		\\
		&\quad+ \left( \ksum{N+1}{M-1} \left( \int_{x_k}^L h  \right)^q \int_{x_{k-1}}^{x_k} \left( \frac1{\Delta(t)} \int_0^t \delta(s) ds \right)^{\frac{q}{r}} w(t) dt \right)^{\frac1{q}}\nonumber\\
		&=: \I + \II.
	\end{align}
	We shall first deal with $\I$. Decomposing the integral $\int_{0}^{t}$ into the sum $\int_{0}^{x_{k-1}}+\int_{x_{k-1}}^{t}$ and using the fact that $x_{k-1}=0$ if $k=N+1$, which is possible if and only if $N>-\infty$, we obtain
	\begin{align}\label{lem:disc_main:discLHS1}
		\I&\approx \left( \ksum{N+2}{M}  \left(\int_0^{x_{k-1}} \left( \int_s^{x_k} h  \right)^r \delta(s) ds \right)^{\frac{q}{r}} \left( \int_{x_{k-1}}^{x_k} \Delta^{-\frac{q}{r}}w \right) \right)^{\frac1{q}}\nonumber
		\\
		&\quad+ \left( \ksum{N+1}{M} \int_{x_{k-1}}^{x_k} \left( \frac1{\Delta(t)} \int_{x_{k-1}}^t \left( \int_s^{x_k} h  \right)^r \delta(s) ds \right)^{\frac{q}{r}} w(t) dt \right)^{\frac1{q}}\nonumber\\
		&=: \I_1+\I_2.
	\end{align}
	Note that $\I_1$ can be written as
	\begin{align*}
		\I_1&=\left( \ksum{N+2}{M}  \left( \ksum[i]{N+1}{k-1} \int_{x_{i-1}}^{x_i} \left( \int_s^{x_k} h  \right)^r \delta(s) ds \right)^{\frac{q}{r}} \left(\int_{x_{k-1}}^{x_k} \Delta^{-\frac{q}{r}}w\right) \right)^{\frac1{q}} 
		\\
		&\approx \left( \ksum{N+2}{M}  \left( \ksum[i]{N+1}{k-1} \int_{x_{i-1}}^{x_i} \left( \int_s^{x_i} h  \right)^r \delta(s) ds \right)^{\frac{q}{r}} \left(\int_{x_{k-1}}^{x_k} \Delta^{-\frac{q}{r}} w \right)  \right)^{\frac1{q}}
		\\
		&\quad+ \left( \ksum{N+2}{M}  \left( \ksum[i]{N+1}{k-1} \left( \int_{x_i}^{x_k} h  \right)^r \int_{x_{i-1}}^{x_i}  \delta(s) ds \right)^{\frac{q}{r}}\left(\int_{x_{k-1}}^{x_k} \Delta^{-\frac{q}{r}}w\right) \right)^{\frac1{q}}
		\\
		&= \left( \ksum{N+2}{M}  \left( \ksum[i]{N+1}{k-1} \int_{x_{i-1}}^{x_i} \left( \int_s^{x_i} h  \right)^r \delta(s) ds \right)^{\frac{q}{r}}\left(\int_{x_{k-1}}^{x_k} \Delta^{-\frac{q}{r}}w\right)\right)^{\frac1{q}}
		\\
		&\quad+ \left( \ksum{N+2}{M}  \left( \ksum[i]{N+1}{k-1} \left( \ksum[j]{i}{k-1} \int_{x_j}^{x_{j+1}} h  \right)^r \left(\int_{x_{i-1}}^{x_i}  \delta\right) \right)^{\frac{q}{r}}\left(\int_{x_{k-1}}^{x_k} \Delta^{-\frac{q}{r}}w\right) \right)^{\frac1{q}}.
	\end{align*}
	Reindexing $(k-1)\mapsto k$, we obtain
	\begin{align}\label{lem:disc_main:discLHS2}
		\I_1 
		&\approx \left( \ksum{N+1}{M-1}  \left( \ksum[i]{N+1}{k} \int_{x_{i-1}}^{x_i} \left( \int_s^{x_i} h  \right)^r \delta(s) ds \right)^{\frac{q}{r}}\left(\int_{x_k}^{x_{k+1}} \Delta^{-\frac{q}{r}}w\right)\right)^{\frac1{q}}\nonumber
		\\
		&\quad+ \left( \ksum{N+1}{M-1}  \left( \ksum[i]{N+1}{k} \left( \ksum[j]{i}{k} \int_{x_j}^{x_{j+1}} h  \right)^r \left(\int_{x_{i-1}}^{x_i}  \delta\right) \right)^{\frac{q}{r}}\left(\int_{x_{k}}^{x_{k+1}} \Delta^{-\frac{q}{r}}w\right) \right)^{\frac1{q}}\nonumber\\
		& =: \I_{1,1} + \I_{1,2}.
	\end{align}
	
	Now we shall deal with $\II$. The very definition of $\Delta$ yields
	\begin{align}\label{lem:disc_main:discLHSII}
		\II = \left( \ksum{N+1}{M-1} \left( \ksum[i]{k}{M-1} \int_{x_i}^{x_{i+1}} h  \right)^q \left( \int_{x_{k-1}}^{x_k} w \right) \right)^{\frac1{q}}.
	\end{align}
	Then, \eqref{lem:disc_main:discLHS}, \eqref{lem:disc_main:discLHS1}, \eqref{lem:disc_main:discLHS2} and \eqref{lem:disc_main:discLHSII} altogether yields
	\begin{align}\label{lem:disc_main:discLHS3}
		\LHSeq{main2} & \approx \I_{1,1} + \I_{1,2} + \I_2 + \II \nonumber\\
		&= \left( \ksum{N+1}{M-1}  \left( \ksum[i]{N+1}{k} \int_{x_{i-1}}^{x_i} \left( \int_s^{x_i} h  \right)^r \delta(s) ds \right)^{\frac{q}{r}} \left( \int_{x_k}^{x_{k+1}} \Delta^{-\frac{q}{r}}w \right) \right)^{\frac1{q}} \nonumber
		\\
		&\quad+ \left( \ksum{N+1}{M-1}  \left( \ksum[i]{N+1}{k} \left( \ksum[j]{i}{k} \int_{x_j}^{x_{j+1}} h  \right)^r \left( \int_{x_{i-1}}^{x_i}  \delta \right) \right)^{\frac{q}{r}} \left( \int_{x_k}^{x_{k+1}} \Delta^{-\frac{q}{r}}w \right) \right)^{\frac1{q}} \nonumber
		\\
		&\quad+ \left( \ksum{N+1}{M} \int_{x_{k-1}}^{x_k} \left( \frac1{\Delta(t)} \int_{x_{k-1}}^t \left( \int_s^{x_k} h  \right)^r \delta(s) ds \right)^{\frac{q}{r}} w(t) dt \right)^{\frac1{q}}\nonumber
		\\
		&\quad+ \left( \ksum{N+1}{M-1} \left( \ksum[i]{k}{M-1} \int_{x_i}^{x_{i+1}} h  \right)^q \left( \int_{x_{k-1}}^{x_k} w \right) \right)^{\frac1{q}}.
	\end{align}
	From the validity of inequality \eqref{main2} for every $h\in\Mpl(0,L)$, together with \eqref{lem:disc_main:discRHS} and \eqref{lem:disc_main:discLHS3}, the following four inequalities can be obtained:
	\begin{align}
		& \left( \ksum{N+1}{M} \int_{x_{k-1}}^{x_k} \left( \frac1{\Delta(t)} \int_{x_{k-1}}^t \left( \int_s^{x_k} h  \right)^r \delta(s) ds \right)^{\frac{q}{r}} w(t) dt \right)^{\frac1{q}} \notag\\
		&\hspace{5cm}  \lesssim\left( \ksum{N+1}{M} \left( \int_{x_{k-1}}^{x_k} \varphi^{\frac1{p}} h \right)^p \right)^{\frac1{p}},\label{3.11}\\
		&\left( \ksum{N+1}{M-1}  \left( \ksum[i]{N+1}{k} \int_{x_{i-1}}^{x_i} \left( \int_s^{x_i} h  \right)^r \delta(s) ds \right)^{\frac{q}{r}} \left( \int_{x_k}^{x_{k+1}} \Delta^{-\frac{q}{r}}w \right) \right)^{\frac1{q}} \notag\\
		& \hspace{5cm} \lesssim\left( \ksum{N+1}{M} \left( \int_{x_{k-1}}^{x_k} \varphi^{\frac1{p}} h \right)^p \right)^{\frac1{p}},
		\label{3.12} \\
		&\left( \ksum{N+1}{M-1}  \left( \ksum[i]{N+1}{k} \left( \ksum[j]{i}{k} \int_{x_j}^{x_{j+1}} h  \right)^r \left( \int_{x_{i-1}}^{x_i}  \delta \right) \right)^{\frac{q}{r}} \left( \int_{x_k}^{x_{k+1}} \Delta^{-\frac{q}{r}}w \right) \right)^{\frac1{q}}\notag\\
		& \hspace{5cm} \lesssim \left( \ksum{N+1}{M} \left( \int_{x_{k-1}}^{x_k} \varphi^{\frac1{p}} h \right)^p \right)^{\frac1{p}}, \label{3.13}  \\ &\left( \ksum{N+1}{M-1} \left( \ksum[i]{k}{M-1} \int_{x_i}^{x_{i+1}} h  \right)^q \left( \int_{x_{k-1}}^{x_k} w \right) \right)^{\frac1{q}} \notag\\
		& \hspace{5cm}  \lesssim\left( \ksum{N+1}{M} \left( \int_{x_{k-1}}^{x_k} \varphi^{\frac1{p}} h \right)^p \right)^{\frac1{p}}.\label{3.14}
	\end{align}
	It remains to show that inequalities \eqref{3.11}, \eqref{3.12}, \eqref{3.13} and \eqref{3.14} are equivalent to inequalities \eqref{M1}, \eqref{M2}, \eqref{M3} and \eqref{M4}, respectively.  
	
	Firstly, note that \eqref{M1} and \eqref{3.11} are identical. 
	
	Next, assume that \eqref{M2} holds for all $h \in \Mpl(0,x_{M-1})$. Then, for every $h \in \Mpl(0,L)$, the following is true:
	\begin{align*}
		\LHS\eqref{3.12} = \LHSeq{M2} \leq \widetilde{C_2} \RHSeq{M2} \lesssim \RHSeq{3.12}.
	\end{align*}
	Conversely, assume that \eqref{3.12} holds for every $h \in \Mpl(0,L)$. Then for $ g \in \Mpl(0, x_{M-1})$,  choosing 
	\begin{equation*}
		h(x) = \begin{cases}
			g(x) &\text{if $x\in(0, x_{M-1})$},\\
			0 &\text{if $x\in [x_{M-1}, L)$},
		\end{cases}
	\end{equation*} 
	we have by the validity of inequality \eqref{3.12} that
	\begin{align*}
		\LHSeq{M2}  = \LHSeq{3.12} &\lesssim  \left( \ksum{N+1}{M} \left( \int_{x_{k-1}}^{x_k} \varphi^{\frac1{p}} h \right)^p \right)^{\frac1{p}} \\
		&= \left( \ksum{N+1}{M-1} \left( \int_{x_{k-1}}^{x_k} \varphi^{\frac1{p}}  g \right)^p \right)^{\frac1{p}} = \RHSeq{M2}.
	\end{align*} 
	Therefore, inequality
	\eqref{3.12} is equivalent to  \eqref{M2}. 
	
	Now, assume that \eqref{M3} holds for all $h\in \Mpl(x_{N+1}, L)$. Then, for every $h \in \Mpl(0,L)$, validity of \eqref{M3} yields the following chain of relations:
	\begin{align*}
		\LHSeq{3.13} &= \LHSeq{M3} \\
		&=\left( \ksum{N+1}{M-1}  \left( \ksum[i]{N+1}{k} \left( \ksum[j]{i}{k} \int_{x_j}^{x_{j+1}} h  \right)^r \left( \int_{x_{i-1}}^{x_i}  \delta \right) \right)^{\frac{q}{r}} \left( \int_{x_k}^{x_{k+1}} \Delta^{-\frac{q}{r}}w \right) \right)^{\frac1{q}}\\
		& \leq \widetilde{C_3} \left( \ksum{N+1}{M-1} \left(\int_{x_{k}}^{x_{k+1}} \varphi^{\frac1{p}} h\right)^p \right)^{\frac1{p}}\\
		& \approx \left( \ksum{N+2}{M} \left( \int_{x_{k-1}}^{x_{k}} \varphi^{\frac1{p}} h \right)^p \right)^{\frac1{p}} \leq \RHSeq{3.13}.
	\end{align*}
	Conversely, assume that \eqref{3.13} holds for every $h \in \Mpl(0,L)$. Then for $f \in \Mpl(x_{N+1},L)$,  choosing 
	\begin{equation*}
		h(x) = \begin{cases}
			0 &\text{if $x\in(0, x_{N+1})$},\\
			f(x) &\text{if $x\in [x_{N+1}, L)$},
		\end{cases}
	\end{equation*} 
	we have by the validity of inequality \eqref{3.13} that
	\begin{align*}
		\LHSeq{M3} & = \LHSeq{3.13} \\
		&= \left( \ksum{N+1}{M-1}  \left( \ksum[i]{N+1}{k} \left( \ksum[j]{i}{k} \int_{x_j}^{x_{j+1}} h  \right)^r \left( \int_{x_{i-1}}^{x_i}  \delta \right) \right)^{\frac{q}{r}} \left( \int_{x_k}^{x_{k+1}} \Delta^{-\frac{q}{r}}w \right) \right)^{\frac1{q}}\\
		& \lesssim\left( \ksum{N+1}{M} \left( \int_{x_{k-1}}^{x_{k}} \varphi^{\frac1{p}} h \right)^p \right)^{\frac1{p}}\\
		& = \left( \ksum{N+2}{M} \left( \int_{x_{k-1}}^{x_{k}} \varphi^{\frac1{p}} f \right)^p \right)^{\frac1{p}}\\
		& = \left( \ksum{N+1}{M-1} \left( \int_{x_{k-1}}^{x_{k}} \varphi^{\frac1{p}} f \right)^p \right)^{\frac1{p}}\\
		& = \RHSeq{M3}.
	\end{align*}
	
	The equivalencies of \eqref{3.14} and \eqref{M4} can be proved in the same way.
\end{proof}

\begin{remark}\label{rem:value_of_a_is_immaterial}
	The assumption $a>108$ is merely technical, as already noted below \eqref{prel:dis-antidis}.
\end{remark}

We are now in a position to prove a discrete characterization of \eqref{main2}.

\begin{theorem}\label{thm:discr:resonanceform}
	Let $0< p,q,r < \infty$ and $u, \delta, v, w$ be weights on $(0,L)$. Let $\varphi$ be given by \eqref{varphi}. Assume that there is $t_0\in(0,L)$ such that $0 < \varphi(t_0) < \infty$. Let $\{x_k\}_{k=N}^M \in CS(\varphi, U^p, a)$ with $a>108$. Set
	\begin{align}
		A(x_{k-1}, x_k) &= \sup_{h \in \Mpl(0,L)} \dfrac{\left( \int_{x_{k-1}}^{x_k} \left( \frac1{\Delta(t)} \int_{x_{k-1}}^t \left( \int_s^{x_k} h \right)^r \delta(s) ds \right)^{\frac{q}{r}} w(t) dt \right)^\frac1{q}}{\int_{x_{k-1}}^{x_k}  h \varphi^\frac1{p}} \label{thm:discr:Ak}\\
		\intertext{and}
		B(x_{k-1}, x_k) &= \sup_{h \in \Mpl(0,L)} \dfrac{\left( \int_{x_{k-1}}^{x_k} \left( \int_s^{x_k} h \right)^r \delta(s) ds \right)^\frac1{r}}{\int_{x_{k-1}}^{x_k}  h \varphi^\frac1{p}} \label{thm:discr:Bk}
	\end{align}
	for $k\in\Z$, $N+1\leq k\leq M$. Denote by $C_i$, $i=1,2,3,4$, the optimal constants in the inequalities:
	\begin{align}
		&\left(\ksum{N+1}{M} a_k^q A(x_{k-1},x_k)^q\right)^{\frac1{q}}
		\leq C_1 \left(\ksum{N+1}{M} a_k^p\right)^\frac1{p}; \label{D1}
		\\
		&\left(\ksum{N+1}{M-1} \left( \ksum[i]{N+1}{k} a_i^r B(x_{i-1}, x_i)^r \right)^\frac{q}{r} \left( \int_{x_k}^{x_{k+1}} \Delta^{-\frac{q}{r}}w \right) \right)^{\frac1{q}}
		\leq C_2 \left(\ksum{N+1}{M-1} a_k^p\right)^\frac1{p}; \label{D2}
		\\
		&\left(\ksum{N+1}{M-1} \left( \ksum[i]{N+1}{k} a_i^r \varphi(x_i)^{-\frac{r}{p}} \left( \int_{x_{i-1}}^{x_i} \delta \right) \right)^\frac{q}{r} \left( \int_{x_k}^{x_{k+1}} \Delta^{-\frac{q}{r}}w \right) \right)^{\frac1{q}}
		\leq C_3 \left(\ksum{N+1}{M-1} a_k^p\right)^\frac1{p}; \label{D3}
		\\
		\intertext{and}
		&\left(\ksum{N+1}{M-1} a_k^q \varphi(x_k)^{-\frac{q}{p}} \left( \int_{x_{k-1}}^{x_k} w \right) \right)^{\frac1{q}}
		\leq C_4 \left(\ksum{N+1}{M-1} a_k^p\right)^\frac1{p} \label{D4}
	\end{align}
	for every sequence $\{a_k\}_{k=N+1}^M$ of nonnegative numbers. Then the $C$ defined by~\eqref{E:C} satisfies
	\begin{equation*}
		C \approx C_1 + C_2 + C_3 + C_4,
	\end{equation*}
	in which the equivalence constants depend only on $p,q,r$ and $a$.
\end{theorem}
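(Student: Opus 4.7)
The plan is to invoke Lemma~\ref{lem:disc_main}, which already yields $C\approx\widetilde{C_1}+\widetilde{C_2}+\widetilde{C_3}+\widetilde{C_4}$, and then to establish the termwise equivalence $\widetilde{C_i}\approx C_i$ for each $i\in\{1,2,3,4\}$; chaining the two equivalences gives the theorem, with the stated dependence on $p,q,r,a$.

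For $i=1,2$ the reductions are essentially immediate. The $k$-th summand on the left-hand side of \eqref{M1} is a functional of $h|_{(x_{k-1},x_k)}$ alone; the very definition~\eqref{thm:discr:Ak} of $A(x_{k-1},x_k)$ bounds this summand by $A(x_{k-1},x_k)^q\bigl(\int_{x_{k-1}}^{x_k}h\varphi^{1/p}\bigr)^q$, with the supremum attained (up to $\varepsilon$) by a test function supported inside $(x_{k-1},x_k)$. Setting $a_k:=\int_{x_{k-1}}^{x_k}h\varphi^{1/p}$ turns \eqref{M1} into \eqref{D1}; for the reverse direction I will glue near-extremal local test functions with pairwise disjoint supports, one per interval. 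An entirely analogous argument via the building block $B(x_{i-1},x_i)$ from~\eqref{thm:discr:Bk} will give $\widetilde{C_2}\approx C_2$.

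For $i=3,4$ the left-hand sides of \eqref{M3} and \eqref{M4} contain \emph{unweighted} integrals $\int_{x_j}^{x_{j+1}}h$, which do not match $a_k$ directly. The monotonicity of $\varphi\in Q_{U^p}(0,L)$ on $(x_j,x_{j+1})$ provides the two-sided bound
\[
\varphi(x_{j+1})^{-\frac{1}{p}}a_{j+1}\le\int_{x_j}^{x_{j+1}}h\le\varphi(x_j)^{-\frac{1}{p}}a_{j+1},
\]
and the covering-sequence property $\varphi(x_{j+1})\ge a\,\varphi(x_j)$ makes both ends of this estimate comparable modulo the constant $a^{1/p}$. For the upper direction $\widetilde{C_i}\lesssim C_i$ I will substitute these bounds into \eqref{M3}, \eqref{M4} and then use the strongly-monotone identities~\eqref{EQ:strongly_increasing_sum_sum}--\eqref{EQ:strongly_decreasing_sup_sum} (whose geometric factor is supplied by $a>108$) to identify the outcome with \eqref{D3}, \eqref{D4}. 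For the reverse direction $C_i\lesssim\widetilde{C_i}$, given a nonnegative sequence $(a_k)$ I will construct a test function $h=\sum_k h_k$ in which each $h_k\in\Mpl(0,L)$ is a narrow bump supported near $x_k^{-}$ inside $(x_{k-1},x_k)$, normalised so that $\int h_k\varphi^{1/p}=a_k$; then the concentration near $x_k$, where $\varphi\approx\varphi(x_k)$, forces $\int h_k\approx a_k\varphi(x_k)^{-1/p}$, and substituting into \eqref{M3}, \eqref{M4} recovers \eqref{D3}, \eqref{D4}.

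The hard part will be the index-shift mismatch produced by the two-sided estimate above: the factor $\varphi(x_j)^{-1/p}$ (resp.\ $\varphi(x_{j+1})^{-1/p}$) sits at a shifted index relative to $a_{j+1}$, whereas \eqref{D3} and \eqref{D4} couple the same index on both factors. Resolving this will require delicate reindexing combined with the geometric character of $\{\varphi(x_k)^{-1/p}\}_k$ (strongly decreasing with ratio at most $a^{-1/p}$), the strongly-monotone sum and supremum identities collected in Section~\ref{S:preliminaries}, and possibly an application of the discrete Hardy characterization~\eqref{eq:Bennett_discr_Hardy} in order to absorb cumulative contributions from the outer summations without losing sharpness.
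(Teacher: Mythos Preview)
Your overall strategy---invoking Lemma~\ref{lem:disc_main} and proving $\widetilde{C_i}\approx C_i$ termwise---is exactly the paper's route, and your treatment of $i=1,2$ as well as the direction $C_i\lesssim\widetilde{C_i}$ for $i=3,4$ via concentrated bumps is essentially correct (the paper places its bumps in $[x_k,x_{k+1}]$ rather than $[x_{k-1},x_k]$, which aligns the indices with $\int_{x_j}^{x_{j+1}}h$ and avoids the shift you anticipate; your version can be repaired by reindexing).

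The one place where your plan is genuinely incomplete is the direction $\widetilde{C_i}\lesssim C_i$ for $i=3,4$. Your recipe ``substitute the pointwise bound $\int_{x_j}^{x_{j+1}}h\le\varphi(x_j)^{-1/p}b_j$ and then apply the strongly-monotone identities to match \eqref{D3}, \eqref{D4}'' does not work as stated: after substitution the left of \eqref{M3} still carries the inner tail $\sum_{j=i}^{k}\varphi(x_j)^{-1/p}b_j$, and no identity from \eqref{EQ:strongly_increasing_sum_sum}--\eqref{EQ:strongly_decreasing_sup_sum} collapses this sum \emph{inside} the $r$-th power, because the outer weights $\int_{x_{i-1}}^{x_i}\delta$ and $\int_{x_k}^{x_{k+1}}\Delta^{-q/r}w$ have no a~priori geometric monotonicity. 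The device your last paragraph gestures at but does not identify is to test \eqref{D3} (resp.\ \eqref{D4}) not with $a_k=\int_{x_{k-1}}^{x_k}h\varphi^{1/p}$ but with the \emph{tail sequence}
\[
a_k=\varphi(x_k)^{1/p}\sum_{j=k}^{M-1}b_j\,\varphi(x_j)^{-1/p},\qquad b_j=\int_{x_j}^{x_{j+1}}h\,\varphi^{1/p}.
\]
With this choice $a_i\varphi(x_i)^{-1/p}=\sum_{j\ge i}\varphi(x_j)^{-1/p}b_j\ge\sum_{j=i}^{k}\int_{x_j}^{x_{j+1}}h$, so the left side of \eqref{D3} dominates that of \eqref{M3}; the strongly-increasing identity \eqref{EQ:strongly_increasing_sum_sum} is then applied on the \emph{right-hand side} (to $\{\varphi(x_k)\}$) to reduce $(\sum_k a_k^p)^{1/p}$ back to $(\sum_k b_k^p)^{1/p}$. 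The discrete Hardy characterization~\eqref{eq:Bennett_discr_Hardy} plays no role at this stage.
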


\begin{proof}
	In view of Lemma~\ref{lem:disc_main}, it is sufficient to show that $\widetilde{C_i}\approx C_i$, $i=1,2,3,4$, with the equivalence constants depending only on the parameters $p,q,r$ and $a$.
	First, we shall show that $\widetilde{C_1}\approx C_1$. Assume that $\widetilde{C_1}<\infty$. Consequently, $A(x_{k-1}, x_k) < \widetilde{C_1} < \infty$ for every $k\in\Z$, $N+1\leq k\leq M$. Hence there are functions $h_k\in\Mpl(0,L)$, $k\in\Z$, $N+1\leq k\leq M$, supported in $[x_{k-1}, x_k]$ such that
	\begin{equation}\label{thm:discr:unitint}
		\int_{x_{k-1}}^{x_k} h_k \varphi^{\frac1{p}} = 1
	\end{equation}
	and
	\begin{equation*}
		\left( \int_{x_{k-1}}^{x_k} \left( \frac1{\Delta(t)} \int_{x_{k-1}}^t \left( \int_s^{x_k} h_k \right)^r \delta(s) ds \right)^{\frac{q}{r}} w(t) dt \right)^\frac1{q} \geq \frac1{2} A(x_{k-1}, x_k).
	\end{equation*}
	Plugging $h = \ksum[i]{N+1}{M} a_ih_i$, where $\{a_i\}_{i=N+1}^M$ is a sequence of nonnegative numbers, in \eqref{M1}, we obtain
	\begin{align*}
		\LHSeq{M1} &= \left( \ksum{N+1}{M} \int_{x_{k-1}}^{x_k} \left( \frac1{\Delta(t)} \int_{x_{k-1}}^t \left( \int_s^{x_k} \ksum[i]{N+1}{M} a_ih_i\right)^r \delta(s) ds \right)^{\frac{q}{r}} w(t) dt \right)^{\frac1{q}}\\
		&= \left( \ksum{N+1}{M} a_k^q \int_{x_{k-1}}^{x_k} \left( \frac1{\Delta(t)} \int_{x_{k-1}}^t \left( \int_s^{x_k} h_k\right)^r \delta(s) ds \right)^{\frac{q}{r}} w(t) dt \right)^{\frac1{q}} \\
		&\gtrsim \left( \ksum{N+1}{M} a_k^q A(x_{k-1}, x_k)^q \right)^\frac1{q},
	\end{align*}
	and in view of \eqref{thm:discr:unitint}
	\begin{equation*}
		\RHSeq{M1} = \widetilde{C_1} \left( \ksum{N+1}{M} \left( \int_{x_{k-1}}^{x_k} \ksum[i]{N+1}{M} a_i h_i \varphi^{\frac1{p}}\right)^p \right)^{\frac1{p}} = \widetilde{C_1} \left( \ksum{N+1}{M} a_k^p  \right)^{\frac1{p}}.
	\end{equation*}
	Therefore
	\begin{equation*}
		\left( \ksum{N+1}{M} a_k^q A(x_{k-1}, x_k)^q \right)^\frac1{q} \lesssim \widetilde{C_1} \left( \ksum{N+1}{M} a_k^p  \right)^{\frac1{p}};
	\end{equation*}
	hence $C_1\lesssim \widetilde{C_1}$. On the other hand, assume that $C_1<\infty$. Let $h\in\Mpl(0,L)$. Using \eqref{thm:discr:Ak} and \eqref{D1}
	we obtain
	\begin{align*}
		\LHSeq{M1} & = \left( \ksum{N+1}{M}  \left( \int_{x_{k-1}}^{x_k} \left( \frac1{\Delta(t)} \int_{x_{k-1}}^t \left( \int_s^{x_k} h\right)^r \delta(s) ds \right)^{\frac{q}{r}} w(t) dt \right) \times \right. \\
		& \hspace{1cm} \times \left.
		\left( \int_{x_{k-1}}^{x_k} h \varphi^\frac1{p} \right)^{q}\left( \int_{x_{k-1}}^{x_k} h \varphi^\frac1{p} \right)^{-q} \right)^{\frac1{q}} \\
		&\leq \left( \ksum{N+1}{M}  \left( \int_{x_{k-1}}^{x_k} h \varphi^\frac1{p} \right)^{q} A(x_{k-1}, x_k)^q \right)^{\frac1{q}}  \\
		&\leq C_1 \left(\ksum{N+1}{M} \left( \int_{x_{k-1}}^{x_k} h \varphi^\frac1{p} \right)^p\right)^\frac1{p}.
	\end{align*}
	The last inequality follows by applying \eqref{D1} with $\left\{\int_{x_{k-1}}^{x_k} h \varphi^\frac1{p}\right\}_{k={N+1}}^M$. Hence $\widetilde{C_1}\leq C_1$.
	
	Second, we shall show that $\widetilde{C_2}\approx C_2$. Assume that $\widetilde{C_2}<\infty$. Consequently, for every $k\in\Z$, $N+1\leq k\leq M-1$,
	$$
	B(x_{k-1}, x_k) < \widetilde{C_2} \left( \int_{x_k}^{x_{k+1}} \Delta^{-\frac{q}{r}} w \right)^{-\frac{1}{q}} < \infty.
	$$
	Hence there are functions $h_k\in\Mpl(0,L)$, $k\in\Z$, $N+1\leq k\leq M-1$, supported in $[x_{k-1}, x_k]$ and satisfying \eqref{thm:discr:unitint} such that
	\begin{equation*}
		\left( \int_{x_{k-1}}^{x_k} \left( \int_s^{x_k} h \right)^r \delta(s) ds \right)^\frac1{r} \geq \frac1{2} B(x_{k-1}, x_k).
	\end{equation*}
	Testing \eqref{M2} with $h = \ksum[i]{N+1}{M-1} a_ih_i$, where $\{a_i\}_{i=N+1}^{M}$ is a sequence of nonnegative numbers, we get
	\begin{align*}
		\LHSeq{M2} &= \left( \ksum{N+1}{M-1} \left( \ksum[i]{N+1}{k} \int_{x_{i-1}}^{x_i} \left(  \int_s^{x_i}\ksum[j]{N+1}{M-1} a_j h_j\right)^r \delta(s) ds \right)^{\frac{q}{r}} \left( \int_{x_k}^{x_{k+1}} \Delta^{-\frac{q}{r}} w \right) \right)^{\frac1{q}} \\
		&= \left( \ksum{N+1}{M-1} \left( \ksum[i]{N+1}{k} a_i^r \int_{x_{i-1}}^{x_i} \left( \int_s^{x_i} h_i \right)^r \delta(s) ds \right)^{\frac{q}{r}} \left( \int_{x_k}^{x_{k+1}} \Delta^{-\frac{q}{r}} w \right) \right)^{\frac1{q}} \\
		&\gtrsim \left( \ksum{N+1}{M-1} \left( \ksum[i]{N+1}{k} a_i^r B(x_{i-1}, x_i)^r \right)^{\frac{q}{r}} \left( \int_{x_k}^{x_{k+1}} \Delta^{-\frac{q}{r}} w \right) \right)^{\frac1{q}}.
	\end{align*}
	Plainly,
	\begin{equation*}
		\RHSeq{M2} = \widetilde{C_2} \left( \ksum{N+1}{M-1} \left(\int_{x_{k-1}}^{x_k} \ksum[i]{N+1}{M-1} a_i  h_i \varphi^{\frac1{p}}\right)^p \right)^{\frac1{p}} = \widetilde{C_2} \left( \ksum{N+1}{M-1} a_k^p \right)^\frac1{p}.
	\end{equation*}
	Therefore
	\begin{equation*}
		\left( \ksum{N+1}{M-1} \left( \ksum[i]{N+1}{k} a_i^r B(x_{i-1}, x_i)^r \right)^{\frac{q}{r}} \left( \int_{x_k}^{x_{k+1}} \Delta^{-\frac{q}{r}} w \right) \right)^{\frac1{q}} \lesssim \widetilde{C_2} \left( \ksum{N+1}{M-1} a_k^p \right)^\frac1{p},
	\end{equation*}
	which implies $C_2 \lesssim \widetilde{C_2}$. Assume now that $C_2 < \infty$. Thanks to \eqref{thm:discr:Bk} and  \eqref{D2}, we have
	\begin{align*}
		&\LHSeq{M2}\\
		& \hspace{0.5cm} =\left(\ksum{N+1}{M-1} \left( \ksum[i]{N+1}{k} \left( \int_{x_{i-1}}^{x_i} h \varphi^\frac1{p} \right)^{r} \left( \int_{x_{i-1}}^{x_i} \left( \int_s^{x_i} h \right)^r \delta(s) ds \right) \left( \int_{x_{i-1}}^{x_i} h \varphi^\frac1{p} \right)^{-r} \right)^\frac{q}{r} \right. \\
		& \hspace{2cm} \times \left. \left( \int_{x_k}^{x_{k+1}} \Delta^{-\frac{q}{r}} w  \right) \right)^{\frac1{q}} \\
		&\hspace{0.5cm}  \leq \left(\ksum{N+1}{M-1} \left( \ksum[i]{N+1}{k} \left( \int_{x_{i-1}}^{x_i} h \varphi^\frac1{p} \right)^{r} B(x_{i-1}, x_i)^r \right)^\frac{q}{r} \left( \int_{x_k}^{x_{k+1}}\Delta^{-\frac{q}{r}} w  \right) \right)^{\frac1{q}}  \\
		&\hspace{0.5cm}  \leq C_2 \left( \ksum{N+1}{M-1} \left(\int_{x_{k-1}}^{x_k}  h \varphi^{\frac1{p}} \right)^p \right)^{\frac1{p}}
	\end{align*}
	for every $h\in\Mpl(0,L)$ where the last inequality is followed by applying \eqref{D2} with $\left\{\int_{x_{k-1}}^{x_k} h \varphi^\frac1{p}\right\}_{k={N+1}}^{M-1}$. Thus, $\widetilde{C_2}\leq C_2$.
	
	Next, we turn our attention to the equivalence $\widetilde{C_3}\approx C_3$. Assume that $\widetilde{C_3} < \infty$. Note that
	\begin{equation}\label{thm:discr:holder_saturation}
		\sup_{h\in\Mpl(0,L)} \frac{\int_{x_k}^{x_{k+1}} h}{\int_{x_k}^{x_{k+1}} h \varphi^\frac1{p}} = \esup_{t\in(x_k, x_{k+1})} \varphi(t)^{-\frac1{p}} = \varphi(x_k)^{-\frac1{p}}
	\end{equation}
	for every $k\in\Z$, $N+1 \leq k \leq M-1$, owing to the saturation of H\"older's  inequality and the monotonicity of $\varphi$. Consequently, there are functions $h_k\in\Mpl(0,L)$, $k\in\Z$, $N+1 \leq k \leq M-1$, supported in $[x_k, x_{k+1}]$ such that
	\begin{align}
		\int_{x_k}^{x_{k+1}} h_k \varphi^{\frac1{p}} &= 1 \label{thm:discr:unitint2}\\
		\intertext{and}
		\int_{x_k}^{x_{k+1}} h_k \geq \frac1{2} \varphi(x_k)^{-\frac1{p}} \label{thm:discr:almost_extremal}.
	\end{align}
	By plugging $h=\ksum[n]{N+1}{M-1} a_n h_n$, where $\{a_n\}_{n=N+1}^{M-1}$ is a sequence of nonnegative numbers, in \eqref{M3}, we obtain
	\begin{align*}
		\LHSeq{M3} &= \left( \ksum{N+1}{M-1}  \left( \ksum[i]{N+1}{k}  \left( \ksum[j]{i}{k}  \int_{x_{j}}^{x_{j+1}} \ksum[n]{N+1}{M-1} a_n h_n\right)^r \left( \int_{x_{i-1}}^{x_i}\delta \right) \right)^{\frac{q}{r}} \right.\\
		&\hspace{2cm} \left. \times \left( \int_{x_k}^{x_{k+1}} \Delta^{-\frac{q}{r}} w \right) \right)^{\frac1{q}} \\
		&\gtrsim \left( \ksum{N+1}{M-1}  \left( \ksum[i]{N+1}{k}  \left( \ksum[j]{i}{k} a_j \varphi(x_j)^{-\frac1{p}} \right)^r \left( \int_{x_{i-1}}^{x_i}\delta \right) \right)^{\frac{q}{r}} \left( \int_{x_k}^{x_{k+1}} \Delta^{-\frac{q}{r}} w \right) \right)^{\frac1{q}} \\
		&\geq \left( \ksum{N+1}{M-1}  \left( \ksum[i]{N+1}{k}  a_i^r \varphi(x_i)^{-\frac{r}{p}} \left( \int_{x_{i-1}}^{x_i}\delta \right) \right)^{\frac{q}{r}} \left( \int_{x_k}^{x_{k+1}} \Delta^{-\frac{q}{r}} w \right) \right)^{\frac1{q}}
	\end{align*}
	and
	\begin{align*}
		\RHSeq{M3} &= \widetilde{C_3} \left( \ksum{N+1}{M-1} \left( \int_{x_{k}}^{x_{k+1}} \ksum[n]{N+1}{M-1} a_n   h_n \varphi^{\frac1{p}}\right)^p \right)^{\frac1{p}} \\
		&= \widetilde{C_3} \left( \ksum{N+1}{M-1}  a_k^p \right)^{\frac1{p}}.
	\end{align*}
	Hence
	\begin{equation*}
		\left( \ksum{N+1}{M-1}  \left( \ksum[i]{N+1}{k}  a_i^r \varphi(x_i)^{-\frac{r}{p}} \left( \int_{x_{i-1}}^{x_i}\delta \right) \right)^{\frac{q}{r}} \left( \int_{x_k}^{x_{k+1}}  \Delta^{-\frac{q}{r}} w \right) \right)^{\frac1{q}} \lesssim \widetilde{C_3} \left( \ksum{N+1}{M-1}  a_k^p \right)^{\frac1{p}},
	\end{equation*}
	and so $C_3 \lesssim \widetilde{C_3}$. Assume now that $C_3 < \infty$. Let $h\in\Mpl(0,L)$, and test \eqref{D3} with $\{a_k\}_{k=N+1}^{M-1}$ defined as
	\begin{equation*}
		a_k = \varphi(x_k)^\frac1{p} \ksum[j]{k}{M-1} b_j \varphi(x_j)^{-\frac1{p}},
	\end{equation*}
	where
	\begin{equation}\label{thm:discr:bj}
		b_j = \int_{x_j}^{x_{j+1}} h \varphi^\frac1{p}, \,\, N+1 \leq j \leq M-1.
	\end{equation}
	We have
	\begin{align*}
		\LHSeq{D3} &= \left(\ksum{N+1}{M-1} \left( \ksum[i]{N+1}{k} \left( \varphi(x_i)^\frac1{p} \ksum[j]{i}{M-1} b_j \varphi(x_j)^{-\frac1{p}} \right)^r \varphi(x_i)^{-\frac{r}{p}} \left( \int_{x_{i-1}}^{x_i} \delta \right) \right)^\frac{q}{r} \right. \\
		&\hspace{4cm} \left. \times \left( \int_{x_k}^{x_{k+1}} \Delta^{-\frac{q}{r}} w \right) \right)^{\frac1{q}} \\
		&\geq \left( \ksum{N+1}{M-1} \left( \ksum[i]{N+1}{k} \left( \ksum[j]{i}{k} b_j \varphi(x_j)^{-\frac1{p}} \right)^r \left( \int_{x_{i-1}}^{x_i} \delta \right) \right)^\frac{q}{r} \left( \int_{x_k}^{x_{k+1}} \Delta^{-\frac{q}{r}} w \right) \right)^{\frac1{q}}.
	\end{align*}
	Next, using \eqref{thm:discr:holder_saturation}, we obtain
	\begin{align*}
		\LHSeq{D3}
		&\geq \left( \ksum{N+1}{M-1} \left( \ksum[i]{N+1}{k} \left( \ksum[j]{i}{k} b_j \left( \int_{x_j}^{x_{j+1}} h \right) \left( \int_{x_j}^{x_{j+1}} h \varphi^\frac1{p} \right)^{-1} \right)^r \left( \int_{x_{i-1}}^{x_i} \delta \right) \right)^\frac{q}{r} \right. \\
		&\hspace{4cm} \left. \times \left( \int_{x_k}^{x_{k+1}} \Delta^{-\frac{q}{r}} w \right) \right)^{\frac1{q}} \\
		&= \left( \ksum{N+1}{M-1} \left( \ksum[i]{N+1}{k} \left( \ksum[j]{i}{k} \int_{x_j}^{x_{j+1}} h \right)^r \left( \int_{x_{i-1}}^{x_i} \delta \right) \right)^\frac{q}{r} \left( \int_{x_k}^{x_{k+1}} \Delta^{-\frac{q}{r}} w \right) \right)^{\frac1{q}}
	\end{align*}
	and
	\begin{equation}\label{thm:discr:varphixk_strongly_incr}
		\RHSeq{D3} = C_3 \left(\ksum{N+1}{M-1} \varphi(x_k) \left( \ksum[j]{k}{M-1} b_j \varphi(x_j)^{-\frac1{p}} \right)^p\right)^\frac1{p} \approx C_3 \left(\ksum{N+1}{M-1} b_k^p \right)^\frac1{p},
	\end{equation}
	in which we used \eqref{EQ:strongly_increasing_sum_sum} with $\{\varrho_k\}_{k=N+1}^{M-1} = \{\varphi(x_k)\}_{k=N+1}^{M-1}$; moreover, the equivalence constants depend only on $p$ and $a$. It follows from the validity of \eqref{D3} and the definition of $b_k$ in \eqref{thm:discr:bj} that
	\begin{align*}
		&\left( \ksum{N+1}{M-1} \left( \ksum[i]{N+1}{k} \left( \ksum[j]{i}{k} \int_{x_j}^{x_{j+1}} h \right)^r \left( \int_{x_{i-1}}^{x_i} \delta \right) \right)^\frac{q}{r} \left( \int_{x_k}^{x_{k+1}} \Delta^{-\frac{q}{r}} w \right) \right)^{\frac1{q}} \\
		&\qquad \lesssim C_3 \left(\ksum{N+1}{M-1} b_k^p \right)^\frac1{p} = C_3 \left(\ksum{N+1}{M-1} \left( \int_{x_k}^{x_{k+1}} h \varphi^\frac1{p} \right)^p \right)^\frac1{p};
	\end{align*}
	hence $\widetilde{C_3} \lesssim C_3$.
	
	Last, we shall show that $\widetilde{C_4} \approx C_4$. Assume that $\widetilde{C_4} < \infty$. Thanks to \eqref{thm:discr:holder_saturation} again, there are functions $h_k\in\Mpl(0,L)$, $k\in\Z$, $N+1 \leq k \leq M-1$, supported in $[x_k, x_{k+1}]$ and satisfying \eqref{thm:discr:unitint2} and \eqref{thm:discr:almost_extremal}. Let $\{a_k\}_{k=N+1}^{M-1}$ be a sequence of nonnegative numbers. Inserting $h=\ksum[j]{N+1}{M-1} a_j h_j$ in \eqref{M4}, we obtain
	\begin{align*}
		\LHSeq{M4} &= \left( \ksum{N+1}{M-1} \left( \ksum[i]{k}{M-1} \int_{x_i}^{x_{i+1}} \ksum[j]{N+1}{M-1}  a_j h_j \right)^q \int_{x_{k-1}}^{x_k} w \right)^{\frac1{q}} \\
		&\gtrsim \left( \ksum{N+1}{M-1} \left( \ksum[i]{k}{M-1} a_i \varphi(x_i)^{-\frac1{p}} \right)^q \int_{x_{k-1}}^{x_k} w \right)^{\frac1{q}} \\
		&\geq \left( \ksum{N+1}{M-1} a_k^q \varphi(x_k)^{-\frac{q}{p}} \int_{x_{k-1}}^{x_k} w \right)^{\frac1{q}},
	\end{align*}
	and
	\begin{equation*}
		\RHSeq{M4} = \widetilde{C_4} \left( \ksum{N+1}{M-1} \left(\int_{x_{k}}^{x_{k+1}} \ksum[i]{N+1}{M-1} a_i   h_i \varphi^{\frac1{p}}\right)^p  \right)^{\frac1{p}} = \widetilde{C_4} \left( \ksum{N+1}{M-1} a_k^p \right)^{\frac1{p}}.
	\end{equation*}
	Hence
	\begin{equation*}
		\left( \ksum{N+1}{M-1} a_k^q \varphi(x_k)^{-\frac{q}{p}} \int_{x_{k-1}}^{x_k} w \right)^{\frac1{q}} \lesssim \widetilde{C_4} \left( \ksum{N+1}{M-1} a_k^p \right)^{\frac1{p}},
	\end{equation*}
	and so $C_4 \lesssim \widetilde{C_4}$. Now, the proof will be finished once we show that $\widetilde{C_4} \lesssim C_4$. Assume that $C_4 < \infty$. Let $h\in\Mpl(0,L)$, and consider the sequence $\{\varphi(x_k)^\frac1{p} \ksum[j]{k}{M-1} b_j \varphi(x_j)^{-\frac1{p}}\}_{j=N+1}^{M-1}$, where $\{b_j\}_{j=N+1}^{M-1}$ is defined by \eqref{thm:discr:bj}. Plugging it in \eqref{D4} and using \eqref{thm:discr:holder_saturation} we get
	\begin{align*}
		\LHSeq{D4} &= \left(\ksum{N+1}{M-1} \left( \ksum[j]{k}{M-1} b_j \varphi(x_j)^{-\frac1{p}} \right)^q \int_{x_{k-1}}^{x_k} w \right)^{\frac1{q}} \\
		&\geq \left(\ksum{N+1}{M-1} \left( \ksum[j]{k}{M-1} b_j \left( \int_{x_j}^{x_{j+1}} h \right) \left( \int_{x_j}^{x_{j+1}} h\varphi^\frac1{p} \right)^{-1} \right)^q \int_{x_{k-1}}^{x_k} w \right)^{\frac1{q}} \\
		&= \left(\ksum{N+1}{M-1} \left( \ksum[j]{k}{M-1} \int_{x_j}^{x_{j+1}} h  \right)^q \int_{x_{k-1}}^{x_k} w \right)^{\frac1{q}},
	\end{align*}
	and
	\begin{equation*}
		\RHSeq{D4} \approx C_4 \left( \ksum{N+1}{M-1} b_k^p \right)^\frac1{p} = C_4 \left( \ksum{N+1}{M-1} \left( \int_{x_k}^{x_{k+1}} h \varphi^\frac1{p} \right)^p \right)^\frac1{p},
	\end{equation*}
	in which we used the same argument as in \eqref{thm:discr:varphixk_strongly_incr}. It follows that
	\begin{align*}
		\left(\ksum{N+1}{M-1} \left( \ksum[j]{k}{M-1}  \int_{x_j}^{x_{j+1}} h  \right)^q \int_{x_{k-1}}^{x_k} w \right)^{\frac1{q}} &\lesssim C_4 \left( \ksum{N+1}{M-1} \left( \int_{x_k}^{x_{k+1}} h \varphi^\frac1{p}
		\right)^p \right)^\frac1{p}
	\end{align*}
	which finishes the proof.
\end{proof}

\begin{remark}\label{rem:Cts_Hardy_Char}
	For future reference, note that, thanks to the following equivalent expression for optimal constants in (continuous) Hardy inequalities with weights (see \cite{B:78} for $r\geq1$ and \cite{SS:96} for $r<1$), we have
	\begin{align*}
		B(x_{k-1}, x_k) &= \sup_{h \in \Mpl(0,L)} \dfrac{\left( \int_{x_{k-1}}^{x_k} \left( \int_s^{x_k} h \right)^r \delta(s) ds \right)^\frac1{r}}{\int_{x_{k-1}}^{x_k}  h \varphi^\frac1{p}}\\
		&\approx \begin{cases}
			\esup_{t\in(x_{k-1}, x_k)} \left( \int_{x_{k-1}}^t \delta \right)^\frac1{r} \varphi(t)^{-\frac1{p}} &\qquad \text{if $r \geq 1$},\\
			\left( \int_{x_{k-1}}^{x_k} \left( \int_{x_{k-1}}^t \delta \right)^\frac{r}{1-r} \delta(t) \varphi(t)^{-\frac{r}{p(1-r)}} dt \right)^\frac{1-r}{r} &\qquad\text{if $r < 1$},
		\end{cases}
	\end{align*}
	for every $k\in\Z$, $N+1\leq k\leq M$, in which the equivalence constants depend only on $r$.
\end{remark}

\begin{theorem}\label{thm:main_discretization}
	Let $0<p\leq q<\infty$, $0<r<\infty$ and $u, \delta, v, w$ be weights on $(0,L)$. Let $\varphi$ be the function defined by \eqref{varphi}. Assume that there is $t_0\in(0,L)$ such that $0 < \varphi(t_0) < \infty$. Let $\{x_k\}_{k=N}^M \in CS(\varphi, U^p, a)$ with $a>108$. Let $C$ be given by~\eqref{E:C}.
	
	\rm{(i)} If $p \leq q$, $p\leq r$, $1\leq q$, $1\leq r$, then $C\approx C_{1,1}+C_{1,2}+ C_{3,1} +C_{4,1}$, where
	\begin{align*}
		&C_{1,1}:=  \sup_{N+1\leq k \leq M} \esup_{t\in(x_{k-1}, x_k)} \left( \int_t^{x_k} \Delta^{-\frac{q}{r}} w \right)^\frac1{q} \esup_{s\in(x_{k-1}, t)} \left( \int_{x_{k-1}}^s \delta \right)^\frac1{r} \varphi(s)^{-\frac1{p}},
		\\
		&C_{1,2} := \sup_{N+1\leq k \leq M} \esup_{t\in(x_{k-1}, x_k)} \bigg(\int_{x_{k-1}}^t \Delta(s)^{-\frac{q}{r}}w(s) \bigg(\int_{x_{k-1}}^s \delta\bigg)^{\frac{q}{r}} ds\bigg)^{\frac{1}{q}} \varphi(t)^{-\frac{1}{p}},
		\\
		&C_{3,1} :=  \sup_{N+1\leq k \leq M-1} \bigg(\int_{x_k}^L \Delta^{-\frac{q}{r}} w \bigg)^{\frac{1}{q}} \esup_{t\in(x_{k-1}, x_k)} \bigg( \int_{x_{k-1}}^t \delta\bigg)^{\frac{1}{r}}
		\varphi(t)^{-\frac{1}{p}},
		\intertext{and}
		&C_{4,1} :=  \sup_{N+1\leq k \leq M-1} \bigg(\int_{x_{k-1}}^{x_k} w \bigg)^{\frac{1}{q}} \varphi(x_k)^{-\frac{1}{p}}.
	\end{align*}
	
	\rm{(ii)} If $p\leq r < 1 \leq q$, then $C \approx C_{1,2} + C_{1,3} + C_{3,2} + C_{4,1}$, where
	\begin{align*}
		&C_{1,3}:= \sup_{N+1\leq k \leq M} \esup_{t\in(x_{k-1}, x_k)} \left( \int_t^{x_k} \Delta^{-\frac{q}{r}} w \right)^\frac1{q} \\
		&\hspace{4cm}\times \left( \int_{x_{k-1}}^t \left( \int_{x_{k-1}}^s \delta \right)^\frac{r}{1-r} \delta(s)
		\varphi(s)^{-\frac{r}{p(1-r)}} ds \right)^{\frac{1-r}{r}},
		\intertext{and}
		&C_{3,2}:= \sup_{N+1\leq k \leq M-1} \bigg(\int_{x_k}^L \Delta^{-\frac{q}{r}} w \bigg)^{\frac{1}{q}}  \bigg(\int_{x_{k-1}}^{x_k} \bigg(\int_{x_{k-1}}^t \delta\bigg)^{\frac{r}{1-r}} \delta(t) \varphi(t)^{-\frac{r}{p(1-r)}}dt \bigg)^{\frac{1-r}{r}}.
	\end{align*}
	
	\rm{(iii)} If $1\leq r<p\leq q$, then $C \approx C_{1,1} + C_{1,2} + C_{3,3} + C_{4,1}$, where
	\begin{equation*}
		C_{3,3} :=  \sup_{N+1\leq k \leq M-1} \bigg(\int_{x_k}^L \Delta^{-\frac{q}{r}} w \bigg)^{\frac{1}{q}} \bigg(\sum_{i=N+1}^k  \esup_{t\in(x_{i-1}, x_i)} \bigg( \int_{x_{i-1}}^t \delta\bigg)^{\frac{p}{p-r}}
		\varphi(t)^{-\frac{r}{p-r}}  \bigg)^{\frac{p-r}{pr}}.
	\end{equation*}
	
	\rm{(iv)} If $r<p\leq q$, $r<1\leq q$, then $C\approx C_{1,2}+C_{1,3} + C_{3,4}+C_{4,1}$, where
	\begin{align*}
		C_{3,4} &:=  \sup_{N+1\leq k \leq M-1} \bigg(\int_{x_k}^L \Delta^{-\frac{q}{r}} w \bigg)^{\frac{1}{q}} \\
		&\hspace{2cm}\times\bigg(\sum_{i=N+1}^k \bigg(\int_{x_{i-1}}^{x_i} \bigg(\int_{x_{i-1}}^t \delta\bigg)^{\frac{r}{1-r}} \delta(t) \varphi(t)^{-\frac{r}{p(1-r)}}dt \bigg)^{\frac{p(1-r)}{p-r}}  \bigg)^{\frac{p-r}{pr}}.
	\end{align*}
	
	\rm{(v)} If $p\leq q<1\leq r$, then $C\approx C_{1,4}+C_{1,5}+ C_{3,1} + C_{4,1}$, where
	\begin{align*}
		C_{1,4} &:= \sup_{N+1\leq k \leq M} \left( \int_{x_{k-1}}^{x_k} \left( \int_t^{x_k} \Delta^{-\frac{q}{r}} w \right)^\frac{q}{1-q} w(t) \Delta(t)^{-\frac{q}{r}}  \right.\\
		&\hspace{2cm} \times\left.\esup_{s\in(x_{k-1},t)} \left( \int_{x_{k-1}}^s \delta \right)^\frac{q}{r(1-q)} \varphi(s)^{-\frac{q}{p(1-q)}} dt \right)^{\frac{1-q}{q}},
		\intertext{and}
		C_{1,5}&:=  \sup_{N+1\leq k \leq M} \left( \int_{x_{k-1}}^{x_k} \left( \int_{x_{k-1}}^t \Delta(s)^{-\frac{q}{r}} w(s) \left( \int_{x_{k-1}}^s \delta \right)^{\frac{q}{r}} ds \right)^\frac{q}{1-q}  \right.\\
		&\hspace{2cm} \times\left. \Delta(t)^{-\frac{q}{r}} w(t) \left( \int_{x_{k-1}}^t \delta \right)^\frac{q}{r} \varphi(t)^{-\frac{q}{p(1-q)}} dt \right)^{\frac{1-q}{q}}.
	\end{align*}
	
	\rm{(vi)} If $p\leq q<1$, $p\leq r < 1$, then $C\approx C_{1,5} + C_{1,6} + C_{3,2} + C_{4,1}$, where
	\begin{align*}
		C_{1,6} &:=  \sup_{N+1\leq k \leq M} \left( \int_{x_{k-1}}^{x_k} \left( \int_t^{x_{k}} \Delta^{-\frac{q}{r}} w \right)^\frac{q}{1-q} \Delta(t)^{-\frac{q}{r}} w(t) \right.\\
		&\hspace{2cm} \times\left.  \left(\int_{x_{k-1}}^t \left( \int_{x_{k-1}}^s \delta \right)^\frac{r}{1-r} \delta(s)\varphi(s)^{-\frac{r}{p(1-r)}}ds \right)^{\frac{q(1-r)}{r(1-q)}} dt \right)^{\frac{1-q}{q}}.
	\end{align*}
	
	\rm{(vii)} If $r<p\leq q<1$, then $C\approx C_{1,5} + C_{1,6} + C_{3,4} + C_{4,1}$.
\end{theorem}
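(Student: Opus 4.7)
\textbf{Reduction via Theorem~\ref{thm:discr:resonanceform}.} The starting point is Theorem~\ref{thm:discr:resonanceform}, which already gives $C \approx C_1 + C_2 + C_3 + C_4$, where $C_i$ is the optimal constant in the discrete inequality~\eqref{D1}--\eqref{D4}. The task therefore reduces to estimating each $C_i$ in every parameter regime~(i)--(vii) and collecting the outputs into the sums listed in the statement.

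\textbf{Handling $C_1$ and $C_4$.} Since $p \leq q$, the discrete inequalities \eqref{D1} and \eqref{D4} reduce to supremum characterizations: combining the pointwise bound $(\sum a_k^q A_k^q)^{1/q} \leq (\sup_k A_k)(\sum a_k^p)^{1/p}$, valid for $p\leq q$, with tests supported at a single index yields
\[
C_1 \approx \sup_{N+1\leq k\leq M} A(x_{k-1}, x_k), \qquad C_4 = C_{4,1}.
\]
For $C_4$ no further work is needed. For $C_1$, the quantity $A(x_{k-1}, x_k)$ is itself the optimal constant in the continuous weighted inequality~\eqref{thm:discr:Ak} for an iterated Hardy-type operator. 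I would characterize it by splitting the inner integral $\int_{x_{k-1}}^t(\int_s^{x_k} h)^r \delta(s)\,ds$ into an \emph{interior} portion $\int_{x_{k-1}}^t(\int_s^t h)^r \delta(s)\,ds$ and a \emph{tail} portion $(\int_t^{x_k} h)^r \int_{x_{k-1}}^t \delta$ via $(a+b)^r \approx a^r + b^r$, then applying the classical weighted Hardy inequality characterizations of Bradley--Maz'ya type when the outer exponent is at least $1$ and of Sawyer--Sinnamon type when it is smaller than $1$. This produces
\[
\sup_k A(x_{k-1},x_k) \approx
\begin{cases}
C_{1,1} + C_{1,2} & \text{if } r\geq 1,\ q\geq 1,\\
C_{1,2} + C_{1,3} & \text{if } r<1\leq q,\\
C_{1,4} + C_{1,5} & \text{if } q<1\leq r,\\
C_{1,5} + C_{1,6} & \text{if } r<1,\ q<1,
\end{cases}
\]
with the terms $C_{1,1}, C_{1,3}$ coming from the interior contribution (which produces the nested $\esup$ over $s\in(x_{k-1}, t)$) and $C_{1,2}, C_{1,4}, C_{1,5}, C_{1,6}$ from the tail operator $h\mapsto\int_t^{x_k} h$.

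\textbf{Handling $C_2$ and $C_3$ via Bennett.} Both \eqref{D2} and \eqref{D3} are discrete weighted Hardy inequalities of precisely the shape covered by~\eqref{eq:Bennett_discr_Hardy}. In the case $p\leq\min\{r,q\}$, its first branch yields
\[
C_2 \approx \sup_k \Bigl(\int_{x_k}^L \Delta^{-q/r} w\Bigr)^{1/q} B(x_{k-1}, x_k),
\]
while $C_3$ has the analogous expression with $B(x_{k-1}, x_k)$ replaced by $\varphi(x_k)^{-1/p}(\int_{x_{k-1}}^{x_k}\delta)^{1/r}$; the latter is pointwise dominated by $B(x_{k-1}, x_k)$ by Remark~\ref{rem:Cts_Hardy_Char}, so $C_3 \lesssim C_2$, and substitution gives $C_2 \approx C_{3,1}$ if $r\geq 1$ and $C_2 \approx C_{3,2}$ if $r<1$. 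In the case $r<p\leq q$, the second branch of~\eqref{eq:Bennett_discr_Hardy} produces an $\ell^{pr/(p-r)}$-sum of $B(x_{i-1}, x_i)^{pr/(p-r)}$ over $i\leq k$; substituting and expanding via Remark~\ref{rem:Cts_Hardy_Char} gives $C_2 \approx C_{3,3}$ if $r\geq 1$ and $C_2 \approx C_{3,4}$ if $r<1$, with $C_3$ again absorbed.

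\textbf{Assembly; main obstacle.} Collecting the above in each of the seven admissible combinations of ``$p\leq r$ or $p>r$'', ``$r\geq 1$ or $r<1$'' and ``$q\geq 1$ or $q<1$'', and discarding redundant terms by the monotonicity of $\varphi, U, \Delta$ together with simple pointwise comparisons among the $C_{1,i}$ and $C_{3,j}$, one recovers each of the seven equivalences (i)--(vii). The main technical obstacle is the closed-form characterization of $A(x_{k-1}, x_k)$: unlike $B(x_{k-1}, x_k)$, which is governed by a single Hardy operator and settles immediately via Remark~\ref{rem:Cts_Hardy_Char}, $A(x_{k-1}, x_k)$ arises from a composition of two weighted operators, and correctly identifying the interior and tail contributions in every regime — producing the pairings $(C_{1,1}, C_{1,2})$, $(C_{1,2}, C_{1,3})$, $(C_{1,4}, C_{1,5})$ and $(C_{1,5}, C_{1,6})$ — is the most delicate step of the proof.
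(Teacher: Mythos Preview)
Your proposal is correct and follows essentially the same route as the paper: start from Theorem~\ref{thm:discr:resonanceform}, reduce $C_1$ and $C_4$ to supremum characterizations via the Landau-type argument, evaluate $C_2$ and $C_3$ through Bennett's discrete Hardy result~\eqref{eq:Bennett_discr_Hardy} together with Remark~\ref{rem:Cts_Hardy_Char}, and absorb $C_3$ into $C_2$ by the pointwise bound $\varphi(x_k)^{-1/p}\bigl(\int_{x_{k-1}}^{x_k}\delta\bigr)^{1/r}\lesssim B(x_{k-1},x_k)$. The only substantive difference is that the paper does not rederive the characterization of $A(x_{k-1},x_k)$ via your interior/tail splitting but simply quotes it from \cite{GMPTU}*{Theorem~A}; your splitting is exactly how that theorem is proved, so this is a difference in packaging rather than in argument.

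One small slip in your description: the attribution of the $C_{1,j}$ to the two pieces of the split is reversed for the $q<1$ terms. The tail operator $h\mapsto\int_t^{x_k}h$ (with outer weight $\Delta(t)^{-q/r}(\int_{x_{k-1}}^t\delta)^{q/r}w(t)$) produces $C_{1,2}$ when $q\geq1$ and $C_{1,5}$ when $q<1$, while the interior Copson--Hardy piece $\int_{x_{k-1}}^t(\int_s^t h)^r\delta(s)\,ds$ produces $C_{1,1}$, $C_{1,3}$, $C_{1,4}$, $C_{1,6}$ according to the position of $r$ and $q$ relative to $1$. This does not affect the correctness of the final equivalences, since you list the right pairs in each regime; it only affects the narrative of where each term comes from.
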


\begin{proof}
	Owing to Theorem~\ref{thm:discr:resonanceform}, we have
	\begin{equation}\label{thm:final_discr_form_C}
		C \approx C_1 + C_2 + C_3 + C_4,
	\end{equation}
	in which $C_1$, $C_2$, $C_3$ and $C_4$ are the optimal constants in \eqref{D1}, \eqref{D2}, \eqref{D3} and \eqref{D4}, respectively.
	
	First, we shall find equivalent expressions for $C_1$. In view of \eqref{eq:landau}, we have
	\begin{equation*}
		C_1 = \ksup{N+1}{M} A(x_{k-1}, x_k),
	\end{equation*}
	where the quantities $A(x_{k-1}, x_k)$ are defined by \eqref{thm:discr:Ak}.
	By \cite{GMPTU}*{Theorem~A} we have
	\begin{equation}\label{thm:final_discr_form_C1}
		C_1 = \ksup{N+1}{M} A(x_{k-1}, x_k) \approx \begin{cases}
			C_{1, 1} + C_{1, 2} \quad &\text{if $1 \leq \min\{q,r\}$;}\\
			C_{1, 2} + C_{1, 3} \quad &\text{if $r < 1 \leq q$;}\\
			C_{1, 4} + C_{1, 5} \quad &\text{if $q < 1 \leq r$;}\\
			C_{1, 5} + C_{1, 6} \quad &\text{if $\max\{q,r\} < 1$.}
		\end{cases}
	\end{equation}
	
	Second, we shall find equivalent expressions for $C_2$. Using \eqref{eq:Bennett_discr_Hardy} with
	$$
	b_k = B(x_{k-1}, x_k)^r, \quad  k\in\Z, \; N + 1 \leq k \leq M-1,
	$$
	where the quantities $B(x_{k-1}, x_k)$ are defined by \eqref{thm:discr:Bk}, and
	$$
	d_k = \int_{x_k}^{x_{k+1}}  \Delta^{-\frac{q}{r}} w, \quad k\in\Z, \; N + 1 \leq k \leq M-1,
	$$
	gives us
	\begin{equation*}
		C_2 \approx \begin{cases}
			\ksup{N+1}{M-1} \left(\int_{x_k}^L  \Delta^{-\frac{q}{r}} w\right)^\frac1{q} B(x_{k-1}, x_k) \, &\text{if $p \leq \min\{q,r\}$;}\\
			\ksup{N+1}{M-1} \left(\int_{x_k}^L  \Delta^{-\frac{q}{r}} w\right)^\frac1{q} \left( \ksum[i]{N+1}{k} B(x_{i-1}, x_i)^{\frac{p r}{p - r}} \right)^{\frac{p - r}{pr}} \, &\text{if $r < p \leq q$.}
		\end{cases}
	\end{equation*}
	Combining that with Remark~\ref{rem:Cts_Hardy_Char}, we obtain
	\begin{equation}\label{thm:final_discr_form_C2}
		C_2 \approx \begin{cases}
			C_{3,1} \quad &\text{if $p \leq \min\{q,r\}$, $1 \leq r$;}\\
			C_{3,3} \quad &\text{if $1\leq r < p \leq q$;}\\
			C_{3,2} \quad &\text{if $p \leq \min\{q,r\}$, $r < 1$;}\\
			C_{3,4} \quad &\text{if $r < p \leq q$, $r < 1$.}
		\end{cases}
	\end{equation}
	
	Next, we shall turn our attention to $C_3$. Using \eqref{eq:Bennett_discr_Hardy} with $b_k = \varphi(x_k)^{-\frac{r}{p}} \int_{x_{k-1}}^{x_k} \delta$ and $d_k = \int_{x_k}^{x_{k+1}}  \Delta^{-\frac{q}{r}} w$, $k\in\Z$, $N + 1 \leq k \leq M-1$, we infer that
	\begin{equation}\label{thm:final_discr_form_C3}
		C_3 \approx \begin{cases}
			C_{2,1} \quad &\text{if $p \leq \min\{q,r\}$;}\\
			C_{2,2} \quad &\text{if $r < p \leq q$},
		\end{cases}
	\end{equation}
	where
	\begin{align*}
		&C_{2,1} :=  \sup_{N+1\leq k \leq M-1} \bigg(\int_{x_k}^L \Delta^{-\frac{q}{r}} w \bigg)^{\frac{1}{q}} \bigg(\int_{x_{k-1}}^{x_k} \delta\bigg)^{\frac{1}{r}} \varphi(x_k)^{-\frac{1}{p}}, \\
		&C_{2,2} :=  \sup_{N+1\leq k \leq M-1} \bigg(\int_{x_k}^L \Delta^{-\frac{q}{r}} w \bigg)^{\frac{1}{q}} \bigg(\sum_{i=N+1}^k  \bigg(\int_{x_{i-1}}^{x_i} \delta\bigg)^{\frac{p}{p-r}} \varphi(x_i)^{-\frac{r}{p-r}}
		\bigg)^{\frac{p-r}{pr}}.
	\end{align*}
	
	Now, by the same argument as in the case $C_1$, we have
	\begin{equation}\label{thm:final_discr_form_C4}
		C_4 = \ksup{N + 1}{M - 1} \left( \int_{x_{k-1}}^{x_k} w \right)^\frac1{q} \varphi(x_k)^{-\frac1{p}} = C_{4,1}.
	\end{equation}
	Then, combining \eqref{thm:final_discr_form_C}  with \eqref{thm:final_discr_form_C1}, \eqref{thm:final_discr_form_C2}, \eqref{thm:final_discr_form_C3} and \eqref{thm:final_discr_form_C4}, we obtain
	\begin{equation}\label{C-new}
		C \approx \begin{cases}
			C_{1,1}+C_{1,2}+ C_{3,1} + C_{2,1} + C_{4,1} \quad &\text{if $p \leq q$, $p\leq r$, $1\leq q$, $1\leq r$;}\\
			C_{1,2} + C_{1,3} + C_{3,2} + C_{2,1} +C_{4,1} \quad &\text{if $p\leq r < 1 \leq q$;}\\
			C_{1,1} + C_{1,2} + C_{3,3} + C_{2,2} +C_{4,1}\quad &\text{if $1\leq r<p\leq q$;}\\
			C_{1,2}+C_{1,3} + C_{3,4}+C_{2,2} +C_{4,1} \quad &\text{if $r<p\leq q$, $r<1\leq q$;}\\
			C_{1,4}+C_{1,5}+ C_{3,1} +C_{2,1} + C_{4,1} \quad &\text{if $p\leq q<1\leq r$;}\\
			C_{1,5} + C_{1,6} + C_{3,2} +C_{2,1} + C_{4,1}\quad &\text{if $p\leq q<1$, $p\leq r < 1$;}\\
			C_{1,5} + C_{1,6} + C_{3,4} +C_{2,2} + C_{4,1} \quad &\text{if $r<p\leq q<1$.}
		\end{cases}
	\end{equation}
	On the other hand, it is clear that \begin{align}
		C_{2,1} \leq C_{3,1} \label{C21<C31}
		\intertext{and}
		C_{2,2} \leq C_{3,3}. \label{C22<C33}
	\end{align}
	Moreover, observe that for $N+1 \leq k \leq M$
	\begin{align}\label{sup-int}
		\esup_{s\in(x_{k-1}, t)} \left( \int_{x_{k-1}}^s \delta \right)^\frac1{r} \varphi(s)^{-\frac1{p}} &\approx \esup_{s\in(x_{k-1}, t)} \left(\int_{x_{k-1}}^s \left(\int_{x_{k-1}}^{\tau} \delta\right)^{\frac{r}{1-r}} \delta(\tau) \,d\tau \right)^\frac{1-r}{r} \varphi(s)^{-\frac1{p}} \nonumber\\
		&\leq \left(\int_{x_{k-1}}^t \left(\int_{x_{k-1}}^{\tau} \delta\right)^{\frac{r}{1-r}} \delta(\tau) \varphi(\tau)^{-\frac{r}{p(1-r)}} \,d\tau \right)^\frac{1-r}{r}.
	\end{align}
	Then it is clear that
	\begin{align}\label{EQ:C31lessC32}
		C_{3,1} & = \sup_{N+1\leq k \leq M-1} \bigg(\int_{x_k}^L \Delta^{-\frac{q}{r}} w \bigg)^{\frac{1}{q}} \esup_{t\in(x_{k-1}, x_k)} \bigg( \int_{x_{k-1}}^t \delta\bigg)^{\frac{1}{r}}
		\varphi(t)^{-\frac{1}{p}}\nonumber\\
		& \lesssim \sup_{N+1\leq k \leq M-1} \bigg(\int_{x_k}^L \Delta^{-\frac{q}{r}} w \bigg)^{\frac{1}{q}} \nonumber \\
		& \hspace{3cm} \times\bigg(\int_{x_{k-1}}^{x_k} \bigg(\int_{x_{k-1}}^{\tau} \delta\bigg)^{\frac{r}{1-r}} \delta(\tau) \varphi(\tau)^{-\frac{r}{p(1-r)}} \,d\tau \bigg)^\frac{1-r}{r} \nonumber \\
		& = C_{3,2},
	\end{align}
	and 
	\begin{align}\label{C33<C34}
		C_{3,3} &=  \sup_{N+1\leq k \leq M-1} \bigg(\int_{x_k}^L \Delta^{-\frac{q}{r}} w \bigg)^{\frac{1}{q}} \bigg(\sum_{i=N+1}^k  \esup_{t\in(x_{i-1}, x_i)} \bigg( \int_{x_{i-1}}^t \delta\bigg)^{\frac{p}{p-r}}
		\varphi(t)^{-\frac{r}{p-r}}  \bigg)^{\frac{p-r}{pr}}\nonumber\\
		& \lesssim \sup_{N+1\leq k \leq M-1} \bigg(\int_{x_k}^L \Delta^{-\frac{q}{r}} w \bigg)^{\frac{1}{q}} \nonumber\\
		&\hspace{2cm}\times\bigg(\sum_{i=N+1}^k \bigg(\int_{x_{i-1}}^{x_i} \bigg(\int_{x_{i-1}}^t \delta\bigg)^{\frac{r}{1-r}} \delta(t) \varphi(t)^{-\frac{r}{p(1-r)}}dt \bigg)^{\frac{p(1-r)}{p-r}}  \bigg)^{\frac{p-r}{pr}}\nonumber\\
		& = C_{3,4}.
	\end{align}
	Finally, the assertion follows from the combination of \eqref{C-new}, \eqref{C21<C31}, \eqref{C22<C33}, \eqref{EQ:C31lessC32} and \eqref{C33<C34}.

\section{Antidiscretization}\label{S:antidiscretization}

We start with a technical lemma, which will prove useful later.
\begin{lemma}\label{TH:antid_lemma1}
	Let $0< r < p <\infty$. Let $\varphi$ and $\sigma$ be functions defined by \eqref{varphi} and \eqref{E:sigma}, respectively. Assume that there is $t_0\in(0,L)$ such that $0 < \varphi(t_0) < \infty$. Suppose that $\{x_k\}_{k=N}^{M}\in CS(\varphi,U^p,a)$ and $\mathcal{Z}_1, \mathcal{Z}_2$ are the decomposition of the index set $\K^+=\{k\in\Z\colon N+1\leq k \leq M\}$ given in Lemma~\ref{Lem:Decomp}.
	\begin{enumerate}
		
		\item Let $i\in\Z$, $N+2\leq i \leq M$ and $y\in[x_{i-1}, x_i]$. Let $h\in Q_{U}(0,y)$. We have
		\begin{align}
			&\int_{x_{i-1}}^{y}\sigma(t) h(t)^{\frac{pr}{p-r}}\,dt \lesssim h(y)^{\frac{pr}{p-r}}\varphi(y)^{-\frac{r}{p-r}} \label{EQ:antid_lemma_1}\quad \text{if $i\in\mathcal{Z}_1$,} \\
			\intertext{and}
			&\int_{x_{i-1}}^{y} \sigma(t)h(t)^{\frac{pr}{p-r}}\,dt \lesssim h(x_{i-1})^{\frac{pr}{p-r}}\varphi(x_{i-1})^{-\frac{r}{p-r}} \label{EQ:antid_lemma_2} \quad \text{if $i\in\mathcal{Z}_2$}.
		\end{align}
		
		\item If $N>-\infty$, then $N+1 \in \mathcal{Z}_2$ and, for every $y\in(0, x_{N+1}]$ and $h\in Q_{U}(0,y)$,
		\begin{equation}\label{EQ:antid_lemma_3}
			\int_{0}^{y}\sigma(t)h(t)^{\frac{pr}{p-r}}\,dt \lesssim \sup_{t\in(0,y]}h(t)^{\frac{pr}{p-r}}\varphi(t)^{-\frac{r}{p-r}}.
		\end{equation}
		
		\item Let $k\in\Z$, $N+1\leq k \leq M$. If $h\in Q_{U}(0,x_k)$, then
		\begin{align}
			\sum_{i=N+1}^{k-1}h(x_i)^{\frac{pr}{p-r}}\varphi(x_i)^{-\frac{r}{p-r}}
			&\lesssim
			\int_0^{x_k}\sigma(t)h(t)^{\frac{pr}{p-r}}\,dt \nonumber\\
			&\hspace{-2cm}\lesssim
			\sum_{i=N+1}^{k}h(x_i)^{\frac{pr}{p-r}}\varphi(x_i)^{-\frac{r}{p-r}}
			+
			\sup_{t\in(0,x_{N+1})}h(t)^{\frac{pr}{p-r}}\varphi(t)^{-\frac{r}{p-r}}. \label{EQ:antid_lemma_4}
		\end{align}
	\end{enumerate}
	
	The multiplicative constants in this lemma depend only on $a$, $p$, and $r$.
\end{lemma}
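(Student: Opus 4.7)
The plan is to exploit the factorization
\begin{equation*}
	\sigma(t)=\tfrac{1}{p}\varphi(t)^{-\alpha-2}V(t)\varphi'(t),\qquad \alpha:=\tfrac{r}{p-r},
\end{equation*}
which follows by differentiating $\varphi(t)=V(t)+U(t)^p\int_t^L v/U^p\,ds$ to obtain $\varphi'(t)=p(\varphi(t)-V(t))u(t)/U(t)$. This recasts the weighted integrand as
\begin{equation*}
	\sigma(t)h(t)^{p\alpha}=\theta(t)(1-\theta(t))\,g(t)^{\alpha}\,\frac{u(t)}{U(t)},\qquad\theta:=V/\varphi,\ g:=h^p/\varphi,
\end{equation*}
and, upon integration by parts, yields the ``telescoping'' identity
\begin{equation*}
	p(\alpha+1)\int_a^b\sigma\,dt=\theta(a)\varphi(a)^{-\alpha}-\theta(b)\varphi(b)^{-\alpha}+\int_a^b v\varphi^{-\alpha-1}\,dt,
\end{equation*}
which drives both directions of every estimate below.

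For part~(1), I split according to the dichotomy of Section~\ref{S:preliminaries}. If $i\in\mathcal{Z}_1$, then $\varphi(t)\approx\varphi(y)$ on $[x_{i-1},y]$, so the monotonicity $h(t)\leq h(y)$ together with $V\leq\varphi$ in the telescoping identity gives $\int_{x_{i-1}}^y\sigma\,dt\lesssim\varphi(y)^{-\alpha}$, yielding~\eqref{EQ:antid_lemma_1}. If $i\in\mathcal{Z}_2$, then $\varphi(t)\approx\kappa U(t)^p$ on the interval with $\kappa:=\varphi(x_{i-1})/U(x_{i-1})^p$, and the fact that $h/U$ is nonincreasing forces $g(t)\lesssim g(x_{i-1})$. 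It then remains to show that $\int_{x_{i-1}}^y\theta(1-\theta)u/U\,dt\lesssim 1$, which I handle by approximating $v/\varphi\approx v/(\kappa U^p)$ and using $\int_{x_{i-1}}^L v/U^p\leq\varphi(x_{i-1})/U(x_{i-1})^p=\kappa$---an inequality that follows from the decomposition $\varphi/U^p=V/U^p+\int_t^L v/U^p$ with both summands nonnegative. This gives~\eqref{EQ:antid_lemma_2}.

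For part~(2), I would first establish that $N+1\in\mathcal{Z}_2$: a case analysis using $V(0^+)=U(0^+)=0$, the $U^p$-quasiconcavity of $\varphi$, and the sixth bullet of Section~\ref{S:preliminaries} rules out the alternative $N+1\in\mathcal{Z}_1$ under the standing assumption that $\varphi(t_0)<\infty$. The argument of case $i\in\mathcal{Z}_2$ from part~(1) then applies almost verbatim, except that $g$ may blow up as $t\to 0^+$; replacing $g(x_{i-1})$ by $\sup_{s\in(0,y]}g(s)$ produces~\eqref{EQ:antid_lemma_3}.

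Part~(3) has two directions. The upper bound in~\eqref{EQ:antid_lemma_4} follows immediately by decomposing $(0,x_k]=(0,x_{N+1}]\cup\bigcup_{i=N+2}^k(x_{i-1},x_i]$ (omitting the first piece when $N=-\infty$) and invoking (1) and (2); after a shift of index, the $\mathcal{Z}_2$ estimates of the form $h(x_{i-1})^{p\alpha}\varphi(x_{i-1})^{-\alpha}$ are absorbed into the stated sum. The lower bound is the main obstacle. For each $i\in\{N+1,\ldots,k-1\}$, I take the disjoint sub-interval $(x_i,x_{i+1}]\subseteq(0,x_k]$, on which $h(t)\geq h(x_i)$, so the task reduces to $\int_{x_i}^{x_{i+1}}\sigma\,dt\gtrsim\varphi(x_i)^{-\alpha}$. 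By the telescoping identity and $\varphi(x_{i+1})\geq a\varphi(x_i)$, the subtracted term $\theta(x_{i+1})\varphi(x_{i+1})^{-\alpha}$ costs at most $a^{-\alpha}\varphi(x_i)^{-\alpha}$; when $\theta(x_i)\gtrsim 1$ the first boundary term $\theta(x_i)\varphi(x_i)^{-\alpha}$ supplies the bound, while when $\theta(x_i)\ll 1$ one has $U(x_i)^p\int_{x_i}^L v/U^p\approx\varphi(x_i)$, and the $\int_{x_i}^{x_{i+1}}v\varphi^{-\alpha-1}\,dt$ contribution recovers the missing mass via a direct estimate that further branches on whether $i+1\in\mathcal{Z}_1$ (so $\varphi$ is approximately constant on the interval) or $i+1\in\mathcal{Z}_2$ (so $\varphi\approx\kappa' U^p$). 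Summing over the disjoint intervals then gives the sum on the left of~\eqref{EQ:antid_lemma_4}.
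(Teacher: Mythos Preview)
Your framework for parts (1), (2), and the upper bound in (3) is essentially equivalent to the paper's, just phrased differently: the paper works directly with the exact differentials $d[-\varphi^{-\alpha}]$ and $d[(\varphi/U^p)^{-\alpha}]$ rather than through your telescoping identity, but the underlying computations coincide.

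The genuine gap is in the lower bound of \eqref{EQ:antid_lemma_4}. Your plan is to show, for each $i$, that $\int_{x_i}^{x_{i+1}}\sigma\,dt\gtrsim\varphi(x_i)^{-\alpha}$ using only the single interval $(x_i,x_{i+1}]$. This claim is \emph{false}. From your own telescoping identity one gets at best
\[
	p(\alpha+1)\int_{x_i}^{x_{i+1}}\sigma
	\;\geq\;(1-a^{-\alpha-1})\,\theta(x_i)\,\varphi(x_i)^{-\alpha},
\]
which degenerates when $\theta(x_i)=V(x_i)/\varphi(x_i)$ is small. Your suggested rescue---that the term $\int_{x_i}^{x_{i+1}}v\varphi^{-\alpha-1}$ ``recovers the missing mass''---does not hold: take $u\equiv1$, $p=2$, $v(t)=t^{1-\delta}$ on $(0,1)$ with $\delta>0$ small. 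Then $\theta\approx\delta/2$ everywhere, $\sigma\approx(\delta/2)\varphi^{-\alpha}/t$, the covering points satisfy $x_{i+1}/x_i\approx a^{1/\delta}$, and a direct computation gives $\int_{x_i}^{x_{i+1}}\sigma\big/\varphi(x_i)^{-\alpha}\approx\delta/(4\alpha)\to0$ as $\delta\to0$. So no universal constant exists, regardless of the $\mathcal{Z}_1/\mathcal{Z}_2$ membership of $i+1$.

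The paper's proof avoids this by using \emph{two} adjacent intervals for each $i$: on $(x_{i-1},x_i]$ it freezes $h/U\geq h(x_i)/U(x_i)$ and $\int_t^L U^{-p}v\geq\int_{x_i}^L U^{-p}v$, then integrates $d[(U^{-p}\varphi)^{-\alpha-1}]$; on $(x_i,x_{i+1}]$ it freezes $h\geq h(x_i)$ and $V\geq V(x_i)$, then integrates $d[-\varphi^{-\alpha-1}]$. The two contributions combine via the exact identity
\[
	U(x_i)^p\!\int_{x_i}^L U^{-p}v \;+\; V(x_i)\;=\;\varphi(x_i),
\]
which is precisely what reconstitutes the full $\varphi(x_i)^{-\alpha}$ without any case analysis on $\theta(x_i)$. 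You should replace your single-interval argument with this two-interval scheme.
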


\begin{proof}
	First, observe that
	\begin{equation}\label{TH:antid_lemma1_varphi}
		\varphi(t) = V(t) + U(t)^p \left( \int_t^L U^{-p} v \right) \quad \text{for every $t\in(0,L)$}.
	\end{equation}
	For future reference, note that
	\begin{align}
		V(t) &\leq \varphi(t) \quad \text{for every $t\in(0,L)$} \label{TH:antid_lemma1_V_dominated_by_varphi}\\
		\intertext{and}
		U(t)^p \left( \int_t^L U^{-p} v \right) &\leq \varphi(t) \quad \text{for every $t\in(0,L)$}. \label{TH:antid_lemma1_Up_times_integral_dominated_by_varphi}
	\end{align}
	Furthermore, we have
	\begin{equation}\label{TH:antid_lemma1_varphi_derivative}
		\varphi'(t) = p U(t)^{p-1} u(t) \left( \int_t^L U^{-p} v \right) \quad \text{for a.e.~$t\in(0,L)$}
	\end{equation}
	and
	\begin{equation}\label{TH:antid_lemma1_varphi_over_Up_derivative}
		\left( \frac{\varphi}{U^p} \right)'(t) = - p V(t) U(t)^{-p-1} u(t) \quad \text{for a.e.~$t\in(0,L)$}.
	\end{equation}
	Second, recall that, for $N+2 \leq i \leq M$,
	\begin{align}
		\varphi(y)\approx \varphi(x_{i-1})\quad&\text{for all $y\in [x_{i-1},x_i]$ and every $i\in\mathcal{Z}_1$}, \label{TH:antid_lemma1_varphi_Z1}\\
		\intertext{and}
		\frac{\varphi}{U^p}(y)\approx \frac{\varphi}{U^p}(x_{i-1}) \quad&\text{for all $y\in [x_{i-1},x_i]$ and every $i\in\mathcal{Z}_2$}, \label{TH:antid_lemma1_varphi_Z2}
	\end{align}
	since $\varphi \in Q_{U^p}(0,L)$ and $\{x_k\}_{k=N}^{M}\in CS(\varphi,U^p,a)$. The multiplicative constants depend only on $a$.
	
	Now, if $i\in\mathcal{Z}_1$, $N+2\leq i\leq M$, then, for each $y\in [x_{i-1},x_i]$, we have
	\begin{align*}
		&\int_{x_{i-1}}^{y}\varphi(t)^{-\frac{r}{p-r}-2}V(t)\left(\int_t^L U^{-p}v\right)U(t)^{p-1}u(t)h(t)^{\frac{pr}{p-r}}\,dt\\
		&\qquad\leq
		h(y)^{\frac{pr}{p-r}}\int_{x_{i-1}}^{y}\varphi(t)^{-\frac{r}{p-r}-2}V(t)\left(\int_t^L U^{-p}v\right)U(t)^{p-1}u(t)\,dt\\
		&\qquad\leq
		h(y)^{\frac{pr}{p-r}}\int_{x_{i-1}}^{y}\varphi(t)^{-\frac{r}{p-r}-1}\left(\int_t^L U^{-p}v\right)U(t)^{p-1}u(t)\,dt\\
		&\qquad\approx
		h(y)^{\frac{pr}{p-r}}\int_{x_{i-1}}^{y}d\left[-\varphi^{-\frac{r}{p-r}}\right]
		\leq
		h(y)^{\frac{pr}{p-r}}\varphi(x_{i-1})^{-\frac{r}{p-r}}\approx
		h(y)^{\frac{pr}{p-r}}\varphi(y)^{-\frac{r}{p-r}}.
	\end{align*}
	We used the fact that the function $h$ is nondecreasing and \eqref{TH:antid_lemma1_V_dominated_by_varphi} in the first and second inequalities, respectively. We used \eqref{TH:antid_lemma1_varphi_derivative} together with a change of variables in the first equivalence and \eqref{TH:antid_lemma1_varphi_Z1} in the second one. Thus, we have proved \eqref{EQ:antid_lemma_1}.
	On the other hand, if $i\in\mathcal{Z}_2$, $N+2\leq i\leq M$, then, for each $y\in [x_{i-1},x_i]$, we have
	\begin{align}
		&\int_{x_{i-1}}^{y}\varphi(t)^{-\frac{r}{p-r}-2}V(t)\left(\int_t^L U^{-p}v\right)U(t)^{p-1}u(t)h(t)^{\frac{pr}{p-r}}\,dt \notag\\
		&\qquad\leq\left(\frac{h(x_{i-1})}{U(x_{i-1})}\right)^{\frac{pr}{p-r}} \label{TH:antid_lemma1_eq1}
		\int_{x_{i-1}}^{y}\varphi^{-\frac{r}{p-r}-1}VU^{-1}uU^{\frac{pr}{p-r}}\\
		&\qquad\approx
		\left(\frac{h(x_{i-1})}{U(x_{i-1})}\right)^{\frac{pr}{p-r}}\int_{x_{i-1}}^{y}d\left[\left(\frac{\varphi}{U^p}\right)^{-\frac{r}{p-r}}\right] \label{TH:antid_lemma1_eq2}\\
		&\qquad \leq
		\left(\frac{h(x_{i-1})}{U(x_{i-1})}\right)^{\frac{pr}{p-r}}\frac{\varphi(y)^{-\frac{r}{p-r}}}{U(y)^{-\frac{pr}{p-r}}}
		\approx h(x_{i-1})^{\frac{pr}{p-r}}\varphi(x_{i-1})^{-\frac{r}{p-r}}. \notag
	\end{align}
	We used the fact the function  $h/U$ is nonincreasing combined with \eqref{TH:antid_lemma1_Up_times_integral_dominated_by_varphi} in the first inequality. The first equivalence follows from \eqref{TH:antid_lemma1_varphi_over_Up_derivative} together with a change of variables, and the second one follows from \eqref{TH:antid_lemma1_varphi_Z2}. Thus, we have proved \eqref{EQ:antid_lemma_2}.
	
	Next, we shall prove \eqref{EQ:antid_lemma_3}. To this end, note that $\lim_{t\rightarrow0+}\varphi(t)=0$ thanks to the dominated convergence theorem, inasmuch as there is $t_0\in(0,L)$ such that $\varphi(t_0) < \infty$. It follows that $N+1\in\mathcal{Z}_2$ if $N>-\infty$. Indeed, suppose that $N>-\infty$ and that $N+1\in\mathcal{Z}_1$. If that were the case, we would have
	\begin{equation*}
		\varphi(t) \approx \varphi(x_{N+1}) > 0 \quad \text{for every $t\in(0, x_{N+1}]$},
	\end{equation*}
	which would contradict the fact that $\lim_{t\rightarrow0+}\varphi(t)=0$. Hence
	\begin{equation*}
		\lim_{t\rightarrow0+}\left(\frac{U(t)^{p}}{\varphi(t)}\right)^{\frac{r}{p-r}} \in (0, \infty) \qquad \text{if $N>-\infty$}.
	\end{equation*}
	Therefore, if $N>-\infty$, then, for every $y\in(0, x_{N+1}]$,
	\begin{align*}
		&\int_{0}^{y}\varphi(t)^{-\frac{r}{p-r}-2}V(t)\left(\int_t^L U^{-p}v\right)U(t)^{p-1}u(t)h(t)^{\frac{pr}{p-r}}\,dt\\
		&\qquad\lesssim
		\lim_{t\rightarrow0+}\left(\frac{h(t)}{U(t)}\right)^{\frac{pr}{p-r}}\int_{0}^{y}d\left[\left(\frac{\varphi}{U^p}\right)^{-\frac{r}{p-r}}\right]
		\leq
		\lim_{t\rightarrow0+}\left(\frac{h(t)}{U(t)}\right)^{\frac{pr}{p-r}}\lim_{t\rightarrow0+}\left(\frac{U(t)^{p}}{\varphi(t)}\right)^{\frac{r}{p-r}} \\
		&\qquad=  \lim_{t\rightarrow0+}\left(\frac{h(t)}{U(t)}\right)^{\frac{pr}{p-r}}\left(\frac{\varphi(y)}{U(y)^p}\right)^{-\frac{r}{p-r}} \lesssim
		\sup_{t\in(0,y]}h(t)^{\frac{pr}{p-r}}\varphi(t)^{-\frac{r}{p-r}}.
	\end{align*}
	We obtain the first inequality by combining the same arguments as in \eqref{TH:antid_lemma1_eq1} and \eqref{TH:antid_lemma1_eq2}. Thus, we have proved \eqref{EQ:antid_lemma_3}.
	
	Finally, it remains to prove \eqref{EQ:antid_lemma_4}. Note that it clearly holds if $k=N+1 > -\infty$ thanks to \eqref{EQ:antid_lemma_3}. Let $k\in\Z$, $N+2\leq k\leq M$. We have, whether $N > -\infty$ or $N = -\infty$,
	\begin{align*}
		\int_{0}^{x_{k}}\sigma h^{\frac{pr}{p-r}}
		&=
		\int_{0}^{x_{N+1}}\sigma h^{\frac{pr}{p-r}}
		+\sum_{i\in\mathcal{Z}_1, N+2\leq i\leq k}\int_{x_{i-1}}^{x_{i}}\sigma h^{\frac{pr}{p-r}}\\
		&\quad+\sum_{i\in\mathcal{Z}_2, N+2\leq i\leq k}\int_{x_{i-1}}^{x_{i}}\sigma h^{\frac{pr}{p-r}}\\
		&\lesssim
		\sup_{t\in(0,x_{N+1})}h(t)^{\frac{pr}{p-r}}\varphi(t)^{-\frac{r}{p-r}}
		+\sum_{i\in\mathcal{Z}_1, N+2\leq i\leq k}h(x_i)^{\frac{pr}{p-r}}\varphi(x_i)^{-\frac{r}{p-r}}\\
		&\quad+\sum_{i\in\mathcal{Z}_2, N+2\leq i\leq k}h(x_{i-1})^{\frac{pr}{p-r}}\varphi(x_{i-1})^{-\frac{r}{p-r}}\\
		&\lesssim
		\sup_{t\in(0,x_{N+1})}h(t)^{\frac{pr}{p-r}}\varphi(t)^{-\frac{r}{p-r}}+\sum_{i=N+1}^k h(x_i)^{\frac{pr}{p-r}}\varphi(x_i)^{-\frac{r}{p-r}},
	\end{align*}
	where we combined \eqref{EQ:antid_lemma_1}, \eqref{EQ:antid_lemma_2} and \eqref{EQ:antid_lemma_3} in the first inequality. Thus, we have proved the second inequality in \eqref{EQ:antid_lemma_4}. As for the other, note that
	\begin{equation}\label{TH:antid_lemma1_eq3}
		\sum_{i=N+1}^{k-1} \int_{x_{i-1}}^{x_i}\sigma h^{\frac{pr}{p-r}} + \sum_{i=N+1}^{k-1} \int_{x_{i}}^{x_{i+1}}\sigma h^{\frac{pr}{p-r}} \lesssim \int_0^{x_k}\sigma h^{\frac{pr}{p-r}}.
	\end{equation}
	
	Recalling \eqref{E:sigma} and using the fact that the function $h/U$ is nonincreasing, we have
	\begin{equation*}
		\sigma(t) h(t)^{\frac{pr}{p - r}} \geq \left( \frac{h(x_i)}{U(x_i)} \right)^{\frac{pr}{p - r}} \left(\int_{x_i}^L U^{-p}v\right) U(t)^{\frac{pr}{p - r}}  \varphi(t)^{-\frac{r}{p-r}-2}V(t)U(t)^{p-1}u(t)
	\end{equation*}
	for every $t\in[x_{i-1}, x_i]$, $i\in\{N+1, \dots, k - 1\} \cap \Z$. Furthermore, using \eqref{TH:antid_lemma1_varphi_over_Up_derivative}, we observe that
	\begin{equation*}
		\left( \left(U^{-p}\varphi\right)^{-\frac{r}{p-r}-1} \right)'(t) \approx U(t)^{\frac{pr}{p - r}}  \varphi(t)^{-\frac{r}{p-r}-2}V(t)U(t)^{p-1}u(t) \quad \text{for a.e.~$t\in(0, L)$}.
	\end{equation*}
	Now, combining these two observations, we arrive at
	\begin{align*}
		&\sum_{i=N+1}^{k-1} \int_{x_{i-1}}^{x_i}\sigma h^{\frac{pr}{p-r}} \gtrsim \sum_{i=N+1}^{k-1} \left(\frac{h(x_i)}{U(x_i)}\right)^{\frac{pr}{p-r}}\left(\int_{x_{i}}^LU^{-p}
		v\right)\int_{x_{i-1}}^{x_i}d\left[\left(U^{-p}\varphi\right)^{-\frac{r}{p-r}-1}\right]\\
		&\quad=
		\sum_{i=N+1}^{k-1}\left(\frac{h(x_i)}{U(x_i)}\right)^{\frac{pr}{p-r}}\left(\int_{x_{i}}^LU^{-p}
		v\right)\left(\left(U^{-p}\varphi\right)(x_i)^{-\frac{r}{p-r}-1}-\left(U^{-p}\varphi\right)(x_{i-1})^{-\frac{r}{p-r}-1}\right).
	\end{align*}
	Moreover, as $\{x_k\}_{k=N}^{M}\in CS(\varphi,U^p,a)$, the sequence $\{\left(U^{-p}\varphi\right)^{-\frac{r}{p-r}}(x_i)\}_{i=N+1}^{M-1}$ is strongly increasing, and we have
	\begin{equation*}
		\left(U^{-p}\varphi\right)(x_i)^{-\frac{r}{p-r}-1}-\left(U^{-p}\varphi\right)(x_{i-1})^{-\frac{r}{p-r}-1} \geq \big(1-a^{-\frac{r}{p-r} - 1}\big) \left(U^{-p}\varphi\right)(x_i)^{-\frac{r}{p-r}-1}.
	\end{equation*}
	Note that $a^{-r/(p-r) - 1}\in(0,1)$. Therefore,
	\begin{equation}\label{TH:antid_lemma1_eq4}
		\sum_{i=N+1}^{k-1} \int_{x_{i-1}}^{x_i}\sigma h^{\frac{pr}{p-r}} \gtrsim \sum_{i=N+1}^{k-1}\left(\frac{h(x_i)}{U(x_i)}\right)^{\frac{pr}{p-r}}\left(\int_{x_{i}}^LU^{-p}
		v\right)\left(U^{-p}\varphi\right)(x_i)^{-\frac{r}{p-r}-1}.
	\end{equation}
	
	As for the second sum in \eqref{TH:antid_lemma1_eq3}, we have
	\begin{equation*}
		\sigma(t) h(t)^{\frac{pr}{p - r}} \geq h(x_i)^{\frac{pr}{p - r}} V(x_i) \varphi(t)^{-\frac{r}{p-r}-2} \left(\int_t^L U^{-p}v\right) U(t)^{p-1} u(t)
	\end{equation*}
	for every $t\in[x_i, x_{i + 1}]$, $i\in\{N+1, \dots, k - 1\} \cap \Z$, thanks to \eqref{E:sigma} and the fact that the functions $h$ and $V$ are nondecreasing. Combining this with \eqref{TH:antid_lemma1_varphi_derivative}, we observe that
	\begin{align*}
		&\sum_{i=N+1}^{k-1}\int_{x_{i}}^{x_{i+1}}\sigma h^{\frac{pr}{p-r}} \gtrsim h(x_i)^{\frac{pr}{p-r}}V(x_i)\int_{x_{i}}^{x_{i+1}}d\left[-\varphi^{-\frac{r}{p-r}-1}\right]\\
		&\quad= h(x_i)^{\frac{pr}{p-r}}V(x_i)\left(\varphi(x_i)^{-\frac{r}{p-r}-1}-\varphi(x_{i+1})^{-\frac{r}{p-r}-1}\right).
	\end{align*}
	Since $\{x_k\}_{k=N}^{M}\in CS(\varphi,U^p,a)$, the sequence $\{\varphi(x_i)^{-\frac{r}{p-r}-1}\}_{i=N+1}^{M-1}$ is strongly  decreasing, and we have
	\begin{equation*}
		\varphi(x_i)^{-\frac{r}{p-r}-1}-\varphi(x_{i+1})^{-\frac{r}{p-r}-1} \geq \big(1-a^{-\frac{r}{p-r} - 1}\big) \varphi(x_i)^{-\frac{r}{p-r}-1}.
	\end{equation*}
	By combining the last two inequalities, we obtain
	\begin{equation}\label{TH:antid_lemma1_eq5}
		\sum_{i=N+1}^{k-1}\int_{x_{i}}^{x_{i+1}}\sigma h^{\frac{pr}{p-r}} \gtrsim h(x_i)^{\frac{pr}{p-r}}V(x_i)\varphi(x_i)^{-\frac{r}{p-r}-1}.
	\end{equation}
	Hence, thanks to \eqref{TH:antid_lemma1_eq4} and \eqref{TH:antid_lemma1_eq5}, we have
	\begin{align*}
		&\sum_{i=N+1}^{k-1} \int_{x_{i-1}}^{x_i}\sigma h^{\frac{pr}{p-r}} + \sum_{i=N+1}^{k-1} \int_{x_{i}}^{x_{i+1}}\sigma h^{\frac{pr}{p-r}} \\
		&\qquad\gtrsim \sum_{i=N+1}^{k-1}\Bigg(\bigg(\frac{h(x_i)}{U(x_i)}\bigg)^{\frac{pr}{p-r}}\bigg(\int_{x_{i}}^LU^{-p} v \bigg)\varphi(x_i)^{-\frac{r}{p-r}-1}U(x_{i})^{\frac{pr}{p-r}+p}\\
		&\qquad\quad+
		h(x_i)^{\frac{pr}{p-r}}V(x_i)\varphi(x_i)^{-\frac{r}{p-r}-1}\Bigg)\\
		&\qquad=
		\sum_{i=N+1}^{k-1}h(x_i)^{\frac{pr}{p-r}}\varphi(x_i)^{-\frac{r}{p-r}-1}\bigg(U(x_i)^p\bigg(\int_{x_{i}}^LU^{-p}
		v\bigg)+V(x_i)\bigg)\\
		&\qquad=\sum_{i=N+1}^{k-1}h(x_i)^{\frac{pr}{p-r}}\varphi(x_i)^{-\frac{r}{p-r}},
	\end{align*}
	where we used \eqref{TH:antid_lemma1_varphi} in the last equality. Finally, the first inequality in \eqref{EQ:antid_lemma_4} follows from this combined with \eqref{TH:antid_lemma1_eq3}.
\end{proof}

We now apply the preceding lemma to two particular choices of $h$, which will be useful for future reference.

\begin{remark}
	Let $r, p$, $\varphi$ and $\{x_k\}_{k=N}^{M}$ be as in Lemma~\ref{TH:antid_lemma1}. Let $u,\delta$ be weights on $(0,L)$ and $\Delta, \, U$ are primitive functions of $u,\, \delta$, respectively. Let $k\in\Z$, $N+1\leq k \leq M-1$. In what follows, the multiplicative constants depend only on the discretization parameter $a$ and the exponents $p$ and $r$.
	
	{\rm(i)} Consider the function
	\begin{equation*}
		h(t)=U(t)\sup_{\tau\in(t,x_k)}\Delta(\tau)^{\frac{1}{r}}U(\tau)^{-1},\ t\in(0,x_k).
	\end{equation*}
	Clearly, the function $h/U$ is nonincreasing on $(0,x_k)$. Furthermore, we have, for every $0 < s < t < x_k$,
	\begin{align*}
		&h(s) = U(s) \max\left\{ \sup_{\tau\in(s, t]}\Delta(\tau)^{\frac{1}{r}}U(\tau)^{-1}, \sup_{\tau\in(t, x_k)}\Delta(\tau)^{\frac{1}{r}}U(\tau)^{-1} \right\} \\
		&\qquad \leq \max\left\{ \Delta(t)^{\frac{1}{r}}, U(t) \sup_{\tau\in(t, x_k)}\Delta(\tau)^{\frac{1}{r}}U(\tau)^{-1} \right\} \leq U(t)\sup_{\tau\in(t,x_k)}\Delta(\tau)^{\frac{1}{r}}U(\tau)^{-1}\\
		&\qquad = h(t).
	\end{align*}
	Hence $h\in Q_U(0,x_k)$. Now, plugging this function $h$ into \eqref{EQ:antid_lemma_4}, we obtain (recall \eqref{E:sigma})
	\begin{align}
		&\hspace{-2cm}\sum_{i=N+1}^{k-1} \varphi(x_i)^{-\frac{r}{p-r}}U(x_i)^{\frac{pr}{p-r}} \sup_{\tau\in(x_i,x_k)}\Delta(\tau)^{\frac{p}{p-r}}U(\tau)^{-\frac{pr}{p-r}}\nonumber\\
		&\hspace{-1cm}\lesssim  \int_0^{x_k}\varphi(t)^{-\frac{r}{p-r}-2}V(t)\left(\int_t^L U^{-p}v\right)U(t)^{p-1}u(t)U(t)^{\frac{pr}{p-r}} \nonumber\\
		& \hspace{2cm} \times
		\sup_{\tau\in(t,x_k)}\Delta(\tau)^{\frac{p}{p-r}}U(\tau)^{-\frac{pr}{p-r}}\,dt\label{R1}\\
		&\hspace{-1cm}\lesssim
		\sum_{i=N+1}^{k}\varphi(x_i)^{-\frac{r}{p-r}}U(x_i)^{\frac{pr}{p-r}}\sup_{\tau\in(x_i,x_k)}\Delta(\tau)^{\frac{p}{p-r}}U(\tau)^{-\frac{pr}{p-r}}\nonumber\\
		&\quad+
		\sup_{t\in(0,x_{N+1})}\varphi(t)^{-\frac{r}{p-r}}U(t)^{\frac{pr}{p-r}}\sup_{\tau\in(t,x_k)}\Delta(\tau)^{\frac{p}{p-r}}U(\tau)^{-\frac{pr}{p-r}}.\nonumber
	\end{align}
	Furthermore, using the fact that the function $U^p/\varphi$ is nondecreasing, we observe that
	\begin{align*}
		&\sup_{t\in(0,x_{N+1})}\varphi(t)^{-\frac{r}{p-r}}U(t)^{\frac{pr}{p-r}}\sup_{\tau\in(t,x_k)}\Delta(\tau)^{\frac{p}{p-r}}U(\tau)^{-\frac{pr}{p-r}} \\
		&\qquad\leq  \sup_{t\in(0,x_{N+1})}\varphi(t)^{-\frac{r}{p-r}}U(t)^{\frac{pr}{p-r}}\sup_{\tau\in(t,x_{N+1})}\Delta(\tau)^{\frac{p}{p-r}}U(\tau)^{-\frac{pr}{p-r}} \\
		&\qquad\quad+ \sum_{i=N+1}^{k}\varphi(x_i)^{-\frac{r}{p-r}}U(x_i)^{\frac{pr}{p-r}}\sup_{\tau\in(x_i,x_k)}\Delta(\tau)^{\frac{p}{p-r}}U(\tau)^{-\frac{pr}{p-r}}.
	\end{align*}
	Hence, combining this with the preceding inequality, we obtain
	\begin{align}
		&\hspace{-1.5cm} \int_0^{x_k}\varphi(t)^{-\frac{r}{p-r}-2}V(t)\left(\int_t^L U^{-p}v\right)U(t)^{p-1}u(t)U(t)^{\frac{pr}{p-r}} \nonumber\\
		& \hspace{2cm} \times
		\sup_{\tau\in(t,x_k)}\Delta(\tau)^{\frac{p}{p-r}}U(\tau)^{-\frac{pr}{p-r}}\,dt\nonumber\\
		&\hspace{-1cm}\lesssim
		\sum_{i=N+1}^{k}\varphi(x_i)^{-\frac{r}{p-r}}U(x_i)^{\frac{pr}{p-r}}\sup_{\tau\in(x_i,x_k)}\Delta(\tau)^{\frac{p}{p-r}}U(\tau)^{-\frac{pr}{p-r}}\nonumber\\
		&\quad+
		\sup_{t\in(0,x_{N+1})}\varphi(t)^{-\frac{r}{p-r}}U(t)^{\frac{pr}{p-r}}\sup_{\tau\in(t,x_{N+1})}\Delta(\tau)^{\frac{p}{p-r}}U(\tau)^{-\frac{pr}{p-r}}\nonumber\\
		& \hspace{-1cm}=
		\sum_{i=N+1}^{k}\varphi(x_i)^{-\frac{r}{p-r}}U(x_i)^{\frac{pr}{p-r}}\sup_{\tau\in(x_i,x_k)}\Delta(\tau)^{\frac{p}{p-r}}U(\tau)^{-\frac{pr}{p-r}} \nonumber\\
		& \quad
		+\sup_{t\in(0,x_{N+1})}\varphi(t)^{-\frac{r}{p-r}}\Delta(t)^{\frac{p}{p-r}}\label{R2}.
	\end{align}
	The equality is obtained by interchanging the order of the suprema in the second term and exploiting the fact that the function $U^p/\varphi$ is nondecreasing.
	
	{\rm(ii)} Assume, in addition, that $r\in(0, 1)$. This time we consider the function $h$ defined as
	\begin{equation*}
		h(t)=\left(\int_0^{x_k}\Delta(s)^{\frac{r}{1-r}}\delta(s)U(s)^{-\frac{r}{1-r}}\min\{U(s)^{\frac{r}{1-r}},U(t)^{\frac{r}{1-r}}\}\,ds\right)^{\frac{1-r}{r}},\ t\in(0,x_k).
	\end{equation*}
	Note that $h\in Q_U(0,x_k)$. Plugging this into \eqref{EQ:antid_lemma_4}, we obtain (recall \eqref{E:sigma})
	\begin{align}
		&\sum_{i=N+1}^{k-1}\varphi(x_i)^{-\frac{r}{p-r}}\left(\int_0^{x_k}\Delta(s)^{\frac{r}{1-r}}\delta(s)U(s)^{-\frac{r}{1-r}}\min\{U(s)^{\frac{r}{1-r}},U(x_i)^{\frac{r}{1-r}}\}\,ds\right)^{\frac{p(1-r)}{p-r}}\nonumber\\
		&\lesssim
		\int_0^{x_k}\sigma(t)\left(\int_0^{x_k}\Delta(s)^{\frac{r}{1-r}}\delta(s)U(s)^{-\frac{r}{1-r}}\min\{U(s)^{\frac{r}{1-r}},U(t)^{\frac{r}{1-r}}\}\,ds\right)^{\frac{p(1-r)}{p-r}}\,dt \label{R3}\\
		&\lesssim \sum_{i=N+1}^{k}\varphi(x_i)^{-\frac{r}{p-r}} \nonumber\\
		& \hspace{1.5cm} \times \left(\int_0^{x_k}\Delta(s)^{\frac{r}{1-r}}\delta(s)U(s)^{-\frac{r}{1-r}}\min\{U(s)^{\frac{r}{1-r}},U(x_i)^{\frac{r}{1-r}}\}\,ds\right)^{\frac{p(1-r)}{p-r}}\nonumber\\
		&\quad+  \sup_{t\in(0,x_{N+1})}\varphi(t)^{-\frac{r}{p-r}} \nonumber\\
		& \hspace{1.5cm} \times  \left(\int_0^{x_k}\Delta(s)^{\frac{r}{1-r}}\delta(s)U(s)^{-\frac{r}{1-r}}\min\{U(s)^{\frac{r}{1-r}},U(t)^{\frac{r}{1-r}}\}\,ds\right)^{\frac{p(1-r)}{p-r}}.\nonumber
	\end{align}
	Furthermore, since the function $U^{r/(1-r)}$ is nondecreasing as $r\in(0,1)$, we have
	\begin{align}
		&\int_0^{x_k}\Delta(s)^{\frac{r}{1-r}}\delta(s)U(s)^{-\frac{r}{1-r}}\min\{U(s)^{\frac{r}{1-r}},U(t)^{\frac{r}{1-r}}\}\,ds \nonumber\\
		&\qquad = \int_0^t\Delta^{\frac{r}{1-r}}\delta + U(t)^{\frac{r}{1-r}} \int_t^{x_k}\Delta^{\frac{r}{1-r}}\delta U^{-\frac{r}{1-r}} \nonumber\\
		&\qquad = (1-r)\Delta(t)^{\frac{1}{1-r}} + U(t)^{\frac{r}{1-r}} \int_t^{x_k}\Delta^{\frac{r}{1-r}}\delta U^{-\frac{r}{1-r}} \label{TH:antid_lemma1_rem_aux_eq1}
	\end{align}
	for every $t\in(0, x_k)$, where we used a change of variables in the second equality. Now, using the fact that the function $U^p/\varphi$ is nondecreasing, we obtain
	\begin{align*}
		&\sup_{t\in(0,x_{N+1})}\varphi(t)^{-\frac{r}{p-r}}U(t)^\frac{pr}{p-r}\left(\int_t^{x_k}\Delta^{\frac{r}{1-r}}\delta U^{-\frac{r}{1-r}}\right)^{\frac{p(1-r)}{p-r}} \\
		&\qquad \leq \sup_{t\in(0,x_{N+1})}\varphi(t)^{-\frac{r}{p-r}}U(t)^\frac{pr}{p-r}\left(\int_t^{x_{N+1}}\Delta^{\frac{r}{1-r}}\delta U^{-\frac{r}{1-r}}\right)^{\frac{p(1-r)}{p-r}} \\
		&\qquad\quad + \varphi(x_{N+1})^{-\frac{r}{p-r}}U(x_{N+1})^\frac{pr}{p-r}\left(\int_{x_{N+1}}^{x_k}\Delta^{\frac{r}{1-r}}\delta U^{-\frac{r}{1-r}}\right)^{\frac{p(1-r)}{p-r}}.
	\end{align*}
	Hence, combining the last two observations together, we have
	\begin{align}
		&\sup_{t\in(0,x_{N+1})}\varphi(t)^{-\frac{r}{p-r}}\left(\int_0^{x_k}\Delta(s)^{\frac{r}{1-r}}\delta(s)U(s)^{-\frac{r}{1-r}}\min\{U(s)^{\frac{r}{1-r}},U(t)^{\frac{r}{1-r}}\}\,ds\right)^{\frac{p(1-r)}{p-r}} \nonumber\\
		&\qquad\approx
		\sup_{t\in(0,x_{N+1})}\varphi(t)^{-\frac{r}{p-r}}\left(\Delta(t)^\frac{1}{1-r}+U(t)^\frac{r}{1-r}\int_t^{x_k}\Delta^{\frac{r}{1-r}}\delta U^{-\frac{r}{1-r}}\right)^{\frac{p(1-r)}{p-r}} \nonumber\\
		&\qquad\approx
		\sup_{t\in(0,x_{N+1})}\varphi(t)^{-\frac{r}{p-r}}\Delta(t)^\frac{p}{p-r} \nonumber\\
		& \qquad\quad +
		\sup_{t\in(0,x_{N+1})}\varphi(t)^{-\frac{r}{p-r}}U(t)^\frac{pr}{p-r}\left(\int_t^{x_k}\Delta^{\frac{r}{1-r}}\delta U^{-\frac{r}{1-r}}\right)^{\frac{p(1-r)}{p-r}} \nonumber\\
		&\qquad\leq
		\sup_{t\in(0,x_{N+1})}\varphi(t)^{-\frac{r}{p-r}}\Delta(t)^\frac{p}{p-r} \nonumber\\
		& \qquad\quad +
		\sup_{t\in(0,x_{N+1})}\varphi(t)^{-\frac{r}{p-r}}U(t)^\frac{pr}{p-r}\left(\int_t^{x_{N+1}}\Delta^{\frac{r}{1-r}}\delta U^{-\frac{r}{1-r}}\right)^{\frac{p(1-r)}{p-r}} \nonumber\\
		&\qquad\quad +
		\varphi(x_{N+1})^{-\frac{r}{p-r}}U(x_{N+1})^\frac{pr}{p-r}\left(\int_{x_{N+1}}^{x_k}\Delta^{\frac{r}{1-r}}\delta U^{-\frac{r}{1-r}}\right)^{\frac{p(1-r)}{p-r}} \nonumber\\
		&\qquad\lesssim \sup_{t\in(0,x_{N+1})}\varphi(t)^{-\frac{r}{p-r}}\left(\Delta(t)^{\frac{1}{1-r}}+U(t)^{\frac{r}{1-r}}\int_{t}^{x_{N+1}}\Delta^{\frac{r}{1-r}}\delta U^{-\frac{r}{1-r}}\right)^{\frac{p(1-r)}{p-r}} \nonumber\\
		&\qquad\quad+ \sum_{i=N+1}^{k}\varphi(x_i)^{-\frac{r}{p-r}}\left(U(x_i)^{\frac{r}{1-r}}\int_{x_i}^{x_k}\Delta^{\frac{r}{1-r}}\delta U^{-\frac{r}{1-r}}\right)^{\frac{p(1-r)}{p-r}}. \label{TH:antid_lemma1_rem_aux_eq2}
	\end{align}
	
	Finally, thanks to \eqref{R3} combined with \eqref{TH:antid_lemma1_rem_aux_eq1} and \eqref{TH:antid_lemma1_rem_aux_eq2}, we arrive at
	\begin{align}
		&\hspace{-0.5cm}\int_0^{x_k}\varphi(t)^{-\frac{r}{p-r}-2}V(t)\left(\int_t^L U^{-p}v\right)U(t)^{p-1}u(t)
		\nonumber\\
		&\hspace{1cm}\times\left(\int_0^{x_k}\Delta(s)^{\frac{r}{1-r}}\delta(s)U(s)^{-\frac{r}{1-r}}\min\{U(s)^{\frac{r}{1-r}},U(t)^{\frac{r}{1-r}}\}\,ds\right)^{\frac{p(1-r)}{p-r}}\,dt\nonumber\\
		&\lesssim
		\sum_{i=N+1}^{k}\varphi(x_i)^{-\frac{r}{p-r}}\left(\Delta(x_i)^{\frac{1}{1-r}}+U(x_i)^{\frac{r}{1-r}}\int_{x_i}^{x_k}\Delta^{\frac{r}{1-r}}\delta U^{-\frac{r}{1-r}}\right)^{\frac{p(1-r)}{p-r}}\nonumber\\
		&\quad+
		\sup_{t\in(0,x_{N+1})}\varphi(t)^{-\frac{r}{p-r}}\left(\Delta(t)^{\frac{1}{1-r}}+U(t)^{\frac{r}{1-r}}\int_{t}^{x_{N+1}}\Delta^{\frac{r}{1-r}}\delta U^{-\frac{r}{1-r}}\right)^{\frac{p(1-r)}{p-r}}.\label{R4}
	\end{align}

\section{Proof of the main result}\label{S:proof}

\begin{proof}[Proof of Theorem~\ref{T:main}]
	We start by fixing a covering sequence $\{x_k\}_{k=N}^M \in CS(\varphi, U^p, a)$ with any $a>108$ (for example, $a=109$). In the entire proof, equivalence constants depend only on $p,q,r$ (and on the completely immaterial choice of $a>108$, see Remark~\ref{rem:value_of_a_is_immaterial}). When proving a desired upper/lower bound on $C$, we always implicitly assume that the quantity on the right/left-hand side is finite.
	
	\rm{(i)} By Theorem~\ref{thm:main_discretization}, we have
	\begin{equation*}
		C\approx C_{1,1}+C_{1,2}+C_{3,1}+C_{4,1}.
	\end{equation*}
	
	First, interchanging the order of suprema gives,
	\begin{align}
		C_{1,1} & = \sup_{N+1\leq k \leq M} \esup_{s\in(x_{k-1}, x_k)}   \left( \int_{x_{k-1}}^s \delta \right)^\frac1{r} \varphi(s)^{-\frac1{p}}  \left( \int_s^{x_k} \Delta^{-\frac{q}{r}} w \right)^\frac1{q} \notag \\
		&\leq
		\sup_{N+1\leq k\leq M}\esup_{s\in(x_{k-1},x_k)} \Delta(s)^{\frac{1}{r}}\varphi(s)^{-\frac{1}{p}} \left(\int_s^L \Delta^{-\frac{q}{r}}w\right)^\frac{1}{q}
		=
		B_2. \label{EQ:C11lesB2}
	\end{align}
	
	Also, it is easy to see that
	\begin{align}
		C_{1,2}
		&\leq
		\sup_{N+1\leq k\leq M}\esup_{t\in(x_{k-1},x_k)}\left(\int_{x_{k-1}}^t w\right)^\frac{1}{q}\varphi(t)^{-\frac{1}{p}}
		\leq
		B_1, \label{EQ:C12lesB1}\\
		C_{3,1}&\leq  B_2, \label{EQ:C31lesB2}\\
		C_{4,1}&\leq B_1. \label{EQ:C41lesB1}
	\end{align}
	Hence
	\begin{align}\label{EQ:anti1up}
		C_{1,1}+C_{1,2}+C_{3,1}+C_{4,1}\lesssim B_1+B_2.
	\end{align}
	As for the opposite inequality,
	\begin{align*}
		B_1 &=
		\sup_{N+1\leq k\leq M}\esup_{t\in(x_{k-1},x_k)}W(t)^{\frac1q}\varphi(t)^{-\frac1p} \\
		&\approx
		\esup_{t\in(0,x_{N+1})} W(t)^{\frac1q}\varphi(t)^{-\frac1p} +
		\sup_{N+2\leq k\leq M} \esup_{t\in(x_{k-1},x_k)} W(t)^{\frac1q}\varphi(t)^{-\frac1p}.
	\end{align*}
	Note that $W(t)=\int_0^{x_{k-1}}w+\int_{x_{k-1}}^t w $, for $t\in (x_{k-1},x_k)$, $N+2\leq k\leq M$. Then  
	\begin{align*}
		B_1 &\approx
		\sup_{N+2\leq k\leq M} W(x_{k-1})^{\frac1q} \varphi(x_{k-1})^{-\frac1p} +\sup_{N+1\leq k\leq M} \esup_{t\in(x_{k-1},x_k)} \bigg(\int_{x_{k-1}}^t w\bigg)^{\frac1q}  \varphi(t)^{-\frac1p}.
	\end{align*}
	Thus first reindexing $(k-1)\mapsto k$ in the first term, next using \eqref{EQ:strongly_decreasing_sup_sum},  we obtain
	\begin{align}
		B_1 &  = \sup_{N+1\leq k\leq M-1} \left(\sum_{i=N+1}^k \int_{x_{i-1}}^{x_i} w\right)^{\frac1q} \varphi(x_{k-1})^{-\frac1p}  \nonumber\\
		& \hspace{2cm} +\sup_{N+1\leq k\leq M} \esup_{t\in(x_{k-1},x_k)}  \bigg(\int_{x_{k-1}}^t w\bigg)^{\frac1q}  \varphi(t)^{-\frac1p} \nonumber\\
		&\approx
		C_{4,1}+\sup_{N+1\leq k\leq M}\esup_{t\in(x_{k-1},x_k)}\left(\int_{x_{k-1}}^t w\right)^{\frac1q}\varphi(t)^{-\frac1p}. \label{EQ:C41+sthlesB1}
	\end{align}

	Next,
	\begin{align*}
		B_1
		&\approx
		C_{4,1}+\sup_{N+1\leq k\leq M}\esup_{t\in(x_{k-1},x_k)}\left(\int_{x_{k-1}}^t \Delta^{-\frac{q}{r}}w\Delta^{\frac{q}{r}}\right)^{\frac1q}\varphi(t)^{-\frac1p}\\
		&  \approx C_{4,1}+  \esup_{t\in(x_N,x_{N+1})}\left(\int_{x_N}^t \Delta^{-\frac{q}{r}}w\Delta^{\frac{q}{r}}\right)^{\frac1q}\varphi(t)^{-\frac1p}\\
		&\hspace{2cm} +\sup_{N+2\leq k\leq M}\esup_{t\in(x_{k-1},x_k)}\left(\int_{x_{k-1}}^t \Delta^{-\frac{q}{r}}w\Delta^{\frac{q}{r}}\right)^{\frac1q}\varphi(t)^{-\frac1p}.
	\end{align*}
	
	Moreover, since  \begin{equation}\label{int-cut-D}
		\Delta(t)=\Delta(x_{k-1})+\int_{x_{k-1}}^t \delta, \,\, t\in (x_{k-1},x_k), \, N+2\leq k\leq M,
	\end{equation}
	we have 
	\begin{align*}
		B_1 
		&\approx
		C_{4,1}
		+
		\sup_{N+1\leq k\leq M}\esup_{t\in(x_{k-1},x_k)}\left(\int_{x_{k-1}}^t \Delta (s)^{-\frac{q}{r}}w(s)\left(\int_{x_{k-1}}^s\delta\right)^{\frac{q}{r}}\,ds\right)^{\frac1q}\varphi(t)^{-\frac1p}
		\nonumber\\
		&\qquad +
		\sup_{N+2\leq k\leq M}\Delta(x_{k-1})^{\frac{1}{r}}\esup_{t\in(x_{k-1},x_k)}\left(\int_{x_{k-1}}^t \Delta^{-\frac{q}{r}}w\right)^{\frac1q}\varphi(t)^{-\frac1p}\\
		&\leq
		C_{4,1}
		+
		C_{1,2}
		+
		\sup_{N+2\leq k\leq M}\Delta(x_{k-1})^{\frac{1}{r}}\left(\int_{x_{k-1}}^L \Delta^{-\frac{q}{r}}w\right)^{\frac1q}\varphi(x_{k-1})^{-\frac1p}.
		\nonumber
	\end{align*}
	On the other hand, reindexing $k \to k+1$ and using 
	\eqref{EQ:strongly_decreasing_sup_sum} again (note that the sequence $\{\left(\int_{x_{k}}^L \Delta^{-\frac{q}{r}}w\right)^{\frac1q}\varphi(x_{k})^{-\frac1p}\}_{k=N+1}^{M-1}$ is strongly decreasing)
	gives,
	\begin{align}\label{C-31-estimate-new}
		& \sup_{N+2\leq k\leq M} \Delta(x_{k-1})^{\frac1r}\varphi(x_{k-1})^{-\frac1p}\left(\int_{x_{k-1}}^L\Delta^{-\frac{q}{r}}w\right)^{\frac{1}{q}} \nonumber\\
		& \qquad \leq 
		\sup_{N+1\leq k\leq M-1} \left(\int_{x_{k}}^L\Delta^{-\frac{q}{r}}w\right)^{\frac{1}{q}}\varphi(x_{k})^{-\frac1p} \left(\sum_{i=N+1}^k\int_{x_{i-1}}^{x_i}\delta\right)^{\frac1r}\nonumber
		\\
		&\qquad  \approx \sup_{N+1\leq k\leq M-1} \left(\int_{x_{k}}^L\Delta^{-\frac{q}{r}}w\right)^{\frac{1}{q}}\varphi(x_{k})^{-\frac1p} \left(\int_{x_{k-1}}^{x_k}\delta\right)^{\frac1r}\nonumber \\
		&\qquad \leq C_{3,1}.
	\end{align}
	Then plugging this in $B_1$, we get
	\begin{align}\label{EQ:antid_i_B1}
		B_1  &\lesssim
		C_{4,1}
		+
		C_{1,2}
		+C_{3,1}.
	\end{align}

	On the other hand, in view of \eqref{int-cut-D}, we have
	\begin{align*}
		B_2 &= \sup_{N+1\leq k\leq M} \esup_{t\in(x_{k-1},x_k)} \Delta(t)^{\frac1r} \varphi(t)^{-\frac1p} \left(\int_t^L\Delta^{-\frac{q}{r}} w \right)^{\frac{1}{q}}
		\\
		&\approx
		\esup_{t\in(x_N,x_{N+1})}  \bigg(\int_{x_N}^t \delta\bigg)^{\frac1r} \varphi(t)^{-\frac1p}\left(\int_t^L\Delta^{-\frac{q}{r}}w\right)^{\frac{1}{q}}  \\
		& \qquad +
		\sup_{N+2\leq k\leq M}\esup_{t\in(x_{k-1},x_k)} \Delta(t)^{\frac1r}\varphi(t)^{-\frac1p}\left(\int_t^L\Delta^{-\frac{q}{r}}w\right)^{\frac{1}{q}}
		\\
		&\approx
		\sup_{N+1\leq k\leq M} \esup_{t\in(x_{k-1},x_k)} \left(\int_{x_{k-1}}^t\delta\right)^{\frac1r}\varphi(t)^{-\frac1p}\left(\int_t^{L}\Delta^{-\frac{q}{r}}w\right)^{\frac{1}{q}}
		\\
		&\qquad+
		\sup_{N+2\leq k\leq M} \Delta(x_{k-1})^{\frac1r}\varphi(x_{k-1})^{-\frac1p}\left(\int_{x_{k-1}}^L\Delta^{-\frac{q}{r}}w\right)^{\frac{1}{q}}
		\\
		&\approx
		\esup_{t\in(x_{M-1}, x_M)} \left(\int_{x_{M-1}}^t\delta\right)^{\frac1r}\varphi(t)^{-\frac1p}\left(\int_t^{x_M}\Delta^{-\frac{q}{r}}w\right)^{\frac{1}{q}}
		\\
		&\qquad +\sup_{N+1\leq k\leq M-1} \esup_{t\in(x_{k-1},x_k)} \left(\int_{x_{k-1}}^t\delta\right)^{\frac1r}\varphi(t)^{-\frac1p}\left(\int_t^{L}\Delta^{-\frac{q}{r}}w\right)^{\frac{1}{q}}
		\\
		&\qquad+
		\sup_{N+2\leq k\leq M} \Delta(x_{k-1})^{\frac1r}\varphi(x_{k-1})^{-\frac1p}\left(\int_{x_{k-1}}^L\Delta^{-\frac{q}{r}}w\right)^{\frac{1}{q}}.
	\end{align*}
	Furthermore, as
	\begin{equation}\label{int-cut-Dw}
		\int_t^L \Delta^{-\frac{q}{r}}w =\int_t^{x_{k}} \Delta^{-\frac{q}{r}}w + \int_{x_{k}}^L \Delta^{-\frac{q}{r}}w, \, \, t\in (x_{k-1},x_k),\, N+1\leq k\leq M-1,
	\end{equation}
	we have
	\begin{align}\label{EQ:antid_i_B2-1}
		B_2
		&\approx
		\sup_{N+1\leq k\leq M}\esup_{t\in(x_{k-1},x_k)} \left(\int_{x_{ k-1}}^t\delta\right)^{\frac1r}\varphi(t)^{-\frac1p}\left(\int_t^{x_k}\Delta^{-\frac{q}{r}}w\right)^{\frac{1}{q}}
		\nonumber\\
		&\qquad+
		\sup_{N+1\leq k\leq M-1}\left(\int_{x_k}^L\Delta^{-\frac{q}{r}}w\right)^{\frac{1}{q}}\esup_{t\in(x_{k-1},x_k)} \left(\int_{x_{k-1}}^t\delta\right)^{\frac1r}\varphi(t)^{-\frac1p}
		\nonumber\\
		&\qquad+
		\sup_{N+2\leq k\leq M} \Delta(x_{k-1})^{\frac1r}\varphi(x_{k-1})^{-\frac1p}\left(\int_{x_{k-1}}^L\Delta^{-\frac{q}{r}}w\right)^{\frac{1}{q}}
		\nonumber\\
		&\leq
		C_{1,1}+C_{3,1}+
		\sup_{N+2\leq k\leq M} \Delta(x_{k-1})^{\frac1r}\varphi(x_{k-1})^{-\frac1p}\left(\int_{x_{k-1}}^L\Delta^{-\frac{q}{r}}w\right)^{\frac{1}{q}}.
	\end{align}
	Therefore, plugging \eqref{C-31-estimate-new} into  \eqref{EQ:antid_i_B2-1}, we have
	\begin{equation}\label{EQ:antid_i_B2}
		B_2\lesssim C_{1,1} + C_{3,1}.
	\end{equation}
	
	Thus arrive at
	\begin{equation}\label{EQ:B1+B2UpperBoundCase1}
		B_1+B_2\lesssim C_{1,1} + C_{1,2} + C_{3,1} + C_{4,1},
	\end{equation}
	which together with \eqref{EQ:anti1up} gives
	\begin{align*}
		C\approx B_1+B_2.
	\end{align*}
	
	\rm{(ii)} By Theorem~\ref{thm:main_discretization}, we have
	\begin{equation*}
		C\approx C_{1,2}+C_{1,3}+C_{3,2}+C_{4,1}.
	\end{equation*}
	We start by establishing the desired upper estimate on $C$. In view of \eqref{EQ:C12lesB1} and \eqref{EQ:C41lesB1}, we only need to prove suitable upper estimates on $C_{1,3}$ and $C_{3,2}$.
	
	Observe that, since $\varphi\in Q_{U^p}$, $\{x_k\}_{k=N}^{M}\in CS(\varphi,U^p,a)$, by Lemma~\ref{Lem:Decomp}, the index set $\K^+=\{k\in\Z\colon N+1\leq k \leq M\}$ can be decomposed into $\K^+ = \mathcal{Z}_1\cup\mathcal{Z}_2$, where $\mathcal{Z}_1\cap\mathcal{Z}_1=\emptyset$, in such a way that 
	\begin{align}\label{phi-Z1}
		\varphi(t) \approx \varphi(x_k) \quad&\text{for all $t\in[x_{k-1},x_k]$ and every $k\in\mathcal{Z}_1$},
	\end{align}
	and
	\begin{align}\label{Uphi-Z2}
		\frac{U(t)}{\varphi(t)^{\frac{1}{p}}}\approx \frac{U(x_k)}{\varphi(x_k)^{\frac{1}{p}}}\quad&\text{for all $t\in[x_{k-1},x_k]$ and every $k\in\mathcal{Z}_2$}.
	\end{align}
	Then, for $k\in\mathcal{Z}_1$ and $t\in(x_{k-1},x_{k}]$
	\begin{align}\label{EQ:antid_ii_1}
		\left( \int_{x_{k-1}}^t \left( \int_{x_{k-1}}^{s} \delta \right)^\frac{r}{1-r} \delta(s)\varphi(s)^{-\frac{r}{p(1-r)}}\, ds \right)^{\frac{1-r}{r}}& \nonumber\\
		&\hspace{-2.5cm}\approx
		\varphi(x_k)^{-\frac{1}{p}}\left( \int_{x_{k-1}}^t \left( \int_{x_{k-1}}^{s} \delta \right)^\frac{r}{1-r} \delta(s)\, ds \right)^{\frac{1-r}{r}}\nonumber\\
		&\hspace{-2.5cm}\leq
		\varphi(x_k)^{-\frac{1}{p}}\Delta(t)^{\frac1r},
	\end{align}
	and  for $k\in\mathcal{Z}_2$ and $t\in(x_{k-1},x_{k}]$
	\begin{align}
		&\left( \int_{x_{k-1}}^t \left( \int_{x_{k-1}}^{s} \delta \right)^\frac{r}{1-r} \delta(s)\varphi(s)^{-\frac{r}{p(1-r)}}\, ds \right)^{\frac{1-r}{r}}\nonumber\\
		&\qquad\qquad\approx
		U(x_{k-1})\varphi(x_{k-1})^{-\frac{1}{p}}\left( \int_{x_{k-1}}^t \left( \int_{x_{k-1}}^{s} \delta \right)^\frac{r}{1-r} \delta(s)U(s)^{-\frac{r}{1-r}}\, ds \right)^{\frac{1-r}{r}}\label{nofor6}\\
		&\qquad\qquad\leq
		\esup_{\tau\in(x_{k-1},t)}U(\tau)\varphi(\tau)^{-\frac{1}{p}}\left( \int_{\tau}^t \Delta^\frac{r}{1-r} \delta U^{-\frac{r}{1-r}} \right)^{\frac{1-r} {r}}.\label{EQ:antid_ii_2}
	\end{align}
	Consequently, using \eqref{EQ:antid_ii_1} and \eqref{EQ:antid_ii_2}, we have
	\begin{align*}
		C_{1,3}
		&\lesssim
		\sup_{\substack{ k\in\mathcal{Z}_1 \\ N+1\le k\le M}} \esup_{t\in(x_{k-1}, x_k)} \left( \int_t^{x_k} \Delta^{-\frac{q}{r}} w \right)^\frac1{q}\varphi(x_k)^{-\frac1p}\Delta(t)^{\frac1r}\\
		&\quad+
		\sup_{\substack{ k\in\mathcal{Z}_2 \\ N+1\le k\le M}}\esup_{t\in(x_{k-1}, x_k)} \left( \int_t^{x_k} \Delta^{-\frac{q}{r}} w \right)^\frac1{q} \\
		& \hspace{3cm} \times \esup_{\tau\in(x_{k-},t)}U(\tau)\varphi(\tau)^{-\frac{1}{p}}\left( \int_{\tau}^t \Delta^\frac{r}{1-r} \delta U^{-\frac{r}{1-r}} \right)^{\frac{1-r}{r}} \\
		&\leq \sup_{\substack{ k\in\mathcal{Z}_1 \\ N+1\le k\le M}}\left( \int_{x_{k-1}}^{x_k}  w \right)^\frac1{q}\varphi(x_k)^{-\frac1p}\\
		&\quad+
		\sup_{\substack{ k\in\mathcal{Z}_2 \\ N+1\le k\le M}}\esup_{t\in(x_{k-1}, x_k)} \left( \int_t^{x_k} \Delta^{-\frac{q}{r}} w \right)^\frac1{q} \\
		& \hspace{3cm} \times \esup_{\tau\in(x_{k-1},t)}U(\tau)\varphi(\tau)^{-\frac{1}{p}}\left( \int_{\tau}^t \Delta^\frac{r}{1-r} \delta U^{-\frac{r}{1-r}} \right)^{\frac{1-r}{r}},
	\end{align*}
	where we used the monotonicity of $\Delta$ for the last inequality. Thus
	\begin{align}
		C_{1,3}
		&\lesssim
		\sup_{\substack{ k\in\mathcal{Z}_1 \\ N+1\le k\le M}} W(x_k)^{\frac1q} \varphi(x_k)^{-\frac1p} \nonumber\\
		&\quad+
		\sup_{\substack{ k\in\mathcal{Z}_2 \\ N+1\le k\le M}}\esup_{t\in(x_{k-1}, x_k)} \left( \int_t^{L} \Delta^{-\frac{q}{r}} w \right)^\frac1{q} \nonumber\\
		& \hspace{3cm}\esup_{\tau\in(0,t)}U(\tau)\varphi(\tau)^{-\frac{1}{p}}\left( \int_{\tau}^t \Delta^\frac{r}{1-r} \delta U^{-\frac{r}{1-r}}\right)^{\frac{1-r}{r}} \nonumber\\
		&\leq
		B_1+B_3. \label{EQ:C13lessB1+B3}
	\end{align}
	Next, since $U\varphi^{-\frac1p}\in Q_{U}(0,L)$, \eqref{KMT-Lemma3.5} applied to $p\mapsto \frac{1-r}{r}$, $\varrho\mapsto U$, ${ \widetilde{\varphi}}\mapsto U\varphi^{-\frac1p}$ and $g\mapsto \Delta^{\frac{r}{1-r}}\delta$, in which the symbols on the left-hand sides refer to those in \eqref{KMT-Lemma3.5}, gives us
	\begin{align}\label{EQ:antid_ii_4}
		&\esup_{s\in(0,x_k)}U(s)\varphi(s)^{-\frac1p}\left(\int_0^{x_k}\frac{\Delta(\tau)^{\frac{r}{1-r}}\delta(\tau)}{U(s)^\frac{r}{1-r}+U(\tau)^\frac{r}{1-r}}\,d\tau \right)^{\frac{1-r}{r}}\nonumber\\
		&\qquad\approx
		\sup_{N\leq i\leq k}U(x_i)\varphi(x_i)^{-\frac1p}\left(\int_0^{x_k}\frac{\Delta(\tau)^{\frac{r}{1-r}}\delta(\tau)}{U(x_i)^\frac{r}{1-r}+U(\tau)^\frac{r}{1-r}}\, d\tau \right)^{\frac{1-r}{r}}\nonumber\\
		&\qquad\approx
		\sup_{N+1\leq i\leq k}\left(\int_{x_{i-1}}^{x_i}\Delta^{\frac{r}{1-r}}\delta \varphi^{-\frac{r}{p(1-r)}} \right)^{\frac{1-r}{r}}.
	\end{align}
	Consequently
	\begin{align}
		C_{3,2}
		&\leq
		\sup_{N+1\leq k \leq M-1} \bigg(\int_{x_k}^L \Delta^{-\frac{q}{r}} w \bigg)^{\frac{1}{q}}  \sup_{N+1\leq i\leq k}\bigg(\int_{x_{i-1}}^{x_i} \Delta^{\frac{r}{1-r}} \delta \varphi^{-\frac{r}{p(1-r)}}\bigg)^{\frac{1-r}{r}} \nonumber\\
		&\approx
		\sup_{N+1\leq k \leq M-1} \bigg(\int_{x_k}^L \Delta^{-\frac{q}{r}} w \bigg)^{\frac{1}{q}}\nonumber\\
		& \hspace{2cm} \times \esup_{s\in(0,x_k)}U(s)\varphi(s)^{-\frac1p}\left(\int_0^{x_k}\frac{\Delta(\tau)^{\frac{r}{1-r}}\delta(\tau)}{U(s)^\frac{r}{1-r}+U(\tau)^\frac{r}{1-r}}\,d\tau \right)^{\frac{1-r}{r}} \nonumber\\
		&\leq
		\sup_{N+1\leq k \leq M-1} \esup_{t\in(x_{k-1},x_k)} \bigg(\int_{t}^L \Delta^{-\frac{q}{r}} w \bigg)^{\frac{1}{q}}\nonumber\\
		& \hspace{2cm} \times \esup_{s\in(0,t)}U(s)\varphi(s)^{-\frac1p}\left(\int_0^{t}\frac{\Delta(\tau)^{\frac{r}{1-r}}\delta(\tau)}{U(s)^\frac{r}{1-r}+U(\tau)^\frac{r}{1-r}}\,d\tau \right)^{\frac{1-r}{r}} \nonumber\\
		&\approx \sup_{N+1\leq k \leq M-1} \esup_{t\in(x_{k-1},x_k)} \bigg(\int_{t}^L \Delta^{-\frac{q}{r}} w \bigg)^{\frac{1}{q}}\esup_{s\in(0,t)}\varphi(s)^{-\frac1p}\left(\int_0^{s}\Delta^{\frac{r}{1-r}}\delta \right)^{\frac{1-r}{r}}
		\nonumber\\
		&\qquad \quad + \sup_{N+1\leq k \leq M-1} \esup_{t\in(x_{k-1},x_k)} \bigg(\int_{t}^L \Delta^{-\frac{q}{r}} w \bigg)^{\frac{1}{q}} \nonumber\\ & \hspace{3cm}\times\esup_{s\in(0,t)}U(s)\varphi(s)^{-\frac1p}\left(\int_s^t \Delta^{\frac{r}{1-r}}\delta U^{-\frac{r}{1-r}} \right)^{\frac{1-r}{r}} \nonumber
	\end{align}
	where, we applied \eqref{EQ:antid_ii_4} for the first equivalence and \eqref{min-equiv} for the second one. Thus, monotonicity of $\Delta$ yields,
	\begin{align}
		C_{3,2}
		&\lesssim \sup_{N+1\leq k \leq M-1} \esup_{t\in(x_{k-1},x_k)} \bigg(\int_{t}^L \Delta^{-\frac{q}{r}} w \bigg)^{\frac{1}{q}}\esup_{s\in(0,t)}\varphi(s)^{-\frac1p} \Delta(s)^{\frac{1}{r}}
		\nonumber\\
		&\qquad \quad + \sup_{N+1\leq k \leq M-1} \esup_{t\in(x_{k-1},x_k)} \bigg(\int_{t}^L \Delta^{-\frac{q}{r}} w \bigg)^{\frac{1}{q}} \nonumber\\ & \hspace{3cm}\times\esup_{s\in(0,t)}U(s)\varphi(s)^{-\frac1p}\left(\int_s^t \Delta^{\frac{r}{1-r}}\delta U^{-\frac{r}{1-r}} \right)^{\frac{1-r}{r}}    \nonumber\\
		&\approx B_2+B_3, \label{EQ:C32lessB2+B3}
	\end{align}
	where, we interchanged the order of the suprema in the first term to obtain the equivalence.
	
	Altogether, we have
	\begin{align}\label{EQ:antid_ii_B123}
		C_{1,2} + C_{1,3} + C_{3,2} + C_{4,1}\lesssim B_1+B_2+B_3.
	\end{align}
	
	On the other hand, we shall prove that $B_1+B_2+B_3 \lesssim C_{1,2}+C_{1,3}+C_{3,2}+C_{4,1}$. First, applying \eqref{sup-int}, it is clear that
	\begin{align}\label{EQ:C11lessC13}
		C_{1,1}  &= \sup_{N+1\leq k \leq M} \esup_{t\in(x_{k-1}, x_k)} \left( \int_t^{x_k} \Delta^{-\frac{q}{r}} w \right)^\frac1{q} \esup_{s\in(x_{k-1}, t)} \left( \int_{x_{k-1}}^s \delta \right)^\frac1{r} \varphi(s)^{-\frac1{p}} \nonumber\\
		& \lesssim  \sup_{N+1\leq k \leq M} \esup_{t\in(x_{k-1}, x_k)} \left( \int_t^{x_k} \Delta^{-\frac{q}{r}} w \right)^\frac1{q}\nonumber \\
		& \hspace{3cm} \times\bigg(\int_{x_{k-1}}^t \bigg(\int_{x_{k-1}}^{\tau} \delta\bigg)^{\frac{r}{1-r}} \delta(\tau) \varphi(\tau)^{-\frac{r}{p(1-r)}} \,d\tau \bigg)^\frac{1-r}{r} \nonumber \\
		& = C_{1,3}.
	\end{align}
	Therefore, using \eqref{EQ:B1+B2UpperBoundCase1} together with \eqref{EQ:C11lessC13} and \eqref{EQ:C31lessC32}, we arrive at
	\begin{align}\label{EQ:antid_ii_B12}
		B_1+B_2\lesssim C_{1,3}+C_{1,2}+C_{3,2}+C_{4,1}.
	\end{align}
	
	It remains to find a suitable upper estimate for $B_3$.  Observe first that
	\begin{align}\label{sup_cut}
		\esup_{s\in(0,t)} U(s)\varphi(s)^{-\frac1p} &\left(\int_s^t\Delta^{\frac{r}{1-r}}\delta U^{-\frac{r}{1-r}}\right)^{\frac{1-r}{r}} \nonumber\\
		&\quad \approx \esup_{s\in(0,x_{k-1})}U(s)\varphi(s)^{-\frac1p}\left(\int_s^t\Delta^{\frac{r}{1-r}}\delta U^{-\frac{r}{1-r}}\right)^{\frac{1-r}{r}}\nonumber\\
		&\qquad+\esup_{s\in(x_{k-1},t)}U(s)\varphi(s)^{-\frac1p}\left(\int_s^t\Delta^{\frac{r}{1-r}}\delta U^{-\frac{r}{1-r}}\right)^{\frac{1-r}{r}}
	\end{align}
	for $t\in (x_{k-1}, x_k)$, $N+2 \leq k \leq M$.
	
	Applying \eqref{int-cut-Dw}, we have
	\begin{align*}
		B_3 &=
		\sup_{N+1\leq k\leq M}\esup_{t\in(x_{k-1},x_k)} \left(\int_t^L\Delta^{-\frac{q}{r}}w\right)^{\frac{1}{q}} \esup_{s\in(0,t)}U(s)\varphi(s)^{-\frac1p}\left(\int_s^t\Delta^{\frac{r}{1-r}}\delta U^{-\frac{r}{1-r}}\right)^{\frac{1-r}{r}}\\
		&  \approx  \sup_{N+1\leq k\leq M-1}\esup_{t\in(x_{k-1},x_k)} \left(\int_t^L\Delta^{-\frac{q}{r}}w\right)^{\frac{1}{q}} \esup_{s\in(0,t)}U(s)\varphi(s)^{-\frac1p}\left(\int_s^t\Delta^{\frac{r}{1-r}}\delta U^{-\frac{r}{1-r}}\right)^{\frac{1-r}{r}}\\
		&  + \esup_{t\in(x_{M-1},x_M)} \left(\int_t^L\Delta^{-\frac{q}{r}}w\right)^{\frac{1}{q}} \esup_{s\in(0,t)}U(s)\varphi(s)^{-\frac1p}\left(\int_s^t\Delta^{\frac{r}{1-r}}\delta U^{-\frac{r}{1-r}}\right)^{\frac{1-r}{r}}\\
		&  \approx  \sup_{N+1\leq k\leq M-1} \esup_{t\in(x_{k-1},x_k)} \left(\int_t^{x_k}\Delta^{-\frac{q}{r}}w\right)^{\frac{1}{q}} \esup_{s \in (0,t)} U(s) \varphi(s)^{-\frac1p} \left(\int_s^t\Delta^{\frac{r}{1-r}}\delta U^{-\frac{r}{1-r}}\right)^{\frac{1-r}{r}}\\
		&  +\sup_{N+1\leq k\leq M-1} \left(\int_{x_k}^L \Delta^{-\frac{q}{r}}w\right)^{\frac{1}{q}} \esup_{s\in(0,x_k)}U(s)\varphi(s)^{-\frac1p}\left(\int_s^{x_k}\Delta^{\frac{r}{1-r}}\delta U^{-\frac{r}{1-r}}\right)^{\frac{1-r}{r}}\\
		&  + \esup_{t\in(x_{M-1},x_M)} \left(\int_t^{x_M}\Delta^{-\frac{q}{r}}w\right)^{\frac{1}{q}} \esup_{s \in (0,t)} U(s)\varphi(s)^{-\frac1p}\left(\int_s^t\Delta^{\frac{r}{1-r}}\delta U^{-\frac{r}{1-r}}\right)^{\frac{1-r}{r}}\\
		& \approx  \sup_{N+1\leq k\leq M} \esup_{t\in(x_{k-1},x_k)} \left(\int_t^{x_k}\Delta^{-\frac{q}{r}}w\right)^{\frac{1}{q}} \esup_{s\in(0,t)} U(s)\varphi(s)^{-\frac1p}\left(\int_s^t\Delta^{\frac{r}{1-r}}\delta U^{-\frac{r}{1-r}}\right)^{\frac{1-r}{r}}\\
		&  +\sup_{N+1\leq k\leq M-1} \left(\int_{x_k}^L \Delta^{-\frac{q}{r}}w\right)^{\frac{1}{q}} \esup_{s\in(0,x_k)}U(s)\varphi(s)^{-\frac1p}\left(\int_s^t\Delta^{\frac{r}{1-r}}\delta U^{-\frac{r}{1-r}}\right)^{\frac{1-r}{r}}.
	\end{align*}
	Now, using \eqref{sup_cut}, we obtain
	\begin{align*}
		B_3 &  \approx \esup_{t\in(x_{N},x_{N+1})} \left(\int_t^{x_{N+1}}\Delta^{-\frac{q}{r}}w\right)^{\frac{1}{q}} \esup_{s\in(x_N,t)}U(s)\varphi(s)^{-\frac1p}\left(\int_s^t\Delta^{\frac{r}{1-r}}\delta U^{-\frac{r}{1-r}}\right)^{\frac{1-r}{r}}\\
		&  + \sup_{N+2\leq k\leq M} \esup_{t\in(x_{k-1},x_k)} \left(\int_t^{x_k}\Delta^{-\frac{q}{r}}w\right)^{\frac{1}{q}} \esup_{s\in(0,t)} U(s)\varphi(s)^{-\frac1p}\left(\int_s^t\Delta^{\frac{r}{1-r}}\delta U^{-\frac{r}{1-r}}\right)^{\frac{1-r}{r}}\\
		&  +\sup_{N+1\leq k\leq M-1} \left(\int_{x_k}^L \Delta^{-\frac{q}{r}}w\right)^{\frac{1}{q}} \esup_{s\in(0,x_k)}U(s)\varphi(s)^{-\frac1p}\left(\int_s^t\Delta^{\frac{r}{1-r}}\delta U^{-\frac{r}{1-r}}\right)^{\frac{1-r}{r}}\\
		&  \approx \esup_{t\in(x_{N},x_{N+1})} \left(\int_t^{x_{N+1}}\Delta^{-\frac{q}{r}}w\right)^{\frac{1}{q}} \esup_{s\in(x_N,t)}U(s)\varphi(s)^{-\frac1p}\left(\int_s^t\Delta^{\frac{r}{1-r}}\delta U^{-\frac{r}{1-r}}\right)^{\frac{1-r}{r}}\\
		&  + \sup_{N+2\leq k\leq M} \esup_{t\in(x_{k-1},x_k)} \left(\int_t^{x_k}\Delta^{-\frac{q}{r}}w\right)^{\frac{1}{q}} \esup_{s\in(0,x_{k-1})} U(s)\varphi(s)^{-\frac1p}\left(\int_s^t\Delta^{\frac{r}{1-r}}\delta U^{-\frac{r}{1-r}}\right)^{\frac{1-r}{r}}\\
		&  + \sup_{N+2\leq k\leq M} \esup_{t\in(x_{k-1},x_k)} \left(\int_t^{x_k}\Delta^{-\frac{q}{r}}w\right)^{\frac{1}{q}} \esup_{s\in(x_{k-1},t)} U(s) \varphi(s)^{-\frac1p}\left(\int_s^t\Delta^{\frac{r}{1-r}}\delta U^{-\frac{r}{1-r}}\right)^{\frac{1-r}{r}}\\
		& +\sup_{N+1\leq k\leq M-1} \left(\int_{x_k}^L \Delta^{-\frac{q}{r}}w\right)^{\frac{1}{q}} \esup_{s\in(0,x_k)}U(s)\varphi(s)^{-\frac1p}\left(\int_s^t\Delta^{\frac{r}{1-r}}\delta U^{-\frac{r}{1-r}}\right)^{\frac{1-r}{r}}\\
		&\approx
		\sup_{N+2\leq k\leq M}\esup_{t\in(x_{k-1},x_k)} \left(\int_t^{x_k}\Delta^{-\frac{q}{r}}w\right)^{\frac{1}{q}} \esup_{s\in(0,x_{k-1})}U(s)\varphi(s)^{-\frac1p}\left(\int_s^t\Delta^{\frac{r}{1-r}}\delta U^{-\frac{r}{1-r}}\right)^{\frac{1-r}{r}}\\
		& +
		\sup_{N+1\leq k\leq M}\esup_{t\in(x_{k-1},x_k)} \left(\int_t^{x_{k}}\Delta^{-\frac{q}{r}}w\right)^{\frac{1}{q}} \esup_{s\in(x_{k-1},t)}U(s)\varphi(s)^{-\frac1p}\left(\int_s^t\Delta^{\frac{r}{1-r}}\delta U^{-\frac{r}{1-r}}\right)^{\frac{1-r}{r}}\\
		& +
		\sup_{N+1\leq k\leq M-1}\left(\int_{x_k}^L\Delta^{-\frac{q}{r}}w\right)^{\frac{1}{q}} \esup_{s\in(0,x_k)}U(s)\varphi(s)^{-\frac1p}\left(\int_s^{x_k}\Delta^{\frac{r}{1-r}}\delta U^{-\frac{r}{1-r}}\right)^{\frac{1-r}{r}}.
	\end{align*}
	Next, decomposing the integral $\int_s^t$ into the sum $\int_s^{x_{k-1}}+ \int_{x_{k-1}}^t$ and using the monotonicities of $\int_t^{x_k} \Delta^{-\frac{q}{r}}w$ and $U(t)\varphi(t)^{-\frac1p}$, we have
	\begin{align*}
		B_3 
		&\approx
		\sup_{N+2\leq k\leq M}  \bigg(\int_{x_{k-1}}^{x_k}\Delta^{-\frac{q}{r}}w\bigg)^{\frac{1}{q}} \esup_{s\in(0,x_{k-1})}U(s)\varphi(s)^{-\frac1p}\bigg(\int_s^{x_{k-1}}\Delta^{\frac{r}{1-r}}\delta U^{-\frac{r}{1-r}}\bigg)^{\frac{1-r}{r}}\\
		&\hspace{-0.1cm}+ \sup_{N+2\leq k\leq M}\esup_{t\in(x_{k-1},x_k)} \bigg(\int_t^{x_k}\Delta^{-\frac{q}{r}}w\bigg)^{\frac{1}{q}} U(x_{k-1})\varphi(x_{k-1})^{-\frac1p}\bigg(\int_{x_{k-1}}^t\Delta^{\frac{r}{1-r}}\delta U^{-\frac{r}{1-r}}\bigg)^{\frac{1-r}{r}}\\
		&\hspace{-0.1cm} +
		\sup_{N+1\leq k\leq M}\esup_{t\in(x_{k-1},x_k)} \bigg(\int_t^{x_{k}}\Delta^{-\frac{q}{r}}w\bigg)^{\frac{1}{q}} \esup_{s\in(x_{k-1},t)}U(s)\varphi(s)^{-\frac1p}\bigg(\int_s^t\Delta^{\frac{r}{1-r}}\delta U^{-\frac{r}{1-r}}\bigg)^{\frac{1-r}{r}}\\
		&\hspace{-0.1cm} +
		\sup_{N+1\leq k\leq M-1}\bigg(\int_{x_k}^L\Delta^{-\frac{q}{r}}w\bigg)^{\frac{1}{q}} \esup_{s\in(0,x_k)}U(s)\varphi(s)^{-\frac1p}\bigg(\int_s^{x_k}\Delta^{\frac{r}{1-r}}\delta U^{-\frac{r}{1-r}}\bigg)^{\frac{1-r}{r}}.
	\end{align*}
	Since, when $t\in (x_{k-1}, x_k)$ 
	\begin{align*}
		U(x_{k-1})\varphi(x_{k-1})^{-\frac1p}\left(\int_{x_{k-1}}^t\Delta^{\frac{r}{1-r}}\delta U^{-\frac{r}{1-r}}\right)^{\frac{1-r}{r}}& \\
		& \hspace{-4cm}\leq
		\esup_{s\in(x_{k-1},t)}U(s)\varphi(s)^{-\frac1p}\left(\int_s^t\Delta^{\frac{r}{1-r}}\delta U^{-\frac{r}{1-r}}\right)^{\frac{1-r}{r}}
	\end{align*}
	holds,   
	\begin{align*}
		B_3 &  \lesssim
		\sup_{N+2\leq k\leq M}  \left(\int_{x_{k-1}}^{L}\Delta^{-\frac{q}{r}}w\right)^{\frac{1}{q}} \esup_{s\in(0,x_{k-1})}U(s)\varphi(s)^{-\frac1p}\left(\int_s^{x_{k-1}}\Delta^{\frac{r}{1-r}}\delta U^{-\frac{r}{1-r}}\right)^{\frac{1-r}{r}}\\
		&  +
		\sup_{N+1\leq k\leq M}\esup_{t\in(x_{k-1},x_k)} \left(\int_t^{x_{k}}\Delta^{-\frac{q}{r}}w\right)^{\frac{1}{q}} \esup_{s\in(x_{k-1},t)}U(s)\varphi(s)^{-\frac1p}\left(\int_s^t\Delta^{\frac{r}{1-r}}\delta U^{-\frac{r}{1-r}}\right)^{\frac{1-r}{r}}\\
		& + 
		\sup_{N+1\leq k\leq M-1}\left(\int_{x_k}^L\Delta^{-\frac{q}{r}}w\right)^{\frac{1}{q}} \esup_{s\in(0,x_k)}U(s)\varphi(s)^{-\frac1p}\left(\int_s^{x_k}\Delta^{\frac{r}{1-r}}\delta U^{-\frac{r}{1-r}}\right)^{\frac{1-r}{r}}.
	\end{align*}
	Reindexing $k\mapsto k+1$ in the first term, we get
	\begin{align*}
		B_3&\lesssim
		\sup_{N+1\leq k\leq M-1}\left(\int_{x_k}^L\Delta^{-\frac{q}{r}}w\right)^{\frac{1}{q}} \\
		& \hspace{2cm} \times \esup_{s\in(0,x_k)}U(s)\varphi(s)^{-\frac1p}\left(\int_s^{x_k}\Delta^{\frac{r}{1-r}}\delta U^{-\frac{r}{1-r}}\, d\tau\right)^{\frac{1-r}{r}}\\
		&  +
		\sup_{N+1\leq k\leq M}\esup_{t\in(x_{k-1},x_k)} \left(\int_t^{x_{k}}\Delta^{-\frac{q}{r}}w\right)^{\frac{1}{q}} \\
		& \hspace{2cm} \times \esup_{s\in(x_{k-1},t)}U(s)\varphi(s)^{-\frac1p}\left(\int_s^t\Delta^{\frac{r}{1-r}}\delta U^{-\frac{r}{1-r}}\right)^{\frac{1-r}{r}}.
	\end{align*}
	
	In view of \eqref{min-equiv}, we have
	\begin{align}\label{1}
		\esup_{s\in(0,x_k)}U(s)\varphi(s)^{-\frac1p} &\left(\int_s^{x_k}\Delta^{\frac{r}{1-r}}\delta U^{-\frac{r}{1-r}}\right)^{\frac{1-r}{r}}\nonumber\\
		&  \lesssim \esup_{s\in(0,x_k)}U(s)\varphi(s)^{-\frac1p}\left(\int_0^{x_k} \frac{\Delta(\tau)^{\frac{r}{1-r}}\delta(\tau)}{U(s)^{\frac{r}{1-r}}+U(\tau)^{\frac{r}{1-r}}}\,d\tau \right)^{\frac{1-r}{r}}.
	\end{align}
	Then,
	\begin{align*}
		B_3
		&\lesssim
		\sup_{N+1\leq k\leq M-1}\left(\int_{x_k}^L\Delta^{-\frac{q}{r}}w\right)^{\frac{1}{q}} \\
		& \hspace{2cm} \times \esup_{s\in(0,x_k)}U(s)\varphi(s)^{-\frac1p}\left(\int_0^{x_k} \frac{\Delta(\tau)^{\frac{r}{1-r}}\delta(\tau)}{U(s)^{\frac{r}{1-r}}+U(\tau)^{\frac{r}{1-r}}} \,d\tau \right)^{\frac{1-r}{r}}\\
		&\quad+
		\sup_{N+1\leq k\leq M}\esup_{t\in(x_{k-1},x_k)} \left(\int_t^{x_{k}}\Delta^{-\frac{q}{r}}w\right)^{\frac{1}{q}} \\
		& \hspace{2cm} \times \esup_{s\in(x_{k-1},t)}U(s)\varphi(s)^{-\frac1p}\left(\int_s^t\Delta^{\frac{r}{1-r}}\delta U^{-\frac{r}{1-r}}\right)^{\frac{1-r}{r}}\\
		& =: \I + \II.
	\end{align*}
	Now, for each $i\in \Z,$ $N+2\leq i \leq k,$ integration by parts and \eqref{int-cut-D} yields
	\begin{align}\label{EQ:antid_ii_5}
		&\left(\int_{x_{i-1}}^t\Delta^{\frac{r}{1-r}}\delta\varphi^{-\frac{r}{p(1-r)}}\right)^{\frac{1-r}{r}}
		\lesssim
		\Delta(t)^{\frac{1}{r}}\varphi(t)^{-\frac{1}{p}}+\left(\int_{x_{i-1}}^t\Delta(\tau)^{\frac{1}{1-r}}d\left[-\varphi^{-\frac{r}{p(1-r)}}(\tau)\right]\right)^{\frac{1-r}{r}}\nonumber\\
		&\qquad \approx
		\Delta(t)^{\frac{1}{r}}\varphi(t)^{-\frac{1}{p}} +
		\Delta(x_{i-1})^{\frac{1}{r}} \left(\int_{x_{i-1}}^t d\left[-\varphi^{-\frac{r}{p(1-r)}}(\tau)\right]\right)^{\frac{1-r}{r}}\nonumber \\
		&\hspace{2cm}  +
		\left(\int_{x_{i-1}}^t\left(\int_{x_{i-1}}^{\tau}\delta\right)^{\frac{1}{1-r}}d\left[-\varphi^{-\frac{r}{p(1-r)}}(\tau)\right]\right)^{\frac{1-r}{r}} \nonumber\\
		&\qquad\lesssim
		\Delta(t)^{\frac{1}{r}}\varphi(t)^{-\frac{1}{p}}
		+
		\Delta(x_{i-1})^{\frac{1}{r}}\varphi(x_{i-1})^{-\frac{1}{p}} \nonumber \\
		&\hspace{2cm} +
		\left(\int_{x_{i-1}}^t\left(\int_{x_{i-1}}^{\tau}\delta\right)^{\frac{1}{1-r}}d\left[-\varphi^{-\frac{r}{p(1-r)}}(\tau)\right]\right)^{\frac{1-r}{r}}\nonumber\\
		&\qquad\lesssim
		\Delta(t)^{\frac{1}{r}}\varphi(t)^{-\frac{1}{p}}
		+
		\Delta(x_{i-1})^{\frac{1}{r}}\varphi(x_{i-1})^{-\frac{1}{p}} \nonumber \\
		&\hspace{2cm}+
		\left(\int_{x_{i-1}}^t\left(\int_{x_{i-1}}^{\tau}\delta\right)^{\frac{r}{1-r}}\delta(\tau)\varphi(\tau)^{-\frac{r}{p(1-r)}}\,d\tau\right)^{\frac{1-r}{r}}.
	\end{align}
	Then \eqref{EQ:antid_ii_4} gives \begin{align*}
		\I
		&\approx
		\sup_{N+1\leq k\leq M-1}\left(\int_{x_k}^L\Delta^{-\frac{q}{r}}w\right)^{\frac{1}{q}}
		\sup_{N+1\leq i\leq k}\left(\int_{x_{i-1}}^{x_i}\Delta^{\frac{r}{1-r}}\delta\varphi^{-\frac{r}{p(1-r)}}\right)^{\frac{1-r}{r}}
		\\
		& \approx
		\left(\int_{x_{N+1}}^L\Delta^{-\frac{q}{r}}w\right)^{\frac{1}{q}}
		\left(\int_{x_{N}}^{x_{N+1}}\Delta^{\frac{r}{1-r}}\delta\varphi^{-\frac{r}{p(1-r)}}\right)^{\frac{1-r}{r}} \\
		& \quad +
		\left(\int_{x_{N+2}}^L\Delta^{-\frac{q}{r}}w\right)^{\frac{1}{q}}
		\left(\int_{x_{N}}^{x_{N+1}}\Delta^{\frac{r}{1-r}}\delta\varphi^{-\frac{r}{p(1-r)}}\right)^{\frac{1-r}{r}}\\
		&  \quad + \sup_{N+2\leq k\leq M-1}\left(\int_{x_k}^L\Delta^{-\frac{q}{r}}w\right)^{\frac{1}{q}}
		\sup_{N+2\leq i\leq k}\left(\int_{x_{i-1}}^{x_i}\Delta^{\frac{r}{1-r}}\delta\varphi^{-\frac{r}{p(1-r)}}\right)^{\frac{1-r}{r}}.
	\end{align*}
	Next, using \eqref{EQ:antid_ii_5} with $t=x_i$, we obtain
	\begin{align*}
		\I
		& \lesssim
		\left(\int_{x_{N+1}}^L\Delta^{-\frac{q}{r}}w\right)^{\frac{1}{q}}
		\left(\int_{x_{N}}^{x_{N+1}}\Delta^{\frac{r}{1-r}}\delta\varphi^{-\frac{r}{p(1-r)}}\right)^{\frac{1-r}{r}} \\
		& \quad +
		\left(\int_{x_{N+2}}^L\Delta^{-\frac{q}{r}}w\right)^{\frac{1}{q}}
		\left(\int_{x_{N}}^{x_{N+1}}\Delta^{\frac{r}{1-r}}\delta\varphi^{-\frac{r}{p(1-r)}}\right)^{\frac{1-r}{r}} \\
		&\quad +\sup_{N+2\leq k\leq M-1}\left(\int_{x_k}^L\Delta^{-\frac{q}{r}}w\right)^{\frac{1}{q}}
		\sup_{N+2\leq i\leq k}\Delta(x_i)^{\frac{1}{r}}\varphi(x_i)^{-\frac{1}{p}}
		\\
		&\qquad +
		\sup_{N+2\leq k\leq M-1}\left(\int_{x_k}^L\Delta^{-\frac{q}{r}}w\right)^{\frac{1}{q}}
		\sup_{N+2\leq i\leq k}\Delta(x_{i-1})^{\frac{1}{r}}\varphi(x_{i-1})^{-\frac{1}{p}}
		\\
		&\qquad +
		\sup_{N+2\leq k\leq M-1}\left(\int_{x_k}^L\Delta^{-\frac{q}{r}}w\right)^{\frac{1}{q}}\\
		& \hspace{2cm}\times
		\sup_{N+2\leq i\leq k}\left(\int_{x_{i-1}}^{x_i}\left(\int_{x_{i-1}}^{\tau}\delta\right)^{\frac{r}{1-r}}\delta(\tau)\varphi(\tau)^{-\frac{r}{p(1-r)}}\,d\tau\right)^{\frac{1-r}{r}}\\
		& \approx
		\sup_{N+2\leq k\leq M-1}\left(\int_{x_k}^L\Delta^{-\frac{q}{r}}w\right)^{\frac{1}{q}}
		\sup_{N+2\leq i\leq k}\Delta(x_i)^{\frac{1}{r}}\varphi(x_i)^{-\frac{1}{p}}
		\\
		&\qquad  +
		\sup_{N+2\leq k\leq M-1}\left(\int_{x_k}^L\Delta^{-\frac{q}{r}}w\right)^{\frac{1}{q}}
		\sup_{N+2\leq i\leq k}\Delta(x_{i-1})^{\frac{1}{r}}\varphi(x_{i-1})^{-\frac{1}{p}}
		\\
		&\qquad +
		\sup_{N+1\leq k\leq M-1}\left(\int_{x_k}^L\Delta^{-\frac{q}{r}}w\right)^{\frac{1}{q}}  \\
		& \hspace{2cm} \times
		\sup_{N+1\leq i\leq k}\left(\int_{x_{i-1}}^{x_i}\left(\int_{x_{i-1}}^{\tau}\delta\right)^{\frac{r}{1-r}}\delta(\tau)\varphi(\tau)^{-\frac{r}{p(1-r)}}\,d\tau\right)^{\frac{1-r}{r}}.
	\end{align*}
	Reindexing $i\mapsto i+1$ in the second term  and interchanging the suprema in the last term, we get
	\begin{align*}
		\I
		&\lesssim
		\sup_{N+1\leq k\leq M-1}\left(\int_{x_k}^L\Delta^{-\frac{q}{r}}w\right)^{\frac{1}{q}}
		\sup_{N+1\leq i\leq k}\Delta(x_i)^{\frac{1}{r}}\varphi(x_i)^{-\frac{1}{p}}
		+
		C_{3,2}.
	\end{align*}
	Using \eqref{EQ:strongly_decreasing_sup_sum},and then interchanging suprema combined with \eqref{C21<C31} and \eqref{EQ:C31lessC32}, we arrive at
	\begin{align}\label{EQ:antid_ii_I}
		\I
		&\lesssim \sup_{N+1\leq k\leq M-1}\left(\int_{x_k}^L\Delta^{-\frac{q}{r}}w\right)^{\frac{1}{q}}
		\sup_{N+1\leq i\leq k}\left(\int_{x_{i-1}}^{x_i}\delta\right)^{\frac{1}{r}}\varphi(x_i)^{-\frac{1}{p}}
		+
		C_{3,2} \nonumber \\
		&= C_{2,1} + C_{3,2} \nonumber\\	
		&\lesssim C_{3,2}.
	\end{align}
	
	For future reference, combining \eqref{1} and \eqref{EQ:antid_ii_I}, we have showed 
	\begin{equation} \label{8}
		\sup_{N+1\leq k\leq M-1}\left(\int_{x_k}^L\Delta^{-\frac{q}{r}}w\right)^{\frac{1}{q}} \esup_{s\in(0,x_k)}U(s)\varphi(s)^{-\frac1p}\left(\int_s^{x_k}\Delta^{\frac{r}{1-r}}\delta U^{-\frac{r}{1-r}}\right)^{\frac{1-r}{r}} \lesssim C_{3,2}.
	\end{equation}
	
	As for $\II$, observe that, applying \eqref{EQ:antid_ii_5} with $i=k$, we have
	\begin{align}
		&\sup_{N+1\leq k\leq M}\esup_{t\in(x_{k-1},x_k)} \left(\int_t^{x_{k}}\Delta^{-\frac{q}{r}}w\right)^{\frac{1}{q}}
		\left(\int_{x_{k-1}}^t\Delta^{\frac{r}{1-r}}\delta\varphi^{-\frac{r}{p(1-r)}}\, \right)^{\frac{1-r}{r}}\nonumber\\
		& \lesssim \sup_{N+1\leq k\leq M}\esup_{t\in(x_{k-1},x_k)} \left(\int_t^{x_{k}}\Delta^{-\frac{q}{r}}w\right)^{\frac{1}{q}} \nonumber \\
		& \hspace{4cm} \times \left(\int_{x_{k-1}}^t\left(\int_{x_{k-1}}^{\tau}\delta\right)^{\frac{r}{1-r}}\delta(\tau)\varphi(\tau)^{-\frac{r}{p(1-r)}}\,d\tau\right)^{\frac{1-r}{r}}\nonumber\\
		&\qquad\quad + \sup_{N+2\leq k\leq M} \left(\int_{x_{k-1}}^{x_{k}}\Delta^{-\frac{q}{r}}w\right)^{\frac{1}{q}} \Delta(x_{k-1})^{\frac{1}{r}}\varphi(x_{k-1})^{-\frac{1}{p}}\nonumber\\
		&\qquad \quad + \sup_{N+2\leq k\leq M}\esup_{t\in(x_{k-1},x_k)} \left(\int_t^{x_{k}}\Delta^{-\frac{q}{r}}w\right)^{\frac{1}{q}} \Delta(t)^{\frac{1}{r}}\varphi(t)^{-\frac{1}{p}}\nonumber\\
		& = C_{1,3} + \sup_{N+2\leq k\leq M} \left(\int_{x_{k-1}}^{x_{k}}\Delta^{-\frac{q}{r}}w\right)^{\frac{1}{q}} \Delta(x_{k-1})^{\frac{1}{r}}\varphi(x_{k-1})^{-\frac{1}{p}}\nonumber\\
		&\qquad \quad + \sup_{N+2\leq k\leq M}\esup_{t\in(x_{k-1},x_k)} \left(\int_t^{x_{k}}\Delta^{-\frac{q}{r}}w\right)^{\frac{1}{q}} \Delta(t)^{\frac{1}{r}}\varphi(t)^{-\frac{1}{p}}.\nonumber
	\end{align}
	Applying \eqref{int-cut-D} once again, we have
	\begin{align*}
		&\sup_{N+1\leq k\leq M}\esup_{t\in(x_{k-1},x_k)} \left(\int_t^{x_{k}}\Delta^{-\frac{q}{r}}w\right)^{\frac{1}{q}}
		\left(\int_{x_{k-1}}^t\Delta^{\frac{r}{1-r}}\delta\varphi^{-\frac{r}{p(1-r)}}\, \right)^{\frac{1-r}{r}}\\
		& \qquad  \lesssim C_{1,3} + \sup_{N+2\leq k\leq M} \left(\int_{x_{k-1}}^{x_{k}}\Delta^{-\frac{q}{r}}w\right)^{\frac{1}{q}} \Delta(x_{k-1})^{\frac{1}{r}}\varphi(x_{k-1})^{-\frac{1}{p}}\\
		&\qquad \quad + \sup_{N+2\leq k\leq M}\esup_{t\in(x_{k-1},x_k)} \left(\int_t^{x_{k}}\Delta^{-\frac{q}{r}}w\right)^{\frac{1}{q}} \left(\int_{x_{k-1}}^t\delta\right)^{\frac{1}{r}}\varphi(t)^{-\frac{1}{p}}\\
		& \qquad  \lesssim C_{1,3} +
		\sup_{N+2\leq k\leq M} \left(\int_{x_{k-1}}^L\Delta^{-\frac{q}{r}}w\right)^{\frac{1}{q}} \Delta(x_{k-1})^{\frac{1}{r}}\varphi(x_{k-1})^{-\frac{1}{p}}\nonumber\\
		&\quad\qquad+
		\sup_{N+2\leq k\leq M}\esup_{t\in(x_{k-1},x_k)} \left(\int_t^{x_{k}}\Delta^{-\frac{q}{r}}w\right)^{\frac{1}{q}} \left(\int_{x_{k-1}}^t\delta\right)^{\frac{1}{r}}\varphi(t)^{-\frac{1}{p}}.
	\end{align*}
	Applying  \eqref{EQ:strongly_decreasing_sup_sum} in the second term, \eqref{sup-int} in the third term and using \eqref{C21<C31} together with \eqref{EQ:C31lessC32}, we have 
	\begin{align}\label{2}
		&\sup_{N+1\leq k\leq M}\esup_{t\in(x_{k-1},x_k)} \left(\int_t^{x_{k}}\Delta^{-\frac{q}{r}}w\right)^{\frac{1}{q}}
		\left(\int_{x_{k-1}}^t\Delta^{\frac{r}{1-r}}\delta\varphi^{-\frac{r}{p(1-r)}}\, \right)^{\frac{1-r}{r}}\nonumber\\
		& \qquad \lesssim C_{1,3} +
		\sup_{N+1\leq k\leq M-1} \left(\int_{x_{k}}^L\Delta^{-\frac{q}{r}}w\right)^{\frac{1}{q}} \left(\int_{x_{k-1}}^{x_k}\delta\right)^{\frac{1}{r}}\varphi(x_k)^{-\frac{1}{p}}\nonumber\\
		&\qquad \quad+
		\sup_{N+2\leq k\leq M}\esup_{t\in(x_{k-1},x_k)} \left(\int_t^{x_{k}}\Delta^{-\frac{q}{r}}w\right)^{\frac{1}{q}} \nonumber \\
		& \hspace{4cm} \times\left(\int_{x_{k-1}}^t \left(\int_{x_{k-1}}^\tau\delta\right)^{\frac{r}{1-r}}\delta(\tau)\varphi(\tau)^{-\frac{r}{p(1-r)}}d\tau\right)^{\frac{1-r}{r}}
		\nonumber\\
		&\qquad \leq 2C_{1,3} +C_{2,1} \nonumber\\
		& \qquad \lesssim C_{1,3} + C_{3,2}.
	\end{align}
	Next, thanks to $U\varphi^{-\frac1p}$ being nondecreasing and \eqref{2}, we have
	\begin{align*}
		\II
		\leq
		\sup_{N+1\leq k\leq M}\esup_{t\in(x_{k-1},x_k)} \left(\int_t^{x_{k}}\Delta^{-\frac{q}{r}}w\right)^{\frac{1}{q}}
		\left(\int_{x_{k-1}}^t\Delta^{\frac{r}{1-r}}\delta\varphi^{-\frac{r}{p(1-r)}}\, \right)^{\frac{1-r}{r}}
		\lesssim C_{1,3} + C_{3,2}.
	\end{align*}
	Thus, we have arrived at
	\begin{align}\label{EQ:antid_ii_B3}
		B_3\lesssim\I+\II\lesssim C_{1,3} + C_{3,2}.
	\end{align}
	Combining \eqref{EQ:antid_ii_B12} and \eqref{EQ:antid_ii_B3}, we have
	\begin{align*}
		B_1+B_2+B_3\lesssim C_{1,2} + C_{1,3} + C_{3,2} + C_{4,1},
	\end{align*}
	which together with \eqref{EQ:antid_ii_B123} yields
	\begin{align*}
		C \approx B_1+B_2+B_3.
	\end{align*}
	
	\rm{(iii)} By Theorem~\ref{thm:main_discretization}, we have
	\begin{equation*}
		C\approx C_{1,1}+C_{1,2}+C_{3,3}+C_{4,1}.
	\end{equation*}
	As for the desired upper estimate on $C$, it is sufficient to show that $C_{3,3}\lesssim B_2 + B_4$ owing to \eqref{EQ:C11lesB2}, \eqref{EQ:C12lesB1}, \eqref{EQ:C41lesB1}. To this end, one has
	\begin{align}\label{E:C3.3-1}
		C_{3,3} & \approx \sup_{N+1\le k\le M-1}\left(\int_{x_k}^{L}\Delta^{-\frac{q}{r}}w\right)^{\iq}\esup_{t\in(x_{k-1},x_k)}\left(\int_{x_{k-1}}^{t}\delta\right)^{\frac{1}{r}}\varphi(t)^{-\frac{1}{p}} \nonumber
		\\
		& \quad + \sup_{N+2\le k\le M-1}\left(\int_{x_k}^{L}\Delta^{-\frac{q}{r}}w\right)^{\iq} \left(\sum_{i=N+2}^{k-1}\esup_{t\in(x_{i-1},x_i)}\left(\int_{x_{i-1}}^{t}\delta\right)^{\frac{p}{p-r}}
		\varphi(t)^{-\frac{r}{p-r}}\right)^{\frac{p-r}{pr}} \nonumber
		\\
		& \ls B_2  + \sup_{N+2\le k\le M-1}\left(\int_{x_k}^{L}\Delta^{-\frac{q}{r}}w\right)^{\iq} \nonumber\\
		& \hspace{3cm} \times\left(\sum_{i=N+2}^{k-1}\esup_{t\in(x_{i-1},x_i)}\left(\int_{x_{i-1}}^{t}\delta\right)^{\frac{p}{p-r}}
		\varphi(t)^{-\frac{r}{p-r}}\right)^{\frac{p-r}{pr}}.
	\end{align}
	
	Observe that for $N+2 \le k \le M-1$, we have by using \eqref{phi-Z1} and \eqref{Uphi-Z2}
	\begin{align*}
		&\sum_{i=N+2}^{k-1}\esup_{t\in(x_{i-1},x_i)}\left(\int_{x_{i-1}}^{t}\delta\right)^{\frac{p}{p-r}}\varphi(t)^{-\frac{r}{p-r}}
		\le \sum_{i=N+2}^{k-1}\esup_{t\in(x_{i-1},x_i)}\Delta(t)^{\frac{p}{p-r}}\varphi(t)^{-\frac{r}{p-r}}
		\\
		&\approx \sum_{\substack{i\in\mathcal{Z}_1\\N+2\le i\le k-1}}\varphi(x_i)^{-\frac{r}{p-r}}\Delta(x_i)^{\frac{p}{p-r}}  \\
		&\quad +
		\sum_{\substack{i\in\mathcal{Z}_2\\N+2\le i\le k-1}}\varphi(x_{i-1})^{-\frac{r}{p-r}} U(x_{i-1})^{\frac{pr}{p-r}}\esup_{t\in(x_{i-1},x_i)}\Delta(t)^{\frac{p}{p-r}}U(t)^{-\frac{pr}{p-r}}
		\\
		& \le \sum_{\substack{i\in\mathcal{Z}_1\\N+2\le i\le k-1}}\varphi(x_i)^{-\frac{r}{p-r}}U(x_{i})^{\frac{pr}{p-r}}\esup_{t\in(x_{i},x_{i+1})}\Delta(t)^{\frac{p}{p-r}}U(t)^{-\frac{pr}{p-r}}
		\\
		&\quad+\sum_{\substack{i\in\mathcal{Z}_2\\N+2\le i\le k-1}}\varphi(x_{i-1})^{-\frac{r}{p-r}} U(x_{i-1})^{\frac{pr}{p-r}}\esup_{t\in(x_{i-1},x_i)}\Delta(t)^{\frac{p}{p-r}}U(t)^{-\frac{pr}{p-r}}
		\\
		&  \lesssim \sum_{i=N+1}^{k-1}\varphi(x_i)^{-\frac{r}{p-r}}U(x_{i})^{\frac{pr}{p-r}}\esup_{t\in(x_{i},x_k)}\Delta(t)^{\frac{p}{p-r}}U(t)^{-\frac{pr}{p-r}} 
		\\
		&  \quad + \sum_{i=N+2}^{k-1}\varphi(x_{i-1})^{-\frac{r}{p-r}}U(x_{i-1})^{\frac{pr}{p-r}}\esup_{t\in(x_{i-1},x_{k})}\Delta(t)^{\frac{p}{p-r}}U(t)^{-\frac{pr}{p-r}} \\
		& \lesssim \sum_{i=N+1}^{k-1}\varphi(x_i)^{-\frac{r}{p-r}}U(x_{i})^{\frac{pr}{p-r}}\esup_{t\in(x_{i},x_k)}\Delta(t)^{\frac{p}{p-r}}U(t)^{-\frac{pr}{p-r}}
		\\
		& \lesssim \int_{0}^{x_k}\sigma(t)U(t)^{\frac{pr}{p-r}}\esup_{\tau\in(t,x_k)}\Delta(\tau)^{\frac{p}{p-r}}U(\tau)^{-\frac{pr}{p-r}}\,dt,
	\end{align*}
	where we used \eqref{R1} in the last inequality.
	Inserting this to~\eqref{E:C3.3-1}, we obtain
	\begin{align*}
		&C_{3,3} \ls B_2 + \sup_{N+2\le k\le M-1}
		\left(\int_{x_k}^{L}\Delta^{-\frac{q}{r}}w\right)^{\iq} \\
		& \hspace{3cm} \times \left(\int_{0}^{x_k}\sigma(t) U(t)^{\frac{pr}{p-r}}\esup_{\tau\in(t,x_k)}\Delta(\tau)^{\frac{p}{p-r}}U(\tau)^{-\frac{pr}{p-r}}\,dt\right)^{\frac{p-r}{pr}}
		\\
		& \le B_2 + \sup_{N+2\le k\le M-1}\sup_{x\in(x_k,x_{k+1})}\left(\int_{x}^{L}\Delta^{-\frac{q}{r}}w\right)^{\iq} \\
		& \hspace{3cm} \times
		\left(\int_0^x\sigma(t) U(t)^{\frac{pr}{p-r}}\esup_{\tau\in(t,x)}\Delta(\tau)^{\frac{p}{p-r}}U(\tau)^{-\frac{pr}{p-r}}\,dt\right)^{\frac{p-r}{pr}}
		\\
		& \le B_2 + B_4.
	\end{align*}
	It follows that $C_{1,1}+C_{1,2}+C_{3,3}+C_{4,1} \ls B_1+B_2+B_4$.
	
	As for establishing the opposite inequality,  first observe that
	\begin{equation}\label{C-31<C-33}
		C_{3,1} \leq C_{3,3}.
	\end{equation}
	Owing to \eqref{EQ:B1+B2UpperBoundCase1} and \eqref{C-31<C-33}, we have
	\begin{equation*}
		B_1+B_2 \ls C_{1,1} + C_{1,2} + C_{3,1} + C_{4,1} \le C_{1,1} + C_{1,2} + C_{3,3} + C_{4,1}.
	\end{equation*}
	Therefore, it is sufficient to show that $B_4\lesssim C_{3,3} +  C_{1,1}$. To this end, first using \eqref{int-cut-Dw}, then decomposing the integral $\int_{0}^{t}$ into the sum $\int_{0}^{x_{k-1}}+\int_{x_{k-1}}^{t}$, we have
	\begin{align*}
		B_4    &= \sup_{N+1\le k\le M} \esup_{t\in(x_{k-1},x_k)} \left(\int_t^L\Delta^{-\frac{q}{r}}w\right)^{\frac{1}{q}} \\
		& \hspace{3cm} \times
		\left(\int_{0}^{t}\sigma(s)U(s)^{\frac{pr}{p-r}}\esup_{\tau\in(s, t)}\Delta(\tau)^{\frac{p}{p-r}}U(\tau)^{-\frac{pr}{p-r}}\, ds\right)^{\frac{p-r}{pr}}
		\\
		&\approx \sup_{N+1\le k\le M} \esup_{t\in(x_{k-1},x_k)} \left(\int_t^{x_k}\Delta^{-\frac{q}{r}}w\right)^{\frac{1}{q}} \\
		& \hspace{3cm} \times
		\left(\int_{0}^{t}\sigma(s)U(s)^{\frac{pr}{p-r}}\esup_{\tau\in(s, t)}\Delta(\tau)^{\frac{p}{p-r}}U(\tau)^{-\frac{pr}{p-r}}\, ds\right)^{\frac{p-r}{pr}}
		\\
		& + \sup_{N+1\le k\le M-1} \left(\int_{x_k}^{L}\Delta^{-\frac{q}{r}}w\right)^{\frac{1}{q}} \\
		& \hspace{3cm} \times
		\left(\int_{0}^{x_k}\sigma(s)U(s)^{\frac{pr}{p-r}}\esup_{\tau\in(s, x_k)}\Delta(\tau)^{\frac{p}{p-r}}U(\tau)^{-\frac{pr}{p-r}}\, ds\right)^{\frac{p-r}{pr}}\\
		&\approx \sup_{N+2\le k\le M} \esup_{t\in(x_{k-1},x_k)}  \left(\int_t^{x_k}\Delta^{-\frac{q}{r}}w\right)^{\frac{1}{q}}\\
		& \hspace{3cm} \times
		\left(\int_{0}^{x_{k-1}}\sigma(s)U(s)^{\frac{pr}{p-r}}\esup_{\tau\in(s, t)}\Delta(\tau)^{\frac{p}{p-r}}U(\tau)^{-\frac{pr}{p-r}}\, ds\right)^{\frac{p-r}{pr}}
		\\
		& + \sup_{N+1\le k\le M} \esup_{t\in(x_{k-1},x_k)}\left(\int_{t}^{x_k}\Delta^{-\frac{q}{r}}w\right)^{\frac{1}{q}} \\
		& \hspace{3cm} \times
		\left(\int_{x_{k-1}}^t\sigma(s)U(s)^{\frac{pr}{p-r}}\esup_{\tau\in(s, t)}\Delta(\tau)^{\frac{p}{p-r}}U(\tau)^{-\frac{pr}{p-r}}\, ds\right)^{\frac{p-r}{pr}}
		\\
		& + \sup_{N+1\le k\le M-1} \left(\int_{x_k}^{L}\Delta^{-\frac{q}{r}}w\right)^{\frac{1}{q}} \\
		& \hspace{3cm} \times
		\left(\int_{0}^{x_k}\sigma(s)U(s)^{\frac{pr}{p-r}}\esup_{\tau\in(s, x_k)}\Delta(\tau)^{\frac{p}{p-r}}U(\tau)^{-\frac{pr}{p-r}}\, ds\right)^{\frac{p-r}{pr}}.
	\end{align*}
	Since
	\begin{align}
		\esup_{\tau \in (s, t)}\Delta(\tau)^{\frac{p}{p-r}}&U(\tau)^{-\frac{pr}{p-r}} \notag\\
		&\approx \esup_{\tau \in (s, x_{k-1})}\Delta(\tau)^{\frac{p}{p-r}}U(\tau)^{-\frac{pr}{p-r}} + \esup_{\tau \in (x_{k-1}, t)}\Delta(\tau)^{\frac{p}{p-r}}U(\tau)^{-\frac{pr}{p-r}}
	\end{align}
	when $0 < s < x_{k-1}< t$, we get
	\begin{align*}
		B_4
		&\approx \sup_{N+2\le k\le M}  \left(\int_{x_{k-1}}^{x_k}\Delta^{-\frac{q}{r}}w\right)^{\frac{1}{q}} \\
		& \hspace{3cm} \times
		\left(\int_{0}^{x_{k-1}}\sigma(s)U(s)^{\frac{pr}{p-r}}\esup_{\tau \in (s, x_{k-1})}\Delta(\tau)^{\frac{p}{p-r}}U(\tau)^{-\frac{pr}{p-r}}\, ds\right)^{\frac{p-r}{pr}}
		\\
		&+ \sup_{N+2\le k\le M} \left(\int_{0}^{x_{k-1}}\sigma(s)U(s)^{\frac{pr}{p-r}}\, ds\right)^{\frac{p-r}{pr}} \\
		& \hspace{3cm} \times \esup_{t\in(x_{k-1},x_k)} \left(\int_t^{x_k}\Delta^{-\frac{q}{r}}w\right)^{\frac{1}{q}}
		\esup_{\tau\in(x_{k-1}, t)}\Delta(\tau)^{\frac{1}{r}}U(\tau)^{-1}
		\\
		& + \sup_{N+1\le k\le M} \esup_{t\in(x_{k-1},x_k)}\left(\int_{t}^{x_k}\Delta^{-\frac{q}{r}}w\right)^{\frac{1}{q}} \\
		& \hspace{3cm} \times
		\left(\int_{x_{k-1}}^t\sigma(s)U(s)^{\frac{pr}{p-r}}\esup_{\tau\in(s, t)}\Delta(\tau)^{\frac{p}{p-r}}U(\tau)^{-\frac{pr}{p-r}}\, ds\right)^{\frac{p-r}{pr}}
		\\
		& + \sup_{N+1\le k\le M-1} \left(\int_{x_k}^{L}\Delta^{-\frac{q}{r}}w\right)^{\frac{1}{q}} \\
		& \hspace{3cm} \times
		\left(\int_{0}^{x_k}\sigma(s)U(s)^{\frac{pr}{p-r}}\esup_{\tau\in(s, x_k)}\Delta(\tau)^{\frac{p}{p-r}}U(\tau)^{-\frac{pr}{p-r}}\, ds\right)^{\frac{p-r}{pr}}.
	\end{align*}
	Reindexing $k \mapsto k+1$ in the first term, we obtain
	\begin{align*}
		B_4 
		&\leq \sup_{N+1\le k\le M-1}  \left(\int_{x_{k}}^{L}\Delta^{-\frac{q}{r}}w\right)^{\frac{1}{q}}
		\left(\int_{0}^{x_{k}}\sigma(s)U(s)^{\frac{pr}{p-r}}\esup_{\tau\in(s, x_k)}\Delta(\tau)^{\frac{p}{p-r}}U(\tau)^{-\frac{pr}{p-r}}\, ds\right)^{\frac{p-r}{pr}}
		\\
		&+ \sup_{N+2\le k\le M} \left(\int_{0}^{x_{k-1}}\sigma(s)U(s)^{\frac{pr}{p-r}}\, ds\right)^{\frac{p-r}{pr}}\\
		& \hspace{3cm} \times\esup_{t\in(x_{k-1},x_k)} \left(\int_t^{x_k}\Delta^{-\frac{q}{r}}w\right)^{\frac{1}{q}}
		\esup_{\tau\in(x_{k-1}, t)}\Delta(\tau)^{\frac{1}{r}}U(\tau)^{-1}
		\\
		& + \sup_{N+1\le k\le M} \esup_{t\in(x_{k-1},x_k)}\left(\int_{t}^{x_k}\Delta^{-\frac{q}{r}}w\right)^{\frac{1}{q}} \\
		& \hspace{3cm} \times
		\left(\int_{x_{k-1}}^t\sigma(s)U(s)^{\frac{pr}{p-r}}\esup_{\tau\in(s, t)}\Delta(\tau)^{\frac{p}{p-r}}U(\tau)^{-\frac{pr}{p-r}}\, ds\right)^{\frac{p-r}{pr}}
		\\
		&=: \I + \II + \III.
	\end{align*}
	
	In view of~\eqref{R2}, we have
	\begin{align*}
		\I &\ls \sup_{N+1\le k \le M-1}\left(\int_{x_k}^{L}\Delta^{-\frac{q}{r}}w\right)^{\frac{1}{q}} \\
		& \hspace{2cm} \times
		\left(\sum_{i=N+1}^{k}\varphi(x_i)^{-\frac{r}{p-r}}U(x_i)^{\frac{pr}{p-r}}\esup_{\tau\in(x_i,x_k)}\Delta(\tau)^{\frac{p}{p-r}}U(\tau)^{-\frac{pr}{p-r}}\right)^{\frac{p-r}{pr}}
		\\
		&\quad + \left(\int_{x_{N+1}}^{L}\Delta^{-\frac{q}{r}}w\right)^{\frac{1}{q}}\esup_{t\in(0,x_{N+1})}\Delta(\tau)^{\frac{1}{r}}\varphi(t)^{-\frac{1}{p}}.
	\end{align*}
	Note that
	\begin{equation}
		\left(\int_{x_{N+1}}^{L}\Delta^{-\frac{q}{r}}w\right)^{\frac{1}{q}}\esup_{t\in(0,x_{N+1})}\Delta(t)^{\frac{1}{r}}\varphi(t)^{-\frac{1}{p}} \le C_{3,3}. \label{EQ:C33LBCase3}
	\end{equation}
	Then
	\begin{align*}
		\I &\ls C_{3,3} +  \sup_{N+1\le k \le M-1}\left(\int_{x_k}^{L}\Delta^{-\frac{q}{r}}w\right)^{\frac{1}{q}} \varphi(x_k)^{-\frac{1}{p}} \Delta(x_k)^{\frac{1}{r}}\\
		& \qquad + \sup_{N+2\le k \le M-1}\left(\int_{x_k}^{L}\Delta^{-\frac{q}{r}}w\right)^{\frac{1}{q}} \\
		& \hspace{2cm} \times
		\left(\sum_{i=N+1}^{k-1}\varphi(x_i)^{-\frac{r}{p-r}}U(x_i)^{\frac{pr}{p-r}}\esup_{\tau\in(x_i,x_k)}\Delta(\tau)^{\frac{p}{p-r}}U(\tau)^{-\frac{pr}{p-r}}\right)^{\frac{p-r}{pr}}\\
		&\leq C_{3,3} +  \sup_{N+1\le k \le M-1}\left(\int_{x_k}^{L}\Delta^{-\frac{q}{r}}w\right)^{\frac{1}{q}} \left(\sum_{i=N+1}^{k}\varphi(x_i)^{-\frac{r}{p-r}}\Delta(x_i)^{\frac{p}{p-r}}\right)^{\frac{p-r}{pr}}\\
		& \qquad + \sup_{N+2\le k \le M-1}\left(\int_{x_k}^{L}\Delta^{-\frac{q}{r}}w\right)^{\frac{1}{q}} \\
		& \hspace{2cm} \times
		\left(\sum_{i=N+1}^{k-1}\varphi(x_i)^{-\frac{r}{p-r}}U(x_i)^{\frac{pr}{p-r}}\esup_{\tau\in(x_i,x_k)}\Delta(\tau)^{\frac{p}{p-r}}U(\tau)^{-\frac{pr}{p-r}}\right)^{\frac{p-r}{pr}}.
	\end{align*}
	On the other hand, for  $N+2 \leq k \leq M$,
	\begin{align*}
		&\sum_{i=N+1}^{k-1}\varphi(x_i)^{-\frac{r}{p-r}}U(x_i)^{\frac{pr}{p-r}}\esup_{\tau\in(x_i,x_k)}\Delta(\tau)^{\frac{p}{p-r}}U(\tau)^{-\frac{pr}{p-r}}
		\\
		&= \sum_{i=N+1}^{k-1}\varphi(x_i)^{-\frac{r}{p-r}}U(x_i)^{\frac{pr}{p-r}}\sup_{i\le m\le k-1}\esup_{\tau\in(x_m,x_{m+1})}\Delta(\tau)^{\frac{p}{p-r}}U(\tau)^{-\frac{pr}{p-r}}.
	\end{align*}
	Since $\{U^p(x_i)/\varphi(x_i)\}_{i=N+1}^{k-1}$ is strongly increasing, using \eqref{EQ:strongly_increasing_sum_sup}
	\begin{align*}
		\sum_{i=N+1}^{k-1} & \varphi(x_i)^{-\frac{r}{p-r}}U(x_i)^{\frac{pr}{p-r}}\esup_{\tau\in(x_i,x_k)}\Delta(\tau)^{\frac{p}{p-r}}U(\tau)^{-\frac{pr}{p-r}}
		\\
		&\approx \sum_{i=N+1}^{k-1}\varphi(x_i)^{-\frac{r}{p-r}}U(x_i)^{\frac{pr}{p-r}}\esup_{\tau\in(x_i,x_{i+1})}\Delta(\tau)^{\frac{p}{p-r}}U(\tau)^{-\frac{pr}{p-r}}
		\\
		&\le \sum_{i=N+1}^{k-1}\esup_{\tau\in(x_i,x_{i+1})}\Delta(\tau)^{\frac{p}{p-r}}\varphi(\tau)^{-\frac{r}{p-r}}
	\end{align*}
	holds. Consequently,
	\begin{align*}
		\I & \ls  C_{3,3} + \sup_{N+1\le k\le M-1}\left(\int_{x_{k}}^{L}\Delta^{-\frac{q}{r}}w\right)^{\frac{1}{q}}
		\left(\sum_{i=N+1}^{k}\varphi(x_i)^{-\frac{r}{p-r}}\Delta(x_i)^{\frac{p}{p-r}}\right)^{\frac{p-r}{pr}}
		\\
		&\quad + \sup_{N+2\le k\le M-1}\left(\int_{x_{k}}^{L}\Delta^{-\frac{q}{r}}w\right)^{\frac{1}{q}}
		\left(\sum_{i=N+1}^{k-1}\esup_{\tau\in(x_i,x_{i+1})}\Delta(\tau)^{\frac{p}{p-r}}\varphi(\tau)^{-\frac{r}{p-r}}\right)^{\frac{p-r}{pr}}.
	\end{align*}
	Reindexing $i \mapsto i-1$ in the second term, we get
	\begin{align*}
		\I & \ls  C_{3,3} + \sup_{N+1\le k\le M-1}\left(\int_{x_{k}}^{L}\Delta^{-\frac{q}{r}}w\right)^{\frac{1}{q}}
		\left(\sum_{i=N+1}^{k}\varphi(x_i)^{-\frac{r}{p-r}}\Delta(x_i)^{\frac{p}{p-r}}\right)^{\frac{p-r}{pr}}
		\\
		&\quad + \sup_{N+2\le k\le M-1}\left(\int_{x_{k}}^{L}\Delta^{-\frac{q}{r}}w\right)^{\frac{1}{q}}
		\left(\sum_{i=N+2}^{k}\esup_{\tau\in(x_{i-1},x_{i})}\Delta(\tau)^{\frac{p}{p-r}}\varphi(\tau)^{-\frac{r}{p-r}}\right)^{\frac{p-r}{pr}}.    
	\end{align*}
	
	Now, applying \eqref{int-cut-D},  we obtain
	\begin{align*}
		\I & \ls  C_{3,3} + \sup_{N+1\le k\le M-1}\left(\int_{x_{k}}^{L}\Delta^{-\frac{q}{r}}w\right)^{\frac{1}{q}}
		\left(\sum_{i=N+1}^{k}\varphi(x_i)^{-\frac{r}{p-r}}\Delta(x_i)^{\frac{p}{p-r}}\right)^{\frac{p-r}{pr}}\\
		&\quad + \sup_{N+2\le k\le M-1}\left(\int_{x_{k}}^{L}\Delta^{-\frac{q}{r}}w\right)^{\frac{1}{q}}
		\left(\sum_{i=N+2}^{k} \Delta(x_{i-1})^{\frac{p}{p-r}}\varphi(x_{i-1})^{-\frac{r}{p-r}}\right)^{\frac{p-r}{pr}}\\
		&\quad + \sup_{N+2\le k\le M-1}\left(\int_{x_{k}}^{L}\Delta^{-\frac{q}{r}}w\right)^{\frac{1}{q}}
		\left(\sum_{i=N+2}^{k} \esup_{\tau\in(x_{i-1},x_{i})} \left(\int_{x_{i-1}}^{\tau}\delta\right)^{\frac{p}{p-r}}\varphi(\tau)^{-\frac{r}{p-r}}\right)^{\frac{p-r}{pr}}\\
		& \lesssim C_{3,3} + \sup_{N+1\le k\le M-1}\left(\int_{x_{k}}^{L}\Delta^{-\frac{q}{r}}w\right)^{\frac{1}{q}}
		\left(\sum_{i=N+1}^{k}\varphi(x_i)^{-\frac{r}{p-r}}\Delta(x_i)^{\frac{p}{p-r}}\right)^{\frac{p-r}{pr}}.        
	\end{align*}
	Since $\{\varphi(x_i)^{-\frac{r}{p-r}}\}_{i=N+1}^{M-1}$ is strongly decreasing, applying \eqref{EQ:strongly_decreasing_sum_sum} we have
	\begin{align*}
		\I & \lesssim C_{3,3} + \sup_{N+1\le k\le M-1} \left(\int_{x_{k}}^{L}\Delta^{-\frac{q}{r}}w\right)^{\frac{1}{q}}
		\left(\sum_{i=N+1}^{k}\varphi(x_i)^{-\frac{r}{p-r}} \bigg(\int_{x_{i-1}}^{x_i} \delta \bigg)^{\frac{p}{p-r}}\right)^{\frac{p-r}{pr}}\\
		&\lesssim C_{3,3}.
	\end{align*}
	
	Let us continue with the estimate of II. First of all, since $\{U^p(x_k)/\varphi(x_k)\}_{k=N+1}^{M-1}$ is strongly increasing, we have from Lemma in \cite{L:93}*{Lemma} that for every $N+2 \leq k \leq M$,
	\begin{equation} \label{N1}
		\sum_{i=N+1}^{k-1} U(x_i)^{\frac{pr}{p-r}} \varphi(x_i)^{-\frac{r}{p-r}}
		\lesssim \varphi(x_{k-1})^{-\frac{r}{p-r}}U(x_{k-1})^{\frac{pr}{p-r}}.
	\end{equation}
	
	On the other hand, clearly $U\in Q_{U}(0,L)$. Thus, applying case (iii) in Lemma~\ref{TH:antid_lemma1} to $h=U$, we have, for every $N+2\le k\le M$,
	\begin{align*}
		\int_{0}^{x_{k-1}}\sigma(s)U(s)^{\frac{pr}{p-r}}\,ds &\ls \sum_{i=N+1}^{k-1} U(x_i)^{\frac{pr}{p-r}} \varphi(x_i)^{-\frac{r}{p-r}} + \esup_{t\in(0,x_{N+1})}U(t)^{\frac{pr}{p-r}}\varphi(t)^{-\frac{r}{p-r}}.
	\end{align*}
	Then in view of \eqref{N1}, we obtain
	\begin{align}
		\int_{0}^{x_{k-1}}\sigma(s)U(s)^{\frac{pr}{p-r}}\,ds &\ls \varphi(x_{k-1})^{-\frac{r}{p-r}}U(x_{k-1})^{\frac{pr}{p-r}} + U(x_{N+1})^{\frac{pr}{p-r}}\varphi(x_{N+1})^{-\frac{r}{p-r}} \nonumber\\
		&\ls \varphi(x_{k-1})^{-\frac{r}{p-r}}U(x_{k-1})^{\frac{pr}{p-r}}. \label{EQ:antid_iii_1}
	\end{align}
	Thus, first using \eqref{EQ:antid_iii_1}, then applying \eqref{int-cut-D}, we have
	\begin{align*}
		\II &\ls \sup_{N+2\le k\le M}\varphi(x_{k-1})^{-\frac{1}{p}}U(x_{k-1})
		\esup_{t\in(x_{k-1},x_k)} \left(\int_t^{x_k}\Delta^{-\frac{q}{r}}w\right)^{\frac{1}{q}}
		\esup_{\tau\in(x_{k-1}, t)}\Delta(\tau)^{\frac{1}{r}}U(\tau)^{-1} \nonumber \\
		&\approx \sup_{N+2\le k\le M}\varphi(x_{k-1})^{-\frac{1}{p}}\Delta(x_{k-1})^{\frac1r}
		\left(\int_{x_{k-1}}^{x_k}\Delta^{-\frac{q}{r}}w\right)^{\frac{1}{q}}\nonumber \\
		&\quad + \sup_{N+2\le k\le M}\varphi(x_{k-1})^{-\frac{1}{p}}U(x_{k-1}) \nonumber\\
		& \hspace{2cm} \times
		\esup_{t\in(x_{k-1},x_k)} \left(\int_t^{x_k}\Delta^{-\frac{q}{r}}w\right)^{\frac{1}{q}}
		\esup_{\tau\in(x_{k-1},t)} \left(\int_{x_{k-1}}^{\tau}\delta\right)^{\frac{1}{r}}U(\tau)^{-1}.
	\end{align*}
	Reindexing $k\mapsto k+1$ and using the monotonicity of $U\varphi^{-\frac{1}{p}}$ gives
	\begin{align}
		\II &\ls  \sup_{N+1\le k\le M-1}  \varphi(x_k)^{-\frac{1}{p}} \Delta(x_k)^{\frac{1}{r}} \left(\int_{x_k}^L\Delta^{-\frac{q}{r}}w\right)^{\frac{1}{q}} \nonumber   \\
		&\quad + \sup_{N+1\le k\le M} \esup_{t\in(x_{k-1},x_k)} \left(\int_t^{x_k}\Delta^{-\frac{q}{r}}w\right)^{\frac{1}{q}}
		\esup_{\tau\in(x_{k-1},t)} \varphi(\tau)^{-\frac{1}{p}}\left(\int_{x_{k-1}}^{\tau}\delta\right)^{\frac{1}{r}}\nonumber \\
		& \approx C_{2,1}+C_{1,1} \label{3},
	\end{align}
	where we used \eqref{EQ:strongly_decreasing_sup_sum} in the last equivalence. Then using \eqref{C21<C31} and \eqref{C-31<C-33} we arrive at
	\begin{equation*}
		\II \lesssim C_{3,1} + C_{1,1} \leq C_{3,3} + C_{1,1}.
	\end{equation*}
	Finally,
	\begin{align*}
		\III     &\approx \esup_{t\in(0,x_{N+1})}\left(\int_{t}^{x_{N+1}}\Delta^{-\frac{q}{r}}w\right)^{\frac{1}{q}}
		\left(\int_{0}^{t}\sigma(s)U(s)^{\frac{pr}{p-r}}\esup_{\tau\in(s,t)}\Delta(\tau)^{\frac{p}{p-r}}U(\tau)^{-\frac{pr}{p-r}}\, ds\right)^{\frac{p-r}{pr}}
		\\
		&\quad + \sup_{\substack{N+2\le k\le M\\k\in\mathcal{Z}_1}} \esup_{t\in(x_{k-1},x_k)} \left(\int_t^{x_k}\Delta^{-\frac{q}{r}}w\right)^{\frac{1}{q}} \\
		& \hspace{4cm} \times
		\left(\int_{x_{k-1}}^t\sigma(s)U(s)^{\frac{pr}{p-r}}\esup_{\tau\in(s,t)}\Delta(\tau)^{\frac{p}{p-r}}U(\tau)^{-\frac{pr}{p-r}}\, ds\right)^{\frac{p-r}{pr}}
		\\
		&\quad +   \sup_{\substack{N+2\le k\le M\\k\in\mathcal{Z}_2}} \esup_{t\in(x_{k-1},x_k)} \left(\int_t^{x_k}\Delta^{-\frac{q}{r}}w\right)^{\frac{1}{q}} \\
		& \hspace{4cm} \times
		\left(\int_{x_{k-1}}^t\sigma(s)U(s)^{\frac{pr}{p-r}}\esup_{\tau\in(s,t)}\Delta(\tau)^{\frac{p}{p-r}}U(\tau)^{-\frac{pr}{p-r}}\, ds\right)^{\frac{p-r}{pr}}.
	\end{align*}
	Observe that for $h(s)= U(s)\esup_{\tau\in(s,t)}\Delta(\tau)^{\frac{1}{r}} U(\tau)^{-1} \in Q_U(0,t)$, if $k\in \mathcal{Z}_1$, by~\eqref{EQ:antid_lemma_1}, we have
	\begin{align*}
		&\int_{x_{k-1}}^t\sigma(s)U(s)^{\frac{pr}{p-r}}\esup_{\tau\in(s,t)}\Delta(\tau)^{\frac{p}{p-r}}U(\tau)^{-\frac{pr}{p-r}}\, ds
		\ls \varphi(t)^{-\frac{r}{p-r}}\Delta(t)^{\frac{p}{p-r}}
	\end{align*}
	also, if $k\in \mathcal{Z}_2$, we have by~\eqref{EQ:antid_lemma_2}, 
	\begin{align*}
		&\int_{x_{k-1}}^t\sigma(s)U(s)^{\frac{pr}{p-r}}\esup_{\tau\in(s,t)}\Delta(\tau)^{\frac{p}{p-r}}U(\tau)^{-\frac{pr}{p-r}}\, ds \\
		&\hspace{2cm}
		\ls \varphi(x_{k-1})^{-\frac{r}{p-r}}U(x_{k-1})^{\frac{pr}{p-r}}\esup_{\tau\in(x_{k-1},t)}\Delta(\tau)^{\frac{p}{p-r}}U(\tau)^{-\frac{pr}{p-r}}.
	\end{align*}
	Moreover, recall that, if $N > - \infty$, then $N+1\in\mathcal Z_2$ (see the proof of Lemma~\ref{TH:antid_lemma1}). Consequently, for every $t\in(0,x_{N+1}]$, by~ \eqref{EQ:antid_lemma_3}, we have
	\begin{align*}
		\int_{0}^t\sigma(s)U(s)^{\frac{pr}{p-r}}\esup_{\tau\in(s,t)}\Delta(\tau)^{\frac{p}{p-r}}U(\tau)^{-\frac{pr}{p-r}}\, ds  \lesssim 
		\esup_{\tau\in(0,t)} \Delta(\tau)^{\frac{p}{p-r}} \varphi(\tau)^{-\frac{r}{p-r}}.
	\end{align*}
	Then,
	\begin{align*}
		\III &\ls \esup_{t\in(0,x_{N+1})}\left(\int_{t}^{x_{N+1}}\Delta^{-\frac{q}{r}}w\right)^{\frac{1}{q}} \esup_{\tau\in(0,t)}\Delta(\tau)^{\frac{1}{r}}\varphi(\tau)^{-\frac{1}{p}}
		\\
		&\quad + \sup_{\substack{N+2\le k\le M\\k\in\mathcal{Z}_1}} \esup_{t\in(x_{k-1},x_k)} \left(\int_t^{x_k}\Delta^{-\frac{q}{r}}w\right)^{\frac{1}{q}}
		\varphi(t)^{-\frac{1}{p}}\Delta(t)^{\frac{1}{r}}
		\\
		&\quad +   \sup_{\substack{N+2\le k\le M\\k\in\mathcal{Z}_2}} \esup_{t\in(x_{k-1},x_k)} \left(\int_t^{x_k}\Delta^{-\frac{q}{r}}w\right)^{\frac{1}{q}}
		\varphi(x_{k-1})^{-\frac{1}{p}}U(x_{k-1}) \\
		& \hspace{2cm} \times\esup_{\tau\in(x_{k-1},t)}\Delta(\tau)^{\frac{1}{r}}U(\tau)^{-1}\\
		&\leq \esup_{t\in(0,x_{N+1})}\left(\int_{t}^{x_{N+1}}\Delta^{-\frac{q}{r}}w\right)^{\frac{1}{q}} \esup_{\tau\in(0,t)}\Delta(\tau)^{\frac{1}{r}}\varphi(\tau)^{-\frac{1}{p}}
		\\
		&\quad + \sup_{\substack{N+2\le k\le M\\k\in\mathcal{Z}_1}} \esup_{t\in(x_{k-1},x_k)} \left(\int_t^{x_k}\Delta^{-\frac{q}{r}}w\right)^{\frac{1}{q}}
		\varphi(t)^{-\frac{1}{p}}\Delta(t)^{\frac{1}{r}}
		\\
		&\quad +   \sup_{\substack{N+2\le k\le M\\k\in\mathcal{Z}_2}} \esup_{t\in(x_{k-1},x_k)} \left(\int_t^{x_k}\Delta^{-\frac{q}{r}}w\right)^{\frac{1}{q}}
		\esup_{\tau\in(x_{k-1},t)}\Delta(\tau)^{\frac{1}{r}} \varphi(\tau)^{-\frac{1}{p}}. 
	\end{align*}
	Since $N+1\in \mathcal{Z}_2$, we have
	\begin{align*}
		\III &\ls \sup_{\substack{N+2\le k\le M\\k\in\mathcal{Z}_1}} \esup_{t\in(x_{k-1},x_k)} \left(\int_t^{x_k}\Delta^{-\frac{q}{r}}w\right)^{\frac{1}{q}}
		\varphi(t)^{-\frac{1}{p}}\Delta(t)^{\frac{1}{r}} \nonumber
		\\
		&\quad +   \sup_{\substack{N+1\le k\le M\\k\in\mathcal{Z}_2}} \esup_{t\in(x_{k-1},x_k)} \left(\int_t^{x_k}\Delta^{-\frac{q}{r}}w\right)^{\frac{1}{q}}
		\esup_{\tau\in(x_{k-1},t)}\Delta(\tau)^{\frac{1}{r}}\varphi(\tau)^{-\frac{1}{p}}\nonumber
		\\
		&\le \sup_{N+1\le k\le M} \esup_{t\in(x_{k-1},x_k)} \left(\int_t^{x_k}\Delta^{-\frac{q}{r}}w\right)^{\frac{1}{q}}
		\esup_{\tau\in(x_{k-1},t)} \varphi(\tau)^{-\frac{1}{p}}\Delta(\tau)^{\frac{1}{r}}.
	\end{align*}
	Then by using \eqref{int-cut-D}, we get
	\begin{align*}
		\III &\ls   \sup_{N+1\le k\le M} \esup_{t\in(x_{k-1},x_k)} \left(\int_t^{x_k}\Delta^{-\frac{q}{r}}w\right)^{\frac{1}{q}}
		\esup_{\tau\in(x_{k-1},t)} \varphi(\tau)^{-\frac{1}{p}}\left(\int_{x_{k-1}}^{\tau}\delta\right)^{\frac{1}{r}}\nonumber
		\\
		&\quad +  \sup_{N+2\le k\le M}\varphi(x_{k-1})^{-\frac{1}{p}}\Delta(x_{k-1})^{\frac{1}{r}}\left(\int_{x_{k-1}}^{x_k}\Delta^{-\frac{q}{r}}w\right)^{\frac{1}{q}}\\
		& \le C_{1,1} +  \sup_{N+2\le k\le M}\varphi(x_{k-1})^{-\frac{1}{p}}\Delta(x_{k-1})^{\frac{1}{r}}\left(\int_{x_{k-1}}^{L}\Delta^{-\frac{q}{r}}w\right)^{\frac{1}{q}}.
	\end{align*}
	Finally, \eqref{C-31-estimate-new} combined with \eqref{C-31<C-33} yields
	\begin{align*}
		\III &\ls   C_{1,1}+C_{3,3}.
	\end{align*}  
	For future reference, note that we have shown that
	\begin{equation}\label{4}
		\sup_{N+1\le k\le M} \esup_{t\in(x_{k-1},x_k)} \left(\int_t^{x_k}\Delta^{-\frac{q}{r}}w\right)^{\frac{1}{q}}
		\esup_{\tau\in(x_{k-1},t)} \varphi(\tau)^{-\frac{1}{p}}\Delta(\tau)^{\frac{1}{r}}\ls C_{1,1}+C_{3,3}.
	\end{equation}
	Thus, we have obtained
	\begin{equation*}
		B_4 \ls \I+\II+\III \ls C_{1,1}+C_{3,3}.
	\end{equation*}
	Hence, putting all things together, we have
	\begin{equation*}
		C_{1,1}+C_{1,2}+C_{3,3}+C_{4,1} \approx B_1+B_2+B_4.
	\end{equation*}
	
	\rm{(iv)} By Theorem~\ref{thm:main_discretization}, we have
	\begin{equation*}
		C\approx C_{1,2}+C_{1,3}+C_{3,4}+C_{4,1}.
	\end{equation*}
	Moreover, thanks to \eqref{EQ:C12lesB1}, \eqref{EQ:C13lessB1+B3} and \eqref{EQ:C41lesB1}, to establish the desired upper bound on $C$, it is sufficient to show that $C_{3,4}\lesssim B_2 + B_ 3 + B_5$. To this end, note that
	\begin{equation}\label{E:C3,4-1}
		\begin{split}
			C_{3,4}
			&=\sup_{N+1\leq k \leq M-1} \bigg(\int_{x_k}^L \Delta^{-\frac{q}{r}} w \bigg)^{\frac{1}{q}} \\
			&\hspace{2cm}\times\bigg(\sum_{i=N+1}^k \bigg(\int_{x_{i-1}}^{x_i} \bigg(\int_{x_{i-1}}^t \delta\bigg)^{\frac{r}{1-r}} \delta(t) \varphi(t)^{-\frac{r}{p(1-r)}}dt \bigg)^{\frac{p(1-r)}{p-r}}  \bigg)^{\frac{p-r}{pr}}\\
			& \approx \bigg(\int_{x_{N+1}}^L \Delta^{-\frac{q}{r}} w \bigg)^{\frac{1}{q}}  \bigg(\int_{x_{N}}^{x_{N+1}} \bigg(\int_{x_{N}}^t \delta\bigg)^{\frac{r}{1-r}} \delta(t) \varphi(t)^{-\frac{r}{p(1-r)}}dt  \bigg)^{\frac{1-r}{r}}\\
			&+ \bigg(\int_{x_{N+2}}^L \Delta^{-\frac{q}{r}} w \bigg)^{\frac{1}{q}}  \bigg(\int_{x_{N}}^{x_{N+1}} \bigg(\int_{x_{N}}^t \delta\bigg)^{\frac{r}{1-r}} \delta(t) \varphi(t)^{-\frac{r}{p(1-r)}}dt \bigg)^{\frac{1-r}{r}}  \\
			& + \sup_{N+2\leq k \leq M-1} \bigg(\int_{x_k}^L \Delta^{-\frac{q}{r}} w \bigg)^{\frac{1}{q}} \bigg(\int_{x_{k-1}}^{x_k} \bigg(\int_{x_{k-1}}^t \delta\bigg)^{\frac{r}{1-r}} \delta(t) \varphi(t)^{-\frac{r}{p(1-r)}}dt \bigg)^{\frac{1-r}{r}}\\
			&+ \sup_{N+2\leq k \leq M-1} \bigg(\int_{x_k}^L \Delta^{-\frac{q}{r}} w \bigg)^{\frac{1}{q}} \\
			&\hspace{2cm} \times\bigg(\sum_{i=N+2}^{k-1} \bigg(\int_{x_{i-1}}^{x_i} \bigg(\int_{x_{i-1}}^t \delta\bigg)^{\frac{r}{1-r}} \delta(t) \varphi(t)^{-\frac{r}{p(1-r)}}dt \bigg)^{\frac{p(1-r)}{p-r}}  \bigg)^{\frac{p-r}{pr}}\\
			& \lesssim \sup_{N+1\le k\le M-1} \left(\int_{x_k}^{L}\Delta^{-\frac{q}{r}}w\right)^{\frac{1}{q}}
			\left(\int_{x_{k-1}}^{x_k}\left(\int_{x_{k-1}}^{t}\delta\right)^{\frac{r}{1-r}}\delta(t)\varphi(t)^{-\frac{r}{p(1-r)}}\,dt\right)^{\frac{1-r}{r}}
			\\
			& \quad + \sup_{N+2\le k\le M-1} \left(\int_{x_k}^{L}\Delta^{-\frac{q}{r}}w\right)^{\frac{1}{q}} \\
			& \hspace{2.5cm} \times
			\left(\sum_{i=N+2}^{k-1}\left(\int_{x_{i-1}}^{x_i}\left(\int_{x_{i-1}}^{t}\delta\right)^{\frac{r}{1-r}}\delta(t)\varphi(t)^{-\frac{r}{p(1-r)}}\,dt\right)^{\frac{p(1-r)}{p-r}}\right)^{\frac{p-r}{pr}}
			\\
			&= C_{3,2} + \sup_{N+2\le k\le M-1} \left(\int_{x_k}^{L}\Delta^{-\frac{q}{r}}w\right)^{\frac{1}{q}} \\
			& \hspace{2.5cm} \times
			\left(\sum_{i=N+2}^{k-1}\left(\int_{x_{i-1}}^{x_i}\left(\int_{x_{i-1}}^{t}\delta\right)^{\frac{r}{1-r}}\delta(t)\varphi(t)^{-\frac{r}{p(1-r)}}\,dt\right)^{\frac{p(1-r)}{p-r}}\right)^{\frac{p-r}{pr}}\\
			&\ls B_2+B_3+\sup_{N+2\le k\le M-1} \left(\int_{x_k}^{L}\Delta^{-\frac{q}{r}}w\right)^{\frac{1}{q}} \\
			& \hspace{2.5cm} \times
			\left(\sum_{i=N+2}^{k-1}\left(\int_{x_{i-1}}^{x_i}\left(\int_{x_{i-1}}^{t}\delta\right)^{\frac{r}{1-r}}\delta(t)\varphi(t)^{-\frac{r}{p(1-r)}}\,dt\right)^{\frac{p(1-r)}{p-r}}\right)^{\frac{p-r}{pr}}
		\end{split}
	\end{equation}
	where we used \eqref{EQ:C32lessB2+B3} in the last inequality.
	For any $k\in\Z$ satisfying $N+2\le k\le M-1$, we have by \eqref{phi-Z1} and \eqref{Uphi-Z2},
	\begin{align*}
		&\sum_{i=N+2}^{k-1}\left(\int_{x_{i-1}}^{x_i}\left(\int_{x_{i-1}}^{t}\delta\right)^{\frac{r}{1-r}}\delta(t)\varphi(t)^{-\frac{r}{p(1-r)}}\,dt\right)^{\frac{p(1-r)}{p-r}} \\
		& \quad \le \sum_{i=N+2}^{k-1}\left(\int_{x_{i-1}}^{x_i}\Delta^{\frac{r}{1-r}}\delta\varphi^{-\frac{r}{p(1-r)}}\right)^{\frac{p(1-r)}{p-r}}
		\\
		&\quad \approx \sum_{\substack{i\in\mathcal{Z}_1\\N+2\le i\le k-1}}\varphi(x_i)^{-\frac{r}{p-r}}\left(\int_{x_{i-1}}^{x_i}\Delta^{\frac{r}{1-r}}\delta\right)^{\frac{p(1-r)}{p-r}}
		\\
		&\qquad + \sum_{\substack{i\in\mathcal{Z}_2\\N+2\le i\le k-1}}\varphi(x_{i-1})^{-\frac{r}{p-r}}U(x_{i-1})^{\frac{pr}{p-r}}\left(\int_{x_{i-1}}^{x_i}\Delta^{\frac{r}{1-r}}\delta U^{-\frac{r}{1-r}}\right)^{\frac{p(1-r)}{p-r}}
		\\
		&\quad \le \sum_{\substack{i\in\mathcal{Z}_1\\N+2\le i\le k-1}}\varphi(x_i)^{-\frac{r}{p-r}}\Delta(x_i)^{\frac{p}{p-r}} \\
		& \qquad +
		\sum_{\substack{i\in\mathcal{Z}_2\\N+1\le i\le k-2}}\varphi(x_{i})^{-\frac{r}{p-r}}U(x_{i})^{\frac{pr}{p-r}}\left(\int_{x_{i}}^{x_{i+1}}\Delta^{\frac{r}{1-r}}\delta U^{-\frac{r}{1-r}}\right)^{\frac{p(1-r)}{p-r}}
		\\
		&\ls \sum_{i=N+1}^{k-1}\varphi(x_i)^{-\frac{r}{p-r}}
		\left[\Delta(x_i)^{\frac{1}{r}}+U(x_i)\left(\int_{x_{i}}^{x_{k}}\Delta^{\frac{r}{1-r}}\delta U^{-\frac{r}{1-r}}\right)^{\frac{1-r}{r}}\right]^{\frac{pr}{p-r}}
		\\
		& \approx \sum_{i=N+1}^{k-1}\varphi(x_i)^{-\frac{r}{p-r}}
		\left(\int_{0}^{x_k}\Delta(s)^{\frac{r}{1-r}}\delta(s)U(s)^{-\frac{r}{1-r}}\min\left\{U(s)^{\frac{r}{1-r}},U(x_i)^{\frac{r}{1-r}}\right\}\,ds\right)^{\frac{p(1-r)}{p-r}}
		\\
		&\ls \int_{0}^{x_k}\sigma(t)\left(\int_{0}^{x_k}\Delta(s)^{\frac{r}{1-r}}\delta(s)U(s)^{-\frac{r}{1-r}}\min\left\{U(s)^{\frac{r}{1-r}},U(t)^{\frac{r}{1-r}}\right\}\,ds\right)^{\frac{p(1-r)}{p-r}}\,dt,
	\end{align*}
	where we used \eqref{R3} in the last inequality. Plugging this into~\eqref{E:C3,4-1}, we obtain
	\begin{align}
		C_{3,4}&\ls B_2 + B_3 \nonumber\\
		& \quad + \sup_{N+2\le k\le M-1}\left(\int_{x_k}^{L}\Delta^{-\frac{q}{r}}w\right)^{\frac{1}{q}}
		\left(\int_{0}^{x_k}\sigma(t)\left(\int_{0}^{x_k}\Delta(s)^{\frac{r}{1-r}}\delta(s)U(s)^{-\frac{r}{1-r}} \right. \right. \nonumber\\
		& \hspace{5cm} \times \left.\left.\min\left\{U(s)^{\frac{r}{1-r}},U(t)^{\frac{r}{1-r}}\right\}\,ds\right)^{\frac{p(1-r)}{p-r}}\,dt\right)^{\frac{p-r}{pr}}
		\nonumber\\
		& \ls B_2 + B_3
		\nonumber\\
		& \quad + \sup_{N+2\le k\le M-1}\esup_{x\in(x_k,x_{k+1})}\left(\int_{x}^{L}\Delta^{-\frac{q}{r}}w\right)^{\frac{1}{q}} \left(\int_{0}^{x}\sigma(t)\left(\int_{0}^{x}\Delta(s)^{\frac{r}{1-r}}\delta(s)U(s)^{-\frac{r}{1-r}}\right. \right. \nonumber \\
		&\hspace{5cm}\times
		\left. \left. \min\left\{U(s)^{\frac{r}{1-r}},U(t)^{\frac{r}{1-r}}\right\}\,ds\right)^{\frac{p(1-r)}{p-r}}\,dt\right)^{\frac{p-r}{pr}}
		\nonumber\\
		& \ls B_2+B_3+B_5. \label{EQ:C34lessB2+B3+B5}
	\end{align}
	Altogether, we arrive at
	\begin{equation*}
		C_{1,2}+C_{1,3}+C_{3,4}+C_{4,1} \ls B_1+B_2+B_3+B_5.
	\end{equation*}
	
	As for the opposite inequality, note that
	\begin{equation}\label{C-32<C-34-new}
		C_{3,2} \leq C_{3,4}.
	\end{equation} 
	Then, owing to \eqref{EQ:B1+B2UpperBoundCase1} combined with \eqref{EQ:C11lessC13}, \eqref{EQ:C31lessC32} and \eqref{C-32<C-34-new}, we have
	\begin{equation*}
		B_1 + B_2 \lesssim C_{1,2} + C_{1,3} + C_{3,2} + C_{4,1} \leq C_{1,2} + C_{1,3} + C_{3,4} + C_{4,1}.
	\end{equation*}
	Moreover, thanks to \eqref{EQ:antid_ii_B3} and \eqref{C-32<C-34-new}, we have
	\begin{equation*}
		B_3  \lesssim C_{1,3}+C_{3,2} \leq  C_{1,3} + C_{3,4}.
	\end{equation*}
	Consequently,
	\begin{equation*}
		B_1+B_2+B_3 \ls  C_{1,2}+C_{1,3}+C_{3,4}+C_{4,1}.
	\end{equation*}
	
	As for $B_5$, we have
	\begin{align*}
		B_5 &\approx \sup_{N+1\le k\le M}\esup_{t\in(x_{k-1},x_{k})} \left(\int_{t}^{L}\Delta^{-\frac{q}{r}}w\right)^{\frac{1}{q}} \\
		& \hspace{2cm} \times
		\left(\int_{0}^{t}\sigma(s)\left[\Delta(s)^{\frac{1}{r}}+U(s)\left(\int_{s}^{t}\Delta^{\frac{r}{1-r}}\delta U^{-\frac{r}{1-r}}\right)^{\frac{1-r}{r}}\right]^{\frac{pr}{p-r}} \,ds\right)^{\frac{p-r}{pr}}.
	\end{align*}
	Then, similar to the previous cases decomposing the integral $\int_{t}^{L}$ into the sum $\int_{t}^{x_{k}}+\int_{x_{k}}^{L}$, we get
	\begin{align*}
		B_5& \approx
		\sup_{N+1\le k\le M}\esup_{t\in(x_{k-1},x_{k})} \left(\int_{t}^{x_k}\Delta^{-\frac{q}{r}}w\right)^{\frac{1}{q}} \\
		& \hspace{2cm} \times
		\left(\int_{0}^{t}\sigma(s)\left[\Delta(s)^{\frac{1}{r}}+U(s)\left(\int_{s}^{t}\Delta^{\frac{r}{1-r}}\delta U^{-\frac{r}{1-r}}\right)^{\frac{1-r}{r}}\right]^{\frac{pr}{p-r}} \,ds\right)^{\frac{p-r}{pr}}
		\\
		&\quad +
		\sup_{N+1\le k\le M-1}\left(\int_{x_k}^{L}\Delta^{-\frac{q}{r}}w\right)^{\frac{1}{q}} \\
		& \hspace{2cm} \times
		\left(\int_{0}^{x_k}\sigma(s)\left[\Delta(s)^{\frac{1}{r}}+U(s)\left(\int_{s}^{x_k}\Delta^{\frac{r}{1-r}}\delta U^{-\frac{r}{1-r}}\right)^{\frac{1-r}{r}}\right]^{\frac{pr}{p-r}} \,ds\right)^{\frac{p-r}{pr}}.
	\end{align*}
	Next, decomposing the integral $\int_0^{t}$ into the sum $\int_0^{x_{k-1}}+ \int_{x_{k-1}}^t$, we have
	\begin{align*}
		B_5& \approx
		\sup_{N+2\le k\le M}\esup_{t\in(x_{k-1},x_{k})} \left(\int_{t}^{x_k}\Delta^{-\frac{q}{r}}w\right)^{\frac{1}{q}} \\
		& \hspace{2cm} \times
		\left(\int_{0}^{x_{k-1}}\sigma(s)\left[\Delta(s)^{\frac{1}{r}}+U(s)\left(\int_{s}^{t}\Delta^{\frac{r}{1-r}}\delta U^{-\frac{r}{1-r}}\right)^{\frac{1-r}{r}}\right]^{\frac{pr}{p-r}} \,ds\right)^{\frac{p-r}{pr}}
		\\
		& +
		\sup_{N+1\le k\le M}\esup_{t\in(x_{k-1},x_{k})} \left(\int_{t}^{x_k}\Delta^{-\frac{q}{r}}w\right)^{\frac{1}{q}} \\
		& \hspace{2cm} \times
		\left(\int_{x_{k-1}}^{t}\sigma(s)\left[\Delta(s)^{\frac{1}{r}}+U(s)\left(\int_{s}^{t}\Delta^{\frac{r}{1-r}}\delta U^{-\frac{r}{1-r}}\right)^{\frac{1-r}{r}}\right]^{\frac{pr}{p-r}} \,ds\right)^{\frac{p-r}{pr}}
		\\
		& +
		\sup_{N+1\le k\le M-1} \left(\int_{x_k}^L\Delta^{-\frac{q}{r}}w\right)^{\frac{1}{q}} \\
		& \hspace{2cm} \times
		\left(\int_0^{x_{k}}\sigma(s)\left[\Delta(s)^{\frac{1}{r}}+U(s)\left(\int_{s}^{x_k}\Delta^{\frac{r}{1-r}}\delta U^{-\frac{r}{1-r}}\right)^{\frac{1-r}{r}}\right]^{\frac{pr}{p-r}} \,ds\right)^{\frac{p-r}{pr}}\\
		&  \approx
		\sup_{N+2\le k\le M} \left(\int_{x_{k-1}}^{x_k}\Delta^{-\frac{q}{r}}w\right)^{\frac{1}{q}}
		\left(\int_{0}^{x_{k-1}}\sigma(s)\Delta(s)^{\frac{p}{p-r}}\,ds\right)^{\frac{p-r}{pr}}\\
		& + \sup_{N+2\le k\le M}\esup_{t\in(x_{k-1},x_{k})} \left(\int_{t}^{x_k}\Delta^{-\frac{q}{r}}w\right)^{\frac{1}{q}} \\
		& \hspace{2cm} \times
		\left(\int_{0}^{x_{k-1}}\sigma(s) U(s)^{\frac{pr}{p-r}} \left(\int_{s}^{t}\Delta^{\frac{r}{1-r}}\delta U^{-\frac{r}{1-r}}\right)^{\frac{p(1-r)}{p-r}} \,ds\right)^{\frac{p-r}{pr}}
		\\
		& +
		\sup_{N+1\le k\le M}\esup_{t\in(x_{k-1},x_{k})} \left(\int_{t}^{x_k}\Delta^{-\frac{q}{r}}w\right)^{\frac{1}{q}} \\
		& \hspace{2cm} \times
		\left(\int_{x_{k-1}}^{t}\sigma(s)\left[\Delta(s)^{\frac{1}{r}}+U(s)\left(\int_{s}^{t}\Delta^{\frac{r}{1-r}}\delta U^{-\frac{r}{1-r}}\right)^{\frac{1-r}{r}}\right]^{\frac{pr}{p-r}} \,ds\right)^{\frac{p-r}{pr}}
		\\
		& +
		\sup_{N+1\le k\le M-1} \left(\int_{x_k}^L\Delta^{-\frac{q}{r}}w\right)^{\frac{1}{q}} \\
		& \hspace{2cm} \times
		\left(\int_0^{x_{k}}\sigma(s)\left[\Delta(s)^{\frac{1}{r}}+U(s)\left(\int_{s}^{x_k}\Delta^{\frac{r}{1-r}}\delta U^{-\frac{r}{1-r}}\right)^{\frac{1-r}{r}}\right]^{\frac{pr}{p-r}} \,ds\right)^{\frac{p-r}{pr}}.
	\end{align*}
	Now, in the second term, decomposing the integral $\int_s^t$ into sum $\int_s^{x_{k-1}} + \int_{x_{k-1}}^t$, we obtain 
	\begin{align*}
		B_5&\approx
		\sup_{N+2\le k\le M}\left(\int_{x_{k-1}}^{x_k}\Delta^{-\frac{q}{r}}w\right)^{\frac{1}{q}}
		\left(\int_{0}^{x_{k-1}}\sigma\Delta^{\frac{p}{p-r}}\right)^{\frac{p-r}{pr}}
		\\
		&\quad +
		\sup_{N+2\le k\le M} \left(\int_{x_{k-1}}^{x_k}\Delta^{-\frac{q}{r}}w\right)^{\frac{1}{q}} \\
		& \hspace{2cm} \times
		\left(\int_{0}^{x_{k-1}}\sigma(s)U(s)^{\frac{pr}{p-r}}
		\left(\int_s^{x_{k-1}}\Delta^{\frac{r}{1-r}}\delta U^{-\frac{r}{1-r}}\right)^{\frac{p(1-r)}{p-r}}\,ds\right)^{\frac{p-r}{pr}}
		\\
		&\quad +
		\sup_{N+2\le k\le M}\esup_{t\in(x_{k-1},x_{k})} \left(\int_{t}^{x_k}\Delta^{-\frac{q}{r}}w\right)^{\frac{1}{q}} \\
		& \hspace{2cm} \times
		\left(\int_0^{x_{k-1}}\sigma U^{\frac{pr}{p-r}}\right)^{\frac{p-r}{pr}}
		\left(\int_{x_{k-1}}^{t}\Delta^{\frac{r}{1-r}}\delta U^{-\frac{r}{1-r}}\right)^{\frac{1-r}{r}}
		\\
		&\quad +
		\sup_{N+1\le k\le M}\esup_{t\in(x_{k-1},x_{k})} \left(\int_{t}^{x_k}\Delta^{-\frac{q}{r}}w\right)^{\frac{1}{q}}\\
		& \hspace{2cm} \times
		\left(\int_{x_{k-1}}^{t}\sigma(s)\left[\Delta(s)^{\frac{1}{r}}+U(s)\left(\int_{s}^{t}\Delta^{\frac{r}{1-r}}\delta U^{-\frac{r}{1-r}}\right)^{\frac{1-r}{r}}\right]^{\frac{pr}{p-r}} \,ds\right)^{\frac{p-r}{pr}}
		\\
		&\quad +
		\sup_{N+1\le k\le M-1}\left(\int_{x_k}^{L}\Delta^{-\frac{q}{r}}w\right)^{\frac{1}{q}} \\
		& \hspace{2cm} \times
		\left(\int_{0}^{x_{k}}\sigma(s)\left[\Delta(s)^{\frac{1}{r}}+U(s)\left(\int_{s}^{x_k}\Delta^{\frac{r}{1-r}}\delta U^{-\frac{r}{1-r}}\right)^{\frac{1-r}{r}}\right]^{\frac{pr}{p-r}} \,ds\right)^{\frac{p-r}{pr}}.
	\end{align*}
	Observe that, reindexing $k\mapsto k+1$, 
	\begin{align}\label{New-B5}
		&\sup_{N+2\le k\le M}\left(\int_{x_{k-1}}^{x_k}\Delta^{-\frac{q}{r}}w\right)^{\frac{1}{q}}
		\left(\int_{0}^{x_{k-1}}\sigma\Delta^{\frac{p}{p-r}}\right)^{\frac{p-r}{pr}}
		\nonumber\\
		&\quad +
		\sup_{N+2\le k\le M} \left(\int_{x_{k-1}}^{x_k}\Delta^{-\frac{q}{r}}w\right)^{\frac{1}{q}} \nonumber\\
		& \hspace{2cm} \times
		\left(\int_{0}^{x_{k-1}}\sigma(s)U(s)^{\frac{pr}{p-r}}
		\left(\int_s^{x_{k-1}}\Delta^{\frac{r}{1-r}}\delta U^{-\frac{r}{1-r}}\right)^{\frac{p(1-r)}{p-r}}\,ds\right)^{\frac{p-r}{pr}}\nonumber
		\\
		& \approx \sup_{N+1\le k\le M-1}\left(\int_{x_k}^{x_{k+1}}\Delta^{-\frac{q}{r}}w\right)^{\frac{1}{q}}\nonumber \\
		& \hspace{2cm} \times
		\left(\int_{0}^{x_{k}}\sigma(s)\left[\Delta(s)^{\frac{1}{r}}+U(s)\left(\int_{s}^{x_k}\Delta^{\frac{r}{1-r}}\delta U^{-\frac{r}{1-r}}\right)^{\frac{1-r}{r}}\right]^{\frac{pr}{p-r}} \,ds\right)^{\frac{p-r}{pr}}.
	\end{align}
	Thus, we obtain
	\begin{align*}
		B_5&\lesssim \sup_{N+1\le k\le M-1}\left(\int_{x_{k}}^{L}\Delta^{-\frac{q}{r}}w\right)^{\frac{1}{q}} \\
		& \hspace{2cm} \times
		\left(\int_{0}^{x_{k}}\sigma(s)\left[\Delta(s)^{\frac{1}{r}}+U(s)\left(\int_{s}^{x_k}\Delta^{\frac{r}{1-r}}\delta U^{-\frac{r}{1-r}}\right)^{\frac{1-r}{r}}\right]^{\frac{pr}{p-r}} \,ds\right)^{\frac{p-r}{pr}}
		\\
		&\quad +
		\sup_{N+2\le k\le M} \left(\int_0^{x_{k-1}}\sigma U^{\frac{pr}{p-r}}\right)^{\frac{p-r}{pr}} \\
		& \hspace{2cm} \times\esup_{t\in(x_{k-1},x_{k})}\left(\int_{t}^{x_k}\Delta^{-\frac{q}{r}}w\right)^{\frac{1}{q}}
		\left(\int_{x_{k-1}}^{t}\Delta^{\frac{r}{1-r}}\delta U^{-\frac{r}{1-r}}\right)^{\frac{1-r}{r}}
		\\
		&\quad +
		\sup_{N+1\le k\le M}\esup_{t\in(x_{k-1},x_{k})}\left(\int_{t}^{x_k}\Delta^{-\frac{q}{r}}w\right)^{\frac{1}{q}} \\
		& \hspace{2cm} \times
		\left(\int_{x_{k-1}}^{t}\sigma(s)\left[\Delta(s)^{\frac{1}{r}}+U(s)\left(\int_{s}^{t}\Delta^{\frac{r}{1-r}}\delta U^{-\frac{r}{1-r}}\right)^{\frac{1-r}{r}}\right]^{\frac{pr}{p-r}} \,ds\right)^{\frac{p-r}{pr}}
		\\
		& =: \I+\II+\III.
	\end{align*}
	
	In view of~\eqref{R4}, we have
	\begin{align*}
		\I&  \approx\sup_{N+1\le k\le M-1}\left(\int_{x_{k}}^{L}\Delta^{-\frac{q}{r}}w\right)^{\frac{1}{q}} \bigg(\int_{0}^{x_{k}}\sigma(s)\\
		&\hspace{1cm}\times
		\bigg(\int_0^{x_k} \Delta(\tau)^{\frac{r}{1-r}}\delta(\tau) U(\tau)^{-\frac{r}{1-r}} \min\{U(s)^{\frac{r}{1-r}}, U(\tau)^{\frac{r}{1-r}}\}d\,\tau\bigg)^{\frac{p(1-r)}{p-r}}
		\,ds\bigg)^{\frac{p-r}{pr}}\\
		& \ls \sup_{N+1\le k\le M-1}\left(\int_{x_{k}}^{L}\Delta^{-\frac{q}{r}}w\right)^{\frac{1}{q}}
		\left(\sum_{i=N+1}^{k}\varphi(x_i)^{-\frac{r}{p-r}}\right.\\
		& \hspace{3cm}\times\left[\Delta(x_i)^{\frac{1}{r}}+ \left. U(x_i)\left(\int_{x_i}^{x_k}\Delta^{\frac{r}{1-r}}\delta U^{-\frac{r}{1-r}}\right)^{\frac{1-r}{r}}\right]^{\frac{pr}{p-r}} \right)^{\frac{p-r}{pr}}
		\\
		&\quad + \left(\int_{x_{N+1}}^{L}\Delta^{-\frac{q}{r}}w\right)^{\frac{1}{q}}\esup_{t\in(0,x_{N+1})}\varphi(t)^{-\frac{1}{p}}\Delta(t)^{\frac{1}{r}}
		\\
		&\quad + \left(\int_{x_{N+1}}^{L}\Delta^{-\frac{q}{r}}w\right)^{\frac{1}{q}}\esup_{t\in(0,x_{N+1})}\varphi(t)^{-\frac{1}{p}}U(t)\left(\int_{t}^{x_{N+1}}\Delta^{\frac{r}{1-r}}\delta U^{-\frac{r}{1-r}}\right)^{\frac{1-r}{r}}.
	\end{align*}
	Note that, owing to \eqref{EQ:C33LBCase3} and \eqref{sup-int},
	\begin{equation*}
		\left(\int_{x_{N+1}}^{L}\Delta^{-\frac{q}{r}}w\right)^{\frac{1}{q}}\esup_{t\in(0,x_{N+1})}\varphi(t)^{-\frac{1}{p}}\Delta(t)^{\frac{1}{r}} \ls C_{3,3}\ls C_{3,4}.
	\end{equation*}
	Moreover, since $U\varphi^{-\frac{1}{p}}$ is increasing, we have
	\begin{align*}
		& \left(\int_{x_{N+1}}^{L}\Delta^{-\frac{q}{r}}w\right)^{\frac{1}{q}} \esup_{t\in(0,x_{N+1})}\varphi(t)^{-\frac{1}{p}}U(t)\left(\int_{t}^{x_{N+1}}\Delta^{\frac{r}{1-r}}\delta U^{-\frac{r}{1-r}}\right)^{\frac{1-r}{r}}
		\\
		& \leq \left(\int_{x_{N+1}}^{L}\Delta^{-\frac{q}{r}}w\right)^{\frac{1}{q}}
		\left(\int_{0}^{x_{N+1}}\Delta^{\frac{r}{1-r}}\delta\varphi^{-\frac{r}{p(1-r)}}\right)^{\frac{1-r}{r}}
		\\
		& \leq C_{3,4}.
	\end{align*}
	Then, 
	\begin{align*}
		\I \ls C_{3,4} &+ \sup_{N+1\le k\le M-1}\bigg(\int_{x_{k}}^{L}\Delta^{-\frac{q}{r}}w\bigg)^{\frac{1}{q}}
		\bigg(\sum_{i=N+1}^{k}\varphi(x_i)^{-\frac{r}{p-r}}\Delta(x_i)^{\frac{p}{p-r}}\bigg)^{\frac{p-r}{pr}}
		\\
		& +\sup_{N+1\le k\le M-1}\bigg(\int_{x_{k}}^{L}\Delta^{-\frac{q}{r}}w\bigg)^{\frac{1}{q}} \\
		& \qquad \times
		\bigg(\sum_{i=N+1}^{k-1} \varphi(x_i)^{-\frac{r}{p-r}} U(x_i)^{\frac{pr}{p-r}}\bigg(\int_{x_i}^{x_k}\Delta^{\frac{r}{1-r}}\delta U^{-\frac{r}{1-r}}\bigg)^{\frac{p(1-r)}{p-r}} \bigg)^{\frac{p-r}{pr}}.
	\end{align*}
	Furthermore, since $\{U^p(x_i)/\varphi(x_i)\}_{i=N+1}^{k-1}$ is strongly increasing, using \eqref{EQ:strongly_increasing_sum_sum}, we have
	\begin{align*}
		&\sum_{i=N+1}^{k-1}\varphi(x_i)^{-\frac{r}{p-r}}U(x_i)^{\frac{pr}{p-r}}\left(\int_{x_i}^{x_k}\Delta^{\frac{r}{1-r}}\delta U^{-\frac{r}{1-r}}\right)^{\frac{p(1-r)}{p-r}}
		\\
		& = \sum_{i=N+1}^{k-1}\varphi(x_i)^{-\frac{r}{p-r}}U(x_i)^{\frac{pr}{p-r}}\left(\sum_{m=i}^{k-1}\int_{x_m}^{x_{m+1}}\Delta^{\frac{r}{1-r}}\delta U^{-\frac{r}{1-r}}\right)^{\frac{p(1-r)}{p-r}}
		\\
		& \approx \sum_{i=N+1}^{k-1}\varphi(x_i)^{-\frac{r}{p-r}}U(x_i)^{\frac{pr}{p-r}}\left(\int_{x_i}^{x_{i+1}}\Delta^{\frac{r}{1-r}}\delta U^{-\frac{r}{1-r}}\right)^{\frac{p(1-r)}{p-r}}.
	\end{align*}
	Next, using the monotonicity of $U^p/\varphi$, reindexing $i \mapsto i-1$, then applying \eqref{EQ:antid_ii_5} with $t=x_i$, we get
	\begin{align*}
		&\sum_{i=N+1}^{k-1}\varphi(x_i)^{-\frac{r}{p-r}}U(x_i)^{\frac{pr}{p-r}}\left(\int_{x_i}^{x_k}\Delta^{\frac{r}{1-r}}\delta U^{-\frac{r}{1-r}}\right)^{\frac{p(1-r)}{p-r}}
		\\
		& \leq \sum_{i=N+2}^{k}\left(\int_{x_{i-1}}^{x_{i}}\Delta^{\frac{r}{1-r}}\delta \varphi^{-\frac{r}{p(1-r)}}\right)^{\frac{p(1-r)}{p-r}}
		\\
		& \ls \sum_{i=N+2}^{k}\Delta(x_i)^{\frac{p}{p-r}}\varphi(x_i)^{-\frac{r}{p-r}} +
		\sum_{i=N+2}^{k}\Delta(x_{i-1})^{\frac{p}{p-r}}\varphi(x_{i-1})^{-\frac{r}{p-r}}
		\\
		& \quad + \sum_{i=N+2}^{k}\left(\int_{x_{i-1}}^{x_{i}}\left(\int_{x_{i-1}}^{\tau}\delta\right)^{\frac{r}{1-r}}\delta(\tau) \varphi(\tau)^{-\frac{r}{p(1-r)}}\,d\tau\right)^{\frac{p(1-r)}{p-r}}.
	\end{align*}
	Consequently, we have
	\begin{align*}
		\I &\ls C_{3,4} + \sup_{N+1\le k\le M-1}\left(\int_{x_{k}}^{L}\Delta^{-\frac{q}{r}}w\right)^{\frac{1}{q}}
		\left(\sum_{i=N+1}^{k} \varphi(x_i)^{-\frac{r}{p-r}} \Delta(x_i)^{\frac{p}{p-r}}\right)^{\frac{p-r}{pr}}
		\\
		& \quad + \sup_{N+1\le k\le M-1} \left(\int_{x_{k}}^{L}\Delta^{-\frac{q}{r}}w\right)^{\frac{1}{q}} \\
		& \hspace{3cm} \times
		\left(\sum_{i=N+2}^{k}\left(\int_{x_{i-1}}^{x_{i}}\left(\int_{x_{i-1}}^{\tau}\delta\right)^{\frac{r}{1-r}}\delta(\tau)\varphi(\tau)^{-\frac{r}{p(1-r)}}\,d\tau\right)^{\frac{p(1-r)}{p-r}}\right)^{\frac{p-r}{pr}}.
	\end{align*}
	Since $\{\varphi(x_i)^{-\frac{r}{p-r}}\}_{i=N+1}^{M-1}$ is strongly decreasing, applying \eqref{EQ:strongly_decreasing_sum_sum},
	\begin{align*}
		\I &  \ls C_{3,4} + \sup_{N+1\le k\le M-1}\left(\int_{x_{k}}^{L}\Delta^{-\frac{q}{r}}w\right)^{\frac{1}{q}}
		\left(\sum_{i=N+1}^{k} \varphi(x_i)^{-\frac{r}{p-r}} \bigg(\int_{x_{i-1}}^{x_i} \delta\bigg)^{\frac{p}{p-r}}\right)^{\frac{p-r}{pr}}
		\\
		& \approx C_{3,4} +C_{2,2}.
	\end{align*}
	
	Now, taking \eqref{C22<C33} and \eqref{C33<C34} into consideration we arrive at
	\begin{equation*}
		I \lesssim C_{3,4}.
	\end{equation*}
	Recall that
	\begin{equation*}
		\int_{0}^{x_{k-1}}\sigma U^{\frac{pr}{p-r}} \ls \varphi(x_{k-1})^{-\frac{r}{p-r}}U(x_{k-1})^{\frac{pr}{p-r}}
		\quad\text{for $k\in\Z, N+2\leq k \leq M$}
	\end{equation*}
	thanks to \eqref{EQ:antid_iii_1}. Next, applying \eqref{EQ:antid_iii_1}, exploiting the monotonicity of $U\varphi^{-\frac{1}{p}}$ and using \eqref{2}  and \eqref{C-32<C-34-new} in turn, we get,
	\begin{align*}
		\II &\ls \sup_{N+2\le k\le M} \varphi(x_{k-1})^{-\frac{1}{p}} U(x_{k-1})\esup_{t\in(x_{k-1},x_{k})}\left(\int_{t}^{x_k}\Delta^{-\frac{q}{r}}w\right)^{\frac{1}{q}}
		\left(\int_{x_{k-1}}^{t}\Delta^{\frac{r}{1-r}}\delta U^{-\frac{r}{1-r}}\right)^{\frac{1-r}{r}}
		\\
		& \leq \sup_{N+2\le k\le M} \esup_{t\in(x_{k-1},x_{k})} \left(\int_{t}^{x_k}\Delta^{-\frac{q}{r}}w\right)^{\frac{1}{q}}
		\left(\int_{x_{k-1}}^{t}\Delta^{\frac{r}{1-r}}\delta \varphi^{-\frac{r}{p(1-r)}}\right)^{\frac{1-r}{r}}
		\\
		& \ls C_{1,3} + C_{3,4}.
	\end{align*}
	For future reference note that, we have shown
	\begin{align}\label{III-new}
		\sup_{N+2\le k\le M} \esup_{t\in(x_{k-1},x_{k})} \left(\int_{t}^{x_k}\Delta^{-\frac{q}{r}}w\right)^{\frac{1}{q}}
		\left(\int_{x_{k-1}}^{t}\Delta^{\frac{r}{1-r}}\delta \varphi^{-\frac{r}{p(1-r)}}\right)^{\frac{1-r}{r}}
		\ls C_{1,3} + C_{3,4}.
	\end{align}
	To find a suitable upper estimate for $\III$, for every $t\in(0,L)$, set
	\begin{equation*}
		\widetilde{h}_t(s) = \Delta(s)^{\frac{1}{r}}+U(s)\left(\int_{s}^{t}\Delta^{\frac{r}{1-r}}\delta U^{-\frac{r}{1-r}}\right)^{\frac{1-r}{r}} \quad\text{for $s\in(0,t)$.}
	\end{equation*}
	Note that $\widetilde{h}_t\in Q_{U}(0,t)$. Finally,
	\begin{align*}
		\III &  = \sup_{N+1\le k\le M}\esup_{t\in(x_{k-1},x_{k})}\bigg(\int_{t}^{x_k}\Delta^{-\frac{q}{r}}w\bigg)^{\frac{1}{q}} \bigg(\int_{x_{k-1}}^{t}\sigma(s)\widetilde{h}_t(s)^{\frac{pr}{p-r}} \,ds\bigg)^{\frac{p-r}{pr}} \\
		& \approx \esup_{t\in(0,x_{N+1})}\left(\int_{t}^{x_{N+1}}\Delta^{-\frac{q}{r}}w\right)^{\frac{1}{q}}
		\left(\int_{0}^{t}\sigma(s)\th_t(s)^{\frac{pr}{p-r}} \,ds\right)^{\frac{p-r}{pr}}
		\\
		&\quad + \sup_{\substack{k\in\mathcal{Z}_1 \\ N+2\le k\le M}}\esup_{t\in(x_{k-1},x_{k})}\left(\int_{t}^{x_{k}}\Delta^{-\frac{q}{r}}w\right)^{\frac{1}{q}}
		\left(\int_{x_{k-1}}^{t}\sigma(s)\th_t(s)^{\frac{pr}{p-r}} \,ds\right)^{\frac{p-r}{pr}}
		\\
		&\quad + \sup_{\substack{k\in\mathcal{Z}_2 \\ N+2\le k\le M}}\esup_{t\in(x_{k-1},x_{k})}\left(\int_{t}^{x_{k}}\Delta^{-\frac{q}{r}}w\right)^{\frac{1}{q}}
		\left(\int_{x_{k-1}}^{t}\sigma(s)\th_t(s)^{\frac{pr}{p-r}} \,ds\right)^{\frac{p-r}{pr}}.
	\end{align*}
	If $k\in\mathcal{Z}_1$, $N+2\leq k\leq M$, then, thanks to \eqref{EQ:antid_lemma_1},
	\begin{equation}\label{result of 4.1}
		\int_{x_{k-1}}^{t}\sigma(s)\th_t(s)^{\frac{pr}{p-r}} \,ds \ls \varphi(t)^{-\frac{r}{p-r}}\Delta(t)^{\frac{p}{p-r}},
	\end{equation}
	while, if $k\in\mathcal{Z}_2$, $N+2\leq k\leq M$, then, owing to \eqref{EQ:antid_lemma_2} and the monotonicity of $U\varphi^{-\frac{1}{p}}$,
	\begin{align}\label{result of 4.2}
		&\int_{x_{k-1}}^{t}\sigma(s)\th_t(s)^{\frac{pr}{p-r}} \,ds  \nonumber\\
		& \qquad \ls \varphi(x_{k-1})^{-\frac{r}{p-r}}
		\left[\Delta(x_{k-1})^{\ir} + U(x_{k-1})\left(\int_{x_{k-1}}^{t}\Delta^{\frac{r}{1-r}}\delta U^{-\frac{r}{1-r}}\right)^{\frac{1-r}{r}}\right]^{\frac{pr}{p-r}}\nonumber\\
		& \qquad \leq \varphi(x_{k-1})^{-\frac{r}{p-r}}
		\Delta(x_{k-1})^{\frac{p}{p-r}} + \left(\int_{x_{k-1}}^{t}\Delta^{\frac{r}{1-r}}\delta \varphi^{-\frac{r}{p(1-r)}}\right)^{\frac{p(1-r)}{p-r}}.
	\end{align}
	Moreover, for every $t\in(0,x_{N+1})$, since $N+1 \in \mathcal{Z}_2$, we have by \eqref{EQ:antid_lemma_3},  \eqref{sup-int} with $k=N+1$ and the monotonicity of $U \varphi^{-\frac{1}{p}}$
	\begin{align}\label{result of 4.3}
		\int_{0}^{t}\sigma(s)\th_t(s)^\frac{pr}{p-r}\,ds &\ls \sup_{y\in (0,t)} \th_t(y)^\frac{pr}{p-r} \varphi(y)^{-\frac{r}{p-r}} \nonumber\\
		& = \sup_{y\in (0,t)} \bigg(\Delta(y)^{\frac{1}{r}}+U(y)\left(\int_{y}^{t}\Delta^{\frac{r}{1-r}}\delta U^{-\frac{r}{1-r}}\right)^{\frac{1-r}{r}}\bigg)^{\frac{pr}{p-r}}\varphi(y)^{-\frac{r}{p-r}} \nonumber\\
		& \lesssim \left(\int_{0}^{t}\Delta^{\frac{r}{1-r}}\delta \varphi^{-\frac{r}{p(1-r)}}\right)^{\frac{p(1-r)}{p-r}}.
	\end{align}
	
	Combining \eqref{result of 4.1}, \eqref{result of 4.2} and \eqref{result of 4.3}, we obtain
	\begin{align*}
		\III & \lesssim \esup_{t\in(0,x_{N+1})}\left(\int_{t}^{x_{N+1}}\Delta^{-\frac{q}{r}}w\right)^{\frac{1}{q}}
		\left(\int_{0}^{t}\Delta^{\frac{r}{1-r}}\delta \varphi^{-\frac{r}{p(1-r)}}\right)^{\frac{1-r}{r}}
		\\
		& \qquad + \sup_{\substack{k\in\mathcal{Z}_1 \\ N+2\le k\le M}}\esup_{t\in(x_{k-1},x_{k})}\left(\int_{t}^{x_{k}}\Delta^{-\frac{q}{r}}w\right)^{\frac{1}{q}}
		\varphi(t)^{-\frac{1}{p}}\Delta(t)^{\frac{1}{r}}
		\\
		& \qquad + \sup_{\substack{k\in\mathcal{Z}_2 \\ N+2\le k\le M}} \left(\int_{x_{k-1}}^{x_{k}}\Delta^{-\frac{q}{r}}w\right)^{\frac{1}{q}}\varphi(x_{k-1})^{-\frac{1}{p}}
		\Delta(x_{k-1})^{\frac{1}{r}} \\
		& \qquad + \sup_{\substack{k\in\mathcal{Z}_2 \\ N+2\le k\le M}} \esup_{t\in(x_{k-1},x_{k})} \left(\int_{t}^{x_{k}}\Delta^{-\frac{q}{r}}w\right)^{\frac{1}{q}}\left(\int_{x_{k-1}}^{t}\Delta^{\frac{r}{1-r}}\delta \varphi^{-\frac{r}{p(1-r)}}\right)^{\frac{1-r}{r}} \\
		& \leq \esup_{t\in(0,x_{N+1})}\left(\int_{t}^{x_{N+1}}\Delta^{-\frac{q}{r}}w\right)^{\frac{1}{q}}
		\left(\int_{0}^{t}\Delta^{\frac{r}{1-r}}\delta \varphi^{-\frac{r}{p(1-r)}}\right)^{\frac{1-r}{r}} \\
		& \qquad +\sup_{N+2\le k\le M} \esup_{t\in(x_{k-1},x_{k})}\left(\int_{t}^{x_{k}}\Delta^{-\frac{q}{r}}w\right)^{\frac{1}{q}}
		\varphi(t)^{-\frac{1}{p}}\Delta(t)^{\frac{1}{r}}
		\\
		& \qquad + \sup_{ N+2\le k\le M} \esup_{t\in(x_{k-1},x_{k})} \left(\int_{t}^{x_{k}}\Delta^{-\frac{q}{r}}w\right)^{\frac{1}{q}}\left(\int_{x_{k-1}}^{t}\Delta^{\frac{r}{1-r}}\delta \varphi^{-\frac{r}{p(1-r)}}\right)^{\frac{1-r}{r}}\\
		& \leq \esup_{t\in(0,x_{N+1})}\left(\int_{t}^{x_{N+1}}\Delta^{-\frac{q}{r}}w\right)^{\frac{1}{q}}
		\left(\int_{0}^{t}\Delta^{\frac{r}{1-r}}\delta \varphi^{-\frac{r}{p(1-r)}}\right)^{\frac{1-r}{r}} \\
		& \qquad + \sup_{N+2\le k\le M} \esup_{t\in(x_{k-1},x_{k})}\left(\int_{t}^{x_{k}}\Delta^{-\frac{q}{r}}w\right)^{\frac{1}{q}} \sup_{\tau\in (x_{k-1}, t)}
		\varphi(t)^{-\frac{1}{p}}\Delta(t)^{\frac{1}{r}}
		\\
		& \qquad + \sup_{ N+2\le k\le M} \esup_{t\in(x_{k-1},x_{k})} \left(\int_{t}^{x_{k}}\Delta^{-\frac{q}{r}}w\right)^{\frac{1}{q}}\left(\int_{x_{k-1}}^{t}\Delta^{\frac{r}{1-r}}\delta \varphi^{-\frac{r}{p(1-r)}}\right)^{\frac{1-r}{r}}.
	\end{align*}
	Consequently, using \eqref{4} and \eqref{III-new}, combined with \eqref{EQ:C11lessC13} and \eqref{C33<C34}, we get
	\begin{align*}
		\III \lesssim
		C_{1,3}+ C_{1,1}+  C_{3,3} + C_{3,4} \ls C_{1,3} + C_{3,3} + C_{3,4}\ls C_{1,3} + C_{3,4}.
	\end{align*}
	We finally arrived at
	\begin{equation}\label{EQ:B5lessC13+C34}
		B_5 \ls \I + \II + \III \ls C_{3,4} + C_{1,3}.
	\end{equation}
	
	Putting all things together, we have
	\begin{equation*}
		C_{1,2} + C_{1,3} + C_{3,4} + C_{4,1}
		\ls B_1 + B_2 + B_3 + B_5
		\ls C_{1,2} + C_{1,3} + C_{3,4} + C_{4,1}.
	\end{equation*}
	
	\rm{(v)} By Theorem~\ref{thm:main_discretization}, we have
	\begin{equation*}
		C\approx C_{1,4}+C_{1,5}+C_{3,1}+C_{4,1}.
	\end{equation*}
	We start by establishing the desired upper estimate. In view of \eqref{EQ:C31lesB2} and \eqref{EQ:C41lesB1}, it is sufficient to prove suitable upper bounds on $C_{1,4}$ and $C_{1,5}$.
	
	For every $t\in(x_{k-1}, x_{k}]$, we have by \eqref{phi-Z1}, if $k\in\mathcal{Z}_1$,
	\begin{align*}
		\esup_{s\in(x_{k-1},t)}\left(\int_{x_{k-1}}^s \delta\right)^{\frac{q}{r(1-q)}}\varphi(s)^{-\frac{q}{p(1-q)}}
		&\approx
		\varphi(x_k)^{-\frac{q}{p(1-q)}}\left(\int_{x_{k-1}}^t \delta\right)^{\frac{q}{r(1-q)}} \\
		&\leq
		\varphi(x_k)^{-\frac{q}{p(1-q)}}\Delta(t)^{\frac{q}{r(1-q)}},
	\end{align*}
	and, by  \eqref{Uphi-Z2}, if $k\in\mathcal{Z}_2$,
	\begin{align*}
		&\esup_{s\in(x_{k-1},t)}\left(\int_{x_{k-1}}^s \delta\right)^{\frac{q}{r(1-q)}}\varphi(s)^{-\frac{q}{p(1-q)}} \\
		& \qquad \approx
		\left(\frac{U(x_{k-1})}{\varphi(x_{k-1})^{\frac{1}{p}}}\right)^{\frac{q}{1-q}}\esup_{s\in(x_{k-1},t)}\left(\int_{x_{k-1}}^s \delta\right)^{\frac{q}{r(1-q)}}U(s)^{-\frac{q}{1-q}}.
	\end{align*}
	Then
	\begin{align*}
		C_{1,4}
		&  \lesssim
		\sup_{\substack{k\in\mathcal{Z}_1\\N+1\leq k \leq M}} \varphi(x_k)^{-\frac1p}\left( \int_{x_{k-1}}^{x_k} \left( \int_t^{x_k} \Delta^{-\frac{q}{r}} w \right)^\frac{q}{1-q}\Delta(t)^{-\frac{q}{r}} w(t) \Delta(t)^{\frac{q}{r(1-q)}}\,dt\right)^{\frac{1-q}{q}} \\
		&\quad  +
		\sup_{\substack{k\in\mathcal{Z}_2\\N+1\leq k \leq M}} \frac{U(x_{k-1})}{\varphi(x_{k-1})^{\frac1p}}\left( \int_{x_{k-1}}^{x_k} \left( \int_t^{x_k} \Delta^{-\frac{q}{r}} w \right)^\frac{q}{1-q}\Delta(t)^{-\frac{q}{r}} w(t)\right. \\
		&\qquad \left.  \times
		\esup_{s\in(x_{k-1},t)} \bigg(\int_{x_{k-1}}^s \delta\bigg)^{\frac{q}{r(1-q)}}U(s)^{-\frac{q}{1-q}}\,dt\right)^{\frac{1-q}{q}}\\
		&\lesssim
		\sup_{\substack{k\in\mathcal{Z}_1\\N+1\leq k \leq M}} \varphi(x_k)^{-\frac1p}\left( \int_{x_{k-1}}^{x_k} \left( \int_t^{x_k} \Delta^{-\frac{q}{r}} w \right)^\frac{q}{1-q}\Delta(t)^{-\frac{q}{r}} w(t) \Delta(t)^{\frac{q}{r(1-q)}}\,dt\right)^{\frac{1-q}{q}}\\
		&\quad+
		\sup_{\substack{k\in\mathcal{Z}_2\\N+1\leq k \leq M}} \frac{U(x_{k-1})}{\varphi(x_{k-1})^{\frac1p}}\left( \int_{x_{k-1}}^{x_k} \left( \int_t^{x_k} \Delta^{-\frac{q}{r}} w \right)^\frac{q}{1-q}\Delta(t)^{-\frac{q}{r}} w(t)\right. \\
		&\qquad\left.\times
		\esup_{s\in(x_{k-1},t)}\Delta(s)^{\frac{q}{r(1-q)}}U(s)^{-\frac{q}{1-q}}\,dt\right)^{\frac{1-q}{q}}.
	\end{align*}
	Monotonicity of $\Delta$ yields
	\begin{align*}
		C_{1,4}&\lesssim
		\sup_{N+1\leq k \leq M} \varphi(x_k)^{-\frac1p}\left( \int_{x_{k-1}}^{x_k} d\left[-\left(\int_t^{x_k}w\right)^{\frac1{1-q}}\right]\right)^{\frac{1-q}{q}} \\
		&\qquad +
		\sup_{N+1\leq k \leq M}\esup_{t\in(x_{k-1},x_k)}\frac{U(t)}{\varphi(t)^{\frac1p}}\\
		&\qquad \quad \times \left( \int_{t}^{L} \left( \int_s^{L} \Delta^{-\frac{q}{r}} w \right)^\frac{q}{1-q}\Delta(s)^{-\frac{q}{r}} w(s)
		\esup_{\tau\in(t,s)}\Delta(\tau)^{\frac{q}{r(1-q)}}U(\tau)^{-\frac{q}{1-q}}\,ds\right)^{\frac{1-q}{q}}\\
		&\leq
		B_1+B_6.
	\end{align*}
	On the other hand, in view of $\int_{x_{k-1}}^t\delta \leq \Delta(t)$,
	\begin{align*}
		C_{1,5}
		&\leq
		\sup_{N+1\leq k \leq M} \left( \int_{x_{k-1}}^{x_k} \left( \int_{x_{k-1}}^t w\right)^{\frac{q}{1-q}}w(t)\varphi(t)^{-\frac{q}{p(1-q)}}\,dt\right)^{\frac{1-q}{q}}\\
		& \leq \sup_{N+1\leq k \leq M} \left( \int_{x_{k-1}}^{x_k} W(t)^{\frac{q}{1-q}}w(t)\varphi(t)^{-\frac{q}{p(1-q)}}\,dt\right)^{\frac{1-q}{q}}.
	\end{align*}
	
	Furthermore, applying \eqref{KMT-Lemma3.5} with $p\mapsto \frac{1-q}{q}$, ${\widetilde{\varphi}}\mapsto U\varphi^{-\frac{1}{p}}$, $\varrho\mapsto U$ and $g\mapsto W^{\frac{q}{1-q}}w$, we have
	\begin{align}\label{5-new}
		&\sup_{N+1\leq k \leq M} \left( \int_{x_{k-1}}^{x_k} W(t)^{\frac{q}{1-q}}w(t)\varphi(t)^{-\frac{q}{p(1-q)}}\,dt\right)^{\frac{1-q}{q}}\nonumber\\
		&\quad \approx
		\esup_{t\in(0,L)}\varphi(t)^{-\frac{1}{p}}U(t)\left(\int_0^L\frac{W(s)^{\frac{q}{1-q}}w(s)}{U(t)^{\frac{q}{1-q}}+U(s)^{\frac{q}{1-q}}}\,ds\right)^{\frac{1-q}{q}}.
	\end{align}
	Thus using \eqref{5-new} and \eqref{min-equiv}
	\begin{align}
		C_{1,5}&\ls 
		\esup_{t\in(0,L)}\varphi(t)^{-\frac{1}{p}}U(t)\left(\int_0^L\frac{W(s)^{\frac{q}{1-q}}w(s)}{U(t)^{\frac{q}{1-q}}+U(s)^{\frac{q}{1-q}}}\,ds\right)^{\frac{1-q}{q}}\nonumber\\
		& \approx
		\esup_{t\in(0,L)}\varphi(t)^{-\frac{1}{p}} \bigg(\int_0^t W^{\frac{q}{1-q}} w\bigg)^{\frac{1-q}{q}}\nonumber\\
		&  \qquad +
		\esup_{t\in(0,L)}\varphi(t)^{-\frac{1}{p}}U(t)\left(\int_t^L W^{\frac{q}{1-q}}wU^{-\frac{q}{1-q}}\right)^{\frac{1-q}{q}}\nonumber\\
		&\approx
		\esup_{t\in(0,L)}\varphi(t)^{-\frac{1}{p}}W(t)^{\frac1q}
		+
		\esup_{t\in(0,L)}\varphi(t)^{-\frac{1}{p}}U(t)\left(\int_t^L W^{\frac{q}{1-q}}wU^{-\frac{q}{1-q}}\right)^{\frac{1-q}{q}}\nonumber\\
		&=
		B_1+B_7.\label{EQ:antid_v_*}
	\end{align}
	Putting all these upper estimates together, we have
	\begin{align*}
		C
		\approx
		C_{1,4}+C_{1,5}+C_{3,1}+C_{4,1}
		\lesssim
		B_1+B_2+B_6+B_7.
	\end{align*}
	
	For future reference note that \eqref{5-new} together with \eqref{EQ:antid_v_*} implies  
	\begin{align}\label{Es-B7-new}
		B_7 \lesssim \sup_{N+1\leq k \leq M} \left( \int_{x_{k-1}}^{x_k} W(t)^{\frac{q}{1-q}}w(t)\varphi(t)^{-\frac{q}{p(1-q)}}\,dt\right)^{\frac{1-q}{q}}.
	\end{align}
	
	Conversely, we already obtained in \eqref{EQ:antid_i_B1} that
	\begin{equation*}
		B_1
		\lesssim
		C_{4,1}+C_{1,2}+C_{3,1}.
	\end{equation*}
	Also, using the monotonicity of $\varphi$, we have 
	\begin{align*}
		C_{1,2}  & \approx
		\sup_{N+1\leq k \leq M} \esup_{t\in(x_{k-1}, x_k)} 
		\varphi(t)^{-\frac1p} \\
		& \hspace{2cm} \times \left(\int_{x_{k-1}}^t d\bigg[\bigg(\int_{x_{k-1}}^{\tau}\Delta(s)^{-\frac{q}{r}}w(s) \left(\int_{x_{k-1}}^s \delta\right)^{\frac{q}{r}}\, ds\bigg)^{\frac{1}{1-q}}\bigg]\right)^{\frac{1-q}{q}} \\
		&\approx
		\sup_{N+1\leq k \leq M} \esup_{t\in(x_{k-1}, x_k)} \left(\int_{x_{k-1}}^t\left(\int_{x_{k-1}}^{\tau}\Delta(s)^{-\frac{q}{r}}w(s) \left(\int_{x_{k-1}}^s \delta\right)^{\frac{q}{r}}\, ds\right)^{\frac{q}{1-q}}\right.\\
		&\qquad\times
		\left.\Delta(\tau)^{-\frac{q}{r}}w(\tau) \left(\int_{x_{k-1}}^{\tau} \delta\right)^{\frac{q}{r}}\, d\tau\right)^{\frac{1-q}{q}}\varphi(t)^{-\frac1p} \\
		&\leq C_{1,5}.
	\end{align*}
	Thus, we arrive at
	\begin{equation}
		B_1
		\lesssim
		C_{4,1}+C_{1,5}+C_{3,1}.\label{EQ:B1UBCase5}
	\end{equation}
	Next, by \eqref{EQ:antid_i_B2}, we know that
	\begin{align*}
		B_2 \lesssim C_{1,1}+C_{3,1}.
	\end{align*}
	Moreover, it is easy to see that
	\begin{align*}
		C_{1,1}
		&\approx
		\sup_{N+1\leq k \leq M} \esup_{t\in(x_{k-1}, x_k)} \left( \int_t^{x_k} \left(\int_s^{x_k} \Delta^{-\frac{q}{r}} w \right)^\frac{q}{1-q}\Delta(s)^{-\frac{q}{r}} w(s)\,ds \right)^\frac{1-q}{q} \\
		&\qquad\times
		\esup_{s\in(x_{k-1}, t)} \left( \int_{x_{k-1}}^s \delta \right)^\frac1{r} \varphi(s)^{-\frac1{p}}\\
		&\leq C_{1,4}.
	\end{align*}
	Combining these estimates gives
	\begin{align}\label{EQ:antid_v_B2}
		B_2 \lesssim
		C_{1,4}+C_{3,1}.
	\end{align}
	Next, similar to the proofs of the previous cases, first decomposing the integral $\int_t^L \dots$ and then the supremum $\sup_{\tau \in (t,s) }\dots$, we obtain
	\begin{align*}
		B_6 & \approx  \sup_{N+1 \leq k \leq M} \esup_{t\in(x_{k-1},x_k)}     U(t)\varphi(t)^{-\frac1p} \\ 
		& \qquad \times \Bigg(\int_t^L \esup_{\tau\in(t,s)}\Delta(\tau)^{\frac{q}{r(1-q)}}U(\tau)^{-\frac{q}{1-q}} d\Bigg[-\bigg(\int_s^L\Delta^{-\frac{q}{r}}w\bigg)^{\frac{1}{1-q}}\Bigg]\Bigg)^{\frac{1-q}{q}}\\
		& \approx  \sup_{N+1 \leq k \leq M} \esup_{t\in(x_{k-1},x_k)}     U(t)\varphi(t)^{-\frac1p} \\ 
		& \qquad \times \Bigg(\int_t^{x_k} \esup_{\tau\in(t,s)}\Delta(\tau)^{\frac{q}{r(1-q)}}U(\tau)^{-\frac{q}{1-q}} d\Bigg[-\bigg(\int_s^L\Delta^{-\frac{q}{r}}w\bigg)^{\frac{1}{1-q}}\Bigg]\Bigg)^{\frac{1-q}{q}}\\
		& +  \sup_{N+1 \leq k \leq M-1} \esup_{t\in(x_{k-1},x_k)}     U(t)\varphi(t)^{-\frac1p} \\ 
		&  \qquad \times \Bigg(\int_{x_k}^L \esup_{\tau\in(t,s)}\Delta(\tau)^{\frac{q}{r(1-q)}}U(\tau)^{-\frac{q}{1-q}} d\Bigg[-\bigg(\int_s^L\Delta^{-\frac{q}{r}}w\bigg)^{\frac{1}{1-q}}\Bigg]\Bigg)^{\frac{1-q}{q}}\\
		& \approx  \sup_{N+1 \leq k \leq M} \esup_{t\in(x_{k-1},x_k)}  U(t)\varphi(t)^{-\frac1p} \\ 
		&  \qquad \times \Bigg(\int_t^{x_k} \esup_{\tau\in(t,s)}\Delta(\tau)^{\frac{q}{r(1-q)}}U(\tau)^{-\frac{q}{1-q}} d\Bigg[-\bigg(\int_s^L\Delta^{-\frac{q}{r}}w\bigg)^{\frac{1}{1-q}}\Bigg]\Bigg)^{\frac{1-q}{q}}\\
		& +  \sup_{N+1 \leq k \leq M-1} \esup_{t\in(x_{k-1},x_k)}     U(t)\varphi(t)^{-\frac1p} \\ 
		&  \qquad \times \Bigg(\int_{x_k}^L \esup_{\tau\in(x_k,s)}\Delta(\tau)^{\frac{q}{r(1-q)}}U(\tau)^{-\frac{q}{1-q}} d\Bigg[-\bigg(\int_s^L\Delta^{-\frac{q}{r}}w\bigg)^{\frac{1}{1-q}}\Bigg]\Bigg)^{\frac{1-q}{q}}\\
		& +  \sup_{N+1 \leq k \leq M-1} \Bigg(\int_{x_k}^L  \Delta^{-\frac{q}{r}}w\bigg)^{\frac{1}{q}} \esup_{t\in(x_{k-1},x_k)}     U(t)\varphi(t)^{-\frac1p} \bigg(\esup_{\tau\in(t,x_k)} \Delta(\tau)^{\frac{1}{r}} U(\tau)^{-1}\bigg)\\
		&\approx
		\sup_{N+1\leq k \leq M} \esup_{t\in(x_{k-1}, x_k)}U(t)\varphi(t)^{-\frac1p} \\
		&\qquad \times \left(\int_t^{x_k}\esup_{\tau\in(t,s)}\Delta(\tau)^{\frac{q}{r(1-q)}}U(\tau)^{-\frac{q}{1-q}}d\left[-\left(\int_s^L\Delta^{-\frac{q}{r}}w\right)^{\frac{1}{1-q}}\right]\right)^{\frac{1-q}{q}}\\
		&\quad+
		\sup_{N+1\leq k \leq M-1} U(x_k)\varphi(x_k)^{-\frac1p}\\
		&\qquad\times
		\left(\int_{x_k}^L\left(\int_s^L\Delta^{-\frac{q}{r}}w\right)^{\frac{q}{1-q}}\Delta(s)^{-\frac{q}{r}}w(s)\esup_{\tau\in(x_k,s)}\Delta(\tau)^{\frac{q}{r(1-q)}}U(\tau)^{-\frac{q}{1-q}}\,ds\right)^{\frac{1-q}{q}}\\
		&\quad+
		\sup_{N+1\leq k \leq M-1} \left(\int_{x_k}^L\Delta^{-\frac{q}{r}}w\right)^{\frac{1}{q}} \esup_{t\in(x_{k-1}, x_k)}  U(t)\varphi(t)^{-\frac1p}\left(\esup_{\tau\in(t,x_k)}\Delta(\tau)^{\frac1r}U(\tau)^{-1}\right)\\
		&=:\I + \II + \III.
	\end{align*}
	Integrating by parts gives
	\begin{align*}
		\I & \lesssim 
		\sup_{N+1\leq k \leq M} \esup_{t\in(x_{k-1}, x_k)}\varphi(t)^{-\frac1p}\Delta(t)^{\frac1r}\left(\int_{t}^L\Delta^{-\frac{q}{r}}w\right)^{\frac{1}{q}}\\
		&\quad+ \sup_{N+1\leq k \leq M} \esup_{t\in(x_{k-1}, x_k)}U(t)\varphi(t)^{-\frac1p} \\
		& \qquad \quad  \times \left(\int_t^{x_k}\left(\int_s^L\Delta^{-\frac{q}{r}}w\right)^{\frac{1}{1-q}}d\left[\esup_{\tau\in(t,s)}\Delta(\tau)^{\frac{q}{r(1-q)}}U(\tau)^{-\frac{q}{1-q}}\right]\right)^{\frac{1-q}{q}}\\
		&=
		B_2 + \sup_{N+1\leq k \leq M} \esup_{t\in(x_{k-1}, x_k)}U(t)\varphi(t)^{-\frac1p} \\
		& \qquad \qquad  \times \left(\int_t^{x_k}\left(\int_s^L\Delta^{-\frac{q}{r}}w\right)^{\frac{1}{1-q}}d\left[\esup_{\tau\in(t,s)}\Delta(\tau)^{\frac{q}{r(1-q)}}U(\tau)^{-\frac{q}{1-q}}\right]\right)^{\frac{1-q}{q}}.
	\end{align*}
	Next, \eqref{int-cut-Dw} together with \eqref{EQ:antid_v_B2} yields,
	\begin{align*}
		\I  &\lesssim 
		C_{1,4} + C_{3,1} + \sup_{N+1\leq k \leq M} \esup_{t\in(x_{k-1}, x_k)}U(t)\varphi(t)^{-\frac1p} \\
		& \qquad \qquad  \times \left(\int_t^{x_k}\left(\int_s^{x_k}\Delta^{-\frac{q}{r}}w\right)^{\frac{1}{1-q}}d\left[\esup_{\tau\in(t,s)}\Delta(\tau)^{\frac{q}{r(1-q)}}U(\tau)^{-\frac{q}{1-q}}\right]\right)^{\frac{1-q}{q}}\\
		&+ \sup_{N+1\leq k \leq M} \left(\int_{x_k}^L\Delta^{-\frac{q}{r}}w\right)^{\frac{1}{q}} \esup_{t\in(x_{k-1}, x_k)} U(t)\varphi(t)^{-\frac1p} \\
		& \qquad \qquad  \times \left(\int_t^{x_k}d\left[\esup_{\tau\in(t,s)}\Delta(\tau)^{\frac{q}{r(1-q)}}U(\tau)^{-\frac{q}{1-q}}\right]\right)^{\frac{1-q}{q}}.
	\end{align*}
	Integrating by parts once again, we obtain
	\begin{align*}
		\I & \lesssim C_{1,4}+C_{3,1}
		+
		\sup_{N+1\leq k \leq M} \esup_{t\in(x_{k-1}, x_k)}U(t)\varphi(t)^{-\frac1p}\\
		&\qquad \quad \times
		\left(\int_t^{x_k}\left(\int_s^{x_k}\Delta^{-\frac{q}{r}}w\right)^{\frac{q}{1-q}}\Delta(s)^{-\frac{q}{r}}w(s)\esup_{\tau\in(t,s)}\Delta(\tau)^{\frac{q}{r(1-q)}}U(\tau)^{-\frac{q}{1-q}}\,ds\right)^{\frac{1-q}{q}}\\
		&\quad+
		\sup_{N+1\leq k \leq M-1} \left(\int_{x_k}^L\Delta^{-\frac{q}{r}}w\right)^{\frac{1}{q}}\esup_{t\in(x_{k-1},x_k)}U(t)\varphi(t)^{-\frac1p}\esup_{\tau\in(t,x_k)}\Delta(\tau)^{\frac{1}{r}}U(\tau)^{-1}.    
	\end{align*}
	Next, monotonicity of $U\varphi^{-\frac{1}{p}}$ gives
	\begin{align}\label{I-v-new}
		\I & \lesssim C_{1,4}+C_{3,1} \nonumber\\
		&\quad + \sup_{N+1\leq k \leq M}
		\left(\int_{x_{k-1}}^{x_k}\left(\int_s^{x_k}\Delta^{-\frac{q}{r}}w\right)^{\frac{q}{1-q}}\Delta(s)^{-\frac{q}{r}}w(s) \right. \nonumber\\
		& \qquad \quad \left. \times \esup_{\tau\in(x_{k-1},s)}\Delta(\tau)^{\frac{q}{r(1-q)}}\varphi(\tau)^{-\frac{q}{p(1-q)}}\,ds\right)^{\frac{1-q}{q}}\nonumber\\
		&\quad+
		\sup_{N+1\leq k \leq M-1} \left(\int_{x_k}^L\Delta^{-\frac{q}{r}}w\right)^{\frac{1}{q}}\esup_{\tau\in(x_{k-1},x_k)}\Delta(\tau)^{\frac{1}{r}}\varphi(\tau)^{-\frac{1}{p}}\nonumber\\
		& \lesssim C_{1,4}+C_{3,1} \nonumber\\
		&\quad + \sup_{N+1\leq k \leq M}
		\left(\int_{x_{k-1}}^{x_k}\left(\int_s^{x_k}\Delta^{-\frac{q}{r}}w\right)^{\frac{q}{1-q}}\Delta(s)^{-\frac{q}{r}}w(s) \right.\nonumber \\
		& \qquad \quad \left. \times \esup_{\tau\in(x_{k-1},s)}\Delta(\tau)^{\frac{q}{r(1-q)}}\varphi(\tau)^{-\frac{q}{p(1-q)}}\,ds\right)^{\frac{1-q}{q}}\nonumber\\
		&\quad+
		\sup_{N+1\leq k \leq M-1} \left(\int_{x_k}^L\Delta^{-\frac{q}{r}}w\right)^{\frac{1}{q}}\esup_{\tau\in(x_{k-1},x_k)}\Delta(\tau)^{\frac{1}{r}}\varphi(\tau)^{-\frac{1}{p}}.
	\end{align}
	Note that in view of \eqref{EQ:antid_v_B2}, we have
	\begin{equation}\label{I-v-new1}
		\sup_{N+1\leq k \leq M-1} \left(\int_{x_k}^L\Delta^{-\frac{q}{r}}w\right)^{\frac{1}{q}}\esup_{\tau\in(x_{k-1},x_k)}\Delta(\tau)^{\frac{1}{r}}\varphi(\tau)^{-\frac{1}{p}} \leq B_2 \lesssim C_{1,4} + C_{3,1},
	\end{equation}
	moreover, applying \eqref{int-cut-D} we have
	\begin{align}\label{5}
		&\sup_{N+1\leq k \leq M}\left(\int_{x_{k-1}}^{x_k}\left(\int_s^{x_k}\Delta^{-\frac{q}{r}}w\right)^{\frac{q}{1-q}}\Delta(s)^{-\frac{q}{r}}w(s) \right. \nonumber \\
		& \hspace{3cm} \left. \times \esup_{\tau\in(x_{k-1},s)}\Delta(\tau)^{\frac{q}{r(1-q)}}\varphi(\tau)^{-\frac{q}{p(1-q)}}\,ds\right)^{\frac{1-q}{q}} \nonumber\\
		& \approx 
		\left(\int_{x_{N}}^{x_{N+1}}\left(\int_s^{x_{N+1}}\Delta^{-\frac{q}{r}}w\right)^{\frac{q}{1-q}}\Delta(s)^{-\frac{q}{r}}w(s) \right. \nonumber \\
		& \hspace{3cm} \left. \times \esup_{\tau\in(x_{N},s)}\Delta(\tau)^{\frac{q}{r(1-q)}}\varphi(\tau)^{-\frac{q}{p(1-q)}}\,ds\right)^{\frac{1-q}{q}} \nonumber\\
		& \quad + \sup_{N+2\leq k \leq M}\left(\int_{x_{k-1}}^{x_k}\left(\int_s^{x_k}\Delta^{-\frac{q}{r}}w\right)^{\frac{q}{1-q}}\Delta(s)^{-\frac{q}{r}}w(s) \right. \nonumber \\
		& \hspace{3cm}\left. \times \esup_{\tau\in(x_{k-1},s)}\left(\int_{x_{k-1}}^{\tau} \delta\right)^{\frac{q}{r(1-q)}}\varphi(\tau)^{-\frac{q}{p(1-q)}}\,ds\right)^{\frac{1-q}{q}}\nonumber\\
		& \quad \quad + \sup_{N+2\leq k \leq M} \Delta(x_{k-1})^{\frac{1}{r}}\varphi(x_{k-1})^{-\frac{1}{p}} \nonumber \\
		&\hspace{3cm} \times\left(\int_{x_{k-1}}^{x_k}\left(\int_s^{x_k}\Delta^{-\frac{q}{r}}w\right)^{\frac{q}{1-q}}\Delta(s)^{-\frac{q}{r}}w(s)\,ds\right)^{\frac{1-q}{q}}\nonumber\\
		& \approx \sup_{N+1\leq k \leq M}\left(\int_{x_{k-1}}^{x_k}\left(\int_s^{x_k}\Delta^{-\frac{q}{r}}w\right)^{\frac{q}{1-q}}\Delta(s)^{-\frac{q}{r}}w(s) \right. \nonumber \\
		& \hspace{3cm}\left. \times \esup_{\tau\in(x_{k-1},s)}\left(\int_{x_{k-1}}^{\tau} \delta\right)^{\frac{q}{r(1-q)}}\varphi(\tau)^{-\frac{q}{p(1-q)}}\,ds\right)^{\frac{1-q}{q}}\nonumber\\
		& \quad + \sup_{N+2\leq k \leq M} \Delta(x_{k-1})^{\frac{1}{r}}\varphi(x_{k-1})^{-\frac{1}{p}} \nonumber \\
		&\hspace{3cm} \times\left(\int_{x_{k-1}}^{x_k}\left(\int_s^{x_k}\Delta^{-\frac{q}{r}}w\right)^{\frac{q}{1-q}}\Delta(s)^{-\frac{q}{r}}w(s)\,ds\right)^{\frac{1-q}{q}}\nonumber\\
		&\approx C_{1,4} + \sup_{N+2\leq k \leq M} \Delta(x_{k-1})^{\frac{1}{r}}\varphi(x_{k-1})^{-\frac{1}{p}}\left(\int_{x_{k-1}}^{x_k}\Delta^{-\frac{q}{r}}w\right)^{\frac{1}{q}}\nonumber\\
		&\lesssim C_{1,4} + C_{3,1},
	\end{align}
	where we used \eqref{C-31-estimate-new} in the last estimate. 
	Thus, plugging \eqref{I-v-new1} and \eqref{5} into \eqref{I-v-new}, we arrive at 
	\begin{equation}
		I \lesssim C_{1,4} + C_{3,1}.
	\end{equation}
	
	We shall now deal with $\II$. Note that
	\begin{align*}
		\II  & \approx   \sup_{N+1\leq k \leq M-1} U(x_k)\varphi(x_k)^{-\frac1p} \\
		& \hspace{1cm} \times
		\left(\sum_{i=k}^{M-1}\int_{x_i}^{x_{i+1}}\esup_{\tau\in(x_k,s)}\Delta(\tau)^{\frac{q}{r(1-q)}}U(\tau)^{-\frac{q}{1-q}} d\left[-\left(\int_s^L\Delta^{-\frac{q}{r}}w\right)^{\frac{1}{1-q}}\right]\right)^{\frac{1-q}{q}}\\
		& \approx  \sup_{N+1\leq k \leq M-1} U(x_k)\varphi(x_k)^{-\frac1p} \\
		& \hspace{1cm} \times
		\left(\int_{x_k}^{x_{k+1}} \esup_{\tau\in(x_k,s)}\Delta(\tau)^{\frac{q}{r(1-q)}}U(\tau)^{-\frac{q}{1-q}} d\left[-\left(\int_s^L\Delta^{-\frac{q}{r}}w\right)^{\frac{1}{1-q}}\right]\right)^{\frac{1-q}{q}}\\
		&  + \sup_{N+1\leq k \leq M-2} U(x_k)\varphi(x_k)^{-\frac1p} \\
		& \hspace{1cm}  \times
		\left(\sum_{i=k+1}^{M-1} \int_{x_i}^{x_{i+1}} \esup_{\tau\in(x_k,s)}\Delta(\tau)^{\frac{q}{r(1-q)}}U(\tau)^{-\frac{q}{1-q}}d\left[-\left(\int_s^L\Delta^{-\frac{q}{r}}w\right)^{\frac{1}{1-q}}\right]\right)^{\frac{1-q}{q}}.
	\end{align*}
	Since
	\begin{align*}
		\esup_{\tau\in(x_k,s)}\Delta(\tau)^{\frac{q}{r(1-q)}}U(\tau)^{-\frac{q}{1-q}}& \approx \esup_{\tau\in(x_k,x_i)}\Delta(\tau)^{\frac{q}{r(1-q)}}U(\tau)^{-\frac{q}{1-q}} \\
		& \hspace{2cm} + \esup_{\tau\in(x_i,s)}\Delta(\tau)^{\frac{q}{r(1-q)}}U(\tau)^{-\frac{q}{1-q}},
	\end{align*}
	when $i>k$ and $s\in (x_i, x_{i+1})$, we have
	\begin{align*}
		\II & \approx \sup_{N+1\leq k \leq M-1} U(x_k)\varphi(x_k)^{-\frac1p} \\
		& \hspace{1cm} \times
		\left(\int_{x_k}^{x_{k+1}} \esup_{\tau\in(x_k,s)}\Delta(\tau)^{\frac{q}{r(1-q)}}U(\tau)^{-\frac{q}{1-q}} d\left[-\left(\int_s^L\Delta^{-\frac{q}{r}}w\right)^{\frac{1}{1-q}}\right]\right)^{\frac{1-q}{q}}\\
		& + \sup_{N+1\leq k \leq M-2} U(x_k)\varphi(x_k)^{-\frac1p} \\
		& \hspace{1cm} \times
		\left(\sum_{i=k+1}^{M-1}\esup_{\tau\in(x_k,x_i)}\Delta(\tau)^{\frac{q}{r(1-q)}}U(\tau)^{-\frac{q}{1-q}}\int_{x_i}^{x_{i+1}}d\left[-\left(\int_s^L\Delta^{-\frac{q}{r}}w\right)^{\frac{1}{1-q}}\right]\right)^{\frac{1-q}{q}}\\
		&\quad+
		\sup_{N+1\leq k \leq M-2} U(x_k)\varphi(x_k)^{-\frac1p} \\
		& \hspace{1cm} \times
		\left(\sum_{i=k+1}^{M-1}\int_{x_i}^{x_{i+1}}\esup_{\tau\in(x_i,s)}\Delta(\tau)^{\frac{q}{r(1-q)}}U(\tau)^{-\frac{q}{1-q}} d\left[-\left(\int_s^L\Delta^{-\frac{q}{r}}w\right)^{\frac{1}{1-q}}\right]\right)^{\frac{1-q}{q}}\\
		& \approx\sup_{N+1\leq k \leq M-2} U(x_k)\varphi(x_k)^{-\frac1p} \\
		& \hspace{1cm} \times
		\left(\sum_{i=k+1}^{M-1}\esup_{\tau\in(x_k,x_i)}\Delta(\tau)^{\frac{q}{r(1-q)}}U(\tau)^{-\frac{q}{1-q}}\int_{x_i}^{x_{i+1}}d\left[-\left(\int_s^L\Delta^{-\frac{q}{r}}w\right)^{\frac{1}{1-q}}\right]\right)^{\frac{1-q}{q}}\\
		&\quad+
		\sup_{N+1\leq k \leq M-1} U(x_k)\varphi(x_k)^{-\frac1p} \\
		& \hspace{1cm} \times
		\left(\sum_{i=k}^{M-1}\int_{x_i}^{x_{i+1}}\esup_{\tau\in(x_i,s)}\Delta(\tau)^{\frac{q}{r(1-q)}}U(\tau)^{-\frac{q}{1-q}} d\left[-\left(\int_s^L\Delta^{-\frac{q}{r}}w\right)^{\frac{1}{1-q}}\right]\right)^{\frac{1-q}{q}}\\
		& =: \II_1 + \II_2.
	\end{align*}
	Observe that
	\begin{align*}
		\II_1
		&\lesssim
		\sup_{N+1\leq k \leq M-2} U(x_k)\varphi(x_k)^{-\frac1p} \left(\sum_{i=k+1}^{M-1}\left(\sum_{m=k+1}^i\esup_{\tau\in(x_{m-1},x_{m})}\Delta(\tau)^{\frac{q}{r(1-q)}}U(\tau)^{-\frac{q}{1-q}}\right) \right.\\
		&\hspace{2cm} \times \left.\int_{x_i}^{x_{i+1}}d\left[-\left(\int_s^L\Delta^{-\frac{q}{r}}w\right)^{\frac{1}{1-q}}\right]\right)^{\frac{1-q}{q}}.
	\end{align*}
	Changing the order of summation, then since $\{U^p(x_i)/\varphi(x_i)\}_{i=N+1}^{k-1}$ is strongly increasing, using \eqref{EQ:strongly_increasing_sup_sum} 
	\begin{align*}
		\II_1
		&\lesssim \sup_{N+1\leq k \leq M-2} U(x_k)\varphi(x_k)^{-\frac1p}
		\left(\sum_{m=k+1}^{M-1}\esup_{\tau\in(x_{m-1},x_{m})}\Delta(\tau)^{\frac{q}{r(1-q)}}U(\tau)^{-\frac{q}{1-q}} \right. \\
		& \hspace{2cm}\left. \times \int_{x_m}^{L}d\left[-\left(\int_s^L\Delta^{-\frac{q}{r}}w\right)^{\frac{1}{1-q}}\right]\right)^{\frac{1-q}{q}}\\
		&\approx
		\sup_{N+1\leq k \leq M-2} \left(\int_{x_{k+1}}^L\Delta^{-\frac{q}{r}}w\right)^{\frac{1}{q}}  U(x_k)\varphi(x_k)^{-\frac1p}\esup_{\tau\in(x_k,x_{k+1})}\Delta(\tau)^{\frac{1}{r}}U(\tau)^{-1}.
	\end{align*}
	Finally monotonicity of $U\varphi^{-\frac{1}{p}}$ and \eqref{EQ:antid_v_B2} yields, 
	\begin{align*}
		\II_1
		&\lesssim \sup_{N+1\leq k \leq M-2} \left(\int_{x_{k+1}}^L\Delta^{-\frac{q}{r}}w\right)^{\frac{1}{q}}  \esup_{\tau\in(x_k,x_{k+1})} \Delta(\tau)^{\frac{1}{r}}\varphi(\tau)^{-\frac1p}\\
		&\leq B_2\\
		& \lesssim C_{1,4}+C_{3,1}.
	\end{align*}
	On the other hand, applying  \eqref{EQ:strongly_increasing_sup_sum} and integrating by parts, we obtain
	\begin{align*}
		\II_2
		&\approx   \sup_{N+1\leq k \leq M-1} U(x_k)\varphi(x_k)^{-\frac1p} \\
		& \quad \qquad \times
		\left(\int_{x_k}^{x_{k+1}}\esup_{\tau\in(x_k,s)}\Delta(\tau)^{\frac{q}{r(1-q)}}U(\tau)^{-\frac{q}{1-q}}d\left[-\left(\int_s^L\Delta^{-\frac{q}{r}}w\right)^{\frac{1}{1-q}}\right]\right)^{\frac{1-q}{q}}\\
		& \lesssim \sup_{N+1\leq k \leq M-1} \varphi(x_k)^{-\frac1p}
		\Delta(x_k)^{\frac{1}{r}}\left(\int_{x_k}^L\Delta^{-\frac{q}{r}}w\right)^{\frac{1}{q}}\\
		&\quad + \sup_{N+1\leq k \leq M-1} U(x_k)\varphi(x_k)^{-\frac1p} \\
		& \quad \qquad \times
		\left(\int_{x_k}^{x_{k+1}} \left(\int_s^L\Delta^{-\frac{q}{r}}w\right)^{\frac{1}{1-q}}d\left[- \esup_{\tau\in(x_k,s)}\Delta(\tau)^{\frac{q}{r(1-q)}}U(\tau)^{-\frac{q}{1-q}}\right]\right)^{\frac{1-q}{q}}.
	\end{align*}
	Decomposing the integral $\int_s^L$ into sums $\int_s^{x_{k+1}} + \int_{x_{k+1}}^L$, then integrating by parts again, we get
	\begin{align*}
		\II_2& \lesssim \sup_{N+1\leq k \leq M-1} \varphi(x_k)^{-\frac1p}
		\Delta(x_k)^{\frac{1}{r}}\left(\int_{x_k}^L\Delta^{-\frac{q}{r}}w\right)^{\frac{1}{q}}\\
		&\quad + \sup_{N+1\leq k \leq M-2} U(x_k)\varphi(x_k)^{-\frac1p}
		\left(\int_{x_{k+1}}^L\Delta^{-\frac{q}{r}}w\right)^{\frac{1}{q}}\esup_{\tau\in(x_k,x_{k+1})}\Delta(\tau)^{\frac{1}{r}}U(\tau)^{-1}\\
		&\quad + \sup_{N+1\leq k \leq M-1} U(x_k)\varphi(x_k)^{-\frac1p} \\
		& \quad \qquad \times
		\left(\int_{x_k}^{x_{k+1}} \left(\int_s^{x_{k+1}}\Delta^{-\frac{q}{r}}w\right)^{\frac{1}{1-q}}d\left[- \esup_{\tau\in(x_k,s)}\Delta(\tau)^{\frac{q}{r(1-q)}}U(\tau)^{-\frac{q}{1-q}}\right]\right)^{\frac{1-q}{q}}\\
		&\lesssim
		\sup_{N+1\leq k \leq M-1} \varphi(x_k)^{-\frac1p}
		\Delta(x_k)^{\frac{1}{r}}\left(\int_{x_k}^L\Delta^{-\frac{q}{r}}w\right)^{\frac{1}{q}}\\
		&\quad+
		\sup_{N+1\leq k \leq M-2} U(x_k)\varphi(x_k)^{-\frac1p}
		\left(\int_{x_{k+1}}^L\Delta^{-\frac{q}{r}}w\right)^{\frac{1}{q}}\esup_{\tau\in(x_k,x_{k+1})}\Delta(\tau)^{\frac{1}{r}}U(\tau)^{-1}\\
		&\quad+
		\sup_{N+1\leq k \leq M-1} U(x_k)\varphi(x_k)^{-\frac1p}
		\left(\int_{x_k}^{x_{k+1}}\left(\int_s^{x_{k+1}}\Delta^{-\frac{q}{r}}w\right)^{\frac{q}{1-q}}\Delta(s)^{-\frac{q}{r}}w(s) \right. \\
		& \quad \qquad \times \left. \esup_{\tau\in(x_k,s)}\Delta(\tau)^{\frac{q}{r(1-q)}}U(\tau)^{-\frac {q}{1-q}}\,ds\right)^{\frac{1-q}{q}}.
	\end{align*}
	Using the monotonicity of $U\varphi^{-\frac{1}{p}}$ in the second and third terms, we obtain
	\begin{align*}
		\II_2 & \lesssim B_2 + \sup_{N+1\leq k \leq M-1} 
		\left(\int_{x_k}^{x_{k+1}}\left(\int_s^{x_{k+1}}\Delta^{-\frac{q}{r}}w\right)^{\frac{q}{1-q}}\Delta(s)^{-\frac{q}{r}}w(s) \right. \\
		& \quad \qquad \times \left. \esup_{\tau\in(x_k,s)}\Delta(\tau)^{\frac{q}{r(1-q)}}\varphi(\tau)^{-\frac {q}{p(1-q)}}\,ds\right)^{\frac{1-q}{q}}.
	\end{align*}
	Thus,  \eqref{EQ:antid_v_B2} and \eqref{5} give $ \II_2 \lesssim C_{1,4}+C_{3,1}$. Consequently, we have $\II \lesssim C_{1,4}+C_{3,1} $. 
	
	Furthermore, using the monotonicity of $U\varphi^{-\frac{1}{p}}$ and applying  \eqref{EQ:antid_v_B2} once again, we obtain
	\begin{align*}
		\III\leq\sup_{N+1\leq k\leq M-1}\left(\int_{x_k}^L\Delta^{-\frac{q}{r}}w\right)^{\frac{1}{q}} \esup_{\tau\in(x_{k-1},x_k)}\Delta(\tau)^{\frac1r}\varphi(\tau)^{-\frac1p}\leq C_{1,4}+C_{3,1}.
	\end{align*}
	Altogether, we have
	\begin{align*}
		B_6
		\approx
		\I + \II + \III
		\lesssim
		C_{1,4}+C_{3,1}.
	\end{align*}
	
	Lastly, note that for $N+2\leq k\leq M$ and $y\in(x_{k-1}, L)$, 
	\begin{equation}\label{W-cut}
		W(t)=W(x_{k-1})+\int_{x_{k-1}}^t w, \,\, t\in (x_{k-1},y).
	\end{equation}
	Then for $N+2\leq k\leq M$ and $y\in(x_{k-1}, L)$ by integrating by parts, and \eqref{W-cut} we have
	\begin{align}\label{EQ:antid_v_**}
		&\left(\int_{x_{k-1}}^y W^{\frac{q}{1-q}}w\varphi^{-\frac{q}{p(1-q)}}\right)^{\frac{1-q}{q}} \nonumber\\
		& \qquad \lesssim
		W(y)^{\frac1q}\varphi(y)^{-\frac1p}+\left(\int_{x_{k-1}}^y W(t)^{\frac{1}{1-q}}d\left[-\varphi(t)^{-\frac{q}{p(1-q)}}\right]\right)^{\frac{1-q}{q}} \nonumber\\
		&\qquad  \lesssim
		W(y)^{\frac1q}\varphi(y)^{-\frac1p}+W(x_{k-1})^{\frac1q}\varphi(x_{k-1})^{-\frac1p} \nonumber\\
		& \hspace{2cm}  +
		\left(\int_{x_{k-1}}^y\left(\int_{x_{k-1}}^tw\right)^{\frac{1}{1-q}} d\left[-\varphi(t)^{-\frac{q}{p(1-q)}}\right]\right)^{\frac{1-q}{q}}\nonumber\\
		&\qquad\lesssim
		W(y)^{\frac1q}\varphi(y)^{-\frac1p}+W(x_{k-1})^{\frac1q}\varphi(x_{k-1})^{-\frac1p}\nonumber\\
		& \hspace{2cm} +
		\left(\int_{x_{k-1}}^y\left(\int_{x_{k-1}}^tw\right)^{\frac{q}{1-q}}w(t)\varphi(t)^{-\frac{q}{p(1-q)}}\,dt\right)^{\frac{1-q}{q}}.
	\end{align}
	Finally, first using \eqref{Es-B7-new}, then applying \eqref{EQ:antid_v_**}  with $y=x_k$,  we have
	\begin{align*}
		B_7
		&\lesssim
		\sup_{N+1\leq k\leq M}\left(\int_{x_{k-1}}^{x_k}W^{\frac{q}{1-q}}w\varphi^{-\frac{q}{p(1-q)}}\right)^{\frac{1-q}q} \\
		&\approx
		\left(\int_{x_N}^{x_{N+1}}W^{\frac{q}{1-q}}w\varphi^{-\frac{q}{p(1-q)}}\right)^{\frac{1-q}q}
		+
		\sup_{N+2\leq k\leq M}\left(\int_{x_{k-1}}^{x_k}W^{\frac{q}{1-q}}w\varphi^{-\frac{q}{p(1-q)}}\right)^{\frac{1-q}q} \\
		&\lesssim
		\sup_{N+2\leq k\leq M}W(x_{k})^{\frac1q}\varphi(x_{k})^{-\frac1p}
		+
		\sup_{N+2\leq k\leq M}W(x_{k-1})^{\frac1q}\varphi(x_{k-1})^{-\frac1p} \\
		&\quad+
		\sup_{N+1\leq k\leq M}\left(\int_{x_{k-1}}^{x_k}\left(\int_{x_{k-1}}^tw\right)^{\frac{q}{1-q}}w(t)\varphi(t)^{-\frac{q}{p(1-q)}}\,dt\right)^{\frac{1-q}{q}}.
	\end{align*}
	Since $\{\varphi(x_k)^{-\frac{1}{p}}\}_{k=N+1}^{M-1}$ is strongly decreasing, it is clear that,  using \eqref{EQ:strongly_decreasing_sup_sum}, we have
	\begin{equation}\label{6}
		\sup_{N+2\leq k\leq M}W(x_{k-1})^{\frac1q}\varphi(x_{k-1})^{-\frac1p} \approx \sup_{N+2\leq k\leq M}\left(\int_{x_{k-2}}^{x_{k-1}}w\right)^{\frac1q}\varphi(x_{k-1})^{-\frac1p} = C_{4,1},
	\end{equation}
	and applying \eqref{EQ:strongly_decreasing_sup_sum}, together with the monotonicity of $\varphi$, 
	\begin{align}\label{7}
		\sup_{N+2\leq k\leq M}W(x_k)^{\frac1q}\varphi(x_k)^{-\frac1p} & \approx \sup_{N+2\leq k\leq M}\bigg(
		\int_{x_{k-1}}^{x_k} w \bigg)^{\frac1q}\varphi(x_k)^{-\frac1p} \nonumber\\
		&\lesssim \sup_{N+2\leq k\leq M}\left(\int_{x_{k-1}}^{x_k} \left(\int_{x_{k-1}}^t w\right)^{\frac{q}{1-q}} w(t) \varphi(t)^{-\frac{q}{p(1-q)}}\, dt\right)^{\frac{1-q}{q}}.
	\end{align}
	Then, in view of \eqref{6}, \eqref{7} and integrating by parts, we get
	\begin{align*}
		B_7  & \lesssim C_{4,1} + \sup_{N+1\leq k\leq M}\left(\int_{x_{k-1}}^{x_k}\left(\int_{x_{k-1}}^tw\right)^{\frac{q}{1-q}}w(t)\varphi(t)^{-\frac{q}{p(1-q)}}\,dt\right)^{\frac{1-q}{q}}\\
		&\lesssim
		C_{4,1}+ \sup_{N+1\leq k\leq M}\varphi(x_k)^{-\frac{1}{p}} \left(\int_{x_{k-1}}^{x_k}w\right)^{\frac{1}{q}} \\
		&\quad +\sup_{N+1\leq k\leq M}\left(\int_{x_{k-1}}^{x_k}\varphi(t)^{-\frac{q}{p(1-q)}}d\left[\left(\int_{x_{k-1}}^tw\right)^{\frac{1}{1-q}}\right]\right)^{\frac{1-q}{q}}\\
		&  \leq
		C_{4,1}+ \sup_{N+1\leq k\leq M} \sup_{t\in (x_{k-1}, x_k)} \varphi(t)^{-\frac{1}{p}} \left(\int_{x_{k-1}}^{t} w \right)^{\frac{1}{q}} \\
		&\quad  +\sup_{N+1\leq k\leq M}\left(\int_{x_{k-1}}^{x_k}\varphi(t)^{-\frac{q}{p(1-q)}}d\left[\left(\int_{x_{k-1}}^tw\right)^{\frac{1}{1-q}}\right]\right)^{\frac{1-q}{q}}.
	\end{align*}
	Using \eqref{EQ:C41+sthlesB1} and \eqref{EQ:B1UBCase5} we obtain
	\begin{align}
		B_7  &\lesssim
		B_1 +\sup_{N+1\leq k\leq M}\left(\int_{x_{k-1}}^{x_k}\varphi(t)^{-\frac{q}{p(1-q)}}d\left[\left(\int_{x_{k-1}}^tw\right)^{\frac{1}{1-q}}\right]\right)^{\frac{1-q}{q}} \nonumber \\
		& \lesssim C_{4,1} + C_{1,5} + C_{3,1}+\sup_{N+1\leq k\leq M}\left(\int_{x_{k-1}}^{x_k}\varphi(t)^{-\frac{q}{p(1-q)}}d\left[\left(\int_{x_{k-1}}^tw\right)^{\frac{1}{1-q}}\right]\right)^{\frac{1-q}{q}}.\label{EQ:B7half}
	\end{align}
	Observe that, integrating by parts yields, 
	\begin{align*}
		&\sup_{N+1\leq k\leq M}\left(\int_{x_{k-1}}^{x_k}\varphi(t)^{-\frac{q}{p(1-q)}}d\left[\left(\int_{x_{k-1}}^tw\right)^{\frac{1}{1-q}}\right]\right)^{\frac{1-q}{q}} \\
		&\approx \sup_{N+1\leq k\leq M}\varphi(x_k)^{-\frac{1}{p}} \left(\int_{x_{k-1}}^{x_k}w\right)^{\frac{1}{q}} \nonumber\\
		&\quad+ \sup_{N+1\leq k\leq M} \left(\int_{x_{k-1}}^{x_k}\left(\int_{x_{k-1}}^t w\right)^{\frac{1}{1-q}}d\left[-\varphi(t)^{-\frac{q}{p(1-q)}}\right]\right)^{\frac{1-q}{q}}\nonumber\\
		& \approx \sup_{N+1\leq k\leq M}\varphi(x_k)^{-\frac{1}{p}} \left(\int_{x_{k-1}}^{x_k} w \right)^{\frac{1}{q}}  \nonumber\\
		&\quad  + \left(\int_{x_{N}}^{x_{N+1}} \left(\int_{x_{N}}^t \bigg(\int_{x_{N}}^s \delta\bigg)^{\frac{q}{r}} \bigg(\int_{x_{N}}^s \delta\bigg)^{-\frac{q}{r}}w(s)ds\right)^{\frac{1}{1-q}}d\left[-\varphi(t)^{-\frac{q}{p(1-q)}}\right]\right)^{\frac{1-q}{q}}\nonumber \\
		&\quad  + \sup_{N+2\leq k\leq M} \left(\int_{x_{k-1}}^{x_k}\left(\int_{x_{k-1}}^t \Delta^{\frac{q}{r}} \Delta^{-\frac{q}{r}}w\right)^{\frac{1}{1-q}}d\left[-\varphi(t)^{-\frac{q}{p(1-q)}}\right]\right)^{\frac{1-q}{q}}.\nonumber 
	\end{align*}
	Next, applying \eqref{int-cut-D}, we have
	\begin{align*}
		&\sup_{N+1\leq k\leq M}\left(\int_{x_{k-1}}^{x_k}\varphi(t)^{-\frac{q}{p(1-q)}}d\left[\left(\int_{x_{k-1}}^tw\right)^{\frac{1}{1-q}}\right]\right)^{\frac{1-q}{q}} \\
		&\approx \sup_{N+1\leq k\leq M}\varphi(x_k)^{-\frac{1}{p}} \left(\int_{x_{k-1}}^{x_k}w\right)^{\frac{1}{q}} \nonumber\\
		&\quad + \left(\int_{x_{N}}^{x_{N+1}} \left(\int_{x_{N}}^t \bigg(\int_{x_{N}}^s \delta\bigg)^{\frac{q}{r}} \bigg(\int_{x_{N}}^s \delta\bigg)^{-\frac{q}{r}}w(s)ds\right)^{\frac{1}{1-q}}d\left[-\varphi(t)^{-\frac{q}{p(1-q)}}\right]\right)^{\frac{1-q}{q}}\nonumber \\
		&\quad+ \sup_{N+2\leq k\leq M}\Delta(x_{k-1})^{\frac1r}\left(\int_{x_{k-1}}^{x_k}\left(\int_{x_{k-1}}^t\Delta^{-\frac{q}{r}}w\right)^{\frac{1}{1-q}}d\left[-\varphi(t)^{-\frac{q}{p(1-q)}}\right]\right)^{\frac{1-q}{q}}\nonumber\\
		&\quad+
		\sup_{N+2\leq k\leq M}\left(\int_{x_{k-1}}^{x_k}\left(\int_{x_{k-1}}^t\Delta(s)^{-\frac{q}{r}}w(s)\left(\int_{x_{k-1}}^s\delta\right)^{\frac{q}{r}}\,ds\right)^{\frac{1}{1-q}}d\left[-\varphi(t)^{-\frac{q}{p(1-q)}}\right]\right)^{\frac{1-q}{q}} \nonumber\\
		&\approx \sup_{N+1\leq k\leq M}\varphi(x_k)^{-\frac{1}{p}} \left(\int_{x_{k-1}}^{x_k}w\right)^{\frac{1}{q}} \nonumber\\
		&\quad+ \sup_{N+2\leq k\leq M}\Delta(x_{k-1})^{\frac1r}\left(\int_{x_{k-1}}^{x_k}\left(\int_{x_{k-1}}^t\Delta^{-\frac{q}{r}}w\right)^{\frac{1}{1-q}}d\left[-\varphi(t)^{-\frac{q}{p(1-q)}}\right]\right)^{\frac{1-q}{q}}\nonumber\\
		&\quad+
		\sup_{N+1\leq k\leq M}\left(\int_{x_{k-1}}^{x_k}\left(\int_{x_{k-1}}^t\Delta(s)^{-\frac{q}{r}}w(s)\left(\int_{x_{k-1}}^s\delta\right)^{\frac{q}{r}}\,ds\right)^{\frac{1}{1-q}}d\left[-\varphi(t)^{-\frac{q}{p(1-q)}}\right]\right)^{\frac{1-q}{q}} \nonumber\\
		&\lesssim B_1 +
		\sup_{N+2\leq k\leq M}\Delta(x_{k-1})^{\frac1r}\left(\int_{x_{k-1}}^{x_k}\Delta^{-\frac{q}{r}}w\right)^{\frac{1}{q}}\varphi(x_{k-1})^{-\frac{1}{p}} \nonumber\\
		&\quad+
		\sup_{N+1\leq k\leq M}\left(\int_{x_{k-1}}^{x_k}\left(\int_{x_{k-1}}^t\Delta(s)^{-\frac{q}{r}}w(s)\left(\int_{x_{k-1}}^s\delta\right)^{\frac{q}{r}}\,ds\right)^{\frac{1}{1-q}}d\left[-\varphi(t)^{-\frac{q}{p(1-q)}}\right]\right)^{\frac{1-q}{q}}.
	\end{align*}
	Integrating by parts once more, we get
	\begin{align*}
		&\sup_{N+1\leq k\leq M}\left(\int_{x_{k-1}}^{x_k}\varphi(t)^{-\frac{q}{p(1-q)}}d\left[\left(\int_{x_{k-1}}^tw\right)^{\frac{1}{1-q}}\right]\right)^{\frac{1-q}{q}} \\
		&\lesssim B_1 +
		\sup_{N+2\leq k\leq M}\Delta(x_{k-1})^{\frac1r}\left(\int_{x_{k-1}}^{x_k}\Delta^{-\frac{q}{r}}w\right)^{\frac{1}{q}}\varphi(x_{k-1})^{-\frac{1}{p}} \nonumber\\
		&\quad+
		\sup_{N+1\leq k\leq M}\left(\int_{x_{k-1}}^{x_k}\left(\int_{x_{k-1}}^t\Delta(s)^{-\frac{q}{r}}w(s)\left(\int_{x_{k-1}}^s\delta\right)^{\frac{q}{r}}\,ds\right)^{\frac{q}{1-q}}\right. \nonumber\\
		&\qquad\times
		\left.\Delta(t)^{-\frac{q}{r}}w(t)\left(\int_{x_{k-1}}^t\delta\right)^{\frac{q}{r}}\varphi(t)^{-\frac{q}{p(1-q)}}\,dt\right)^{\frac{1-q}{q}} \nonumber\\
		&\leq  B_1 +  B_2 + C_{1,5} \nonumber \\& \lesssim C_{4,1} +  C_{1,4}+C_{3,1}+C_{1,5}.
	\end{align*}
	Note that we used \eqref{EQ:B1UBCase5} and \eqref{EQ:antid_v_B2} in the last inequality. Plugging the last estimate in \eqref{EQ:B7half}, we arrive at
	\begin{equation}\label{EQ:B8lessC41+C15+C31}
		B_7 \lesssim C_{1,4} + C_{3,1} + C_{1,5} + C_{4,1}.
	\end{equation}
	
	Putting all things together, we have
	\begin{align*}
		B_1+B_2+B_6+B_7
		\lesssim
		C_{4,1}+C_{1,5}+C_{3,1}+C_{1,4}
		\lesssim
		B_1+B_2+B_6+B_7;
	\end{align*}
	consequently
	\begin{align*}
		C
		\approx
		B_1+B_2+B_6+B_7.
	\end{align*}
	
	\rm{(vi)}
	By Theorem~\ref{thm:main_discretization}, we have
	\begin{equation*}
		C\approx C_{1,5} + C_{1,6} + C_{3,2} + C_{4,1}.
	\end{equation*}
	Using \eqref{EQ:antid_ii_1}, \eqref{nofor6} and the monotonicity of  $\Delta$ we have
	\begin{align}
		C_{1,6} 	&  \lesssim
		\sup_{N+1\leq k \leq M} \varphi(x_k)^{-\frac1p}\left( \int_{x_{k-1}}^{x_k} \left( \int_t^{x_{k}} \Delta^{-\frac{q}{r}} w \right)^\frac{q}{1-q}  \Delta(t)^{\frac{q^2}{r(1-q)}} w(t) dt \right)^{\frac{1-q}{q}}  \nonumber\\
		& \quad +
		\sup_{N+1\leq k \leq M} U(x_{k-1}) \varphi(x_{k-1})^{-\frac1p} \bigg( \int_{x_{k-1}}^{x_k} \bigg( \int_t^{x_{k}} \Delta^{-\frac{q}{r}} w \bigg)^\frac{q}{1-q} 
		\nonumber\\
		&\qquad  \times \, 
		\Delta(t)^{-\frac{q}{r}} w(t) \bigg( \int_{x_{k-1}}^{t} \bigg( \int_{x_{k-1}}^{s}  \delta(s) U(s)^{-\frac{r}{1-r}}\, ds \bigg)^{\frac{q(1-r)}{r(1-q)}} dt\bigg)^{\frac{1-q}{q}}
		\nonumber\\
		&\lesssim
		\sup_{N+1\leq k \leq M} \varphi(x_k)^{-\frac1p}\left( \int_{x_{k-1}}^{x_k} \left( \int_t^{x_{k}}  w \right)^\frac{q}{1-q}  w(t) dt \right)^{\frac{1-q}{q}}
		\nonumber\\
		&\quad+
		\sup_{N+1\leq k \leq M} \esup_{\tau\in(x_{k-1},x_k)}U(\tau)\varphi(\tau)^{-\frac1p}\left( \int_{\tau}^{x_k} \left( \int_t^{x_{k}} \Delta^{-\frac{q}{r}} w \right)^\frac{q}{1-q} \right.
		\nonumber\\
		&\qquad \times\left.
		\Delta(t)^{-\frac{q}{r}} w(t) \left(\int_{\tau}^t \Delta^\frac{r}{1-r} \delta U^{-\frac{r}{1-r}} \right)^{\frac{q(1-r)}{r(1-q)}} 	dt \right)^{\frac{1-q}{q}}
		\nonumber\\
		&\lesssim
		\sup_{N+1\leq k \leq M} \varphi(x_k)^{-\frac1p}W(x_k)^{\frac1q} \nonumber \\
		& \quad
		+
		\sup_{N+1\leq k \leq M} \esup_{\tau\in(x_{k-1},x_k)}U(\tau)\varphi(\tau)^{-\frac1p}\left( \int_{\tau}^{L} \left( \int_t^{L} \Delta^{-\frac{q}{r}} w \right)^\frac{q}{1-q} \right.
		\nonumber\\
		&\qquad \times\left.
		\Delta(t)^{-\frac{q}{r}} w(t) \left(\int_{\tau}^t \Delta^\frac{r}{1-r} \delta U^{-\frac{r}{1-r}} \right)^{\frac{q(1-r)}{r(1-q)}} 	dt \right)^{\frac{1-q}{q}} \nonumber\\
		&= B_1 + B_8. \label{EQ:C16lessB1+B9}
	\end{align}
	
	Thus,  \eqref{EQ:C41lesB1}, \eqref{EQ:C32lessB2+B3}, \eqref{EQ:antid_v_*} and \eqref{EQ:C16lessB1+B9} altogether gives
	\begin{align*}
		C_{1,5}+C_{1,6}+C_{3,2}+C_{4,1}\lesssim B_1+B_2+B_3+B_7+B_8.
	\end{align*}
	
	As for the opposite inequality, thanks to \eqref{EQ:B1UBCase5} and \eqref{EQ:antid_v_B2} combined with \eqref{EQ:C31lessC32} note that
	\begin{equation*}
		B_1 + B_2 \lesssim C_{1,4} + C_{1,5} + C_{3,1} + C_{4,1} \lesssim C_{1,4} + C_{1,5} + C_{3,2} + C_{4,1}.
	\end{equation*}
	On the other hand, applying \eqref{sup-int}, it is easy to see that 
	\begin{equation}\label{C14<C16}
		C_{1,4} \lesssim C_{1,6}.
	\end{equation} Then,
	\begin{equation}\label{EQ:B1+B2lessC15+C16+C32+C41}
		B_1 + B_2 \lesssim C_{1,6} + C_{1,5} + C_{3,2} + C_{4,1}.
	\end{equation}
	Using the same argument as in \eqref{sup-int} it can be easily shown that 
	\begin{equation}\label{C13<C16}
		C_{1,3} \lesssim C_{1,6}.    
	\end{equation}
	Consequently, owing to \eqref{EQ:antid_ii_B3}, observe that 
	\begin{equation}\label{EQ:B3lessC16+C32}
		B_3 \lesssim C_{1,3} + C_{3,2} \lesssim C_{1,6} + C_{3,2},
	\end{equation}
	and thanks to \eqref{EQ:B8lessC41+C15+C31} combined with \eqref{C14<C16} and \eqref{EQ:C31lessC32},
	\begin{equation}\label{EQ:B8lessC15+C32+C41}
		B_7 \lesssim C_{1,4} + C_{3,1} + C_{1,5} + C_{4,1} \lesssim C_{1,6} + C_{3,2} + C_{1,5} + C_{4,1}.
	\end{equation}

	On the other hand, decomposing the integral $\int_{t}^{L}$ into the sum $\int_{t}^{x_{k}}+\int_{x_{k}}^{L}$ yields
	\begin{align*}	
		B_{8}
		&\approx
		\sup_{N+1\leq k \leq M}\esup_{t\in(x_{k-1},x_{k})} U(t)\varphi(t)^{-\frac1p} \\
		& \qquad \times \left(\int_t^{x_k}\left(\int_t^s\Delta^{\frac{r}{1-r}}\delta U^{-\frac{r}{1-r}}\right)^{\frac{q(1-r)}{r(1-q)}}d\left[-\left(\int_s^L\Delta^{-\frac{q}{r}}w\right)^{\frac{1}{1-q}}\right]\right)^{\frac{1-q}{q}}
		\\
		&\quad+
		\sup_{N+1\leq k \leq M-1}\esup_{t\in(x_{k-1},x_{k})} U(t)\varphi(t)^{-\frac1p}
		\\
		&\qquad\times
		\left(\int_{x_k}^L\left(\int_s^L\Delta^{-\frac{q}{r}}w\right)^{\frac{q}{1-q}}\Delta(s)^{-\frac{q}{r}}w(s)\left(\int_t^s\Delta^{\frac{r}{1-r}}\delta U^{-\frac{r}{1-r}}\right)^{\frac{q(1-r)}{r(1-q)}}\,ds\right)^{\frac{1-q}{q}}.
	\end{align*}
	Now, integration by parts in the first term and decomposing the integral $\int_{t}^{s}$ into the sum $\int_{t}^{x_{k}}+\int_{x_{k}}^{s}$ in the second term, gives
	\begin{align}	
		B_{8}
		&\lesssim	
		\sup_{N+1\leq k \leq M}\esup_{t\in(x_{k-1},x_{k})} U(t)\varphi(t)^{-\frac1p} \notag\\
		& \qquad \times\left(\int_t^{x_k}	\left(\int_s^L\Delta^{-\frac{q}{r}}w\right)^{\frac{1}{1-q}}d\left[\left(\int_t^s\Delta^{\frac{r}{1-r}}\delta U^{-\frac{r}{1-r}}\right)^{\frac{q(1-r)}{r(1-q)}}\right]\right)^{\frac{1-q}{q}}
		\notag\\
		&\quad+
		\sup_{N+1\leq k \leq M-1} \left(\int_{x_k}^L \left(\int_s^L \Delta^{-\frac{q}{r}}w \right)^{\frac{q}{1-q}}  \Delta(s)^{-\frac{q}{r}} w(s)\, ds\right)^{\frac{1-q}{q}} \notag\\
		& \quad \quad \times \esup_{t\in(x_{k-1},x_{k})} U(t)\varphi(t)^{-\frac1p} \left(\int_t^{x_k}\Delta^{\frac{r}{1-r}}\delta U^{-\frac{r}{1-r}}\right)^{\frac{1-r}{r}}
		\notag\\
		&\quad+
		\sup_{N+1\leq k \leq M-1} U(x_k)\varphi(x_k)^{-\frac1p} \notag\\
		& \qquad \times
		\left(\int_{x_k}^L\left(\int_s^L\Delta^{-\frac{q}{r}}w\right)^{\frac{q}{1-q}}\Delta(s)^{-\frac{q}{r}}w(s)\left(\int_{x_k}^s\Delta^{\frac{r}{1-r}}\delta U^{-\frac{r}{1-r}}\right)^{\frac{q(1-r)}{r(1-q)}}\,ds\right)^{\frac{1-q}{q}}.\label{B8-new}
	\end{align}
	Observe that, decomposing the integral $\int_s^L$ into the sum $\int_s^{x_k} + \int_{x_k}^L$, yields
	\begin{align*}
		&\sup_{N+1\leq k \leq M}\esup_{t\in(x_{k-1},x_{k})} U(t)\varphi(t)^{-\frac1p} \\
		& \hspace{2cm} \times\left(\int_t^{x_k}	\left(\int_s^L\Delta^{-\frac{q}{r}}w\right)^{\frac{1}{1-q}}d\left[\left(\int_t^s\Delta^{\frac{r}{1-r}}\delta U^{-\frac{r}{1-r}}\right)^{\frac{q(1-r)}{r(1-q)}}\right]\right)^{\frac{1-q}{q}} \\
		& \approx \sup_{N+1\leq k \leq M} \esup_{t\in(x_{k-1},x_{k})} U(t)\varphi(t)^{-\frac1p} \\
		& \hspace{2cm} \times\left(\int_t^{x_k}	\left(\int_s^{x_k}\Delta^{-\frac{q}{r}}w\right)^{\frac{1}{1-q}}d\left[\left(\int_t^s\Delta^{\frac{r}{1-r}}\delta U^{-\frac{r}{1-r}}\right)^{\frac{q(1-r)}{r(1-q)}}\right]\right)^{\frac{1-q}{q}} \\
		& + \sup_{N+1\leq k \leq M} \left(\int_{x_k}^L\Delta^{-\frac{q}{r}}w\right)^{\frac{1}{1-q}} \esup_{t\in(x_{k-1},x_{k})} U(t)\varphi(t)^{-\frac1p}  \left(\int_t^{x_k}\Delta^{\frac{r}{1-r}}\delta U^{-\frac{r}{1-r}}\right)^{\frac{1-r}{r}}.
	\end{align*}
	On the other hand,
	\begin{align*}
		& \sup_{N+1\leq k \leq M-1} \left(\int_{x_k}^L \left(\int_s^L \Delta^{-\frac{q}{r}}w \right)^{\frac{q}{1-q}}  \Delta(s)^{-\frac{q}{r}} w(s)\, ds\right)^{\frac{1-q}{q}} \\
		& \hspace{3cm} \times \esup_{t\in(x_{k-1},x_{k})} U(t)\varphi(t)^{-\frac1p} \left(\int_t^{x_k}\Delta^{\frac{r}{1-r}}\delta U^{-\frac{r}{1-r}}\right)^{\frac{1-r}{r}}\\ 
		& \approx \sup_{N+1\leq k \leq M-1} \left(\int_{x_k}^L \Delta^{-\frac{q}{r}}w \right)^{\frac{1}{q}} \esup_{t\in(x_{k-1},x_{k})} U(t)\varphi(t)^{-\frac1p} \left(\int_t^{x_k}\Delta^{\frac{r}{1-r}}\delta U^{-\frac{r}{1-r}}\right)^{\frac{1-r}{r}}.
	\end{align*}
	Plugging the last two estimates in \eqref{B8-new}, we obtain
	\begin{align*}
		B_8 &\lesssim \sup_{N+1\leq k \leq M}\esup_{t\in(x_{k-1},x_{k})} U(t)\varphi(t)^{-\frac1p} \\
		& \qquad \times \left(\int_t^{x_k}	\left(\int_s^{x_k}\Delta^{-\frac{q}{r}}w\right)^{\frac{1}{1-q}}d\left[\left(\int_t^s\Delta^{\frac{r}{1-r}}\delta U^{-\frac{r}{1-r}}\right)^{\frac{q(1-r)}{r(1-q)}}\right]\right)^{\frac{1-q}{q}}
		\\
		&\quad+
		\sup_{N+1\leq k \leq M-1} \left(\int_{x_k}^L \Delta^{-\frac{q}{r}}w \right)^{\frac{1}{q}}\esup_{t\in(x_{k-1},x_{k})} U(t)\varphi(t)^{-\frac1p} \left(\int_t^{x_k}\Delta^{\frac{r}{1-r}}\delta U^{-\frac{r}{1-r}}\right)^{\frac{1-r}{r}}
		\\
		&\quad+
		\sup_{N+1\leq k \leq M-1} U(x_k)\varphi(x_k)^{-\frac1p} \\
		& \qquad \times
		\left(\int_{x_k}^L\left(\int_s^L\Delta^{-\frac{q}{r}}w\right)^{\frac{q}{1-q}}\Delta(s)^{-\frac{q}{r}}w(s)\left(\int_{x_k}^s\Delta^{\frac{r}{1-r}}\delta U^{-\frac{r}{1-r}}\right)^{\frac{q(1-r)}{r(1-q)}}\,ds\right)^{\frac{1-q}{q}} \\
		& =: \I + \II + \III.
	\end{align*}
	
	As for $\II$, using \eqref{8}, we obtain
	\begin{align*}
		\II
		\lesssim
		\sup_{N+1\leq k \leq M-1}\left(\int_{x_k}^L\Delta^{-\frac{q}{r}}w\right)^{\frac{1}{q}}\esup_{t\in(0,x_{k})} U(t)\varphi(t)^{-\frac1p}\left(\int_t^{x_k}\Delta^{\frac{r}{1-r}}\delta U^{-\frac{r}{1-r}}\right)^{\frac{1-r}{r}}
		\lesssim
		C_{3,2}.
	\end{align*}
	
	We shall now turn our attention to $\I$. Integrating by parts, the monotonicity of $U\varphi^{-\frac{1}{p}}$ and applying \eqref{EQ:antid_ii_5}, we have
	\begin{align*}
		\I
		&\approx
		\sup_{N+1\leq k \leq M}\esup_{t\in(x_{k-1},x_{k})} U(t)\varphi(t)^{-\frac1p}\left(\int_t^{x_k}	\left(\int_s^{x_k}\Delta^{-\frac{q}{r}}w\right)^{\frac{q}{1-q}}\Delta(s)^{-\frac{q}{r}}w(s)\right.
		\\
		&\hspace{3cm} \times\left.
		\left(\int_t^s\Delta^{\frac{r}{1-r}}\delta U^{-\frac{r}{1-r}}\right)^{\frac{q(1-r)}{r(1-q)}}\,ds\right)^{\frac{1-q}{q}}
		\\
		&\leq
		\sup_{N+1\leq k \leq M}\left(\int_{x_{k-1}}^{x_k}	\left(\int_s^{x_k}\Delta^{-\frac{q}{r}}w\right)^{\frac{q}{1-q}}\Delta(s)^{-\frac{q}{r}}w(s) \right. \\
		& \hspace{3cm} \left. \times \left(\int_{x_{k-1}}^s\Delta^{\frac{r}{1-r}}\delta \varphi^{-\frac{r}{p(1-r)}}\right)^{\frac{q(1-r)}{r(1-q)}}\,ds\right)^{\frac{1-q}{q}}
		\\
		&\approx
		\left(\int_{x_N}^{x_{N+1}}	\left(\int_s^{x_{N+1}}\Delta^{-\frac{q}{r}}w\right)^{\frac{q}{1-q}}\Delta(s)^{-\frac{q}{r}}w(s)\left(\int_{x_N}^s\Delta^{\frac{r}{1-r}}\delta \varphi^{-\frac{r}{p(1-r)}}\right)^{\frac{q(1-r)}{r(1-q)}}\,ds\right)^{\frac{1-q}{q}}
		\\
		&\quad+
		\sup_{N+2\leq k \leq M}\left(\int_{x_{k-1}}^{x_k}	\left(\int_s^{x_k}\Delta^{-\frac{q}{r}}w\right)^{\frac{q}{1-q}}\Delta(s)^{-\frac{q}{r}}w(s) \right. \\
		& \hspace{2cm} \left. \times \left(\int_{x_{k-1}}^s\Delta^{\frac{r}{1-r}}\delta \varphi^{-\frac{r}{p(1-r)}}\right)^{\frac{q(1-r)}{r(1-q)}}\,ds\right)^{\frac{1-q}{q}}
		\\
		&\lesssim
		\left(\int_{x_N}^{x_{N+1}}	\left(\int_s^{x_{N+1}}\Delta^{-\frac{q}{r}}w\right)^{\frac{q}{1-q}}\Delta(s)^{-\frac{q}{r}}w(s)\left(\int_{x_N}^s\Delta^{\frac{r}{1-r}}\delta \varphi^{-\frac{r}{p(1-r)}}\right)^{\frac{q(1-r)}{r(1-q)}}\,ds\right)^{\frac{1-q}{q}}
		\\
		& \quad +
		\sup_{N+2\leq k \leq M}\left(\int_{x_{k-1}}^{x_k}	\left(\int_s^{x_k}\Delta^{-\frac{q}{r}}w\right)^{\frac{q}{1-q}}\Delta(s)^{-\frac{q}{r}}w(s) \right. \\
		& \hspace{2cm} \left. \times \Delta(s)^{\frac{q}{r(1-q)}}\varphi(s)^{-\frac{q}{p(1-q)}}\,ds\right)^{\frac{1-q}{q}}
		\\
		&\quad+
		\sup_{N+2\leq k \leq M}\varphi(x_{k-1})^{-\frac1p}\Delta(x_{k-1})^{\frac1r}\left(\int_{x_{k-1}}^{x_k}	\left(\int_s^{x_k}\Delta^{-\frac{q}{r}}w\right)^{\frac{q}{1-q}}\Delta(s)^{-\frac{q}{r}}w(s)\,ds\right)^{\frac{1-q}{q}}
		\\
		&\quad+
		\sup_{N+2\leq k \leq M}\left(\int_{x_{k-1}}^{x_k}	\left(\int_s^{x_k}\Delta^{-\frac{q}{r}}w\right)^{\frac{q}{1-q}}\Delta(s)^{-\frac{q}{r}}w(s)\right.
		\\
		&\hspace{2cm} \times\left.
		\left(\int_{x_{k-1}}^s\left(\int_{x_{k-1}}^{\tau}\delta\right)^{\frac{r}{1-r}}\delta(\tau) \varphi(\tau)^{-\frac{r}{p(1-r)}}\,d\tau\right)^{\frac{q(1-r)}{r(1-q)}}\,ds\right)^{\frac{1-q}{q}}\\
		&\approx
		\sup_{N+2\leq k \leq M}\left(\int_{x_{k-1}}^{x_k}	\Delta(s)^{\frac{q}{r(1-q)}}\varphi(s)^{-\frac{q}{p(1-q)}}d\left[-\left(\int_s^{x_k}\Delta^{-\frac{q}{r}}w\right)^{\frac{1}{1-q}}\right]\right)^{\frac{1-q}{q}}
		\\
		&\quad+
		\sup_{N+2\leq k \leq M}\varphi(x_{k-1})^{-\frac1p}\Delta(x_{k-1})^{\frac1r}\left(\int_{x_{k-1}}^{x_k}	\left(\int_s^{x_k}\Delta^{-\frac{q}{r}}w\right)^{\frac{q}{1-q}}\Delta(s)^{-\frac{q}{r}}w(s)\,ds\right)^{\frac{1-q}{q}}
		\\
		&\quad+
		\sup_{N+1\leq k \leq M}\left(\int_{x_{k-1}}^{x_k}	\left(\int_s^{x_k}\Delta^{-\frac{q}{r}}w\right)^{\frac{q}{1-q}}\Delta(s)^{-\frac{q}{r}}w(s)\right.
		\\
		&\hspace{2cm} \times\left.
		\left(\int_{x_{k-1}}^s\left(\int_{x_{k-1}}^{\tau}\delta\right)^{\frac{r}{1-r}}\delta(\tau) \varphi(\tau)^{-\frac{r}{p(1-r)}}\,d\tau\right)^{\frac{q(1-r)}{r(1-q)}}\,ds\right)^{\frac{1-q}{q}}\\
		&\lesssim
		C_{1,6}+
		\sup_{N+2\leq k \leq M}\left(\int_{x_{k-1}}^{x_k}	\Delta(s)^{\frac{q}{r(1-q)}}\varphi(s)^{-\frac{q}{p(1-q)}}d\left[-\left(\int_s^{x_k}\Delta^{-\frac{q}{r}}w\right)^{\frac{1}{1-q}}\right]\right)^{\frac{1-q}{q}}
		\\
		&\quad+
		\sup_{N+2\leq k \leq M}\varphi(x_{k-1})^{-\frac1p}\Delta(x_{k-1})^{\frac1r}\left(\int_{x_{k-1}}^{x_k}	\Delta^{-\frac{q}{r}}w\right)^{\frac{1}{q}}.
	\end{align*}
	Next, \eqref{int-cut-D} and the monotonicity of $\varphi$ yields
	\begin{align*}
		\I
		&\lesssim C_{1,6}+
		\sup_{N+2\leq k \leq M}\left(\int_{x_{k-1}}^{x_k}	\left(\int_{x_{k-1}}^{s}\delta\right)^{\frac{q}{r(1-q)}}\varphi(s)^{-\frac{q}{p(1-q)}} d\left[-\left(\int_s^{x_k}\Delta^{-\frac{q}{r}}w\right)^{\frac{1}{1-q}}\right]\right)^{\frac{1-q}{q}}
		\\
		&\quad+
		\sup_{N+2\leq k \leq M} \Delta(x_{k-1})^{\frac{1}{r}} \varphi(x_{k-1})^{-\frac{1}{p}} \left(\int_{x_{k-1}}^{x_k}	\Delta^{-\frac{q}{r}}w\right)^{\frac{1}{q}}
		\\
		&\approx
		C_{1,6}+
		\sup_{N+2\leq k \leq M}\left(\int_{x_{k-1}}^{x_k}	\left(\int_{x_{k-1}}^{s}\left(\int_{x_{k-1}}^{\tau}\delta\right)^{\frac{r}{1-r}}\delta(\tau)\,d\tau\right)^{\frac{q(1-r)}{r(1-q)}}
		\right.\\
		&\hspace{2cm}\times\left.\varphi(s)^{-\frac{q}{p(1-q)}}\left(\int_s^{x_k}\Delta^{-\frac{q}{r}}w\right)^{\frac{q}{1-q}}\Delta(s)^{-\frac{q}{r}}w(s)\,ds\right)^{\frac{1-q}{q}}
		\\
		&\quad+
		\sup_{N+2\leq k \leq M}\Delta(x_{k-1})^{\frac{1}{r}}\varphi(x_{k-1})^{-\frac{1}{p}}\left(\int_{x_{k-1}}^{x_k}	\Delta^{-\frac{q}{r}}w\right)^{\frac{1}{q}}
		\\
		&\leq
		2C_{1,6}+
		\sup_{N+2\leq k \leq M}\Delta(x_{k-1})^{\frac{1}{r}}\varphi(x_{k-1})^{-\frac{1}{p}}\left(\int_{x_{k-1}}^{x_k}	\Delta^{-\frac{q}{r}}w\right)^{\frac{1}{q}}\\
		&\leq
		2C_{1,6}+
		B_2\\
		&\lesssim 	C_{1,6}+C_{1,5}+C_{3,2}+C_{4,1},
	\end{align*}
	where we used the fact that $B_2 \lesssim C_{1,6} + C_{1,5} + C_{3,2} + C_{4,1}$ thanks to \eqref{EQ:B1+B2lessC15+C16+C32+C41} in the last inequality.
	
	For future reference, note that we also showed that
	\begin{align}\label{EQ:antid_vi_*}
		&\sup_{N+2\leq k \leq M}\bigg(\int_{x_{k-1}}^{x_k}	\bigg(\int_s^{x_k}\Delta^{-\frac{q}{r}}w\bigg)^{	\frac{q}{1-q}}\Delta(s)^{-\frac{q}{r}}w(s)\bigg(\int_{x_{k-1}}^s\Delta^{\frac{r}{1-r}}\delta \varphi^{-\frac{r}{p(1-r)}}\bigg)^{\frac{q(1-r)}{r(1-q)}}\,ds\bigg)^{\frac{1-q}{q}}
		\nonumber\\
		&\qquad\lesssim C_{1,6}+C_{1,5}+C_{3,2}+C_{4,1}.
	\end{align}
	
	Finally, as for $\III$, we have
	\begin{align*}
		\III
		&\approx
		\sup_{N+1\leq k \leq M-1}U(x_k)\varphi(x_k)^{-\frac1p} \\
		& \qquad \times
		\left(\sum_{i=k}^{M-1}\int_{x_i}^{x_{i+1}}\left(\int_{x_k}^s\Delta^{\frac{r}{1-r}}\delta U^{-\frac{r}{1-r}}\right)^{\frac{q(1-r)}{r(1-q)}}d\left[-\left(\int_s^L\Delta^{-\frac{q}{r}}w\right)^{\frac{1}{1-q}}\right]\right)^{\frac{1-q}{q}}.
	\end{align*}
	Decomposing the integral $\int_{x_k}^s$ into sum $\int_{x_k}^{x_i} + \int_{x_i}^s$, we get
	\begin{align*}
		\III
		&\approx
		\sup_{N+1\leq k \leq M-1}U(x_k)\varphi(x_k)^{-\frac1p}\\
		& \qquad \times
		\left(\sum_{i=k}^{M-1}\int_{x_i}^{x_{i+1}}\left(\int_{x_i}^s\Delta^{\frac{r}{1-r}}\delta U^{-\frac{r}{1-r}}\right)^{\frac{q(1-r)}{r(1-q)}}d\left[-\left(\int_s^L\Delta^{-\frac{q}{r}}w\right)^{\frac{1}{1-q}}\right]\right)^{\frac{1-q}{q}}\\
		&\quad+
		\sup_{N+1\leq k \leq M-2}U(x_k)\varphi(x_k)^{-\frac1p} \\
		& \qquad \times
		\left(\sum_{i=k+1}^{M-1}\left(\int_{x_k}^{x_i}\Delta^{\frac{r}{1-r}}\delta U^{-\frac{r}{1-r}}\right)^{\frac{q(1-r)}{r(1-q)}}\int_{x_i}^{x_{i+1}}d\left[-\left(\int_s^L\Delta^{-\frac{q}{r}}w\right)^{\frac{1}{1-q}}\right]\right)^{\frac{1-q}{q}}
		\\
		&=:\operatorname{III_1}+	\operatorname{III_2}.
	\end{align*}
	Since $\{U(x_k)/\varphi(x_k)^{\frac{1}{p}}\}_{k=N+1}^{M-1}$ is strongly increasing, using \eqref{EQ:strongly_increasing_sup_sum}, then integration by parts yields
	\begin{align*}
		\operatorname{III_1}
		&\approx
		\sup_{N+1\leq k \leq M-1}U(x_k)\varphi(x_k)^{-\frac1p} \\
		& \qquad \times
		\left(\int_{x_k}^{x_{k+1}}\left(\int_{x_k}^s\Delta^{\frac{r}{1-r}}\delta U^{-\frac{r}{1-r}}\right)^{\frac{q(1-r)}{r(1-q)}}d\left[-\left(\int_s^L\Delta^{-\frac{q}{r}}w\right)^{\frac{1}{1-q}}\right]\right)^{\frac{1-q}{q}}
		\\
		&\lesssim
		\sup_{N+1\leq k \leq M-1}U(x_k)\varphi(x_k)^{-\frac1p} \\
		& \qquad \times
		\left(\int_{x_k}^{x_{k+1}}\left(\int_s^L\Delta^{-\frac{q}{r}}w\right)^{\frac{1}{1-q}}d\left[\left(\int_{x_k}^s\Delta^{\frac{r}{1-r}}\delta U^{-\frac{r}{1-r}}\right)^{\frac{q(1-r)}{r(1-q)}}\right]\right)^{\frac{1-q}{q}}.
	\end{align*}
	Next, as in previous cases, decomposing the integral $\int_s^L$, we obtain
	\begin{align*}
		\operatorname{III_1}
		&\lesssim
		\sup_{N+1\leq k \leq M-1}U(x_k)\varphi(x_k)^{-\frac1p} \\
		& \qquad \times
		\left(\int_{x_k}^{x_{k+1}}\left(\int_s^{x_{k+1}}\Delta^{-\frac{q}{r}}w\right)^{\frac{1}{1-q}}d\left[\left(\int_{x_k}^s\Delta^{\frac{r}{1-r}}\delta U^{-\frac{r}{1-r}}\right)^{\frac{q(1-r)}{r(1-q)}}\right]\right)^{\frac{1-q}{q}}
		\\
		&\quad+
		\sup_{N+1\leq k \leq M-2}U(x_k)\varphi(x_k)^{-\frac1p}
		\left(\int_{x_{k+1}}^L\Delta^{-\frac{q}{r}}w\right)^{\frac{1}{q}}\left(\int_{x_k}^{x_{k+1}}\Delta^{\frac{r}{1-r}}\delta U^{-\frac{r}{1-r}}\right)^{\frac{1-r}{r}}.
	\end{align*}
	Integration by parts and the monotonicity of $U\varphi^{-\frac{1}{p}}$ gives
	\begin{align*}
		\operatorname{III_1} 
		&\lesssim
		\sup_{N+1\leq k \leq M-1}U(x_k)\varphi(x_k)^{-\frac1p}\\
		& \qquad \times
		\left(\int_{x_k}^{x_{k+1}}\left(\int_{x_k}^s\Delta^{\frac{r}{1-r}}\delta U^{-\frac{r}{1-r}}\right)^{\frac{q(1-r)}{r(1-q)}}\left(\int_s^{x_{k+1}}\Delta^{-\frac{q}{r}}w\right)^{\frac{q}{1-q}}\Delta(s)^{-\frac{q}{r}}w(s)\,ds\right)^{\frac{1-q}{q}}
		\\
		&\quad+
		\sup_{N+1\leq k \leq M-2}\left(\int_{x_{k+1}}^L\Delta^{-\frac{q}{r}}w\right)^{\frac{1}{q}}
		\esup_{t\in(0,x_k)}U(t)\varphi(t)^{-\frac1p}
		\left(\int_{t}^{x_{k+1}}\Delta^{\frac{r}{1-r}}\delta U^{-\frac{r}{1-r}}\right)^{\frac{1-r}{r}}
		\\
		&\leq
		\sup_{N+2\leq k \leq M}
		\left(\int_{x_{k-1}}^{x_{k}}\left(\int_{x_{k-1}}^s\Delta^{\frac{r}{1-r}}\delta \varphi^{-\frac{r}{p(1-r)}}\right)^{\frac{q(1-r)}{r(1-q)}} \right. \\
		& \qquad \left. \times \left(\int_s^{x_{k}}\Delta^{-\frac{q}{r}}w\right)^{\frac{q}{1-q}}\Delta(s)^{-\frac{q}{r}}w(s)\,ds\right)^{\frac{1-q}{q}}
		\\
		&\quad+
		\sup_{N+2\leq k \leq M-1}\left(\int_{x_{k}}^L\Delta^{-\frac{q}{r}}w\right)^{\frac{1}{q}}
		\esup_{t\in(0,x_k)}U(t)\varphi(t)^{-\frac1p}
		\left(\int_{t}^{x_{k}}\Delta^{\frac{r}{1-r}}\delta U^{-\frac{r}{1-r}}\right)^{\frac{1-r}{r}}.
	\end{align*}
	Finally, applying \eqref{EQ:antid_vi_*} and \eqref{8}, we obtain
	\begin{align*}
		\operatorname{III_1} 
		&\lesssim
		C_{1,6}+C_{1,5} + C_{4,1}+C_{3,2}.
	\end{align*}
	Also, 
	\begin{align*}
		\operatorname{III_2}
		&=
		\sup_{N+1\leq k \leq M-2}U(x_k)\varphi(x_k)^{-\frac1p} \left(\sum_{i=k+1}^{M-1}\left(\sum_{j=k+1}^i\int_{x_{j-1}}^{x_j}\Delta^{\frac{r}{1-r}}\delta U^{-\frac{r}{1-r}}\right)^{\frac{q(1-r)}{r(1-q)}} \right.
		\\
		&\qquad \quad \left.\times
		\int_{x_i}^{x_{i+1}}d\left[-\left(\int_s^L\Delta^{-\frac{q}{r}}w\right)^{\frac{1}{1-q}}\right]\right)^{\frac{1-q}{q}}.
	\end{align*}
	If $q\leq r$, then $\frac{q(1-r)}{r(1-q)}\leq 1$, which gives
	\begin{align*}
		\operatorname{III_2}
		&\leq
		\sup_{N+1\leq k \leq M-2}U(x_k)\varphi(x_k)^{-\frac1p} \left(\sum_{i=k+1}^{M-1} \sum_{j=k+1}^i \left(\int_{x_{j-1}}^{x_j}\Delta^{\frac{r}{1-r}}\delta U^{-\frac{r}{1-r}}\right)^{\frac{q(1-r)}{r(1-q)}} \right.
		\\
		&\qquad \quad \left.\times
		\int_{x_i}^{x_{i+1}}d\left[-\left(\int_s^L\Delta^{-\frac{q}{r}}w\right)^{\frac{1}{1-q}}\right]\right)^{\frac{1-q}{q}}
	\end{align*}
	and so, changing the order of summation then reindexing $j-1 \mapsto j$ we have,
	\begin{align*}
		\operatorname{III_2}
		&\leq \sup_{N+1\leq k \leq M-2}U(x_k)\varphi(x_k)^{-\frac1p} \\
		& \qquad \times
		\left(\sum_{j=k+1}^{M-1}\left(\int_{x_{j-1}}^{x_j}\Delta^{\frac{r}{1-r}}\delta U^{-\frac{r}{1-r}}\right)^{\frac{q(1-r)}{r(1-q)}}\int_{x_j}^{L}d\left[-\left(\int_s^L\Delta^{-\frac{q}{r}}w\right)^{\frac{1}{1-q}}\right]\right)^{\frac{1-q}{q}}\\
		& = \sup_{N+1\leq k \leq M-2}U(x_k)\varphi(x_k)^{-\frac1p} \\
		& \qquad \times
		\left(\sum_{j=k}^{M-2}\left(\int_{x_{j}}^{x_{j+1}}\Delta^{\frac{r}{1-r}}\delta U^{-\frac{r}{1-r}}\right)^{\frac{q(1-r)}{r(1-q)}}\int_{x_{j+1}}^{L}d\left[-\left(\int_s^L\Delta^{-\frac{q}{r}}w\right)^{\frac{1}{1-q}}\right]\right)^{\frac{1-q}{q}}.
	\end{align*}	
	Since $U\varphi^{-\frac{1}{p}}$ is strongly increasing, applying \eqref{EQ:strongly_increasing_sup_sum} and  \eqref{8} again, we have
	\begin{align*}
		\operatorname{III_2}
		&\lesssim
		\sup_{N+1\leq k \leq M-2}U(x_k)\varphi(x_k)^{-\frac1p}
		\left(\int_{x_{k}}^{x_{k+1}}\Delta^{\frac{r}{1-r}}\delta U^{-\frac{r}{1-r}}\right)^{\frac{1-r}{r}}\left(\int_{x_{k+1}}^{L}\Delta^{-\frac{q}{r}}w\right)^{\frac{1}{q}}	
		\\
		&\leq
		\sup_{N+1\leq k \leq M-2}\left(\int_{x_{k+1}}^{L}\Delta^{-\frac{q}{r}}w\right)^{\frac{1}{q}}\esup_{t\in (0,x_k)}	U(t)\varphi(t)^{-\frac1p}
		\left(\int_{t}^{x_{k+1}}\Delta^{\frac{r}{1-r}}\delta U^{-\frac{r}{1-r}}\right)^{\frac{1-r}{r}}
		\\	
		&\lesssim C_{3,2}.	
	\end{align*}	
	
	On the other hand, if $r<q$, then $\frac{q(1-r)}{r(1-q)}> 1$ and we can apply the Minkowski inequality. Using that and the similar arguments as above, we arrive at
	\begin{align*}
		\operatorname{III_2}
		&\leq	
		\sup_{N+1\leq k \leq M-2}U(x_k)\varphi(x_k)^{-\frac1p}
		\left(\sum_{j=k+1}^{M-1} \left(\int_{x_{j-1}}^{x_j}\Delta^{\frac{r}{1-r}}\delta U^{-\frac{r}{1-r}}\right)\right.\\
		&\qquad\times\left.\left(\sum_{i=j}^{M-1}
		\int_{x_i}^{x_{i+1}}d\left[-\left(\int_s^L\Delta^{-\frac{q}{r}}w\right)^{\frac{1}{1-q}}\right]\right)^{\frac{r(1-q)}{q(1-r)}}\right)^{\frac{1-r}{r}}
		\\
		&=	
		\sup_{N+2\leq k \leq M-2}U(x_k)\varphi(x_k)^{-\frac1p} \\
		& \qquad \times
		\left(\sum_{j=k+1}^{M-1}\left(\int_{x_{j-1}}^{x_j}\Delta^{\frac{r}{1-r}}\delta U^{-\frac{r}{1-r}}\right)
		\left(\int_{x_j}^{L}\Delta^{-\frac{q}{r}}w\right)^{\frac{r}{q(1-r)}}\right)^{\frac{1-r}{r}}
		\\
		&\approx
		\sup_{N+2\leq k \leq M-2}U(x_k)\varphi(x_k)^{-\frac1p}
		\left(\int_{x_{k}}^{x_{k+1}}\Delta^{\frac{r}{1-r}}\delta U^{-\frac{r}{1-r}}\right)^{\frac{1-r}{r}}
		\left(\int_{x_{k+1}}^{L}\Delta^{-\frac{q}{r}}w\right)^{\frac{1}{q}}
		\\
		&\lesssim C_{3,2}.
	\end{align*}
	The estimates above yields
	\begin{align*}
		\III\lesssim C_{1,6}+C_{1,5} +C_{4,1}+C_{3,2};
	\end{align*}
	hence, altogether,
	\begin{equation}\label{EQ:B9lessC16+C32+C41}
		B_8\lesssim C_{1,6}+C_{1,5}+C_{4,1}+C_{3,2}.
	\end{equation}
	Finally, putting all things together, we obtain
	\begin{align*}
		B_1+B_2+B_3+B_7+B_8\lesssim C_{1,5}+C_{1,6}+C_{3,2}+C_{4,1}\lesssim B_1+B_2+B_3+B_7+B_8.
	\end{align*}
	
	\rm{(vii)} By Theorem~\ref{thm:main_discretization}, we have
	\begin{equation*}
		C\approx C_{1,5} + C_{1,6} + C_{3,4} + C_{4,1}.
	\end{equation*}
	
	Now, \eqref{EQ:antid_v_*}, \eqref{EQ:C16lessB1+B9}, \eqref{EQ:C34lessB2+B3+B5} and \eqref{EQ:C41lesB1} combined all together yield the desired upper estimate on $C_{1,5} + C_{1,6} + C_{3,4} + C_{4,1}$.
	
	Finally, we obtain the opposite inequality by combining \eqref{EQ:B1+B2lessC15+C16+C32+C41}, \eqref{EQ:B3lessC16+C32}, \eqref{EQ:B5lessC13+C34}, \eqref{EQ:B8lessC15+C32+C41} and \eqref{EQ:B9lessC16+C32+C41} upon using the fact that $C_{3,2}\leq C_{3,4}$ from \eqref{C-32<C-34-new} and $C_{1,3}\lesssim C_{1,6}$ from  \eqref{C13<C16}.
\end{proof}

\end{document}